\documentclass[a4paper,twoside,10pt]{article}
\usepackage{ucs}
\usepackage[utf8x]{inputenc}
\usepackage[T1]{fontenc}
\usepackage[french,english]{babel}
\usepackage{subfigure}
\usepackage{amsmath, amsthm, amsfonts, amssymb}
\usepackage{latexsym}
\usepackage{color}
\usepackage{stmaryrd}
\usepackage{endnotes}
\usepackage{wrapfig}
\usepackage{fancyhdr,lastpage}
\usepackage{ccaption}
\usepackage{enumitem}
\usepackage{xspace}
\usepackage{ifthen}
\usepackage{xkeyval}
\usepackage{remreset}
\usepackage{xcolor}
\usepackage[marginpar]{todo}
\usepackage{calc}
\usepackage{multirow}
\usepackage{graphicx}
\usepackage{sympytex}

\makeatletter

\pagestyle{fancy} 


\fancyhead{}
\fancyhead[LE]{\thepage}
\fancyhead[RE]{\@title}
\fancyhead[LO]{\nouppercase{\leftmark}}
\fancyhead[RO]{\thepage}
\fancyfoot{}
\fancypagestyle{plain}{%
	\fancyhf{} %
	\fancyfoot{}%
}



\let\orig@enumerate =\enumerate
\renewenvironment{enumerate}{\orig@enumerate [label=(\roman{*}), ref=(\roman{*})]}
{\endlist}

\newenvironment{enumeratealph}
{\orig@enumerate [label=(\alph{*}), ref=(\alph{*})]}
{\endlist}

\newenvironment{foldable}
{\orig@enumerate [label=(C\arabic{*}), ref=(C\arabic{*})]}
{\endlist}

\definecolor{grispuce}{gray}{0.7}
\renewcommand{\labelitemi}{\raisebox{0.15em}{$\color{grispuce}{\scriptstyle \blacktriangleright}$}}


\newcommand{\resp}{respectively \xspace}
\newcommand{\oas}{$\omega$-almost surely ($\omega$-as)\xspace \renewcommand{\oas}{$\omega$-as\xspace}}
\newcommand{\oeb}{$\omega$-essentially bounded  ($\omega$-eb)\xspace \renewcommand{\oeb}{$\omega$-eb\xspace}}

\newtheorem{defi}{Definition}[section]
\newtheorem{lemm}[defi]{Lemma}
\newtheorem{prop}[defi]{Proposition}
\newtheorem{theo}[defi]{Theorem}
\newtheorem{coro}[defi]{Corollary}

\newtheorem*{mthm}{Theorem}

\@addtoreset{defi}{section}
\renewcommand{\thesection}{%
	\arabic{section}%
}
\renewcommand{\thesubsection}{%
	\ifnum \value{section}>0
		\thesection.%
	\else%
	\fi%
	\arabic{subsection}%
}
\renewcommand{\thedefi}{%
	\ifnum \value{section}>0
		\thesection.%
	\else%
	\fi%
	\arabic{defi}%
}

\newcommand{\rem}{\paragraph{Remark :}}

\newcommand{\nota}{\paragraph{Notation :}}
\newcommand{\notas}{\paragraph{Notations :}}
\newcommand{\voc}{\paragraph{Vocabulary :}}

\newcommand{\N}{\mathbf{N}}
\newcommand{\Z}{\mathbf{Z}}

\newcommand{\R}{\mathbf{R}}
\newcommand{\C}{\mathbf{C}}
\renewcommand{\H}{\mathbf{H}}
\newcommand{\intval}[2]{\left[#1\, , #2\right]}
\newcommand{\intvald}[2]{\left\{ #1,\dots , #2\right\} }

\define@choicekey*{planhyp}{corps}[\val \nr]{r,c,h,}[]{
	\ifcase\nr 
		\def\ph@corps{\R}
	\or
		\def\ph@corps{\C}
	\or
		\def\ph@corps{\H}
	\or
		\def\ph@corps{}
	\fi
}
\presetkeys{planhyp}{corps}{}
\newcommand{\HP}[2][]{
	\begingroup
		\setkeys{planhyp}{#1}
		\ifthenelse{\equal{\ph@corps}{}}
			{\mathbf{H}_{#2}}
			{\mathbf{H}_{#2}\left(\ph@corps \right)}
	\endgroup
}
\newcommand{\gro}[4][]{
	\ifthenelse{\equal{#1}{}}
	{\left< #2,#3\right>_{#4}}
	{\left< #2,#3\right>_{#4}^{#1}}
}

\newcommand*{\dist}[3][]{
	\ifthenelse{\equal{#1}{}}
		{\left| #2- #3 \right|}
		{
			\ifthenelse{\equal{#1}{SC}}
			{\left\| #2- #3 \right\|}
			{\left| #2- #3 \right|_{#1}}
		}
}
\newcommand{\diaminter}[2]{
	\left|#1 \cap #2\right|
}

\newcommand*{\distV}[1][]{
	\ifthenelse{\equal{#1}{}}
		{\left| \ . \ \right|}
		{
			\ifthenelse{\equal{#1}{SC}}
			{\left\| \ . \ \right\|}
			{\left| \ . \ \right|_{#1}}
		}
}

\newcommand*{\geo}[3][]{
	\ifthenelse{\equal{#1}{}}
		{\left[ #2, #3 \right]}
		{\left[ #2, #3 \right]_{#1}}
}

\newcommand*{\triang}[4][]{
	\ifthenelse{\equal{#1}{}}
		{\left[ #2, #3, #4 \right]}
		{\left[ #2, #3, #4 \right]_{#1}}
}

\newcommand{\nerf}[3]{
	\left( #1,#2\right)_{#3}
}

\define@boolkey{longueurtrans}{stable}[true]{}
\define@key{longueurtrans}{espace}[]{\def \lt@espace{#1}}
\presetkeys{longueurtrans}{stable=false,espace}{}
\newcommand*{\len}[2][]{
	\begingroup
		\setkeys{longueurtrans}{#1}
		\ifKV@longueurtrans@stable {
			\ifthenelse{\equal{\lt@espace}{}}
				{\left[ {#2}\right]^{\infty}}
				{\left[ {#2}\right]^{\infty}_{\lt@espace}}
		}
		\else {
			\ifthenelse{\equal{\lt@espace}{}}
				{\left[ {#2}\right]}
				{\left[ {#2}\right]_{\lt@espace}}
		}
		\fi
	\endgroup
}

\newcommand{\rinj}[2]{r_{\textit{inj}}\left(#1,#2\right)}

\newcommand{\rips}[3][]{
	\ifthenelse{\equal{#1}{}}
		{P_{#2}\left( #3\right)}
		{P_{#2}^{(#1)}\left( #3\right)}
}
\newcommand{\diam}{\operatorname{diam}}

\newcommand{\sdp}[3][]{
	\ifthenelse{\equal{#1}{}}
		{#2 \rtimes #3}
		{#2 \rtimes _{#1} #3}
}

\newcommand{\out}[1]{\operatorname{Out} \left( #1\right)}
\newcommand{\free}[1]{\mathbf F_{#1}}

\newcommand{\burn}[2]{\mathbf B _{#1}(#2)}
\newcommand{\dlim}{\displaystyle{\lim_{\longrightarrow}}\ }

\renewcommand{\sinh}{\operatorname{sh}}
\renewcommand{\cosh}{\operatorname{ch}}


\newcommand{\fantomA}{\vphantom{\frac 12}\!}
\newcommand{\fantomB}{\vphantom{\Big\vert}\!}
\renewcommand{\epsilon}{\varepsilon}
\renewcommand{\phi}{\varphi}
\renewcommand{\leq}{\leqslant}
\renewcommand{\geq}{\geqslant}



\newcommand{\makebiblio} {
	\IfFileExists{/Users/coulonr/Maths/Publications/papers2/bibliography.bib}{
		\bibliography{/Users/coulonr/Maths/Publications/papers2/bibliography}
	}{	
		\bibliography{/Volumes/Data/Maths/svn/papers2/bibliography}	 
	}
	\bibliographystyle{abbrv} 
}

\makeatother
\begin{document}

\title{A criterion for detecting trivial elements of Burnside groups \\ {\large  Un critère pour détecter les éléments triviaux dans les groupes de Burnside}}
\author{R\'emi Coulon}

\maketitle

\begin{abstract}
	In this article we give a sufficient and necessary condition to determine whether or not an element of the free group induces a non-trivial element of the free Burnside group of sufficiently large odd exponent.
	This criterion can be stated without any knowledge about Burnside groups, in particular about the proof of its infiniteness.
	Therefore it provides a useful tool that we will use later to study outer automorphisms of Burnside groups.
	We also state an analogue result for periodic quotients of torsion-free hyperbolic groups.
\end{abstract}

\selectlanguage{french}
\begin{abstract}
	Dans cet article, on propose une condition nécessaire et suffisante pour déterminer si un élément du groupe libre induit ou non un élément trivial dans les groupes de Burnside libre d'exposants impairs suffisamment grands.
	Ce critère peut être énoncé sans aucun pré-requis sur les groupes de Burnside.
	En particulier il n'est pas nécessaire de comprendre pourquoi les groupes de Burnside sont infinis pour l'appliquer.
	Pour cette raison il fournit un outil effectif qui nous permettra plus tard d'étudier les automorphismes du groupe de Burnside.
	Nous donnons aussi un résultat analogue pour les quotients périodiques d'un groupe hyperbolique sans torsion. 
\end{abstract} 

\selectlanguage{english}
\tableofcontents 

\newcommand{\varz}{x}


\begin{sympysilent}
from criterion import * 
\end{sympysilent}

\newcommand{\AAA}{A}
\newcommand{\AB}{B}
\newcommand{\AC}{C}
\newcommand{\AF}{F}
\newcommand{\Aa}{a}
\newcommand{\Ab}{b}
\newcommand{\Ac}{c}
\newcommand{\AcBis}{c'}
\newcommand{\Ad}{d}
\newcommand{\ALn}{\lambda^{-1}}

\newcommand{\Balpha}{\alpha}
\newcommand{\Bbeta}{\beta}

\newcommand{\LL}{L}
\newcommand{\Ll}{l}
\newcommand{\Kk}{k}
\newcommand{\KkBis}{k'}

\newcommand{\Nn}{n}

\newcommand{\PTLbase}{\pi\sinh r_0}

\newcommand{\Qalpha}{\alpha}

\newcommand{\Rr}{r_0}

\newcommand{\Sepsilon}{\epsilon}
\newcommand{\SepsilonOne}{\epsilon_1}
\newcommand{\SepsilonDot}{\dot\epsilon}
\newcommand{\SDeltaZero}{\Delta_0}

\newcommand{\Tdelta}{\delta}
\newcommand{\TdeltaZero}{\delta_0}
\newcommand{\TdeltaOne}{\delta_1}
\newcommand{\TdeltaBar}{\bar\delta}
\newcommand{\TdeltaDot}{\dot\delta}

\newcommand{\Xeta}{\eta}
\newcommand{\XetaBis}{\eta'}

\section*{Introduction}

\paragraph{}Let $n$ be an integer.
A group $G$ has exponent $n$ if for all $g \in G$, $g^n=1$.
In 1902, W.~Burnside asked whether a finitely  generated group with finite exponent  is necessarily finite or not \cite{Bur02}.
To study this question, it is natural to look at the free Burnside group $\burn rn = \free r / \free r^n$ which is the quotient of the free goup of rank $r$, denoted by $\free r$, by the subgroup  $\free r^n$ generated by all $n$-th powers.
It is indeed the largest group of rank $r$ and exponent $n$.
Until the work of P.S.~Novikov and S.I.~Adian,  it was only known that for some small exponents $\burn rn$ was finite ($n=2$ \cite{Bur02}, 3 \cite{Bur02,LevWae33}, 4 \cite{San40}, 6 \cite{Hal57}).
In 1968, they proved that for $r \geq 2$ and $n \geq 4381$ odd $\burn rn$ is infinite \cite{NovAdj68a,NovAdj68b,NovAdj68c}.
This result has been improved in many directions. 
A.Y.~Ol'shanski\u\i\ \cite{Olc82} proposed an other proof of the Novikov-Adian theorem using graded diagramms.
Moreover he extended the  result to the periodic quotients of a hyperbolic group \cite{Olc91}.
S.V.~Ivanov \cite{Iva94} and I.G.~Lysenok \cite{Lys96} solved the case of even exponents.

\paragraph{} The crucial fact used by P.S.~Novikov and S.I.~Adian is the following result (see \cite[Statement 1]{AdiLys92}).
Let $p$ be an integer and $w$ a reduced word representing an element of $\free r$.
If $w$ does not contain a subword of the form $u^p$, then $w$ induces a non-trivial element of $\burn rn$ where $n$ is an odd integer larger than $10000p$.
The infiniteness of the Burnside groups follows then from the existence of infinite words without third-power (like Thue-Morse words \cite{Adi79}).
Our goal is to improve this statement.
Given a reduced word $w$ of $\free r$ we provide a sufficient and necessary condition to decide wether $w$ represents a trivial element of $\burn rn$ or not.

\paragraph{}Before describing the criterion we would like to motivate this work.
We wish to investigate the outer automorphisms of Burnside groups.
Since $\free r^n$ is a characteristic subgroup of $\free r$, the projection $\free r \twoheadrightarrow \burn rn$ induces a map $\out{\free r} \rightarrow \out{\burn rn}$.
This map is not onto.
Nevertheless it provides numerous examples of automorphisms of the Burnside groups.
For instance if $n$ is an odd exponent large enough, the image of $\out{\free r}$ in $\out{\burn rn}$ contains free groups of arbitrary rank \cite{Cou10a}.
One important question is: which automorphisms of $\free r$ induce automorphisms of infinite order of $\burn rn$?
In \cite{Cou10a} we provided a large class of automorphisms of $\free r$ having this property.
However we are looking for a sufficient and necessary condition to characterize them.
To understand the difficulties that may appear, let us have a look at a simple example already studied by E.A.~Cherepanov \cite{Che05}.
Let $\phi$ be the automorphism of $\free 2 = \mathbf F (a,b)$ defined by $\phi(a)=ab$ and $\phi(b)=a$.
The idea is to compute the orbit of $b$ under $\phi$.
\begin{displaymath}
	\begin{array}{lclclcl}
	\phi^1(b) & = & a			&\quad 	& \phi^5(b) & = & abaababa\\
	\phi^2(b) & = & ab			&		& \phi^6(b) & = & abaababaabaab\\
	\phi^3(b) & = & aba			&		& \phi^7(b) & = & abaababaabaababaababa\\
	\phi^4(b) & = & abaab		&		& \dots & &
	\end{array}
\end{displaymath}
This sequence converges to a right-infinite word
\begin{displaymath}
	\phi^\infty(b)= abaababaabaababaababaabaababaabaab\dots
\end{displaymath}
which does not contain a subword which is a fourth-power \cite{Mos92}.
Using the criterion of P.S.~Novikov and S.I.~Adian, the $\phi^k(b)$'s define pairwise distinct elements of $\burn rn$ for some large $n$.
In particular $\phi$ induces an automorphism of infinite order of the Burnside groups of large exponents.
For an arbitrary automorphism the situation becomes more complicated.
Consider for instance the automorphism $\psi$ of $\free 4 = \mathbf F (a,b,c,d)$ defined by $\psi(a)=a$, $\psi(b)=ba$, $\psi(c)=c^{-1}bcd$ and $\psi(d)=c$.
As previously we compute the orbit of $d$ under $\psi$.
\begin{displaymath}
	\begin{array}{lcl}
	\psi^1(d) & = & c			\\
	\psi^2(d) & = & c^{-1}\mathbf bcd			\\
	\psi^3(d) & = & d^{-1}c^{-1} b^{-1}c\mathbf b\mathbf ac^{-1} bcdc			\\
	\psi^4(d) & = & c^{-1}d^{-1}c^{-1} b^{-1}c a^{-1} b^{-1}c^{-1} bcd\mathbf b\mathbf a^2d^{-1}c^{-1} b^{-1}c b ac^{-1} bcd bcd		\\
	\psi^5(d) & = & d^{-1}c^{-1} b^{-1}d^{-1}c^{-1} b^{-1}c a^{-1} b^{-1}c^{-1} bcd a^{-2} b^{-1}d^{-1}c^{-1} b^{-1}c\dots\\
	&&  b ac^{-1}  bcdc\mathbf b\mathbf a^3c^{-1}d^{-1}c^{-1} b^{-1}c a^{-1} b^{-1}c^{-1} bcd b a^2d^{-1}c^{-1} b^{-1}c\dots \\
	&&  b ac^{-1} bcdc b ac^{-1} bcdc
	\end{array}
\end{displaymath}
Note that each time $\psi^k(d)$ contains a subword $ba^m$ then $\psi^{k+1}(d)$ contains $ba^{m+1}$.
Hence the $\psi^k(d)$'s contain arbitrary large powers of $a$.
This cannot be avoided by choosing the orbit of an another element.
The result of P.S.~Novikov and S.I.~Adian cannot tell us if the $\psi^k(d)$'s are pairwise distinct in $\burn rn$.
Therefore, we need a more accurate criterion two distinguish two different elements of $\burn rn$.
This question about automorphisms of $\burn rn$ is solved in \cite{CouHil11}.

\paragraph{} To state our theorem we need to define elementary moves.
Let $\xi$ and $n$ be two integers.
A \emph{$(\xi, n)$-elementary move} consists in replacing a reduced word of the form $pu^ms \in \free r$ by the reduced representative of $pu^{m-n}s$, provided $m$ is an integer larger than $n/2- \xi$.
Note that an elementary move may increase the length of the word.
\begin{mthm}
	There exist numbers $\xi$ and $n_0$ such that for all odd integers $n \geq n_0$ we have the following property.
	Let $w$ be a reduced word of $\free r$.
	The element of $\burn rn$ defined by $w$ is trivial if and only if there exists a finite sequence of $(\xi, n)$-elementary moves that sends $w$ to the empty word. 
\end{mthm}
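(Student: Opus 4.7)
The forward implication is immediate. Each $(\xi,n)$-elementary move replaces a subword $u^m$ with $u^{m-n}$, and the two words differ by the $n$-th power $u^{\pm n}$, which is trivial in $\burn rn$. Hence if some sequence of such moves reduces $w$ to the empty word, $w$ represents the identity of $\burn rn$.

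For the converse, the plan is to extract a reducing sequence of moves from a diagram witnessing $w = 1$. Fix a van Kampen diagram $D$ of minimal complexity for $w$ over the natural presentation $\langle a_1,\dots,a_r \mid u^n,\ u \in \free r\rangle$ of $\burn rn$ and induct on the number of $2$-cells of $D$. If $D$ has no $2$-cell, then $w$ is already trivial in $\free r$ and, being reduced, is empty. Otherwise, the crucial geometric ingredient --- to be imported from the Olshanskii--Delzant--Gromov machinery underlying the infiniteness of $\burn rn$ --- asserts that some $2$-cell $c$ of $D$, bounded by a relator $u^n$, shares an arc with $\partial D$ whose length is at least $n/2 + \xi$, for a constant $\xi$ independent of $n$. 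In combinatorial terms, $w$ contains a subword $v^m$ with $m > n/2 - \xi$, where $v$ is a cyclic conjugate of $u$.

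Now apply a $(\xi,n)$-elementary move replacing this subword by $v^{m-n}$. Geometrically, this pushes the boundary across the cell $c$ and produces a new word $w'$ that represents the same element of $\burn rn$ and bounds the diagram $D\setminus c$, which has strictly fewer $2$-cells. By the inductive hypothesis $w'$ can be reduced to the empty word by a finite sequence of moves; prepending the initial move gives the required reduction of $w$.

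The main obstacle is the boundary-domination statement above: producing in every reduced diagram over the Burnside presentation a relator $2$-cell sharing more than half of its perimeter with $\partial D$, with the error $\xi$ a universal constant. Establishing this will occupy the bulk of the paper and rests on the delicate hyperbolic geometry of the direct system $\free r = G_0 \twoheadrightarrow G_1 \twoheadrightarrow \cdots \twoheadrightarrow \burn rn$ obtained by successively killing $n$-th powers of short periods, in particular on uniform control of the hyperbolicity constants $\Tdelta_i$ and injectivity radii $\rinjRF{G_i}$ along the tower. The threshold $n/2$ is essentially forced: the substitution $v^m \mapsto v^{m-n}$ corresponds to traversing the relator $u^n$ ``the short way around'' only when $m > n/2$, and any viable complexity-reducing argument must avoid going around the long way.
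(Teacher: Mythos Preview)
The forward direction is correct. For the converse, you propose an induction on the number of $2$-cells of a minimal van Kampen diagram, driven by a ``crucial geometric ingredient'': that some cell shares more than $(n/2-\xi)$ periods of its boundary with $\partial D$. You then defer the proof of this ingredient, describing it as something to be imported from the Ol'shanski\u\i--Delzant--Gromov machinery.

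This is where the gap lies. The boundary-domination statement you isolate is not a known lemma sitting inside that machinery; it is essentially the theorem itself. The Greendlinger-type lemmas available in the graded or geometric small-cancellation setting (Theorem~\ref{Greendlinger Lemma} here) operate \emph{one level at a time}: they tell you that a word trivial in $G_{k+1}$ but not in $G_k$ contains a large power \emph{when read as a geodesic of $G_k$}. They say nothing about what is visible in the free group. The introduction already gives the prototype obstacle: $w=(u^\ell v)^q(u^{\ell-n}v)^{n-q}$ is trivial at level two, and the level-two power $(u^\ell v)^n$ is invisible in $\free r$ until one has first undone the level-one relators $u^n$ by elementary moves. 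In a minimal diagram for such a $w$ the large cell bounded by $(u^\ell v)^n$ need not touch $\partial D$ along anything close to half its perimeter; it is shielded by the level-one cells. More deeply nested examples only make this worse.

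The paper therefore does \emph{not} argue via a single diagrammatic Greendlinger lemma over the full Burnside presentation. The induction is on the rank $k$ in the tower $G_0\twoheadrightarrow G_1\twoheadrightarrow\cdots$, and the technical core is Proposition~\ref{big induction proposition}, especially part~(C): if a geodesic in $X_k$ overlaps the cylinder of a rank-$k$ reduced isometry by at least $\len{g^m}$, then after a finite sequence of elementary moves one can realise the same overlap already in $X$. The machinery of Sections~\ref{sec:cone-off over a metric space}--\ref{sec:small cancellation} (shortening chains, foldable configurations, the lifting lemmas~\ref{releve distance a une geodesique}--\ref{lift overlap}) is built precisely to carry out this lifting from $X_k$ back to $X$. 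Your outline replaces all of this by an unproved diagrammatic claim; to turn it into a proof you would have to supply an independent argument for that claim, and nothing in the existing literature provides it off the shelf with the threshold $n/2-\xi$.
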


\paragraph{}
A.Y.~Ol'shanksi\u\i\ point us out that this theorem also follows from Lemma~5.5 of \cite{Olc82} when $m \geq n/3$.
Moreover his method could be adapted to cover the case where $m \geq n/2-\xi$.
However in this paper we follow the construction given by T.~Delzant and M.~Gromov.
In \cite{DelGro08}, they proposed an alternative proof of the Novikov-Adian Theorem.
Using a geometrical approach they built a sequence of hyperbolic groups $\free r \twoheadrightarrow G_1 \twoheadrightarrow G_2 \twoheadrightarrow \dots$ whose direct limit is $\burn rn$.
At each step the groups have - among others - the following properties.
\begin{itemize}
	\item $G_{k+1}$ is a small cancellation quotient of $G_k$
	\item The relations that define the the quotient $G_k \twoheadrightarrow G_{k+1}$ are $n$-th powers of elements of $G_k$.
\end{itemize}
Given a small cancellation group, one knows  an algorithm solving the word problem.
Consider for instance $w$ a reduced word of $\free r$ which is trivial in the first quotient $G_1$.
According to the Greendlinger Lemma, $w$ contains a subword which equals three fourth of a relation.
In our situation, this means that $w$ can be written $w=pu^ms$ where $m \geq 3n/4$.
Applying an elementary move, we obtain a new word $w' $ which represents $pu^{m-n}s$ and is shorter than the previous one.
Moreover $w'$ is still trivial in $G_1$.
By iterating the process we get a sequence of elementary moves that sends $w$ to the empty word.

\paragraph{}
For the Burnside groups the process is more tricky.
Let $w$ be a reduced word of $\free r$ which is trivial in $\burn rn$.
Since $\burn rn$ is the direct limit of the $G_k$'s, there exists a step $k$ such that $w$ is trivial in $G_{k+1}$ but not in $G_k$.
Roughly speaking, the Greendlinger Lemma tells us that a geodesic word of $G_k$ representing $w$ contains three fourth of a relation, i.e. a subword of the form $u^m$ with $m \geq 3n/4$.
One would like to apply an elementary move.
However there is no reason that $u^m$ should be a subword of $w$ in $\free r$.
Consider the following example.
Let $u$ and $v$ be two reduced words of $\free r$.
Assume that $u^n$ is trivial in $G_1$.
Let $w=\left(u^lv\right)^q\left(u^{l-n}v\right)^{n-q}$.
As an element of $G_1$, $w$ represents $\left(u^lv\right)^n$ which contains an $n$-th power.
Nevertheless this does not hold in $\free r$.
The fact is that the previous relations (here $u^n$) mess up the powers.
However despite $w$ does not contain a $n$-th power of $u^lv$, it contains a large power of $u$.
Thus $n-q$ elementary moves send $w$ to $\left(u^lv\right)^n$.
We can now ``read'' the power of $u^lv$ directly in $\free r$ and apply an elementary move to reduced the length of this last word.
This example actually describes the general situation.
Our main theorem is proved by induction on $k$ using this kind of arguments.
The technical difficulties come from the fact that to be rigorous we should formulate the ideas presented above in a hyperbolic framework, taking care of many parameters (hyperbolicity constants, small cancellation parameters,...). 

\paragraph{}Our study works in fact in a more general situation.
Let   $(X,x_0)$ be a $\delta$-hyperbolic, geodesic, pointed space and $G$ a non-elementary, torsion-free group acting properly, co-compactly, by isometries on it.
We provide indeed a sufficient and necessary condition to detect elements of $G$ which are trivial in the quotient $G/G^n$.
For this purpose we need to extend the definition of elementary moves to this context.
Let $v$ be a non-trivial isometry of $G$.
Since $G$ is torsion free, it fixes two points $v^-$ and $v^+$ of $\partial X$, the boundary at infinity of $X$.
We denote by $Y_v$ the set of points of $X$ which are $10\delta$-close to some bi-infinite geodesic joining $v^-$ and $v^+$.
This subset is quasi-isometric to a line.
Moreover $v$ roughly acts on it by translation of length $\len v$.
A \emph{$(\xi, n)$-elementary move} consists in replacing a point $y \in X$ by $v^{-n}y$ provided that we have in $X$
\begin{displaymath}
	\diaminter{\geo {x_0}y}{Y_v}\geq \len{v^m}, \text{ where } m \geq n/2-\xi.
\end{displaymath}
Here $\diaminter{\geo {x_0}y}{Y_v}$ is a quantity that measures the length of the part of the geodesic $\geo {x_0}y$ which is approximatively contained in $Y_v$.

\paragraph{}
Let us compare this definition with the previous one.
Let $X$ be the Cayley graph of $\free r$ and $x_0$ the vertex representing 1.
Let $g\in\free r$.
Assume that $g$ can be written as a reduced word $g=pu^ms$. 
Then the geodesic $\geo {x_0}{gx_0}$, labeled by $pu^ms$, intersects the axis of $v=pup^{-1}$ along a path of length $\len {v^m}$.
Moreover $v^{-n}g$ can be represented by the word $pu^{m-n}s$.
The next theorem is a generalization for hyperbolic groups of the previous one.
Not only does it tell that an element of $G$ trivial in a periodic quotient $G/G^n$ of $G$ can be reduced to the trivial element using elementary moves but it also explain how to decide whether or not two element of $G$ are the same in $G/G^n$ using the same kind of elementary moves.

\begin{mthm}
	Let $G$ be a non-elementary, torsion-free group acting freely, properly, co-compactly, by isometries on a proper, hyperbolic, geodesic, pointed space $(X,x_0)$.
	There exist numbers $\xi$ and $n_0$ such that for all odd integers $n \geq n_0$ we have the following property.
	Two elements $g$ and $g'$ of $G$ induce the same element of $G/G^n$ if and only if there are two finite sequences of $(\xi, n)$-elementary moves that respectively send $gx_0$ and $g'x_0$ to the same point.
\end{mthm}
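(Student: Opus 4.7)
The easy direction is immediate: if $y' = v^{-n}y$ is a single $(\xi,n)$-elementary move, then $y'$ and $y$ differ by left multiplication by $v^{-n} \in G^n$, so the move is invisible in $G/G^n$. Iterating, any sequence of moves starting from $gx_0$ stays inside the orbit $G^n\cdot gx_0$; hence if two such sequences meet at a common point, $g$ and $g'$ have the same image in $G/G^n$.

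For the converse, the plan is to reduce to showing that whenever $h \in G^n$, there is a finite sequence of $(\xi,n)$-elementary moves sending $hx_0$ to $x_0$. Translating by $g$ then sends $gx_0$ to $x_0$ and $g'x_0$ to $(g^{-1}g')x_0 = hx_0$ with $h \in G^n$, so the two orbits meet at $x_0$. To handle this one-variable problem, I would follow the construction of Delzant--Gromov, which presents $G/G^n$ as the direct limit of a tower of hyperbolic quotients
\begin{displaymath}
	G = G_0 \twoheadrightarrow G_1 \twoheadrightarrow G_2 \twoheadrightarrow \cdots,
\end{displaymath}
in which each arrow is a small cancellation quotient whose relators are $n$-th powers. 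Any $h \in G^n$ becomes trivial at some finite stage, so one may argue by induction on the smallest $k$ for which $h$ is trivial in $G_{k+1}$. The base case $k=0$ is vacuous since $h = 1$.

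The inductive step is the heart of the argument. Suppose $h$ is trivial in $G_{k+1}$ but not in $G_k$. A Greendlinger-type lemma in the small cancellation quotient $G_k \twoheadrightarrow G_{k+1}$ provides, along a $G_k$-geodesic from $x_0$ to $hx_0$, a long sub-segment running along the axis of some conjugate $v$ of a defining relator, of length at least $\len{v^m}$ with $m \geq n/2 - \xi$, provided $\xi$ absorbs the small cancellation constants. In the quotient this sub-segment is exactly what an elementary move along $v$ would eliminate; the difficulty, as illustrated by $\left(u^\ell v\right)^q\left(u^{\ell-n}v\right)^{n-q}$ in the introduction, is that the geodesic $\geo{x_0}{hx_0}$ computed in $X$ can look very different, so one does not see the large power of $v$ directly. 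The crucial step is therefore to invoke the induction hypothesis on the auxiliary elements that measure the discrepancy between the $G$-geodesic and the $G_k$-geodesic, producing a finite sequence of $(\xi,n)$-elementary moves in $G$ that transforms $hx_0$ into a point $h'x_0$ for which
\begin{displaymath}
	\diaminter{\geo{x_0}{h'x_0}}{Y_v} \geq \len{v^m}
\end{displaymath}
actually holds in $X$. One further move then replaces $h'x_0$ by $v^{-n}h'x_0$, which is trivial in $G_k$, and the outer induction closes the loop.

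The main obstacle is the uniform control of all the constants along the tower: the hyperbolicity constant of $G_k$, the injectivity radius, and the small cancellation ratio of $G_k \twoheadrightarrow G_{k+1}$ must stay within bounds that are independent of $k$, so that a single pair $(\xi,n_0)$ works at every stage. This is where the cone-off construction of Delzant--Gromov and the stability of quasi-geodesics under small cancellation quotients come in, and where almost all the technical work of the paper will be concentrated. Keeping $\xi$ strictly below $n/2$ (rather than a safer $n/4$) is precisely what makes the criterion sharp and forces the delicate tracking of these parameters.
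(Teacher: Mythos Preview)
Your easy direction is fine, and your overall strategy (Delzant--Gromov tower, Greendlinger at level $k$, lift the overlap down to $X$) is exactly the paper's. There are, however, two genuine gaps.

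\paragraph{The reduction by translation does not work.} Elementary moves are defined relative to the fixed base point $x_0$: whether $y \mapsto v^{-n}y$ is allowed depends on $\diaminter{\geo{x_0}{y}}{Y_v}$. Left-multiplying everything by $g^{-1}$ sends $\geo{x_0}{gx_0}$ to $\geo{g^{-1}x_0}{x_0}$, not to $\geo{x_0}{x_0}$, so the move relation is \emph{not} $G$-equivariant. Hence knowing that $hx_0$ can be moved to $x_0$ for every $h\in G^n$ tells you nothing about moving $gx_0$ and $g'x_0$ together when $g\notin G^n$. The paper avoids this by proving the statement for an \emph{arbitrary} starting point $y$ (Theorem~\ref{the:identifying two points}): for any $y$ and any $h\in G^n$, two sequences of moves (still based at $x_0$) send $y$ and $hy$ to a common point. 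One then takes $y=gx_0$ and $h=g'g^{-1}$. Your special case $y=x_0$ does follow from this (no move applies to $x_0$, so the common point must be $x_0$), but it does not imply the general case.

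\paragraph{The induction hypothesis is too weak.} Your inductive statement is essentially ``$h$ trivial in $G_k$ $\Rightarrow$ $hx_0$ can be killed by moves''. At the inductive step you correctly note that Greendlinger gives a large overlap $\diaminter{\geo{x_0}{y}}{Y_s}\geq \len{s^m}$ \emph{in $X_k$}, and that one must pull this down to $X$ before performing the move. But the auxiliary elements witnessing the discrepancy between the $X$-geodesic and the $X_k$-geodesic need not be trivial in any $G_j$; they are just elements of $K_k$, so your hypothesis does not apply to them. The paper solves this by carrying a strictly stronger three-part induction (Proposition~\ref{big induction proposition}): beyond ``trivial elements can be killed'' (parts (A)--(B)), one must also prove (C) that whenever a large overlap with a cylinder is visible in $X_k$, a sequence of moves on $y$ produces a point for which the same overlap is visible in $X$. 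It is (C), fed back into itself via the ``foldable configuration'' machinery of Section~\ref{sec:foldable configurations}, that makes the lift possible; without it the inductive step does not close.
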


\paragraph{Outline of the article.} In Section~\ref{sec:hyperbolic spaces}, we review some of the standard facts on hyperbolic geometry.
Since the proofs in the rest of the article are already quite technical, we also tried to compile in this section all the results that only require hyperbolic geometry.
Section~\ref{sec:cone-off over a metric space} investigates the cone-off construction used by T.~Delzant and M.~Gromov, in \cite{DelGro08}.
In particular we compare at a large scale the relation between the geometry of the cone-off over a metric space and the one of its base.
Section~\ref{sec:small cancellation} is devoted to the study of small cancellation theory.
Our goal is to understand how to lift figures from a small cancellation quotient $\bar G  = G/K$ in the group $G$.
For instance, let $g$ be an element of $G$ such that a geodesic of $\bar G$ representing the image of $g$ contains a large power.
Under which conditions $g$ already contains a large power? 
If not, what kind of transformations could send $g$ to an element containing a large power?
In the last section we summarize all this results in an induction that will proves our main theorem.

\paragraph{Acknowledgment.}
Part of this work was done during my stay at the \emph{Max-Planck-Institut f\"ur Mathematik}, Bonn, Germany.
I would like to express my gratitude to all faculty and staff from the MPIM for their support and warm hospitality.
I am also thankful to A.Y.~Ol'shanski\u\i\ who point me out Lemma~5.5 of \cite{Olc82} which provides an alternative proof of our result.


\section{Hyperbolic spaces}
\label{sec:hyperbolic spaces}

\paragraph{} Let $X$ be a metric space.
Given two points $x, x' \in X$, we denote by $\dist[X]x{x'}$ (or simply $\dist x{x'}$) the distance between them.
Although it may not be unique, we write $\geo x{x'}$ for a geodesic joining $x$ and $x'$.
The Gromov's product of three points $x$, $y$ and $z$ of $X$ is defined by
\begin{displaymath}
	\gro xyz = \frac 12 \left(\fantomB \dist xz + \dist yz - \dist yz \right).
\end{displaymath}
From now on, we assume that $X$ is $\delta$-hyperbolic, which means that for all $x,y,z,t \in X$
\begin{equation}
\label{eqn: hyperbolicity 1}
	\gro xzt \geq \min \left\{\fantomB \gro xyt, \gro yzt \right\} - \sympy{F_hyp1(delta)}.
\end{equation}\
Equivalently, for all $x, y, z, t \in X$,
\begin{equation}
\label{eqn: hyperbolicity 2}
	\dist xy + \dist zt \leq \left\{ \fantomB \dist xz + \dist yt , \dist xt + \dist yz \right\} + \sympy{F_hyp2(delta)}.
\end{equation}
It follows from the hyperbolicity assumption that the geodesic triangles of $X$ are $\sympy{F_thin(delta)}$-thin (see \cite[Chap. 1, Prop. 3.1]{CooDelPap90}).
More precisely for all $x,y,z \in X$, for all $(r,s) \in \geo xy \times \geo xz$, if $\dist xr = \dist xs \leq \gro yzx$ then $\dist rs \leq \sympy{F_thin(delta)}$.
The Gromov's product $\gro xyz$ can be interpreted as an estimate of the distance of $z$ to $\geo xy$.
We have indeed $\gro xyz \leq d\left(z,\geo xy\right) \leq \gro xyz + \sympy{F_groVsDist(delta)}$ (see \cite[Chap. 3, Lemm. 2.7]{CooDelPap90}).
We denote by $\partial X$, the boundary at infinity of $X$ (see \cite[Chap.2]{CooDelPap90} for the definition and the main properties).


\subsection{Quasi-convex subsets}
\label{sec: hyp - quasi-convex}

Let $Y$ be a subset of $X$. 
We denote by $Y^{+ \alpha}$ the $\alpha$-neighbourhood of $Y$, i.e. the set of points $x \in X$ such that $d(x,Y) \leq \alpha$.
A point $y$ of $Y$ is called an $\eta$-projection of $x$ on $Y$ if $\dist xy \leq d(x,Y)+ \eta$.
A 0-projection is simply called a projection.

\begin{defi}
\label{def quasi-convex}
	Let $\alpha \geq0$.
	A subset $Y$ of $X$ is \emph{$\alpha$-quasi-convex} if for every $x \in X$ and $y,y' \in Y$, $d(x,Y) \leq \gro y{y'}x + \alpha$.
\end{defi}

\begin{defi}
\label{def: strong quasi-convex}
	A subset $Y$ of $X$ is \emph{strongly quasi-convex} if for all $y,y' \in Y$ there exist $z, z' \in Y$ and geodesics $\geo yz$, $\geo z{z'}$, $\geo {z'}{y'}$ contained in $Y$ such that $\dist yz, \dist{y'}{z'} \leq \sympy{F_SQCDef(delta)}$.
\end{defi}

\rem Our definition of quasi-convex is slightly different from the one usually given in the literature (every geodesic joining two points of $Y$ lies in the $\alpha$-neighbourhood of $Y$).
However an $\alpha$-quasi-convex in the regular sense is $(\alpha +4\delta)$-quasi-convex in our sense, and conversely.
This definition has the advantage of working even in a length space which is not geodesic (see \cite{Coulon:2013tx}).
Moreover since we defined hyperbolicity using Gromov's products it is more convenient to work with.
With this definition a geodesic is $\sympy{F_groVsDist(delta)}$-quasi-convex.
By hyperbolicity, a strongly quasi-convex subset is $ \sympy{F_SQCQc(delta)}$-quasi-convex.

\begin{lemm}[compare {\cite[Chap. 10, Prop. 2.1]{CooDelPap90}}]
\label{projection quasi-convex}
	Let $Y$ be an $\alpha$-quasi-convex subset of $X$.
	\begin{itemize}
		\item Let $x \in X$ and $y \in Y$. If $p$ is an $\eta$-projection of $x$ on $Y$, then $\gro xyp \leq \sympy{F_projQc1(alpha,eta)}$.
		\item Let $x, x' \in X$. If $p$ and $p'$ are respectively $\eta$- and $\eta'$-projections of $x$ and $x'$ on $Y$ then, 
		\begin{displaymath}
			\dist p{p'} \leq \max\left\{\fantomB \epsilon, \dist x{x'} - \dist xp - \dist {x'}{p'} + 2\epsilon \right\},
		\end{displaymath}
		where $\epsilon = \sympy{F_projQc2(alpha,eta,etaBis,delta)}$.
	\end{itemize}
\end{lemm}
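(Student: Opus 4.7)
The plan is to handle (i) directly from the definitions, then bootstrap to (ii) by combining (i) with the four-point hyperbolicity condition. For \textbf{(i)}, I apply the $\alpha$-quasi-convexity of $Y$ to the two points $y,p\in Y$: this gives $d(x,Y)\le \gro yp x + \alpha$. Since $p$ is an $\eta$-projection, $\dist xp\le d(x,Y)+\eta\le \gro yp x + \alpha + \eta$. Substituting $\gro yp x = \frac 12(\dist xy + \dist xp - \dist yp)$ and cancelling $\dist xp$ on both sides leaves $\gro xy p \le \alpha + \eta$, which is part~(i).

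For \textbf{(ii)}, the key move is to apply (i) twice: once with $y:=p'\in Y$ at the projection~$p$, and once symmetrically with the roles of $(x,p)$ and $(x',p')$ exchanged. This yields $\gro x{p'}p \le \alpha+\eta$ and $\gro{x'}p{p'}\le \alpha+\eta'$. Unpacking both Gromov products gives
\begin{displaymath}
\dist x{p'}\ge \dist xp + \dist p{p'} - 2(\alpha+\eta),\qquad \dist{x'}p \ge \dist{x'}{p'} + \dist p{p'} - 2(\alpha+\eta'),
\end{displaymath}
and summing produces
\begin{displaymath}
\dist x{p'} + \dist{x'}p \;\ge\; \dist xp + \dist{x'}{p'} + 2\dist p{p'} - c_2,
\end{displaymath}
where $c_2 := 4\alpha + 2\eta + 2\eta'$. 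Thus part~(i) converts the triangle inequality into a near-equality along the broken path $x \to p \to p' \to x'$.

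Now I invoke the four-point condition~(\ref{eqn: hyperbolicity 2}) with the pairing that places the sum $\dist x{p'} + \dist{x'}p$ on the left-hand side:
\begin{displaymath}
\dist x{p'} + \dist{x'}p \;\le\; \max\bigl\{\,\dist x{x'} + \dist p{p'},\ \dist xp + \dist{x'}{p'}\,\bigr\} + c_1,
\end{displaymath}
with $c_1 := F_{\mathrm{hyp2}}(\delta)$. Plugging in the previous lower bound and splitting on which term realizes the $\max$ gives, in one case, $\dist p{p'} \le (c_1+c_2)/2$, and in the other, $\dist p{p'} \le \dist x{x'} - \dist xp - \dist{x'}{p'} + c_1 + c_2$. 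Setting $\epsilon := (c_1+c_2)/2$, these fit together into the advertised $\max$ bound. The only real obstacle is cosmetic: one must choose the pairing in the four-point inequality so that the ``diagonal'' sum $\dist x{p'}+\dist{x'}p$ is the one being upper-bounded; after that the algebra is routine and the resulting $\epsilon$ is an affine expression in $\alpha,\eta,\eta'$, and $\delta$ matching the paper's \texttt{F\_projQc2}.
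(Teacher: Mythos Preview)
Your argument is correct in both parts. For (i), applying the $\alpha$-quasi-convexity at the pair $y,p\in Y$ and the $\eta$-projection bound gives exactly $\gro xyp \le \alpha+\eta$, and for (ii) the combination of two applications of (i) with the four-point inequality~(\ref{eqn: hyperbolicity 2}) is clean; the case split on the $\max$ lands precisely on $\epsilon = 2\alpha + \eta + \eta' + \frac 12 F_{\mathrm{hyp2}}(\delta)$.

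Note, however, that the paper does not supply its own proof of this lemma: it is stated with a reference to \cite[Chap.~10, Prop.~2.1]{CooDelPap90} and no argument follows. So there is no in-paper proof to compare against. Your proof is the standard one (and is essentially the argument in the cited source, adapted to the paper's Gromov-product definition of quasi-convexity), so nothing further is needed.
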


\begin{lemm}
\label{res: exending projections on quasi-convex}
	Let $Y$ be an $\alpha$-quasi-convex subset of $X$.
	Let $x$ be a point of $X$ and $p$ an $\eta$-projection of $x$ on $Y$.
	For every $x' \in X$, $p$ is an $\epsilon$-projection of $x'$ on $Y$ where $\epsilon =  \gro xp{x'} + \sympy{F_extProj(alpha,eta,delta)}$.
\end{lemm}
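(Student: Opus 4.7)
The plan is to reduce the problem to comparing $\dist{x'}p$ with $\dist{x'}q$ for a near-projection $q \in Y$ of $x'$, and to extract the dependence on $\gro xp{x'}$ from a four-point hyperbolicity argument combined with the quasi-convexity of $Y$.

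First I would fix $\eta_0 > 0$ and choose $q \in Y$ with $\dist{x'}q \leq d(x', Y) + \eta_0$. Applying the four-point hyperbolicity inequality (\ref{eqn: hyperbolicity 2}) to the quadruple $x, q, x', p$ yields
\begin{displaymath}
\dist xq + \dist{x'}p \leq \max\bigl\{\dist x{x'} + \dist pq,\ \dist xp + \dist{x'}q\bigr\} + C_\delta,
\end{displaymath}
where $C_\delta$ is the hyperbolicity constant from (\ref{eqn: hyperbolicity 2}). I would then split according to which term attains the maximum.

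In the first case the inequality rearranges to $\dist{x'}p \leq \dist{x'}q + (\dist xp - \dist xq) + C_\delta$; since $p$ is an $\eta$-projection of $x$ on $Y$ and $q \in Y$, one has $\dist xp - \dist xq \leq \eta$, so $\dist{x'}p \leq \dist{x'}q + \eta + C_\delta$. In the second case I would invoke the $\alpha$-quasi-convexity of $Y$ through the first assertion of Lemma~\ref{projection quasi-convex}: applied to $p$ (an $\eta$-projection of $x$) and $q \in Y$ it gives $\gro xqp \leq C_1$ for some $C_1 = C_1(\alpha, \eta)$, which rewrites as $\dist pq \leq \dist xq - \dist xp + 2C_1$. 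Substituting into the four-point inequality and then using the identity $\dist x{x'} - \dist xp = 2\gro xp{x'} - \dist{x'}p$ (immediate from the definition of the Gromov product based at $x'$) yields $2\dist{x'}p \leq 2\gro xp{x'} + 2C_1 + C_\delta$, hence $\dist{x'}p \leq \gro xp{x'} + C_1 + C_\delta/2$.

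Combining the two cases (adding the missing non-negative term in each) and letting $\eta_0 \to 0$ so that $\dist{x'}q \to d(x', Y)$ produces $\dist{x'}p \leq d(x', Y) + \gro xp{x'} + \epsilon_0$ for an explicit $\epsilon_0$ depending only on $\alpha$, $\eta$, $\delta$. This is exactly the desired bound. The argument is essentially routine; the only real care needed is in tracking the numerical constants so that $\epsilon_0$ matches the expression prescribed in the statement, and in checking that the hypotheses of Lemma~\ref{projection quasi-convex}(i) are indeed fulfilled by $p$ and $q$.
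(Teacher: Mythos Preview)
Your argument is correct. The paper's proof is shorter because it invokes the second assertion of Lemma~\ref{projection quasi-convex} directly: taking an $\eta'$-projection $p'$ of $x'$ on $Y$, that lemma bounds $\dist p{p'}$ by $\max\{\epsilon,\dist x{x'}-\dist xp-\dist{x'}{p'}+2\epsilon\}$, and the triangle inequality $\dist{x'}p\leq\dist{x'}{p'}+\dist p{p'}$ then yields the result after letting $\eta'\to 0$. Your route unpacks essentially the same computation from first principles, using the four-point inequality together with the first assertion of Lemma~\ref{projection quasi-convex}; this is exactly how the second assertion is proved in the first place, so the two arguments coincide up to whether one quotes that estimate or rederives it.
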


\begin{proof}
	Let $\eta'>0$ and $p'$ be an $\eta'$-projection of $x'$ on $Y$.
	The previous lemma combined with the triangle inequality gives $\dist p{p'}\leq \epsilon(\eta')$ where $\epsilon(\eta') = \gro  xp{x'} + \sympy{Faux_extProj(alpha,eta,etaBis,delta)}$.
	Therefore $p$ is an $(\epsilon(\eta')+\eta')$-projection of $x'$ on $Y$.
	This property holds for every $\eta'>0$ which gives the result.
\end{proof}

\begin{defi}
\label{def:intersection quasi-convex}
Let $Y$ and $Z$ be two subsets of $X$ we denote by $\diaminter YZ$ the following quantity.
\begin{displaymath}
	\diaminter YZ = \frac 12 \sup_{\substack{y,y' \in Y \\ z,z' \in Z}} \left\{\fantomB 0, \dist y{y'} + \dist z{z'} - \dist yz - \dist {y'}{z'}\right\}.
\end{displaymath}
\end{defi}

\rem 
It follows from the definition that $\diaminter YZ \geq \diam \left(Y\cap Z\right)$.
Actually, if $Y$ and $Z$ are respectively $\alpha$- and $\beta$-quasi-convex subsets of $X$, $\diaminter YZ$ roughly measures the intersection of $Y$ and $Z$:
\begin{displaymath}
	\diaminter YZ \approx \diam\left(Y^{+\alpha+10\delta}\cap Z^{+\beta+10\delta}\right) +10\delta.
\end{displaymath}
However this notation has two advantages.
First the definition does not involve the hyperbolicity constant $\delta$ nor the quasi-convexity parameters $\alpha$ and $\beta$.
Moreover, given two points $x$ and $x'$ of $X$ joined by a geodesic the triangle inequality yields $\diaminter{\geo x{x'}}Y = \diaminter{\{x,x'\}}Y$.
Therefore $\diaminter{\geo x{x'}}Y$ does not depend on the choice of the geodesic but only on its endpoints.
This is convenient since our space is not necessary uniquely geodesic.

\paragraph{} Let $Y$ and $Z$ be two subsets of $X$.
Applying the triangle inequality we obtain the followings.
\begin{enumerate}
	\item \label{enu: diaminter - neighbourhood}
	For all $A,B\geq0$, $\diaminter{Y^{+A}}{Z^{+B}} \leq \diaminter YZ +\sympy{F_diamInterNbhood(A,B)}$.
	\item \label{enu: diaminter - gro}
	For all $x,x',z \in X$, $\diaminter {\geo xz}Y \leq \diaminter{\geo x{x'}}Y + \gro x{x'}z$.
\end{enumerate}
Combining \ref{enu: diaminter - gro} with the hyperbolicity condition (\ref{eqn: hyperbolicity 1}) we obtain for all $x, x', z, z' \in X$,
\begin{equation}	
\label{eqn: diameter with gromov products}
	\diaminter{\geo z{z'}}Y \leq \diaminter{\geo x{x'}}Y + \gro x{x'}z + \gro x{x'}{z'} + \sympy{F_diamInterProdGromov(delta)}.
\end{equation}

\begin{prop}
\label{res:diam quasi-convex and projections}
	Let $Y$ be an $\alpha$-quasi-convex subset of $X$.
	Let $x$ and $x'$ be two points of $X$.
	We assume that $y$ and $y'$ are respectively $\eta$- and $\eta'$-projections of $x$ and $x'$ on $Y$.
	Then $\dist{\diaminter {\geo x{x'}}Y}{\dist y{y'}} \leq  \epsilon$, where $\epsilon = \sympy{F_diamInterProj(alpha,eta,etaBis,delta)}$.
\end{prop}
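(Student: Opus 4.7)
The plan is to prove the two inequalities $\diaminter{\geo x{x'}}Y \leq \dist y{y'} + \epsilon$ and $\dist y{y'} \leq \diaminter{\geo x{x'}}Y + \epsilon$ separately. A crucial preliminary observation (already recorded in the remark preceding the proposition) is that $\diaminter{\geo x{x'}}Y = \diaminter{\{x,x'\}}Y$, so the computation reduces to bounding
\[
S \;=\; \tfrac12 \sup_{z,z'\in Y}\bigl\{0,\; \dist x{x'} + \dist z{z'} - \dist xz - \dist {x'}{z'}\bigr\}.
\]

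For the upper bound on $S$, I would exploit the first assertion of Lemma~\ref{projection quasi-convex}. Since $y$ is an $\eta$-projection of $x$ on $Y$, for every $z\in Y$ we have $\gro xzy \leq C_1(\alpha,\eta)$, which unfolds to $\dist xz \geq \dist xy + \dist yz - 2C_1$. The same estimate with $\eta'$ applies to $x'$, $y'$, $z'$. Combining this with the triangle inequalities $\dist x{x'} \leq \dist xy + \dist y{y'} + \dist{y'}{x'}$ and $\dist z{z'} \leq \dist zy + \dist y{y'} + \dist{y'}{z'}$, the terms involving $z$ and $z'$ cancel and I obtain, uniformly in $z,z'\in Y$,
\[
\dist x{x'} + \dist z{z'} - \dist xz - \dist {x'}{z'} \;\leq\; 2\dist y{y'} + 4\max(C_1(\alpha,\eta),C_1(\alpha,\eta')).
\]
Taking the supremum yields the desired upper bound $\diaminter{\geo x{x'}}Y \leq \dist y{y'} + \epsilon$.

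For the lower bound, I would simply test the supremum defining $S$ with the choice $z=y$, $z'=y'$, which gives
\[
\diaminter{\geo x{x'}}Y \;\geq\; \tfrac12\bigl(\dist x{x'} + \dist y{y'} - \dist xy - \dist{x'}{y'}\bigr)_+.
\]
To relate this to $\dist y{y'}$ I invoke the second assertion of Lemma~\ref{projection quasi-convex}, which guarantees that either $\dist y{y'} \leq \epsilon$ or $\dist y{y'} \leq \dist x{x'} - \dist xy - \dist{x'}{y'} + 2\epsilon$. In the first case, $\dist y{y'} \leq \diaminter{\geo x{x'}}Y + \epsilon$ trivially since the latter is non-negative. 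In the second case, substituting the inequality into the formula above yields $\diaminter{\geo x{x'}}Y \geq \dist y{y'} - \epsilon$, completing the proof.

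I do not anticipate a genuine obstacle: the argument is a bookkeeping exercise once both halves of Lemma~\ref{projection quasi-convex} are in hand. The only delicate point is tracking the explicit constants produced by the \texttt{sympy} macros so that the final $\epsilon$ matches the value displayed in the statement; concretely this amounts to adding the projection-constant from part one (used twice, for $\eta$ and $\eta'$) to twice the projection-constant of part two, plus a few $\delta$'s absorbed by the triangle inequalities.
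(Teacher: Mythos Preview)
Your proposal is correct and follows essentially the same route as the paper's own proof: the upper bound on $\diaminter{\geo x{x'}}Y$ is obtained from the first part of Lemma~\ref{projection quasi-convex} combined with the two triangle inequalities you wrote (the paper phrases the resulting bound as $\dist y{y'} + \gro xzy + \gro{x'}{z'}{y'}$, which is the same computation), and the lower bound comes from testing the supremum at $z=y$, $z'=y'$ and applying the second part of Lemma~\ref{projection quasi-convex} with the same case split you describe. Apart from the bookkeeping of constants you already flag, there is nothing to add.
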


\begin{proof}
	By projection on a quasi-convex we have,
	\begin{displaymath}
		\max\left\{\fantomB \dist x{x'} - \dist xy - \dist {x'}{y'} + 2\epsilon, \epsilon\right\} \geq \dist y{y'},
	\end{displaymath}
	where $\epsilon = \sympy{Faux1_diamInterProj(alpha,eta,etaBis,delta)}$.
	Therefore
	\begin{displaymath}
		\diaminter {\geo x{x'}}Y \geq \frac 12 \max\left\{\fantomB \dist x{x'} +\dist y{y'} - \dist xy - \dist {x'}{y'},0 \right\} \geq \dist y{y'} - \epsilon.
	\end{displaymath}
	On the other hand, $y$ and $y'$ being respective $\eta$- and $\eta'$-projections of $x$ and $x'$, the triangle inequality implies that for every $z,z' \in Y$
	\begin{eqnarray*}
		\frac12 \left(\fantomB\dist x{x'} + \dist z{z'} - \dist xz - \dist {x'}{z'} \right)
		& \leq & \dist y{y'} + \gro xzy + \gro {x'}{z'}{y'}\\
		& \leq & \dist y{y'} +\sympy{Faux2_diamInterProj(alpha,eta,etaBis,delta)}.
	\end{eqnarray*}
	This inequality holds for every $z,z' \in Y$ hence $\diaminter {\geo x{x'}}Y \leq \dist y{y'} + \sympy{F_diamInterProj(alpha,eta,etaBis,delta)}$, which ends the proof.
\end{proof}


\subsection{Quasi-geodesics}

In this article, all the paths that we consider are continuous.

\begin{defi}
	Let $k \geq 1$, $l \geq 0$ and $L>0$.
	Let $J$ be an interval of $\R$.
	A path $\sigma :J \rightarrow X$ is
	\begin{itemize}
		\item a \emph{$(k,l)$-quasi-geodesic} if for all $s,t \in J$,
		\begin{displaymath}
			k^{-1} \dist st - l \leq \dist{\sigma(s)}{\sigma(t)} \leq k \dist st +l.
		\end{displaymath}
		\item a \emph{$L$-local $(k,l)$-quasi-geodesic} if its restriction to every close interval of diameter $L$ is a $(k,l)$-quasi-geodesic.
		\item a \emph{$L$-local geodesic} if it is a $L$-local $(1,0)$-quasi-geodesic.
	\end{itemize}
\end{defi}

\rem By abuse of notation, we often write $\sigma$ for the image $\sigma(J)$ of $\sigma$ in $X$.

\begin{prop}[Stability of quasi-geodesics]
\label{stability quasi-geodesics}
	Let $l \geq 0$ and $k \geq 1$.
	There exist $L >0$, $k' \geq k$ and $d \geq 0$ depending only on $l$ and $k$ (not on $X$ nor $\delta$) with the following property.
	The Hausdorff distance between two $L\delta$-local $(k,l\delta)$-quasi-geodesics joining the same endpoints (possibly in $\partial X$) is at most $d\delta$.
	Moreover every $L\delta$-local $(k,l\delta)$-quasi-geodesic is a (global)  $(k',l\delta)$-quasi-geodesic.
\end{prop}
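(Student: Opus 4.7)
The plan is to establish the proposition via two standard ingredients: the Morse-type stability for \emph{global} $(k, l\delta)$-quasi-geodesics, and a \emph{local-to-global principle} ensuring that sufficiently long $L\delta$-local $(k, l\delta)$-quasi-geodesics are globally quasi-geodesic. Combining them, any $L\delta$-local $(k, l\delta)$-quasi-geodesic becomes a global $(k', l\delta)$-quasi-geodesic to which the Morse stability applies, yielding the Hausdorff bound.

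First I would prove the Morse lemma: two global $(k, l\delta)$-quasi-geodesics $\sigma_1$, $\sigma_2$ between the same endpoints $x, y$ are within Hausdorff distance $D = D(k, l)\delta$. Picking $p \in \sigma_1$ farthest from $\sigma_2$, I would take neighbors $p^\pm$ on $\sigma_1$ at controlled parameter distance from $p$, let $q^\pm$ be projections of $p^\pm$ on $\sigma_2$, and apply the projection estimate of Lemma~\ref{projection quasi-convex} together with the four-point hyperbolicity inequality~(\ref{eqn: hyperbolicity 2}) to the configuration $(p^-, p^+, q^-, q^+)$. The quasi-geodesic inequality upper bounds the length of $\sigma_1$ between $p^-$ and $p^+$, while the projection estimate controls $\dist{p^\pm}{q^\pm}$, and the four-point condition then converts this into an upper bound on $d(p, \sigma_2)$ depending only on $k$, $l$, $\delta$. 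A preliminary observation is that global $(k, l\delta)$-quasi-geodesics are quasi-convex in the sense of Definition~\ref{def quasi-convex}, with constant depending only on $k$, $l$, $\delta$, which legitimizes applying Lemma~\ref{projection quasi-convex} to $\sigma_2$.

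Next I would establish the local-to-global principle. Suppose $\sigma : J \to X$ is an $L\delta$-local $(k, l\delta)$-quasi-geodesic but fails the global inequality, and pick a failing pair $s < t$ with $|t-s|$ minimal; minimality forces $|t-s|$ to exceed $L\delta$ only slightly. I would subdivide $[s, t]$ into overlapping windows of length $L\delta$, each of which is a global $(k, l\delta)$-quasi-geodesic by the local hypothesis. Applying the Morse lemma to each window in comparison with a geodesic $\geo{\sigma(s)}{\sigma(t)}$, and chaining these estimates with the thin-triangle property, I would show that the entire image of $\sigma|_{[s,t]}$ lies in a fixed neighborhood of $\geo{\sigma(s)}{\sigma(t)}$. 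The projection estimate of Lemma~\ref{projection quasi-convex} then bounds the length of $\sigma|_{[s,t]}$ in terms of $\dist{\sigma(s)}{\sigma(t)}$; for $L$ large enough, this contradicts the local $(k, l\delta)$-quasi-geodesic lower bound on $|t - s|$. The extension to endpoints in $\partial X$ is then standard: one approximates an ideal endpoint by points $\sigma(s_n)$ with $s_n$ tending to the end of $J$, applies the finite case with uniform constants, and passes to the limit.

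The main obstacle is the calibration of $L$ in the local-to-global step. All constants must be tracked so that $L$, $k'$, and $d$ depend only on $k$ and $l$, not on $\delta$. This is possible because the hyperbolicity inequalities~(\ref{eqn: hyperbolicity 1}) and~(\ref{eqn: hyperbolicity 2}), together with the thin-triangle property and the projection estimates of Lemma~\ref{projection quasi-convex}, are all homogeneous of degree one in $\delta$. Once this calibration is fixed the two stages fit together mechanically.
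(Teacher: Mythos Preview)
Your proposal sketches a direct proof, whereas the paper does not: it simply invokes \cite[Chap.~4, Th.~1.4 and 3.1]{CooDelPap90} for the case $\delta=1$ and then observes that rescaling the metric by $1/\delta$ gives the general statement with constants independent of $\delta$. Your final paragraph about homogeneity in $\delta$ is exactly this rescaling argument, so in that sense you have recovered the paper's actual contribution; the rest of your sketch is a reconstruction of what lies inside the cited reference.

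That reconstruction is broadly along the right lines (Morse lemma plus a local-to-global principle is indeed the content of the cited theorems), but your local-to-global step contains a genuine slip. You write that if $\sigma$ fails the global inequality and $(s,t)$ is a failing pair with $|t-s|$ minimal, then ``minimality forces $|t-s|$ to exceed $L\delta$ only slightly.'' This is not true: minimality only tells you that $|t-s|>L\delta$ and that the $(k',l\delta)$-inequality holds on strictly shorter subintervals; nothing prevents the first failure from occurring at a scale much larger than $L\delta$. The standard arguments proceed differently --- either by an inductive doubling of scale (quasi-geodesic on windows of length $M$ implies quasi-geodesic on windows of length $2M$ with controlled loss, iterated), or by discretising at mesh $\sim L\delta$, bounding the Gromov products $\gro{x_{i-1}}{x_{i+1}}{x_i}$ via the Morse lemma on each window, and invoking the discrete stability (your paper's Corollary~\ref{stability discrete quasi-geodesics}). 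Either route closes the gap; your ``minimal failing interval'' shortcut does not.
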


\begin{proof}
	The case where $\delta  = 1$ follows from \cite[Chap. 4, Th. 1.4 and 3.1]{CooDelPap90}. The general case is obtained by a rescaling argument.
\end{proof}

\begin{coro}[Stability of discrete quasi-geodesics]
\label{stability discrete quasi-geodesics}
	Let $l \geq 0$.
	There exist $L >0$ and $d \geq 0$ depending only on $l$ (not on $X$ nor $\delta$) with the following property.
	If $x_0, \dots, x_m$ is a sequence of points of $X$, such that for all $i \in \intvald 0{m-2}$, $\dist{x_{i+1}}{x_i} \geq L\delta$ and $\gro {x_i}{x_{i+2}}{x_{i+1}} \leq l\delta$.
	Then the Hausdorff distance between $\geo{x_0}{x_1}\cup \dots \cup \geo {x_{m-1}}{x_m}$ and $\geo{x_0}{x_m}$ is less than $d\delta$.
\end{coro}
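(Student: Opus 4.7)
The plan is to parametrize the concatenation $\sigma = \geo{x_0}{x_1}\cup\dots\cup\geo{x_{m-1}}{x_m}$ by arc length and show that, for constants $L$ and $l'$ depending only on $l$, $\sigma$ is an $L\delta$-local $(1,l'\delta)$-quasi-geodesic. The corollary will then follow by applying the preceding stability of quasi-geodesics to $\sigma$ and any geodesic $\geo{x_0}{x_m}$. First I would choose $L$ large enough that the hypothesis $\dist{x_i}{x_{i+1}} \geq L\delta$ forces any sub-arc of $\sigma$ of length at most $L\delta$ to cross at most one of the internal vertices $x_1,\dots,x_{m-1}$. A sub-arc avoiding all vertices is already a genuine geodesic, so the only non-trivial case is that of a sub-arc straddling a unique vertex $x_{i+1}$, with endpoints $y \in \geo{x_i}{x_{i+1}}$ and $z \in \geo{x_{i+1}}{x_{i+2}}$.

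For such a sub-arc, the arc-length distance between $y$ and $z$ equals $\dist{y}{x_{i+1}} + \dist{x_{i+1}}{z}$, so the $(1,l'\delta)$-quasi-geodesic inequality reduces to proving
\begin{displaymath}
\gro{y}{z}{x_{i+1}} \leq \tfrac{1}{2} l' \delta.
\end{displaymath}
Since $y$ lies on the geodesic $\geo{x_i}{x_{i+1}}$, a direct computation from the definition of the Gromov product gives $\gro{y}{x_i}{x_{i+1}} = \dist{y}{x_{i+1}}$, and symmetrically $\gro{z}{x_{i+2}}{x_{i+1}} = \dist{z}{x_{i+1}}$. The hyperbolicity inequality~\eqref{eqn: hyperbolicity 1}, read symmetrically in its three arguments, implies that among the three pairwise Gromov products of any triple based at $x_{i+1}$, the two smallest differ by at most a constant $\epsilon_0$ depending only on $\delta$. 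Applied first to the triple $(y, x_i, x_{i+2})$ together with the hypothesis $\gro{x_i}{x_{i+2}}{x_{i+1}} \leq l\delta$, this yields $\gro{y}{x_{i+2}}{x_{i+1}} \leq l\delta + \epsilon_0$; in the degenerate case $\dist{y}{x_{i+1}} \leq l\delta + \epsilon_0$ the same bound holds trivially from $\gro{a}{b}{t} \leq \dist{a}{t}$. A second application of the same principle to the triple $(y, z, x_{i+2})$ produces $\gro{y}{z}{x_{i+1}} \leq l\delta + 2\epsilon_0$, so one can take $l'$ to be an explicit linear function of $l$ and $\epsilon_0/\delta$.

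Once this local quasi-geodesic property is established, the stability proposition immediately produces a constant $d$, depending only on $l$, such that the Hausdorff distance between $\sigma$ and $\geo{x_0}{x_m}$ is at most $d\delta$, as required. The principal obstacle is bookkeeping rather than geometry: one must arrange for $L$ to exceed simultaneously the threshold $L_0$ demanded by the stability proposition for parameter $l'$ and the threshold ensuring that sub-arcs straddle at most one vertex. Because $l'$ is an explicit function of $l$ alone, this dependency is not circular, and the explicit values of $L$ and $d$ can be extracted through the paper's symbolic constant-tracking apparatus.
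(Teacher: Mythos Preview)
Your proposal is correct and follows exactly the approach implicit in the paper: the corollary is stated without proof precisely because it is the obvious consequence of Proposition~\ref{stability quasi-geodesics} once one observes that the broken geodesic, parametrized by arc length, is an $L\delta$-local $(1,l'\delta)$-quasi-geodesic for a suitable $l'$ depending only on $l$. Your Gromov-product computation at the vertices is the right verification, and the non-circularity of the choice of $L$ is exactly as you describe.
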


If we only consider local geodesics, one can give simple quantitative estimations for the constants which appear in the stability of quasi-geodesics.
They will be often used later.

\begin{prop}
\label{stability local geodesic}
	Let $L >\sympy{F_stabQGL(delta)}$.
	The Hausdorff distance between two $L$-local geodesics joining the same endpoints of $X$ (\resp  $X\cup \partial X$) is at most $\sympy{F_stabQGd1(delta)}$ (\resp $\sympy{F_stabQGd2(delta)}$).
	Moreover every $L$-local geodesic is a (global) $(k,0)$-quasi-geodesic with $k=\sympy{F_stabQGk(L,delta)}$.
\end{prop}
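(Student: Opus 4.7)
The plan is to follow the classical local-to-global argument for $\delta$-hyperbolic spaces, but to track all constants quantitatively in the style already set up in the paper. The proof splits into four steps: (a) a quasi-convexity bound asserting that an $L$-local geodesic stays uniformly close to a genuine geodesic between its endpoints; (b) the resulting global $(k,0)$-quasi-geodesic inequality; (c) the Hausdorff comparison for two such paths with common endpoints; and (d) the extension to endpoints in $\partial X$.

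For (a), I would fix an arclength-parametrised $L$-local geodesic $\sigma:[0,T]\to X$ with endpoints $x,y\in X$, and any geodesic $\gamma=\geo xy$. The continuous function $t\mapsto d(\sigma(t),\gamma)$ attains a maximum $D$ at some $t_0$. On a sub-window $[t_0-\ell,t_0+\ell]\subset[0,T]$ of length $2\ell\leq L$, $\sigma$ restricts to a genuine geodesic by hypothesis. Project $p=\sigma(t_0-\ell)$ and $q=\sigma(t_0+\ell)$ onto $\gamma$ to get $p',q'$; by Lemma~\ref{projection quasi-convex} the quadrilateral $pp'q'q$ is thin, and the midpoint of the geodesic $[p,q]$, namely $\sigma(t_0)$, must then lie within a controlled distance of $[p',q']\subset\gamma$, using only the thin-triangle constant of the hyperbolicity set-up. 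Choosing $\ell$ proportional to $D$ and requiring $L$ to exceed the threshold of the statement forces $D$ to be bounded by the explicit constant in the proposition. Near the endpoints the window is truncated and the bound $d(x,\gamma)=d(y,\gamma)=0$ closes the argument.

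For (b), pick $s<t$ in $[0,T]$. If $t-s\leq L$ then $d(\sigma(s),\sigma(t))=t-s$ by the local-geodesic property. Otherwise, project $\sigma(s)$ and $\sigma(t)$ onto $\gamma$; by (a) these projections sit within the constant of (a) from $\sigma(s)$ and $\sigma(t)$. Subdividing $[s,t]$ into blocks of length $L$ and applying the second item of Lemma~\ref{projection quasi-convex} on each block allows one to chain the estimates and conclude that the two projections on $\gamma$ are separated by at least $t-s-O(\delta)$, giving the linear lower bound and the quasi-geodesic constant $k$. Statement (c) follows easily: two $L$-local geodesics with common endpoints both lie in the same tubular neighbourhood of $\gamma$ by (a), and matching up their points via projection onto $\gamma$ (Proposition~\ref{res:diam quasi-convex and projections}) bounds the Hausdorff distance. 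For (d), when an endpoint is in $\partial X$, approximate by finite endpoints along a geodesic ray converging to the boundary point and pass to the limit; the constants of (a)--(c) are independent of the approximating sequence, and the additional bounded loss is absorbed into the slightly larger boundary constant of the statement.

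The main obstacle is (a): producing a bound $D=O(\delta)$ independent of $T$ and of $\sigma$. A naive iteration of the hyperbolicity defect window by window would accumulate error linear in $T$, so the extremal argument --- picking $t_0$ where $d(\sigma(\cdot),\gamma)$ is maximal and exploiting the symmetric window on \emph{both} sides --- is essential. It is precisely this step that forces $L$ to exceed the explicit multiple of $\delta$ stated in the proposition, and that pins down the constants in the Hausdorff estimate and the quasi-geodesic constant $k$ in terms of the thin-triangle constant and the Gromov-product-to-distance comparison recalled at the beginning of the section.
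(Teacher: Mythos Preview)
Your proposal is correct and follows the classical local-to-global argument for $\delta$-hyperbolic spaces. The paper itself does not give a self-contained proof of this statement: it simply cites \cite[Chap.~III.H, Th.~1.13]{BriHae99} for endpoints in $X$ and \cite[Chap.~3, Th.~3.1]{CooDelPap90} for the extension to $X\cup\partial X$, and the argument you sketch in step~(a) --- choosing the point of maximal distance to a geodesic and exploiting a symmetric window of length $\leq L$ --- is precisely the proof appearing in the Bridson--Haefliger reference. Your write-up is therefore more detailed than the paper's own treatment, while following the same underlying route.
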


\begin{proof}
	The case where the local geodesics join two points of $X$ is done in \cite[Chap. III.H, Th. 1.13]{BriHae99}.
	The general case follows then as in \cite[Chap. 3, Th. 3.1]{CooDelPap90}.
\end{proof}


\subsection{Isometries}

	\paragraph{} 
	In this section we assume that $X$ is geodesic and proper i.e., every close ball is compact.
	Let $g$ be an isometry of $X$. 
	In order to measure its action on $X$, we define two translation lengths.
	By the \emph{translation length} $\len[espace=X]g$ (or simply $\len g$) we mean 
	\begin{displaymath}
		\len[espace=X] g = \inf_{x \in X} \dist {gx}x.
	\end{displaymath}
	The \emph{asymptotic translation length} $\len[stable, espace=X] g$ (or simply $\len[stable]g$) is
	\begin{displaymath}
		\len[espace=X,stable] g = \lim_{n \rightarrow + \infty} \frac 1n \dist{g^nx}x.
	\end{displaymath}
	These two lengths satisfy the following inequality $\len[stable]g \leq \len g \leq \len[stable] g + \sympy{F_comparaisonTLength(delta)}$ (see \cite[Chap. 10, Prop 6.4]{CooDelPap90}).
	The \emph{axis} $A_g$ of $g$, defined as follows, is a $\sympy{F_axexQC(delta)}$-quasi-convex subset of $X$ (see \cite[Prop. 2.3.3]{DelGro08}).
	\begin{displaymath}
		A_g = \left\{ \fantomB x \in X/ \dist {gx}x \leq \max\left\{\len g,\sympy{F_axesDef(delta)}\right\} \right\}
	\end{displaymath}
	The isometry $g$ is \emph{hyperbolic} if its asymptotic translation length is positive.
	In this case, $g$ fixes exactly two points of $\partial X$ denoted by $g^-$ and $g^+$.
	The cylinder of $g$, denoted by $Y_g$, is defined to be the set of points of $X$ which are $\sympy{F_cylDef(delta)}$-close to some geodesic joining $g^-$ and $g^+$.
	It is a $g$-invariant, strongly quasi-convex subset of $X$. 
	
	\begin{prop}[see {\cite[Prop 2.3]{Cou10a}}]
	\label{embedded quasi-convex in an axis}
		Let $g$ be a hyperbolic isometry of $X$.
		We denote by $\geo{g^-}{g^+}$ a geodesic joining the points of $\partial X$ fixed by $g$.
		Then $\geo{g^-}{g^+}$ is contained in the $\sympy{F_geoInAxes(delta)}$-neighbourhood of $A_g$.
		In particular $Y_g$ lies in the $\sympy{F_cylInAxes(delta)}$-neighbourhood of $A_g$.
	\end{prop}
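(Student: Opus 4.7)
The plan is to exhibit $A_g$ as a nonempty, $g$-invariant, quasi-convex subset of $X$ whose boundary accumulation set contains both fixed points $g^-$ and $g^+$, and then to combine an Arzelà--Ascoli extraction with the stability of geodesics to transfer this information onto an arbitrary geodesic joining $g^-$ and $g^+$.

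First I would check the elementary properties of $A_g$. Properness of $X$, continuity of the displacement map $x\mapsto\dist{gx}x$, and the $\delta$-slack built into the definition of $A_g$ make the axis nonempty; $g$-invariance follows from $\dist{g(gx)}{gx}=\dist{gx}x$. Picking $x_0\in A_g$, the whole orbit $(g^kx_0)_{k\in\Z}$ then lies in $A_g$. Because $g$ is hyperbolic with fixed points $g^\pm\in\partial X$, the standard north--south dynamics gives $g^kx_0\to g^+$ and $g^{-k}x_0\to g^-$ as $k\to+\infty$.

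Next, for each $N\geq1$ I would let $\gamma_N$ be a geodesic from $g^{-N}x_0$ to $g^{N}x_0$. Its endpoints lie in $A_g$, and applying Definition~\ref{def quasi-convex} at any $p\in\gamma_N$, together with the equality $\gro{g^{-N}x_0}{g^Nx_0}p=0$ valid for any $p$ on a geodesic, yields $d(p,A_g)\leq\alpha$, where $\alpha$ is the quasi-convexity constant of $A_g$ (an explicit multiple of $\delta$ coming from the definition of the axis). Hence $\gamma_N\subseteq A_g^{+\alpha}$ for every $N$.

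Finally, properness of $X$ combined with Arzelà--Ascoli extracts a subsequence of the $\gamma_N$ converging uniformly on compact sets to a bi-infinite geodesic $\gamma_\infty$ whose boundary endpoints, by the convergence statement above, are $g^-$ and $g^+$; passage to closures places $\gamma_\infty$ inside $A_g^{+\alpha}$. Any other geodesic $\geo{g^-}{g^+}$ shares these endpoints with $\gamma_\infty$, so Proposition~\ref{stability quasi-geodesics} bounds their Hausdorff distance by some $d\delta$, and the inclusion $\geo{g^-}{g^+}\subseteq A_g^{+(\alpha+d\delta)}$ follows. The ``in particular'' clause is then immediate: since $Y_g$ is, by definition, the $10\delta$-neighbourhood of some geodesic $\geo{g^-}{g^+}$, the first conclusion places $Y_g$ inside an $(\alpha+d\delta+10\delta)$-neighbourhood of $A_g$. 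I expect the main obstacle to be the limiting step, where one must confirm that the subsequential limit $\gamma_\infty$ really is a bi-infinite geodesic with endpoints $g^-$ and $g^+$; properness of $X$ and the boundary convergence of the endpoints of $\gamma_N$ are precisely what is needed to carry this through.
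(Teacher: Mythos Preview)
The paper does not prove this proposition; it is imported from \cite[Prop.~2.3]{Cou10a} and stated without argument. Your approach is a standard and correct way to establish the result: exploit the quasi-convexity of $A_g$ to trap the finite geodesics $\gamma_N=\geo{g^{-N}x_0}{g^Nx_0}$ in a uniform neighbourhood of $A_g$, pass to a limiting bi-infinite geodesic via Arzel\`a--Ascoli, and then invoke the stability of (quasi-)geodesics with common boundary endpoints.

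Two small points are worth tightening. First, for the Arzel\`a--Ascoli extraction you need the $\gamma_N$ to meet a fixed compact set; this follows because $g^-\neq g^+$ forces $\gro{g^{-N}x_0}{g^Nx_0}{x_0}$ to stay bounded, hence each $\gamma_N$ passes within a uniform distance of $x_0$. You flag this as the main obstacle but do not quite say why it is resolved. Second, for the ``in particular'' clause, note that $Y_g$ consists of points close to \emph{some} geodesic $\geo{g^-}{g^+}$, not a single fixed one, so you must apply the first conclusion to every such geodesic; your argument does this implicitly, but the phrasing ``the $10\delta$-neighbourhood of some geodesic'' could be misread. With those clarifications the argument is complete.
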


	Let $g$ be an isometry of $X$ such that $\len g > \sympy{F_nerveTL(delta)}$.
	(In particular, $g$ is hyperbolic.)
	Let $x$ be a point of $A_g$.
	We consider a geodesic $N : J \rightarrow X$ between $x$ and $gx$ parametrized by arc length. 
	We extend $N$ in a $g$-invariant path $N : \R \rightarrow X$ in the following way: for all $t \in J$, for all $m \in \Z$, $N\left(t + m \len g \right) = g^mN(t)$. 
	This is a $\len g$-local geodesic contained in $A_g$.
	We call such a path a \emph{nerve} of $g$.
	It is a very convenient tool for the proofs.
	Indeed $N$ is homeomorphic to a line on which $g$ acts by translation of length $\len g$.
	Moreover the Hausdorff distance between $N$ and $Y_g$ is less than $\sympy{F_nerveHausdorffCyl(delta)}$.
	Therefore one can replace $Y_g$ by $N$ with a little error. 
	We summarize here some of its properties which follow from the stability of the local geodesics and the projection on a quasi-convex.
	In order to lighten the proofs we will later use these facts without any justification.
	
	\paragraph{}The nerve $N$ is $\sympy{F_nerveQC(delta)}$-quasi-convex.
	Given two points $u = N(s)$ and $v = N(t)$ of $N$, we denote by $\nerf uvN$ the path $N\left(\geo st\right)$.
	The path $N$ is injective thus this definition makes sense.
	\begin{itemize}
		\item Let $x$ be a point of $X$ and $y$ its projection on $N$, for all $y' \in N$ and $z \in \nerf y{y'}N$, $\gro x{y'}y \leq \sympy{F_nerveProj0(delta)}$ and $\gro x{y'}z \leq \sympy{F_nerveProj1(delta)}$.
		\item Let $x$, $x'$ be two points of $X$ and $y$, $y'$ their respective projections on $N$.
		If $\dist y{y'} > \sympy{F_nerveProjAssumption(delta)}$ then for all $z \in \nerf y{y'}N$, $\gro x{x'}y \leq \sympy{F_nerveProj2(delta)}$ and $\gro x{x'}z \leq  \sympy{F_nerveProj3(delta)}$.
		\item For all $x, x' \in X$, we have $\dist{d\left(x,N\right)}{d\left(x,Y_g\right)}\leq \sympy{F_nerveDistPointCyl(delta)}$. 
		On the other hand, $\dist{\fantomB\diaminter {\geo x{x'}}N}{\diaminter{\geo x{x'}}{Y_g}} \leq \sympy{F_nerveDiamInterCyl(delta)}$.
	\end{itemize}
	
	\begin{lemm}
	\label{distance axe gromov product}
		Let $g$ be an isometry of $X$ such that $\len g > \sympy{F_axesDistVsGromovTL(delta)}$.
		For all $x \in X$ we have
		\begin{displaymath}
			\dist{\gro{gx}{g^{-1}x}x}{d\left(x, Y_g \right)} \leq \sympy{F_axesDistVsGromov(delta)}.
		\end{displaymath}
	\end{lemm}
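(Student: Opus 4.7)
The plan is to replace $Y_g$ by a nerve $N$ of $g$, which costs only a constant (the Hausdorff distance between $N$ and $Y_g$ is bounded by $\sympy{F_nerveHausdorffCyl(delta)}$), and then to show that the projection of $x$ onto $N$ lies essentially on any geodesic joining $gx$ to $g^{-1}x$.

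First, I would fix a projection $p$ of $x$ on $N$. Since $N$ is $g$-invariant and $g$ acts on it as a translation of length $\len g$, the points $gp$ and $g^{-1}p$ are projections on $N$ of $gx$ and $g^{-1}x$ respectively, sitting on either side of $p$ at nerve-arclength $\len g$. By the stability of local geodesics applied to $N$, the hypothesis $\len g > \sympy{F_axesDistVsGromovTL(delta)}$ can be arranged to force the $X$-distance $\dist{gp}{g^{-1}p}$ to exceed the threshold $\sympy{F_nerveProjAssumption(delta)}$ that appears in the nerve-projection estimates recalled just above the lemma. Those estimates, applied with $z = p \in \nerf{g^{-1}p}{gp}{N}$, then yield a uniform bound $\gro{gx}{g^{-1}x}{p} \leq \sympy{F_nerveProj3(delta)}$; in other words, $p$ sits at bounded distance of any geodesic $\geo{gx}{g^{-1}x}$.

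Second, I would combine this with the general comparison $\gro{gx}{g^{-1}x}{x} \leq d(x,\geo{gx}{g^{-1}x}) \leq \gro{gx}{g^{-1}x}{x} + \sympy{F_groVsDist(delta)}$ recalled at the beginning of the section. The triangle inequality at $p$ bounds the difference between $d(x,\geo{gx}{g^{-1}x})$ and $d(x,p)$ by $\sympy{F_nerveProj3(delta)}$, and $d(x,p) = d(x,N)$ differs from $d(x,Y_g)$ by at most $\sympy{F_nerveDistPointCyl(delta)}$. Chaining these three estimates produces the required bound on $\dist{\gro{gx}{g^{-1}x}{x}}{d(x,Y_g)}$.

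The delicate point is the first step, namely the calibration of the threshold $\sympy{F_axesDistVsGromovTL(delta)}$: it has to be simultaneously above $\sympy{F_nerveTL(delta)}$ (so that the nerve $N$ is well-defined) and large enough that the nerve-arclength $\len g$ between $p$ and $gp$, after passing through the stability of local geodesics, forces $\dist{gp}{g^{-1}p} > \sympy{F_nerveProjAssumption(delta)}$. Once those thresholds are fixed, the remainder is routine bookkeeping of the hyperbolic and nerve inequalities collected above.
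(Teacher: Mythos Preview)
Your approach is essentially the paper's — project onto a nerve $N$, use the nerve-projection estimates, and convert back to $Y_g$ — but one step is incomplete. The claim ``the triangle inequality at $p$ bounds the difference between $d(x,\geo{gx}{g^{-1}x})$ and $d(x,p)$'' only gives one direction: from $d(p,\geo{gx}{g^{-1}x}) \leq \gro{gx}{g^{-1}x}{p} + \sympy{F_groVsDist(delta)}$ you get $d(x,\geo{gx}{g^{-1}x}) \leq d(x,p) + \text{small}$, hence $t \leq d(x,p) + \text{small}$. But the reverse inequality $d(x,p) \leq d(x,\geo{gx}{g^{-1}x}) + \text{small}$ does \emph{not} follow from a triangle inequality: the nearest point $q$ on $\geo{gx}{g^{-1}x}$ to $x$ need not be close to $p$ just because $p$ is close to that geodesic.

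The fix is already implicit in your setup. Since $p$, $gp$, $g^{-1}p$ are the respective projections of $x$, $gx$, $g^{-1}x$ on $N$ and $\dist{p}{gp} = \dist{p}{g^{-1}p} = \len g$ exceeds the threshold $\sympy{F_nerveProjAssumption(delta)}$, the same nerve estimates (applied to the pairs $(x,gx)$ and $(x,g^{-1}x)$ rather than $(gx,g^{-1}x)$) yield $\gro{x}{gx}{p} \leq \sympy{F_nerveProj2(delta)}$ and $\gro{x}{g^{-1}x}{p} \leq \sympy{F_nerveProj2(delta)}$. The paper then bypasses $d(x,\geo{gx}{g^{-1}x})$ altogether and uses the hyperbolicity inequality
\[
\dist xp \leq t + \max\left\{\gro{x}{gx}{p},\, \gro{x}{g^{-1}x}{p}\right\} + \sympy{F_hyp1(delta)},
\]
together with the trivial $t - \gro{gx}{g^{-1}x}{p} \leq \dist xp$, to bound $\bigl|\dist xp - t\bigr|$ directly. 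With this correction your argument and the paper's coincide.
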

	
	\begin{proof}
		We denote by $t$ the Gromov product $\gro{gx}{g^{-1}x}x$.
		Let $N$ be a nerve of $g$ and $y$ a projection of $x$ on $N$.
		By hyperbolicity we have
		\begin{displaymath}
			t - \gro{gx}{g^{-1}x}y 
			\leq \dist xy
			\leq t +\max\left\{ \gro x{gx}y, \gro x{g^{-1}x}y\right\} +\sympy{F_hyp1(delta)}.
		\end{displaymath}
		However $\len g > \sympy{F_axesDistVsGromovTL(delta)}$ hence $\dist{gy}{g^{-1}y}  > \sympy{F_nerveProjAssumption(delta)}$.
		Consequently $\gro x{gx}y \leq  \sympy{F_nerveProj2(delta)}$, $\gro x{g^{-1}x}y \leq  \sympy{F_nerveProj2(delta)}$ and $\gro{g^{-1}x}{gx}y \leq  \sympy{F_nerveProj3(delta)}$.
		It follows that $\dist t{\dist xy} \leq  \sympy{Faux_axesDistVsGromov(delta)}$.
		However $\dist xy$ is exactly $d\left(x, N \right)$.
		Hence $\dist t{d\left(x, Y_g \right)} \leq  \sympy{F_axesDistVsGromov(delta)}$.
	\end{proof}
		
	\begin{lemm}
	\label{shortening geodesic}
		Let $a \geq 0$.
		Let $g$ be an isometry of $X$ such that $\len g > \sympy{F_shorteningGeoTL(delta)}$.
		Let $x$ and $x'$ be two points of $X$.
		We assume that $\diaminter{\geo x{x'}}{Y_g}>\len g/2 + a  >\sympy{F_shorteningGeoAssumption(delta)}$.
		Then there exists $k \in \Z$ such that
		\begin{math}
			\dist{g^kx'}x < \dist {x'}x - a + \sympy{F_shorteningGeo(delta)}.
		\end{math}
	\end{lemm}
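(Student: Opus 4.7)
The plan is to reduce the problem to the $g$-invariant nerve $N$ of $g$ (which is roughly a line on which $g$ acts by translation of length $\len g$), and then choose $k$ so that $g^k$ shifts the projection of $x'$ past the projection of $x$.

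First, I would fix a nerve $N$ of $g$ and let $y$, $y'$ be projections of $x$, $x'$ on $N$. Since the Hausdorff distance between $N$ and $Y_g$ is bounded by a constant depending only on $\delta$ (one of the bullet points listed after the definition of a nerve), the hypothesis $\diaminter{\geo x{x'}}{Y_g} > \len g/2 + a$ gives an analogous lower bound on $\diaminter{\geo x{x'}}{N}$ after absorbing a $\delta$-dependent constant. Applying Proposition~\ref{res:diam quasi-convex and projections} to the quasi-convex set $N$ then yields $\dist y{y'} \geq \len g/2 + a - C_1$ for some constant $C_1 = C_1(\delta)$.

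Second, since $g$ acts on $N$ essentially by translation of length $\len g$, I would pick the integer $k$ that shifts $y'$ past $y$ as close as possible, so that $g^k y'$ lies on the opposite side of $y$ from $y'$ along $N$ with $\dist y{g^k y'} \leq \len g/2 + C_2$ for some $\delta$-dependent constant $C_2$. Combining with the previous step,
\[
\dist y{y'} - \dist y{g^k y'} \geq a - C_1 - C_2.
\]

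Third, I would transfer this gain back to $x$ and $g^k x'$. By $g$-invariance of $N$ and the fact that $g$ is an isometry, $g^k y'$ is a projection of $g^k x'$ on $N$ and $\dist{g^k x'}{g^k y'} = \dist{x'}{y'}$. The second item of Lemma~\ref{projection quasi-convex} applied to $x$ and $x'$ gives, provided $\dist y{y'}$ exceeds the threshold (guaranteed by our assumption $\len g/2 + a$ being large enough),
\[
\dist x{x'} \geq \dist xy + \dist y{y'} + \dist {y'}{x'} - C_3,
\]
while the triangle inequality yields
\[
\dist x{g^k x'} \leq \dist xy + \dist y{g^k y'} + \dist{g^k y'}{g^k x'} = \dist xy + \dist y{g^k y'} + \dist{y'}{x'}.
\]
Subtracting the first from the second bounds $\dist x{g^k x'} - \dist x{x'}$ by $\dist y{g^k y'} - \dist y{y'} + C_3 \leq -a + (C_1+C_2+C_3)$, which is the desired inequality after setting the final additive constant.

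The main obstacle here is quantitative bookkeeping: one must verify that the assumptions $\len g > \sympy{F_shorteningGeoTL(delta)}$ and $\len g/2 + a > \sympy{F_shorteningGeoAssumption(delta)}$ are strong enough that (a) the integer $k$ producing a shift of size at most $\len g/2$ plus a bounded error actually exists (this requires $\len g$ to dominate the discrepancy between $N$ and a true geodesic), and (b) the threshold for $\dist y{y'}$ in the projection lemma is met so that the lower bound on $\dist x{x'}$ above is valid. Once these are checked, the rest is a clean computation.
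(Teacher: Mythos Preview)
Your proposal is correct and follows essentially the same route as the paper: pass to a nerve $N$, project $x,x'$ to $y,y'$, use Proposition~\ref{res:diam quasi-convex and projections} to bound $\dist y{y'}$ from below, pick $k$ so that $\dist{g^ky'}y\leq \len g/2$, and compare the triangle inequality for $\dist{g^kx'}x$ with the projection lower bound for $\dist{x'}x$. One simplification: since $g$ acts on $N$ by translation of length exactly $\len g$, the integer $k$ with $\dist{g^ky'}y\leq \len g/2$ exists with no extra error $C_2$ and no need to worry about sides, so your obstacle~(a) dissolves.
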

	
	\begin{proof}
		Let $N$ be a nerve of $g$.
		Its $\sympy{F_nerveHausdorffCyl(delta)}$-neighbourhood contains $Y_g$, therefore $\diaminter {\geo x{x'}}N > \len g/2 + a -\sympy{F_nerveDiamInterCyl(delta)}$.
		We denote by $y$ and $y'$ respective projections of $x$ and $x'$ on $N$.
		Lemma~\ref{res:diam quasi-convex and projections} gives $\dist {y'}y > \len g/2+a -\sympy{Faux_shorteningGeo(delta)} > \sympy{F_nerveProjAssumption(delta)}$.
		Combined with the projection on $N$ we obtain
		\begin{displaymath}
			\dist {x'}x 
			> \dist {x'}{y'} +  \frac 12 \len g + a + \dist yx  - \sympy{F_shorteningGeo(delta)}
		\end{displaymath} 
		On the other hand $g$ acts on $N$ by translation of length $\len g$.
		Hence there exists $k \in \Z$ such that $\dist {g^ky'}y \leq \len g/2$.
		The triangle inequality yields
		\begin{displaymath}
			\dist{g^kx'}x  
			\leq \dist {x'}{y'} +  \frac 12 \len g + \dist yx
			< \dist {x'}x - a + \sympy{F_shorteningGeo(delta)},
		\end{displaymath}
		which completes the proof.
	\end{proof}

	\begin{lemm}
	\label{shortening class length}
		Let $a \geq 0$.
		Let $g$ and $h$ be two isometries of $X$ such that $\len g > \sympy{F_shorteningIsomTL(delta)}$.
		We assume that 
		\begin{displaymath}
			\min\left\{ \fantomB \len h , \diaminter{Y_h}{Y_g}\right\}> \frac 12 \len g +a> \sympy{F_shorteningIsomAssumption(delta)}.
		\end{displaymath}
		Then, there exists $k \in \Z$ such that $\len{g^kh} < \len h - a + \sympy{F_shorteningIsom(delta)}$.
	\end{lemm}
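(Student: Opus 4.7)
The plan is to reduce this lemma to Lemma \ref{shortening geodesic} applied to the pair of points $x$ and $x'=hx$, where $x$ is carefully chosen on a nerve of $h$. Indeed, if $x$ lies on a nerve $N_h$ of $h$ then $h$ acts on $N_h$ by translation of length $\len h$, so the segment of $N_h$ from $x$ to $hx$ is a geodesic and $\dist{hx}{x}=\len h$. Hence, once we produce an $x \in N_h$ with $\diaminter{\geo{x}{hx}}{Y_g}>\len g/2+a$, Lemma \ref{shortening geodesic} gives some $k\in\Z$ with $\dist{g^khx}{x}<\dist{hx}{x}-a+\sympy{F_shorteningGeo(delta)}=\len h-a+\sympy{F_shorteningGeo(delta)}$, and $\len{g^kh}\leq\dist{g^khx}{x}$ yields the claim (after absorbing the $O(\delta)$ loss into $\sympy{F_shorteningIsom(delta)}$).

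The core of the proof is therefore to locate $x \in N_h$ such that the diameter of intersection of $\geo{x}{hx}$ with $Y_g$ is large. Since $N_h$ is Hausdorff $\sympy{F_nerveHausdorffCyl(delta)}$-close to $Y_h$, property \ref{enu: diaminter - neighbourhood} of $\diaminter$ gives $\diaminter{N_h}{Y_g}>\len g/2+a-\epsilon_0$ for some $\epsilon_0=O(\delta)$. Unwinding the definition of $\diaminter{N_h}{Y_g}$, we may pick points $p,q \in N_h$ and $z,z' \in Y_g$ witnessing this quantity up to a small error, so that $\dist{p}{z}$ and $\dist{q}{z'}$ are small and $\dist{p}{q}\gtrsim\diaminter{N_h}{Y_g}$.

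We now distinguish two cases. If $\dist{p}{q}\leq \len h$, we translate along $N_h$ by a suitable power of $h$ so that the arc $[p,q]_{N_h}$ is contained in $[x,hx]_{N_h}$ for the chosen $x\in N_h$; then, in the sum defining $\diaminter$ with the witnesses $y=x$, $y'=hx$, $z,z' \in Y_g$, the identity $\dist{x}{p}+\dist{p}{q}+\dist{q}{hx}=\len h$ along the local geodesic $N_h$ gives $\diaminter{\geo{x}{hx}}{Y_g}\geq \dist{p}{q}-O(\delta)>\len g/2+a-O(\delta)$. If instead $\dist{p}{q}>\len h$, we take $x=p$ so that both $x$ and $hx$ lie within the portion of $N_h$ close to $Y_g$, and the same kind of computation gives $\diaminter{\geo{x}{hx}}{Y_g}\geq\len h-O(\delta)>\len g/2+a-O(\delta)$. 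In either case we get the required lower bound.

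The main obstacle is the bookkeeping of additive constants: the intersection-diameter passes through several $O(\delta)$ losses (Hausdorff approximation of $Y_h$ by $N_h$, comparison between $\diaminter$ and actual distances between projections, hyperbolicity slack in the local geodesic $N_h$), and we must choose $\sympy{F_shorteningIsomTL(delta)}$ and $\sympy{F_shorteningIsomAssumption(delta)}$ large enough to absorb these losses into the hypothesis of Lemma \ref{shortening geodesic}, so that its conclusion can be applied with $a$ rather than a slightly smaller parameter, and the final error $\sympy{F_shorteningIsom(delta)}$ accommodates both the error $\sympy{F_shorteningGeo(delta)}$ coming from Lemma \ref{shortening geodesic} and the additional discrepancy $\dist{hx}{x}-\len h$ (which is $0$ on the nerve, but requires $\len h$ to be large enough to justify using a nerve at all).
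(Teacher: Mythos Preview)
Your reduction to Lemma~\ref{shortening geodesic} via a point on a nerve of $h$ is exactly the paper's approach. The paper avoids your case split by choosing $x,x'\in Y_g$ realising $\diaminter{Y_g}{N}$ (where $N$ is a nerve of $h$), projecting them to $y=N(t)$, $y'=N(t')$, and introducing the auxiliary point $z=N(t+s)$ with $s\approx\len g/2+a$; since $t+s\leq\min\{t',t+\len h\}$, the point $z$ lies on $N$ between $y$ and both $y'$ and $hy$, so $\diaminter{Y_g}{\geo y{hy}}\geq\diaminter{Y_g}{\geo yz}-\gro y{hy}z$ handles both of your cases at once.

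One slip in your sketch: witnesses $p,q\in N_h$ and $z,z'\in Y_g$ for $\diaminter{N_h}{Y_g}$ need not satisfy ``$\dist pz$ and $\dist q{z'}$ small''---the definition only controls the combination $\dist pq+\dist z{z'}-\dist pz-\dist q{z'}$. The correct move (which the paper makes) is to pick $z,z'\in Y_g$ with $\diaminter{\geo z{z'}}{N_h}$ nearly maximal, let $p,q$ be their projections on $N_h$, and use Proposition~\ref{res:diam quasi-convex and projections} to obtain $\dist pq\gtrsim\diaminter{N_h}{Y_g}$; then the projection estimates $\gro z{z'}p,\gro z{z'}q\leq O(\delta)$ supply exactly the approximate additivity you need in your displayed computation.
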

	
	\begin{proof}
		Let $N$ be a nerve of $h$.
		Since $Y_h$ lies in the $\sympy{F_nerveHausdorffCyl(delta)}$-neighbourhood of $N$ we have $\diaminter {Y_g}N > \len g/2 +a -\sympy{F_nerveDiamInterCyl(delta)}$.
		Hence there exist $x$ and $x'$ in $Y_g$ such that $\diaminter {\geo x{x'}}N > \len g/2 +a -\sympy{F_nerveDiamInterCyl(delta)}$.
		We denote by $y=N(t)$ and $y'=N(t')$ respective projections of $x$ and $x'$ on $N$.
		Up to change the role of $x$ and $x'$ we can assume that $t' \geq t$.
		Recall that $N$ is parametrized by arclength. 
		Hence Lemma~\ref{res:diam quasi-convex and projections} gives 
		\begin{displaymath}
			\dist {t'}t \geq \dist {y'}y > \frac 12\len g+a -\sympy{Faux1_shorteningIsom(delta)} > \sympy{F_nerveProjAssumption(delta)}.
		\end{displaymath}
		Let us set $s = \len g/2 + a - \sympy{Faux1_shorteningIsom(delta)}$ and $z = N(t+s)$.
		The isometry $h$ acts on $N$ by translation of length $\len h$, thus $hy = N\left(t+\len h\right)$.
		Note that $t \leq t+ s \leq \min\left\{t', t+ \len h\right\}$.
		Consequently $\gro y{hy}z \leq \sympy{F_nerveGromovIntern(delta)}$ and 
		\begin{displaymath}
			\dist x{x'} \geq \dist xy + \dist yz + \dist z{x'} - \sympy{Faux2_shorteningIsom(delta)}.
		\end{displaymath}
		In particular $\diaminter{Y_g}{\geo yz} \geq \dist yz -\sympy{Faux3_shorteningIsom(delta)}$.
		It follows that 
		\begin{displaymath}
			\diaminter{Y_g}{\geo y{hy}} \geq \diaminter{Y_g}{\geo yz} - \gro y{hy}z \geq \dist yz - \sympy{Faux4_shorteningIsom(delta)} \geq \frac 12\len g +a - \sympy{Faux5_shorteningIsom(delta)} > \sympy{F_shorteningGeoAssumption(delta)}.
		\end{displaymath}
		According to Lemma~\ref{shortening geodesic}, there exists $k \in \Z$ such that $ \dist{g^khy}{y} < \dist {hy}{y} - a + \sympy{F_shorteningIsom(delta)}$.
		However $y$ is a  point of a nerve of $h$ and thus of the axis of $h$.
		Consequently $\len {g^k h} \leq \len h - a + \sympy{F_shorteningIsom(delta)}$.
	\end{proof}
			
	\paragraph{} The goal of the next two results is to describe a figure that will naturally arise in Part~\ref{sec:small cancellation}.
	Since the proof only requires some basic properties of hyperbolicity, we give it here.
	It will considerably lighten the proofs involving foldable configurations (see Sections~\ref{sec: close points}-\ref{sec:foldable configurations}).
	The constants $a$, $b$ and $c$ which appear in the following statements will be made precise in Part~\ref{sec:small cancellation}.
	They represent distances which are large in comparison to $\delta$ but small compared to $\len g$.

	\begin{prop}
	\label{projection conditionnelle sur un axe version simple}
		Let $a,b,c \geq 0$.
		Let $g$ be an isometry of $X$ such that $\len g > \sympy{F_simpleProjAxesTL(a,b,c,delta)}$.
		Let $x$, $y$ and $z$ be three points of $X$.
		We assume that there exists a point $s \in X$ such that $\diaminter{\geo sy}{Y_g} \leq \len g/2 +\sympy{F_simpleProjAxesHypA(a)}$ and $\dist xs \leq \gro yzx + \sympy{F_simpleProjAxesHypB(b)}$.
		Let $N$ be a nerve of $g$.
		We denote by $p$ and $q$ respective projections of $y$ and $z$ on $N$.
		Let $r$ be a projection of $x$ on $\nerf pqN$.
		If $\diaminter{\geo yz}{Y_g} \geq \len g - \sympy{F_simpleProjAxesHypC(c)}$, then we have
		\begin{enumerate}
			\item	\label{enum:axe simple distances}
			\begin{list}{\labelitemi}{\leftmargin=0em}
			\renewcommand{\labelitemi}{}
				\item $\dist pq \geq \len g \sympy{-1*F_simpleProjAxesMinPQ(c,delta)}$,
				\item $\dist pr \leq \len g/2 + \sympy{F_simpleProjAxesMajPR(a,b,delta)}$,
				\item $\dist qr \geq \len g/2 \sympy{-1*F_simpleProjAxesMinQR(a,b,c,delta)}$,
			\end{list}
			\item \label{enum axe simple produit gromov}
			$\gro xyz \geq  \gro xyr + \dist zq +\dist qr  - \sympy{F_simpleProjAxesMinGro(delta)}$.
		\end{enumerate}
	\end{prop}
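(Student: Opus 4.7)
My plan is to work throughout with the nerve $N$ of $g$ rather than $Y_g$ itself, exploiting the uniform bound on their Hausdorff distance so that all quantities of the form $\diaminter{\cdot}{\cdot}$ change by at most $O(\delta)$ when $Y_g$ is replaced by $N$; the subset $N$ is quasi-convex, so Proposition~\ref{res:diam quasi-convex and projections}, Lemma~\ref{projection quasi-convex} and Lemma~\ref{res: exending projections on quasi-convex} are all applicable to projections onto $N$. The first bound of (i) is the easiest: from $\diaminter{\geo yz}{Y_g} \geq \len g - O(1)$ one gets $\diaminter{\geo yz}{N} \geq \len g - O(\delta)$, and then Proposition~\ref{res:diam quasi-convex and projections} applied to the projections $p, q$ of $y, z$ on $N$ immediately yields $\dist pq \geq \len g - O(\delta)$.

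To locate $r$, I would introduce $p_s$, a projection of $s$ on $N$. The same argument applied to $\diaminter{\geo sy}{Y_g} \leq \len g/2 + O(1)$ gives $\dist p{p_s} \leq \len g/2 + O(\delta)$. The bound $\dist pr \leq \len g/2 + O(\delta)$ then reduces to showing that the projection $p_x$ of $x$ on $N$ lies within $O(\delta)$ of $p_s$. For this I would apply Lemma~\ref{res: exending projections on quasi-convex} to view $p_s$ as an $\epsilon$-projection of $x$ with $\epsilon \leq \gro s{p_s}x + O(\delta)$, and then compare $p_s$ to $p_x$ via Lemma~\ref{projection quasi-convex}. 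The crucial point is that in the maximum appearing in Lemma~\ref{projection quasi-convex} the negative terms $-\dist x{p_x} - \dist s{p_s}$ absorb the potentially large contribution coming from $\dist xs \leq \gro yzx + O(1)$, because $d(x,N)$ is itself of order $\gro yzx$ up to $O(\delta)$ (as $\geo yz$ lies essentially in $N$ and $x$ is at distance $\gro yzx$ from $\geo yz$). Once $\dist pr \leq \len g/2 + O(\delta)$ is proved, the estimate $\dist qr \geq \len g/2 - O(\delta)$ follows from $\dist pr + \dist rq = \dist pq$ along the arc $\nerf pqN$.

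For the Gromov product inequality (ii), the geometric idea is that any path from $z$ to $\geo xy$ must enter the nerve near $q$ and travel along $N$ to near $r$ before leaving toward $\geo xy$. Using the projection properties of the nerve one has $\gro zrq \leq O(\delta)$, hence $\dist zr \geq \dist zq + \dist qr - O(\delta)$. Combining this with hyperbolicity~(\ref{eqn: hyperbolicity 1}) applied to the quadruple $\{x, y, z, r\}$ and with the estimate for $\gro xyr$ coming from the position of $r$ relative to $\geo xy$, the desired inequality is obtained by reorganizing Gromov products around the detour $z \to q \to r$.

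The hardest step will be the position control for $r$ above: carefully balancing $\dist xs$, $\gro yzx$, and $\dist x{p_x}$ so that the final bound depends only on $a$, $b$, $\delta$ and not on the potentially unbounded quantity $\gro yzx$. This requires a precise application of Lemma~\ref{projection quasi-convex}'s second-part formula with appropriate $\eta$-projections, exploiting the approximate equality $\dist x{p_x} \approx \gro yzx$ so that the large terms cancel inside the maximum.
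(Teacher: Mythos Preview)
Your treatment of the first bound in (i) and of part (ii) is essentially the paper's argument. The problem lies in your route to the upper bound $\dist pr \leq \len g/2 + O(\delta)$.

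The assertion that ``$d(x,N)$ is itself of order $\gro yzx$ up to $O(\delta)$'' is false in general. Take $x$ on $N$ itself, beyond $p$ on the side opposite $q$: then $d(x,N)=0$, while $\gro yzx$ is approximately $\dist xp$, which may be large. In that configuration your $p_x$ equals $x$, the projection $p_s$ of $s$ lies near $p$, and $\dist{p_x}{p_s}$ is large, so the cancellation you expect in the second alternative of Lemma~\ref{projection quasi-convex} does not occur. (The conclusion $\dist pr \leq \len g/2 + O(\delta)$ is still true there, since $r=p$; it is only your argument that fails.) The underlying reason is that only the sub-arc $\nerf pqN$ stays close to $\geo yz$; the rest of $N$ need not, so projecting $x$ onto the whole nerve loses the link with $\gro yzx$.

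The paper's proof never projects $x$ onto $N$. It works directly with $r$, the projection of $x$ on $\nerf pqN$. Because $\nerf pqN$ lies $O(\delta)$-close to $\geo yz$ one gets $\gro yzr \leq O(\delta)$, hence $\dist xr \geq \gro yzx - O(\delta)$, and therefore $\dist xs \leq \dist xr + O(\delta)$. A single four-point inequality then gives $\gro syr \leq O(\delta)$. Finally, since $r\in N$ and $p$ is the projection of $y$ on $N$, Proposition~\ref{res:diam quasi-convex and projections} yields
\[
\dist rp \leq \diaminter{\geo ry}{N} + O(\delta) \leq \diaminter{\geo sy}{N} + \gro syr + O(\delta) \leq \frac12\len g + O(\delta),
\]
using the hypothesis on $\diaminter{\geo sy}{Y_g}$. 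Your idea of introducing $p_s$ can be made to work, but only after first restricting to $\nerf pqN$ and handling separately the case where the projection of $x$ falls outside that arc.
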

	
	\rem The conditions on $s$ have the following signification.
	By hyperbolicity, $\geo xy$ is contained in the $\sympy{F_thin(delta)}$-neighbourhood of $\geo xz \cup \geo zy$.
	The part of the geodesic $\geo xy$ which lies in the $\sympy{F_thin(delta)}$-neighbourhood of $\geo yz$ can not have a large overlap with the cylinder of $g$ (see Figure~\ref{fig: projection conditionnelle sur un axe version simple}).
	We could have chosen for $s$ the point of $\geo xy$ such that $\dist xs = \gro yzx$ and asked that $\diaminter {\geo sy}{Y_g} \leq \len g/2 +\sympy{F_simpleProjAxesHypA(a)}$.
	However in Part~\ref{sec:small cancellation}, we will need this more general assumption.
	\begin{figure}[ht]
		\centering
		\includegraphics{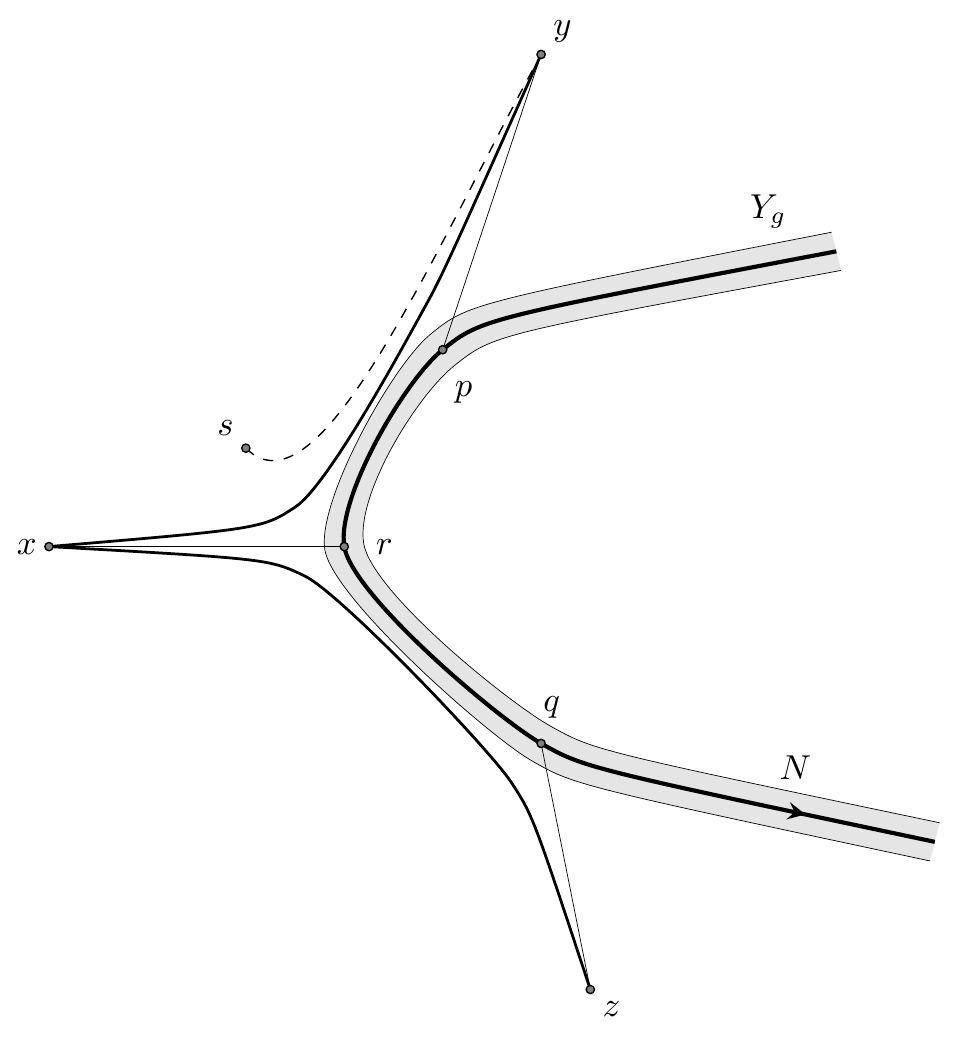}
		\caption{Signification of the point $s$}
		\label{fig: projection conditionnelle sur un axe version simple}
	\end{figure}
	
	\begin{proof}
	The $\sympy{F_nerveHausdorffCyl(delta)}$-neighbourhood of $N$ contains $Y_g$, thus $\diaminter{\geo yz}N \geq \len g \sympy{-1*Faux1_simpleProjAxes(c,delta)}$.
	Since $p$ and $q$ are respective projections of $y$ and $z$ on $N$, we get by Lemma~\ref{res:diam quasi-convex and projections} $\dist pq \geq \len g \sympy{-1*F_simpleProjAxesMinPQ(c,delta)}> \sympy{F_nerveProjAssumption(delta)}$.
	This proves the first inequality of Point~\ref{enum:axe simple distances}.
	
	\paragraph{Upper bound of $\dist pr$.}
	We may assume that $\dist pr > \sympy{Faux2_simpleProjAxes(delta)}$.
	Hence $\dist pq \geq \dist pr - \gro pqr > \sympy{F_nerveProjAssumption(delta)}$.
	The points $p$ and $q$ are respective projections of $y$, $z$ on $N$, thus $\gro yzx \leq \dist xr  + \gro yzr \leq \dist xr +\sympy{F_nerveProj3(delta)}$.
	Using our second assumption on $s$ we obtain $\dist xs \leq \dist xr + \sympy{Faux3_simpleProjAxes(b,delta)}$.
	However, by hyperbolicity we have
	\begin{displaymath}
		\gro syr \leq \max \left\{ \dist xs - \dist xr + 2 \gro xyr, \gro xyr \right\} +\sympy{F_hyp1(delta)}
	\end{displaymath}
	Since $\gro xyr \leq \sympy{F_nerveProj2(delta)}$ we get $\gro syr \leq \sympy{Faux4_projSimpleAxes(b,delta)}$.
	The point $p$ is a projection of $y$ on $N$.
	By Proposition~\ref{res:diam quasi-convex and projections} we have
	\begin{displaymath}
		\dist rp \leq \diaminter{\geo ry}N +\sympy{Faux5_simpleProjAxes(delta)} \leq \diaminter{\geo sy}N + \gro syr + \sympy{Faux5_simpleProjAxes(delta)} \leq \diaminter{\geo sy}{Y_g} + \sympy{Faux6_simpleProjAxes(b,delta)}.
	\end{displaymath}
	The second inequality of Point~\ref{enum:axe simple distances} follows then from the first assumption on $s$.
	
	\paragraph{Lower bound of $\dist qr$.}
	The third inequality of Point~\ref{enum:axe simple distances} follows by triangle inequality from the two previous ones.
	
	\paragraph{Estimation of $\gro xyz$.}
	As a consequence of Point~\ref{enum:axe simple distances}, $\dist qr >\sympy{Faux2_simpleProjAxes(delta)}$, thus $\gro xzr \leq \sympy{F_nerveProj2(delta)}$ and $\gro yzr \leq \sympy{F_nerveProj3(delta)}$.
	However 
	\begin{displaymath}
		\gro xyz = \gro xyr + \dist zr - \gro xzr - \gro yzr \geq \gro xyr + \dist zr - \sympy{Faux7_simpleProjAxes(delta)}.
	\end{displaymath}
	Since $q$ is a projection of $z$ on $N$ we have $\dist zr \geq \dist zq + \dist qr - \sympy{Faux8_simpleProjAxes(delta)}$, which combined with the previous inequality gives Point~\ref{enum axe simple produit gromov}.
	\end{proof}

	\begin{prop}
	\label{projection conditionnelle sur une axe version double}
		Let $a$, $b$ and $c$ be non-negative constants.
		Let $g$ be an isometry of $X$ such that $\len g > \sympy{F_doubleProjAxesTL(a,b,c,delta)}$.
		Let $x$, $y_1$ and $y_2$ be three points of $X$.
		We assume that there exist two points $s_1, s_2 \in X$ such that for all $i \in \left\{ 1,2 \right\}$, $\diaminter{\geo{s_i}{y_i}}{Y_g} \leq \len g/2 +\sympy{F_doubleProjAxesHypA(a)}$ and $\dist x{s_i} \leq \gro {y_1}{y_2}x + \sympy{F_doubleProjAxesHypB(b)}$.
		Let $N$ be a nerve of $g$.
		We denote by $r$, $q_1$ and $q_2$ respective projections of $x$, $y_1$ and $y_2$ on $N$.
		If $\diaminter{\geo {y_1}{y_2}}{Y_g} \geq \len g - \sympy{F_doubleProjAxesHypC(c)}$, then we have the followings
		\begin{enumerate}
			\item \label{enum: axe double nerf}
			$r$ belongs to $\nerf {q_1}{q_2}N$,
			\item \label{enum: axe double distances}
			\begin{list}{\labelitemi}{\leftmargin=0em}
			\renewcommand{\labelitemi}{}
				\item $\dist {q_1}{q_2} \geq \len g \sympy{-1*F_doubleProjAxesMinQs(c,delta)}$,
				\item $\len g/2 \sympy{-1*F_doubleProjAxesMinQR(a,b,c,delta)} \leq \dist r{q_i} \leq \len g/2 + \sympy{F_doubleProjAxesMajQR(a,b,delta)}$,
			\end{list}
			\item \label{enum: axe double petits produits gromov}
			$\gro x{y_i}r, \gro x{y_i}{q_i} \leq  \sympy{F_doubleProjAxesMajGro1(delta)}$ and $\gro{s_i}{y_i}{q_i} \leq \sympy{F_doubleProjAxesMajGro2(delta)}$.
			\item \label{enum: axe double gro x y1 y2}
			$\dist{\fantomB \gro{y_1}{y_2}x}{\dist xr} \leq \sympy{F_doubleProjAxesDiffGro(delta)}$.
		\end{enumerate}
	\end{prop}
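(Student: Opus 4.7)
The plan is to deduce this ``double'' version from two symmetric applications of the simple Proposition~\ref{projection conditionnelle sur un axe version simple} just proved, and then to combine the conclusions using the nerve-projection bullets listed before Lemma~\ref{distance axe gromov product}. First I would note that, by Proposition~\ref{res:diam quasi-convex and projections} applied to the projections $q_1,q_2$ together with the nerve--cylinder comparison, the hypothesis $\diaminter{\geo{y_1}{y_2}}{Y_g}\geq\len g-c'$ already yields the first inequality of~(ii): $\dist{q_1}{q_2}\geq\len g-C$ for an appropriate constant $C$. Then I would apply the simple Proposition twice, to $(x,y_1,y_2)$ with auxiliary point $s_1$ and to $(x,y_2,y_1)$ with $s_2$. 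This produces projections $r^{(1)},r^{(2)}$ of $x$ on $\nerf{q_1}{q_2}N=\nerf{q_2}{q_1}N$ with $\dist{r^{(1)}}{q_1}\leq\len g/2+C$ and $\dist{r^{(2)}}{q_2}\leq\len g/2+C$. Since both are projections of $x$ on the same quasi-convex, Lemma~\ref{projection quasi-convex}(ii) gives $\dist{r^{(1)}}{r^{(2)}}\leq C$; combined with $\dist{q_1}{q_2}\geq\len g-C$ the triangle inequality upgrades the one-sided bounds into the symmetric two-sided estimate $\tfrac12\len g-C\leq\dist{r^{(j)}}{q_i}\leq\tfrac12\len g+C$ for all $i,j\in\{1,2\}$.

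Part~(i) I would establish by contradiction. If $r\notin\nerf{q_1}{q_2}N$, up to swapping indices $q_1$ lies strictly between $r$ and $q_2$ on $N$; the nerve-projection bullets give $\gro{x}{u}{r}\leq C$ for every $u\in\nerf{q_1}{q_2}N$, so $\dist{x}{u}\geq\dist{x}{r}+\dist{r}{u}-C$, and minimising over $u$ forces $q_1$ to be a $C$-projection of $x$ on the sub-nerve. Essential uniqueness of projections then gives $\dist{r^{(1)}}{q_1}\leq C$, contradicting the lower bound above once $\len g$ is large enough. Hence $r\in\nerf{q_1}{q_2}N$ and is itself a projection of $x$ on the sub-nerve, so $\dist{r}{r^{(j)}}\leq C$ and the bounds of the previous paragraph transfer to $r$, completing~(ii).

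For~(iii), the estimates on $\gro{x}{y_i}{r}$ and $\gro{x}{y_i}{q_i}$ are immediate from the second nerve-projection bullet applied to $x,y_i$ whose projections $r,q_i$ on $N$ are at distance $\simeq\len g/2$, well above threshold. The bound $\gro{s_i}{y_i}{q_i}\leq C$ is the one step I expect to be the main obstacle, since neither $s_i$ nor its projection appears in the simple Proposition. I would choose a projection $\sigma_i$ of $s_i$ on $N$ and aim at $\dist{\sigma_i}{q_i}\simeq\len g/2$. The assumption $\dist{x}{s_i}\leq\gro{y_1}{y_2}{x}+C$ keeps $s_1,s_2$ within a bounded distance of the tripod-centre of $[x,y_1,y_2]$, hence of each other, so $\geo{y_1}{y_2}$ admits the approximate decomposition $\geo{y_1}{y_2}\simeq\geo{y_1}{s_i}\cup\geo{s_i}{y_2}$. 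Combining the hypothesis $\diaminter{\geo{s_j}{y_j}}{Y_g}\leq\len g/2+C$ for $j\neq i$ (perturbing the endpoint $s_j$ to $s_i$ costs only $C$) with the lower bound $\diaminter{\geo{y_1}{y_2}}{Y_g}\geq\len g-C$ then forces $\diaminter{\geo{s_i}{y_i}}{Y_g}\geq\len g/2-C$, whence $\dist{\sigma_i}{q_i}\geq\len g/2-C$ by Proposition~\ref{res:diam quasi-convex and projections}. The second nerve-projection bullet applied to $s_i,y_i$ with projections $\sigma_i,q_i$ now delivers $\gro{s_i}{y_i}{q_i}\leq C$.

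Finally,~(iv) is short: the second nerve-projection bullet applied to $y_1,y_2$ with projections $q_1,q_2$ gives $\gro{y_1}{y_2}{r}\leq C$, so $r$ lies within $C$ of $\geo{y_1}{y_2}$ and $\gro{y_1}{y_2}{x}\leq\dist{x}{r}+C$. Conversely, chaining the hypothesis $\dist{x}{s_i}\leq\gro{y_1}{y_2}{x}+C$ with the estimate $\dist{x}{r}\leq\dist{x}{s_i}+C$ that already appears inside the proof of the simple Proposition (in its ``upper bound of $\dist pr$'' step) gives the reverse inequality. Beyond the geometric point singled out in the previous paragraph, the remaining work is pure bookkeeping: at each step one must keep track of which constants depend on $a,b,c$ and ensure $\len g$ is large enough for every nerve-projection bullet used to lie in its regime of validity, which is guaranteed by the assumption on $\len g$.
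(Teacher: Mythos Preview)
Your overall architecture is the paper's: prove~(i) by contradiction using the simple Proposition, deduce~(ii) by two symmetric applications of the simple Proposition, and read~(iii)--(iv) off the nerve-projection bullets. The parts~(i),~(ii), the bounds on $\gro x{y_i}r$ and $\gro x{y_i}{q_i}$ in~(iii), and the inequality $\gro{y_1}{y_2}x\leq\dist xr+C$ in~(iv) are all correct and match the paper.

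There are, however, two genuine gaps.

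\textbf{The bound $\gro{s_i}{y_i}{q_i}\leq C$ in~(iii).} Your claim that ``$\dist x{s_i}\leq\gro{y_1}{y_2}x+C$ keeps $s_1,s_2$ within a bounded distance of the tripod-centre'' is false: the hypothesis only places $s_i$ in a ball around $x$ of radius $\gro{y_1}{y_2}x+b$, and $s_i=x$ is a perfectly legal choice, arbitrarily far from the tripod-centre. Consequently the decomposition $\geo{y_1}{y_2}\simeq\geo{y_1}{s_i}\cup\geo{s_i}{y_2}$ is unjustified, and the perturbation ``$s_j$ to $s_i$ costs only $C$'' fails. The paper avoids this entirely with a two-line hyperbolicity argument: from
\[
\gro{s_i}{y_i}{q_i}\leq\max\bigl\{\dist x{s_i}-\dist x{q_i}+2\gro x{y_i}{q_i},\ \gro x{y_i}{q_i}\bigr\}+\delta,
\]
it suffices to show $\dist x{s_i}-\dist x{q_i}$ is very negative. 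But $r$ being a projection of $x$ on $N$ gives $\dist x{q_i}\geq\dist xr+\dist r{q_i}-C$, and $\dist xr\geq\gro{y_1}{y_2}x-C\geq\dist x{s_i}-b-C$; together with $\dist r{q_i}\geq\len g/2-C$ from~(ii), the first branch of the max is dominated by the second, giving the bound.

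\textbf{The reverse inequality in~(iv).} You invoke ``the estimate $\dist xr\leq\dist x{s_i}+C$ that already appears inside the proof of the simple Proposition''. That proof establishes the \emph{opposite} inequality, $\dist x{s}\leq\dist xr+C$ (obtained from $\gro yz x\leq\dist xr+C$). The bound you need, $\dist xr\leq\gro{y_1}{y_2}x+C$, follows instead from hyperbolicity applied to $\gro x{y_1}r,\gro x{y_2}r\leq C$: one has $\gro{y_1}{y_2}x\geq\min\{\gro{y_1}rx,\gro{y_2}rx\}-\delta=\dist xr-\max\{\gro x{y_1}r,\gro x{y_2}r\}-\delta$, which is exactly what the paper writes.
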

	
	\rem Intuitively, we have Figure~\ref{fig: projection conditionnelle sur un axe version double} in mind.
		The goal of this proposition is to prove that this picture actually corresponds to the reality.
		
	\begin{figure}[ht]
		\centering
		\includegraphics{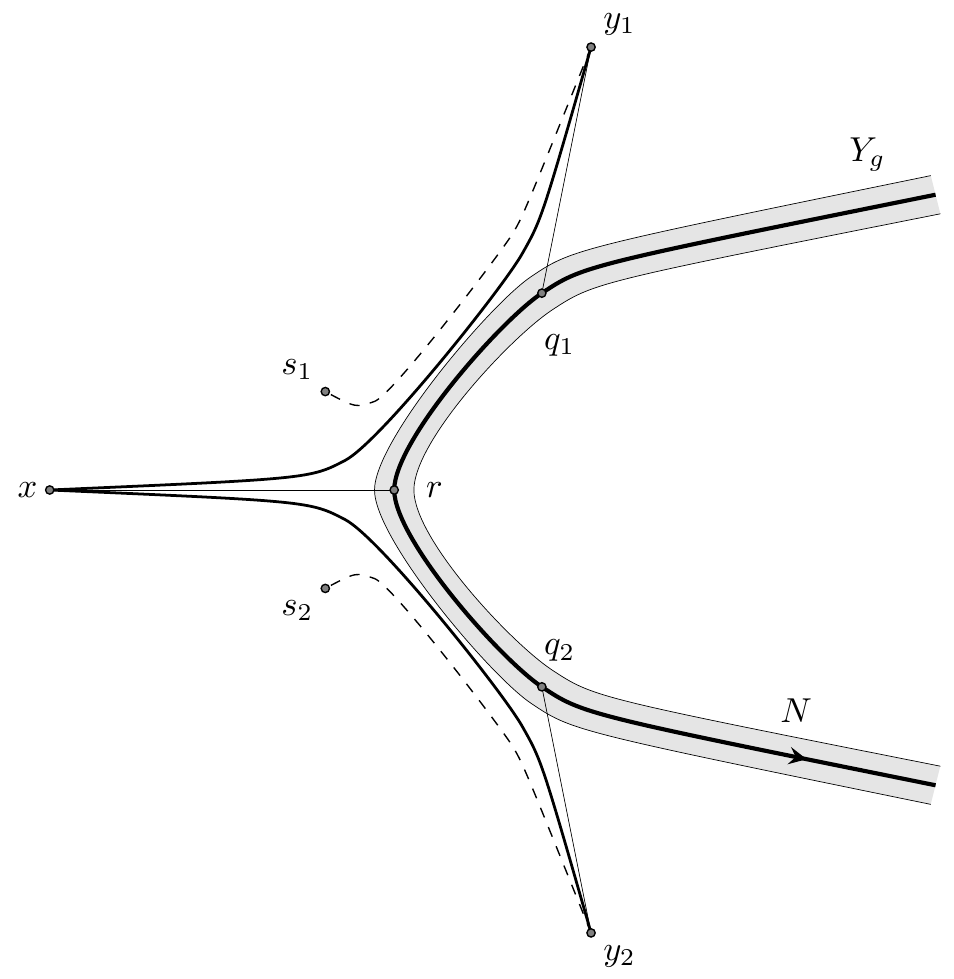}
		\caption{Positions of the points $q_1$, $q_2$ and $r$.}
		\label{fig: projection conditionnelle sur un axe version double}
	\end{figure}

	 \begin{proof}
		We prove Point~\ref{enum: axe double nerf} by contradiction.
		Assume that $r$ does not belong to $\nerf {q_1}{q_2}N$.
		By symmetry we can assume that $q_1$ is a point of $\nerf r{q_2}N$.
		Let $q$ be a point of $\nerf {q_1}{q_2}N$.
		Since $r$ is a projection of $x$ on $N$, $\dist xq \geq \dist xr + \dist rq - \sympy{Faux1_doubleProjAxes(delta)}$.
		However $q_1$ lies on $N$ between $r$ and $q$.
		Therefore we obtain $\dist xq \geq \dist x{q_1} -\sympy{Faux2_doubleProjAxes(delta)}$.
		Consequently $q_1$ is a $\sympy{Faux2_doubleProjAxes(delta)}$-projection of $x$ on $\nerf{q_1}{q_2}N$.
		By Proposition~\ref{projection quasi-convex}, the distance between $q_1$ and a projection $t$ of $x$ on $\nerf{q_1}{q_2}N$ is at most $\sympy{Faux3_doubleProjAxes(delta)}$.
		Nevertheless Proposition~\ref{projection conditionnelle sur un axe version simple} Point~\ref{enum:axe simple distances}  gives $\dist {q_1}t \geq \len g /2 \sympy{-1*F_simpleProjAxesMinQR(a,b,c,delta)}$.
		Contradiction.
		Hence $r$ belongs to $\nerf {q_1}{q_2}N$.
		Therefore, Point~\ref{enum: axe double distances} follows from Proposition~\ref{projection conditionnelle sur un axe version simple}.
		
		\paragraph{}
		The points $r$ and $q_i$ are respective projections of $x$ and $y_i$ on $N$.
		Thus $\gro x{y_i}r, \gro x{y_i}{q_i} \leq \sympy{F_nerveProj2(delta)}$ and $\gro {y_1}{y_2}r \leq \sympy{F_nerveProj3(delta)}$, which proves in particular the first part of Point~\ref{enum: axe double petits produits gromov}.
		The hyperbolicity condition yields
		\begin{displaymath}
			\gro {y_1}{y_2}x - \gro {y_1}{y_2}r \leq \dist xr \leq \gro {y_1}{y_2}x +\max\left\{\fantomB \gro x{y_1}r, \gro x{y_2}r\right\} +\sympy{F_hyp1(delta)}
		\end{displaymath}
		which leads to Point~\ref{enum: axe double gro x y1 y2}.
		What is left to show is that $\gro {s_i}{y_i}{q_i} \leq \sympy{F_doubleProjAxesMajGro2(delta)}$.
		By hyperbolicity we have 
		\begin{displaymath}
			\gro{s_i}{y_i}{q_i} \leq \max\left\{ \dist x{s_i} - \dist  x{q_i} + 2 \gro x{y_i}{q_i}, \gro x{y_i}{q_i}\right\} +\sympy{F_hyp1(delta)}.
		\end{displaymath}
		However $\gro x{y_i}{q_i} \leq \sympy{F_nerveProj2(delta)}$, thus it is sufficient to give an upper bound to  $\dist x{s_i} - \dist x{q_i}$.
		Since $r$ is a projection of $x$ on $N$, one has $\dist x{q_i} \geq \dist xr  +\dist r{q_i}-  \sympy{Faux4_doubleProjAxes(delta)}$.
		However we already proved that $\dist xr \geq \gro {y_1}{y_2}x - \sympy{F_nerveProj3(delta)} \geq \dist x{s_i} \sympy{-1*Faux5_doubleProjAxes(b,delta)}$.
		Hence $\dist x{q_i} \geq \dist x{s_i} +\dist r{q_i} \sympy{-1*Faux6_doubleProjAxes(b,delta)}$.
		It follows then from \ref{enum: axe double distances} that $ \dist x{s_i} - \dist  x{q_i} + 2 \gro x{y_i}{q_i} \leq \gro x{y_i}{q_i}$ which leads to the result.
	 \end{proof}
	 
	 \subsection{Hyperbolic groups}
	 \label{sec: hyperbolic groups}
	 
	\paragraph{}In this section $X$ is still geodesic and proper.
	We consider a group $G$ acting properly, co-compactly by isometries on $X$. It follows that every element of $G$ is either \emph{elliptic} (and has finite order) or \emph{hyperbolic} (see \cite[Chap. 9, Th. 3.4]{CooDelPap90}).
	A subgroup of $G$ is called \emph{elementary} if it is virtually cyclic.
	Every non-elementary subgroup of $G$ contains a copy of $\free 2$, the free group of rank 2 (see \cite[Chap. 8, Th. 37]{GhyHar90}). 
	Given a hyperbolic element $g$ of $G$, the subgroup of $G$ stabilizing $\left\{ g^-, g^+\right\} \subset \partial X$ is elementary.
	In particular the normalizer of $g$ is elementary (see \cite[Chap. 10, Prop 7.1]{CooDelPap90}).
	
	\nota If $P$ is a subset of $G$, we denote by $P^*$ the set of hyperbolic elements of $P$.
	
	\begin{defi}
	\label{defi:overlap and injectivity radius}
		Let $P$ be a subset of $G$.
		\begin{itemize}
			\item The \emph{injectivity radius} of $P$ on $X$, denoted by $\rinj PX$, is defined by $\rinj PX = \inf_{\rho \in P^*} \len[stable, espace=X]\rho$.
			\item The \emph{maximal overlap} of $P$ on $X$, denoted by $\Delta (P,X)$, is the quantity $ \Delta(P,X) = \sup_{\rho \neq \rho' \in P^*} \diaminter{Y_\rho}{Y_{\rho'}}$.
		\end{itemize}
	\end{defi}

	\begin{defi}
		The $A$ invariant  of $G$ on $X$, denoted by $A(G,X)$,  is the upper bound of $\diaminter{A_g}{A_h}$, where $g$ and $h$ are two elements of $G$ which generate a non-elementary subgroup and whose translation lengths are smaller than $\sympy{F_invADef(delta)}$.
	\end{defi}
	
	\begin{prop}[see {\cite[Prop. 2.4.3]{DelGro08}}, {\cite[Prop. 2.41]{Coulon:2013tx}}]
	\label{recouvrement axes}
		We assume that every elementary subgroup of $G$ is cyclic.
		Let $g$ and $h$ be two elements of $G$ such that $\len g \leq \sympy{F_invADef(delta)}$.
		If the subgroup generated by $g$ and $h$ is non-elementary, then
		\begin{displaymath}
			\diaminter{A_g}{A_h} \leq \len h + A(G,X) + \sympy{F_invAProp(delta)}.
		\end{displaymath}
	\end{prop}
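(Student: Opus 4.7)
The plan is to reduce the estimate to the definition of $A(G,X)$ by producing, from $h$, an isometry $h'$ of small translation length that still generates a non-elementary subgroup with $g$. The natural candidate is the conjugate $h' = hgh^{-1}$, for which $\len{h'} = \len g \leq \sympy{F_invADef(delta)}$ falls within the range of $A(G,X)$, and whose axis is $A_{h'} = hA_g$.

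First I would check that $\langle g, h'\rangle$ is non-elementary. If it were elementary, it would be cyclic by hypothesis, and in the hyperbolic case this forces $g$ and $h' = hgh^{-1}$ to share the same pair of fixed points at infinity. Thus $h\{g^-,g^+\} = \{g^-,g^+\}$, so $h$ lies in the stabilizer of $\{g^-,g^+\}$, which is elementary by the remarks of Section~\ref{sec: hyperbolic groups}. But then $\langle g,h\rangle$ would already be elementary, contradicting our assumption. The case where $g$ is elliptic is handled analogously, using that in a proper action elliptic elements have finite order and are contained in finite cyclic elementary subgroups. With this in place, the definition of $A(G,X)$ applied to the pair $(g,h')$ immediately yields
\begin{displaymath}
    \diaminter{A_g}{hA_g} = \diaminter{A_g}{A_{h'}} \leq A(G,X).
\end{displaymath}

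It then remains to prove the geometric lower bound $\diaminter{A_g}{hA_g} \geq \diaminter{A_g}{A_h} - \len h - C$ for some constant $C$ depending only on $\delta$ and the quasi-convexity parameters of the axes. Set $\ell = \diaminter{A_g}{A_h}$ and let $N$ be a nerve of $h$; then $h$ acts on $N$ by translation of length $\len h$, and $N$ lies at bounded Hausdorff distance from $Y_h$, hence from $A_h$ by Proposition~\ref{embedded quasi-convex in an axis}. Projecting $A_g$ onto $N$ and applying Proposition~\ref{res:diam quasi-convex and projections} shows that this projection is essentially an interval of length $\ell$ up to bounded error. The projection of $hA_g$ is the $h$-translate of this interval, shifted along $N$ by $\len h$, so the two projections overlap on a sub-interval of length at least $\ell - \len h$; another application of Proposition~\ref{res:diam quasi-convex and projections} converts this back into the desired lower bound on $\diaminter{A_g}{hA_g}$. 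Combining with the upper bound above gives the result. The main obstacle is the bookkeeping of the additive errors: the paper's definition of $\diaminter{\cdot}{\cdot}$ is tailored to behave well under projections onto quasi-convex subsets, but one must verify that the constants absorb cleanly into the final error term, since $A_g$ is only quasi-convex and $g$'s action on $Y_g$ is only approximately a translation when $\len g$ is small.
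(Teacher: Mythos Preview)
The paper does not give its own proof of this proposition; it simply cites \cite[Prop.~2.4.3]{DelGro08} and \cite[Prop.~2.41]{Coulon:2013tx}. Your outline is the standard argument used in those references: conjugate $g$ by $h$ to obtain $h' = hgh^{-1}$ with $\len{h'} = \len g$ and $A_{h'} = hA_g$, check that $\langle g, h'\rangle$ is non-elementary (your fixed-point argument is correct), apply the definition of $A(G,X)$ to bound $\diaminter{A_g}{hA_g}$, and finish with the translation-overlap estimate along a nerve of $h$.

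Two small points worth tightening. First, using a nerve of $h$ requires $\len h$ to exceed the nerve threshold; you should note that if $\len h \leq \sympy{F_invADef(delta)}$ the inequality follows immediately from the definition of $A(G,X)$, so one may assume $\len h$ is large enough for nerves. Second, Proposition~\ref{res:diam quasi-convex and projections} is stated for a geodesic segment $\geo x{x'}$ against a quasi-convex set, not for an arbitrary quasi-convex $A_g$; to apply it you should first pass to a pair $y,y' \in A_g$ (and $z,z' \in A_h$) nearly realising the supremum in $\diaminter{A_g}{A_h}$, and then project those points. With these adjustments the argument goes through and the constants absorb into the stated error term.
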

	
	\voc \label{def:small centralizers hypothesis}
	The group $G$ satisfies the \emph{small centralizers hypothesis} if $G$ is non-elementary and every elementary subgroup of $G$ is cyclic.

\section{Cone-off over a metric space}
\label{sec:cone-off over a metric space}

\paragraph{} In this section we focus on the cone-off over a metric space (see \cite{DelGro08}).
Let us fix a positive real number $r_0$.
Its value will be made precise later.
It should be thought as a very large scale parameter.

\subsection{Cone over a metric space}
\label{sec:cone}

\paragraph{} We review the construction of a cone over a metric space. 
For more details see \cite[Chap. I.5]{BriHae99}.
Let $Y$ be metric space.
The cone of radius $r_0$ over $Y$, denoted by $Z(Y,r_0)$ (or simply $Z(Y)$) is the quotient of $Y\times \intval 0{r_0}$ by the equivalence relation which identifies all the points of the form $(y,0)$, $y \in Y$.
The equivalence class of $(y,0)$ is the apex of the cone, denoted by $v$.
We endow $Y$ with a metric characterized as follows.
Given any two points $x=(y,r)$ and $x'=(y',r')$ of $Z(Y)$, 
\begin{displaymath}
	\cosh \left( \dist x{x'}\right) = \cosh r \cosh r' - \sinh r \sinh r' \cos \left( \min \left\{ \pi , \frac {\dist y{y'}}{\sinh r_0}\right\}\right).
\end{displaymath}
In order to compare the cone $Z(Y)$ and its base $Y$ we introduce two maps.
\begin{displaymath}
	\begin{array}{lcccp{2cm}lccc}
	\iota:	& Y	& \rightarrow	& Z(Y)	& & p:	& Z(Y)\setminus\{v\}	& \rightarrow	& Y \\
		& y	& \rightarrow	& (y,r_0)	& &		& (y,r)			& \rightarrow	& y
	\end{array}
\end{displaymath}
If $y$ and $y'$ are two points of $Y$, the distance between $\iota(y)$ and $\iota(y')$ is then given by $\dist{\iota(y)}{\iota(y')} = \mu\left(\dist y{y'}\right)$ where $\mu : \R^+ \rightarrow \R^+$ is defined in the following way: for all $t \in \R^+$, 
\begin{displaymath}
	\cosh\left(\mu(t)\right) = \cosh^2 r_0 - \sinh^2r_0\cos \left( \min \left\{ \pi , \frac t{\sinh r_0}\right\}\right).
\end{displaymath}
The function $\mu$ is non-decreasing, concave and subadditive.
Moreover, for all $t \in \R^+$, $\mu(t) \leq t$ (see \cite{Coulon:il}).
A coarse computation proves also that for all $t \in \left[0, \pi \sinh r_0\right]$, $t \leq \pi\sinh\left(\mu(t)/2\right)$.
It follows from the concavity that for every $r,s,t \geq 0$ 
\begin{equation}
\label{eqn: prop mu for shorthening}
	\mu(r+s) \leq \mu(r+t) +\mu(t+s) - \mu(t)
\end{equation}
\paragraph{}If $Y$ is a length space, so is $Z(Y)$.
More precisely, let $x=(y,r)$ and $x'=(y',r')$ be two points of $Z(Y)$.
Let $\sigma : I \rightarrow Y$ be a rectifiable path between $y$ and $y'$.
If its length $L(\sigma)$ is strictly smaller than $\pi \sinh r_0$, then there exists a rectifiable path $\tilde \sigma : I \rightarrow Z(Y)\setminus\{v\}$ between $x$ and $x'$ such that $p\circ \tilde \sigma = \sigma$ and whose length satisfies
\begin{displaymath}
	\cosh \left(L\left(\tilde \sigma\right)\right) \leq \cosh r \cosh r' - \sinh r \sinh r' \cos \left(  \frac {L(\sigma)}{\sinh r_0}\right).
\end{displaymath}

\paragraph{} We now consider a group $H$ acting properly, by isometries on $Y$.
We denote by $\bar Y$ the quotient $Y/H$. 
For all $y \in Y$, we write $\bar y$ for the image of $y$ in $\bar Y$.
The space $\bar Y$ is endowed with a metric defined by $\dist{\bar y}{\bar y'} = \inf_{h \in H}\dist y{hy'}$.
The action of $H$ on $Y$ can be extended to $Z(Y)$ by homogeneity: if $(y,r) \in Z(Y)$ and $h \in H$, then $h(y,r) = (hy,r)$.
Hence $H$ acts on $Z(Y)$ by isometries.
If $Y$ is not compact, this action may not be proper. 
The stabilzer of $v$ (i.e. $H$) may indeed be not finite.
Nevertheless the formula $\dist{\bar x}{\bar x'} = \inf_{h \in H}\dist x{hx'}$ still defines a metric on $Z(Y)/H$.
Moreover the spaces $Z(Y)/H$ and $Z(Y/H)$ are isometric (see \cite{Coulon:il}).

\begin{lemm}
\label{metric quotient cone}
	Let $l \geq 2\pi \sinh r_0$.
	We assume that for every $h \in H\setminus \{1\}$, $\len h \geq l$.
	Let $x=(y,r)$ and $x'=(y',r')$ be two points of $Z(Y)$.
	If $\dist[Y]y{y'} \leq l - \pi \sinh r_0$ then $\dist{\bar x}{\bar x'} = \dist x{x'}$.
\end{lemm}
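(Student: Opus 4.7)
Since $\dist{\bar x}{\bar x'}=\inf_{h\in H}\dist x{hx'}$, the inequality $\dist{\bar x}{\bar x'}\leq\dist x{x'}$ is immediate (take $h=1$). For the reverse inequality, the plan is to show that for every $h\in H\setminus\{1\}$ one has $\dist x{hx'}\geq\dist x{x'}$, so that the infimum is attained at $h=1$.

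First I would exploit the fact that the action of $H$ on $Z(Y)$ is by homogeneity, so that $hx'=(hy',r')$, and both $\dist x{x'}$ and $\dist x{hx'}$ are then computed by the same cone formula: only the $Y$-distance between the bases changes. The key step is therefore to estimate $\dist[Y]y{hy'}$ from below. By the triangle inequality in $Y$,
\begin{displaymath}
    \dist[Y]y{hy'}\ \geq\ \dist[Y]{y'}{hy'}-\dist[Y]y{y'}\ \geq\ \len h-\dist[Y]y{y'}.
\end{displaymath}
Since $h\neq 1$, the hypothesis $\len h\geq l$ combined with $\dist[Y]y{y'}\leq l-\pi\sinh r_0$ gives $\dist[Y]y{hy'}\geq\pi\sinh r_0$.

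Next I would feed this estimate into the cone formula. Because $\dist[Y]y{hy'}/\sinh r_0\geq\pi$, the minimum in the formula defining $\dist x{hx'}$ is achieved at $\pi$ and $\cos\pi=-1$, so
\begin{displaymath}
    \cosh\dist x{hx'}\ =\ \cosh r\cosh r'+\sinh r\sinh r'\ =\ \cosh(r+r'),
\end{displaymath}
whence $\dist x{hx'}=r+r'$. On the other hand, the cosine factor in the formula defining $\dist x{x'}$ is at least $-1$, so $\cosh\dist x{x'}\leq\cosh(r+r')$, i.e.\ $\dist x{x'}\leq r+r'$ (geometrically, this is the length of the broken path $x\to v\to x'$ through the apex). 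Combining these two inequalities gives $\dist x{hx'}\geq\dist x{x'}$, which is what we needed.

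No step should present a genuine obstacle: the argument reduces to a single triangle inequality in $Y$ together with two straightforward manipulations of the explicit cone formula. The only subtlety worth flagging is that the hypothesis $l\geq 2\pi\sinh r_0$ is not used directly here, but it is precisely what ensures the bound $\dist[Y]y{y'}\leq l-\pi\sinh r_0$ is non-vacuous (it forces the admissible range to contain $[0,\pi\sinh r_0]$, i.e.\ the regime where the cone metric genuinely differs from the path $x\to v\to x'$).
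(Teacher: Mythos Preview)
Your argument is correct and slightly more direct than the paper's. The paper first invokes the isometry $Z(Y)/H\cong Z(Y/H)$, rewrites $\dist{\bar x}{\bar x'}$ via the cone formula over $\bar Y$, and then splits into two cases according to whether $\dist y{y'}<l/2$ or $\dist y{y'}\geq l/2$; the hypothesis $l\geq 2\pi\sinh r_0$ is used in the second case to force $\dist y{y'}\geq\pi\sinh r_0$. You instead work directly with $\dist{\bar x}{\bar x'}=\inf_{h\in H}\dist x{hx'}$ and dispose of all nontrivial $h$ in one stroke via the triangle inequality $\dist[Y]y{hy'}\geq\len h-\dist[Y]y{y'}\geq\pi\sinh r_0$, which makes the case split unnecessary and, as you correctly observe, does not actually use $l\geq 2\pi\sinh r_0$. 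Both routes rest on the same key estimate; yours just packages it more economically.
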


\begin{proof}
	Since $Z(Y/H)$ and $Z(Y)/H$ are isometric, the distance between $\bar x$ and $\bar x'$ in $Z(Y)/H$ is given by 
	\begin{displaymath}
		\cosh \left( \dist{\bar x}{\bar x'}\right) = \cosh r \cosh r' - \sinh r \sinh r' \cos \left( \min \left\{ \pi , \frac {\dist[\bar Y]{\bar y}{\bar y'}}{\sinh r_0}\right\}\right).
	\end{displaymath}
	If $\dist y{y'} <l/2$, then we have $\dist{\bar y}{\bar y'} = \dist y{y'}$.
	It follows that $\dist{\bar x}{\bar x'} = \dist x{x'}$.
	Assume now that $\dist y{y'} \geq l/2$. 
	In particular $\dist y{y'} \geq \pi \sinh r_0$.
	Thus $\dist x{x'} = r + r'$.
	On the other hand, using the triangle inequality, for all $h \in H\setminus\{1\}$, $\dist  y{hy'} \geq l - \dist y{y'}$, thus $\dist {\bar y}{\bar y'} \geq \pi \sinh r_0$.
	Consequently $\dist{\bar x}{\bar x'} = r+ r' = \dist x{x'}$.
\end{proof}

\subsection{Cone-off over a metric space}
\label{sec:cone-off}

\paragraph{} We give here a brief exposition of the construction of the cone-off over a metric space.
For details and proofs we refer the reader to \cite{Coulon:il} and \cite{Coulon:2013tx}.
For the rest of this section $X$ denotes a geodesic, $\delta$-hyperbolic space and $Y = \left(Y_i\right)_{i \in I}$ a family of strongly quasi-convex subsets of $X$ (see Definition~\ref{def quasi-convex}).

\begin{defi}
	The maximal overlap between the $Y_i$'s is measured by the quantity
\begin{displaymath}
	\Delta(Y) = \sup_{i \neq j} \diaminter{Y_i}{Y_j}.
\end{displaymath} 
\end{defi}

For all $i \in I$ we define the following objects:
\begin{enumerate}
	\item $Y_i$ is endowed with the length metric $\distV[Y_i]$ induced by the restriction to $Y_i$ of $\distV[X]$.
	Since $Y_i$ is strongly quasi-convex, for all $y,y' \in Y_i$ we have
	\begin{displaymath}
		\dist[X]y{y'} \leq \dist[Y_i]y{y'} \leq \dist[X]y{y'} + \sympy{F_SQCLength(delta)}.
	\end{displaymath}
	\item $Z_i$ is the cone of radius $r_0$ over $\left(Y_i, \distV[Y_i]\right)$ and $v_i$ its apex.
	\item $\iota_i : Y_i \rightarrow Z_i$ and $p_i : Z(Y_i) \setminus\{v_i\} \rightarrow Y_i$ are the comparison maps defined in the previous section.
\end{enumerate}
The \emph{cone-off} of radius $r_0$ over $X$ relatively to $Y$ is the space obtained by attaching each cone $Z_i$ on $X$ along $Y_i$ according to $\iota_i$.
We denote it by $\dot X(Y, r_0)$ or simply $\dot X$.

\paragraph{}
The next step is to define a metric on $\dot X$.
Given $x$ and $x'$ two points of $\dot X$ we denote by $\dist[SC]x{x'}$ the minimal distance between two points of $X \sqcup \left(\bigsqcup_{i \in I} Z_i\right)$ whose images in $\dot X$ are respectively $x$ and $x'$.

\rem 
If $x$ and $x'$ are two points of the base $X$, $\dist[SC]x{x'}$ can be computed as follows:
\begin{displaymath}
	\dist[SC]x{x'} = \min \left[ \fantomB\dist[X]x{x'}, \inf \left\{\mu \left( \dist[Y_i]x{x'}\right) | i \in I,\; x,x' \in Y_i \right\}\right].
\end{displaymath}
In particular, 
\begin{displaymath}
\mu \left(\dist[X]x{x'}\right) \leq \dist[SC]x{x'} \leq \dist[X] x{x'}.
\end{displaymath}
Moreover, if there is $ i \in I$ such that $x,x' \in Y_i$ then $\dist[SC] x{x'} \leq \mu\left(\dist[X]x{x'}\right) +\sympy{F_diffSCMu(delta)}$.
	
\begin{defi}
	Let $x$ and $x'$ be two points of $\dot X$.
	A chain between $x$ and $x'$ is a finite sequence $C=\left(z_1,\dots,z_m\right)$ such that $z_1 = x$ and $z_m = x'$.
	Its length is $l(C) = \dist[SC]{z_1}{z_2}+ \dots+ \dist[SC]{z_{m-1}}{z_m}$.
\end{defi}

\begin{prop}
Given $x$ and $x'$ in $\dot X$, the following formula defines a length metric on $\dot X$.
	\begin{displaymath}
		\dist[\dot X]x{x'} = \inf \left\{ l(C) | C \text{ chain between }x \text{ and }x'\right\}.
	\end{displaymath}
\end{prop}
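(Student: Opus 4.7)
The plan is to check the metric axioms in turn and then to verify the length-metric property. Symmetry is automatic since reversing a chain $(z_1,\dots,z_m)$ preserves its length; the triangle inequality follows by concatenating two chains at their common endpoint; finiteness and vanishing on the diagonal come from the two-term chain $(x,x')$ whose length is the finite quantity $\dist[SC]x{x'}$. Thus the formula immediately defines a pseudo-metric on $\dot X$.

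The only genuinely substantive step is non-degeneracy: $\dist[\dot X]x{x'}=0$ must force $x=x'$. I would argue that, for every $x\in\dot X$, there is a small ball around $x$ on which any chain-length is bounded below by a positive function of the genuine distance to the other endpoint. When $x$ lies in the base $X$, or in a cone strictly away from its apex, this follows from the comparison $\mu\!\left(\dist[X]y{y'}\right)\leq\dist[SC]y{y'}\leq\dist[X]y{y'}$ recorded just after the definition of $\dist[SC]{\cdot}{\cdot}$ (and its analogue inside each $Z_i$), together with the fact that $\mu$ is non-decreasing and vanishes only at $0$. For an apex $x=v_i$ the crucial observation is that $v_i$ is glued to the rest of $\dot X$ only through the copy $\iota_i(Y_i)$ sitting at cone-radius $r_0$: a chain leaving $v_i$ either stays inside $Z_i$, in which case $l(C)$ dominates the intrinsic cone distance, or it must accumulate at least length $r_0$ before entering another piece. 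Refining a chain so that consecutive terms stay within the comparison radius and summing the corresponding lower bounds yields the desired positivity.

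Once non-degeneracy is established, the length-metric property is a standard subdivision argument. Each piece $X$ and $Z_i$ is itself a length space (for the cones this was recalled just before the statement), so every transition $\dist[SC]{z_k}{z_{k+1}}$ in a chain can be realised, up to arbitrarily small error, by a rectifiable path inside one of the pieces. Concatenating such paths turns a chain $C$ into a rectifiable path in $\dot X$ of length at most $l(C)+\varepsilon$, and conversely every rectifiable path in $\dot X$ can be finely discretised into a chain of comparable length; taking infima in both directions forces the chain-distance and the path-length infimum to coincide.

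The hardest part will be the apex step in the non-degeneracy argument: one must rule out the possibility that a chain makes many short zig-zags between $Z_i$ and neighbouring pieces sharing portions of $Y_i$ and thereby collapses to zero length. I expect this to be handled by combining the lower bound $\mu(\dist[X]\cdot\cdot)\leq\dist[SC]\cdot\cdot$ with a quantitative use of the quasi-convexity of the $Y_i$ and the overlap invariant $\Delta(Y)$.
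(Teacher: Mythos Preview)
The paper does not prove this proposition: at the start of Section~\ref{sec:cone-off} it announces that all proofs for the cone-off construction are deferred to \cite{Coulon:il} and \cite{Coulon:2013tx}. Your outline is the standard verification for such chain/gluing metrics, and the overall strategy is correct. Two points deserve correction, however.

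First, the two-term chain $(x,x')$ need \emph{not} have finite length: $\dist[SC]x{x'}$ is the infimum of distances between preimages in the disjoint union $X\sqcup\bigsqcup_i Z_i$, and this equals $+\infty$ whenever $x$ and $x'$ have no preimages lying in a common piece (for instance $x=v_i$ and $x'=v_j$ with $i\neq j$). Finiteness of $\dist[\dot X]x{x'}$ follows instead from connectedness: every cone is glued to the geodesic space $X$ along its base, so one can always route a finite chain through $X$.

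Second, and more importantly, your closing expectation that non-degeneracy requires the overlap invariant $\Delta(Y)$ is misplaced; the proposition carries no hypothesis on $\Delta(Y)$ and holds unconditionally. The zig-zag worry dissolves once you observe that a point $(y,r)\in Z_i$ with $0\leq r<r_0$ has a \emph{unique} preimage in the disjoint union (namely in $Z_i$), so consecutive chain steps involving such a point are forced to be $Z_i$-distances; a chain from $v_i$ therefore accumulates length at least $r_0$ before it can touch any other piece, exactly as you said, and no quasi-convexity is needed to see it. For points of the base $X$, the paper already remarks (just after the proposition) that chains between them can be taken entirely in $X$; then one uses that $\mu(t)/t\to 1$ as $t\to 0$: split any such chain into the steps with $\dist[X]{z_k}{z_{k+1}}$ below a fixed threshold $t_0$ (on which, say, $\mu(t)\geq t/2$, so these steps sum to at least $\tfrac12\dist[X]x{x'}$ if they cover the whole chain) and those above (each contributing at least $\mu(t_0)>0$). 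This yields a uniform positive lower bound with no reference to $\Delta(Y)$ or to the quasi-convexity of the $Y_i$.
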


\paragraph{}Note that given a chain between two points of $X$, one can always find an shorter chain joining the same extremities, whose points belong to $X$. (Just apply the triangle inequality in $X \sqcup \left(\bigsqcup_{i \in I} Z_i\right)$.)
Therefore, in the rest of the section, we will only consider chains whose points lie in $X$.

\rem In the rest of Section~\ref{sec:cone-off over a metric space}, we will work with two metric spaces : $X$ and $\dot X$.
Unless stated otherwise all distances, Gromov's products and geodesics are computed with the distance of $X$.
To avoid any confusion the distance between two points $x$ and $x'$ in $\dot X$ will be written $\dist[\dot X] x{x'}$.

\begin{theo}[see {\cite[Prop. 6.4]{Coulon:2013tx} or \cite[Coro. 5.27]{Dahmani:2011vu}}]
\label{cone-off globally hyperbolic}
	There exist positive numbers $\delta_0$, $\delta_1$ and $\Delta_0$ and $r_1$ which do not depend on $X$ or $Y$ with the following property.
	If $r_0 \geq r_1$, $\delta \leq \delta_0$ and $\Delta(Y) \leq \Delta_0$, then $\dot X(Y,r_0)$ is $\delta_1$-hyperbolic.
\end{theo}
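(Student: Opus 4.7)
The plan is to verify Gromov's four-point condition directly on $\dot X$. Given any four points of $\dot X$, I would first reduce to the case when all four lie in the base $X$ by projecting each apex $v_i$ that appears down to its base $Y_i$; this costs at most $r_0$ in each coordinate and only adds a constant to whatever hyperbolicity constant is eventually produced. The real task is therefore to bound Gromov products on $X$ measured with $\distV[\dot X]$.

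The second step is a structural analysis of shortest chains. Because one may always straighten inside each individual space, a nearly optimal chain $(z_1,\dots,z_m)$ joining two points of $X$ splits into consecutive blocks, each block being either an $X$-segment of length $\dist[X]{z_{k-1}}{z_k}$ or a cone-transit of length $\mu\bigl(\dist[Y_i]{z_{k-1}}{z_k}\bigr)$ for some $i$. Using strong quasi-convexity of the $Y_i$'s, the $Y_i$-distance is comparable to the $X$-distance with error $\sympy{F_SQCLength(delta)}$; and using the hypothesis $\Delta(Y)\leq \Delta_0$, two consecutive cone-transits through distinct $Y_i$ and $Y_j$ contribute $X$-pieces whose overlap is $\leq\Delta_0$, so they do not telescope. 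Hence the broken path obtained by replacing each cone-transit $(z_{k-1},z_k)$ by an $X$-geodesic $\geo{z_{k-1}}{z_k}$ is, up to the error in Corollary~\ref{stability discrete quasi-geodesics}, a discrete quasi-geodesic in $X$ provided each piece is long compared to $\delta$; the stability of local quasi-geodesics then shows this broken path stays close to a genuine $X$-geodesic between the endpoints.

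The third step converts $X$-hyperbolicity into $\dot X$-hyperbolicity via $\mu$. The key inequality is the concavity consequence $\mu(r+s)\leq\mu(r+t)+\mu(t+s)-\mu(t)$ displayed in~(\ref{eqn: prop mu for shorthening}), which is exactly what is needed to push the four-point inequality through $\mu$: if $\delta$ is so small that $\mu(\cdot+O(\delta))-\mu(\cdot)$ is bounded by a universal constant, then $4\delta$-hyperbolicity in $X$ transfers to a bounded defect in $\dot X$ for quadruples which only see $X$-pieces. Simultaneously, each individual cone $Z_i$ is essentially a subset of the hyperbolic plane (its distance is defined by the hyperbolic law of cosines), hence genuinely CAT($-1$) and in particular uniformly $\delta_1'$-hyperbolic for some absolute $\delta_1'$; so quadruples confined to a single cone are handled for free. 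The mixed case, where the optimal chains for the six pairwise distances visit different sets of cones, is controlled by combining the two previous estimates at each block boundary, the small-overlap hypothesis guaranteeing that the boundary corrections are absorbed by a single universal constant.

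The main obstacle is the last combination step: one must choose $r_1$, $\delta_0$ and $\Delta_0$ jointly so that the defects accumulated at block boundaries (of which a chain may have arbitrarily many) do not add up. This is achieved by observing that the $\mu$-image of any single block length is bounded above and that the quasi-geodesic approximation in the second step already collapses the problem to a uniformly bounded number of \emph{essentially different} configurations: once the $X$-geodesic between two projected points is fixed, the relevant cones are those whose $Y_i$ has a long overlap with it, and by $\Delta(Y)\leq\Delta_0$ their contributions to the Gromov product are additive with bounded error per cone, while the number of cones contributing to any given product is itself bounded by hyperbolicity of $X$. Choosing $r_1$ large enforces $\mu(t)\approx t$ on the relevant scale and makes cone-transits cheaper than chords, $\delta_0$ small forces the $X$-geometry to look trivial at the scale of $r_0$, and $\Delta_0$ small decouples the cones; with these three calibrations the four-point defect in $\dot X$ is bounded by a universal $\delta_1$.
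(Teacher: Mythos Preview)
The paper does not give its own proof of this statement; it is quoted as a black box from \cite{Coulon:2013tx} and \cite{Dahmani:2011vu}. In those references the argument is not a direct verification of the four-point condition but a local-to-global one: one first checks that $\dot X$ is hyperbolic at a fixed mesoscopic scale (balls of radius a definite fraction of $r_0$ sit either essentially inside a single cone, which is $\mathrm{CAT}(-1)$, or essentially inside $X$), and then appeals to a Cartan--Hadamard type theorem asserting that a length space which is uniformly locally hyperbolic is globally hyperbolic with a controlled constant.

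Your direct approach has a genuine gap in the last step. You correctly see that a near-optimal chain may cross arbitrarily many cones and worry about error accumulation at block boundaries, but your resolution does not work. The assertion that ``the number of cones contributing to any given product is itself bounded by hyperbolicity of $X$'' is false: nothing prevents a single $X$-geodesic from meeting an unbounded number of the $Y_i$'s in long subsegments---the hypothesis $\Delta(Y)\leq\Delta_0$ bounds the overlap of any \emph{two} of the $Y_i$'s, not the number of them that a given geodesic may visit. Likewise, ``$\mu(t)\approx t$ on the relevant scale'' is the opposite of what happens: $\mu$ is bounded above by $2r_0$, so cone-transits compress arbitrarily long $X$-segments to bounded $\dot X$-length, and this is precisely the regime in which the $\dot X$-distance decouples from the $X$-distance and the four-point inequality cannot be pushed through $\mu$ blockwise. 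Without a mechanism that controls the global defect independently of the number of cone-transits---which is exactly what the local-to-global theorem supplies in the cited proofs---the argument does not close. A secondary issue: projecting an apex to its base costs $r_0$, so your first reduction already makes $\delta_1$ depend on $r_0$, contrary to the statement.
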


\subsection{Shortening chains}
\label{sec:shortening chains}

\paragraph{} Our goal is now to compare the geometry of $\dot X$ and $X$.
In \cite{DelGro08}, T.~Delzant and M.~Gromov proved that the natural map $X \rightarrow \dot X$ restricted to any ball of radius $1000 \delta$ is a quasi-isometric embedding. 
For our purpose we need to compare $X$ and $\dot X$ at a larger scale.
In particular we have to take into account paths passing through the apices of $\dot X$.

\paragraph{}Coarsly speaking we prove that the projection $p$ preserves the shapes.
For instance if $x$ and $x'$ are two points of $X$, the projection by $p$ of a quasi-geodesic of $\dot X$ between them remains in the neighbourhood of any geodesic of $X$ joining $x$ and $x'$ (see Proposition~\ref{projection of quasi-geodesic}).
To that end we proceed in two steps.
Let $x$, $y$, $z$ and $t$ be four points of $X$.
If $\gro xty$ or $\gro xtz$ is large (compare to $\Delta(Y)$ and $\delta$) we first explain how to shorten the chain $C=(x,y,z,t)$ (see Proposition~\ref{res: shortening chain - 4 points}).
Then we combine this fact with the stability of discrete quasi-geodesics to show that the points of a chain between $x$ and $x'$ whose length approximates $\dist[\dot X] x{x'}$ lie in the neighbourhood of $\geo x{x'}$ (see Proposition~\ref{res: chain in neighbourhood of geodesic}).

\begin{lemm}
\label{res: shortening chain - 2 points}
	Let $x, x' \in X$ and $p,p' \in \geo x{x'}$.
	There exists a chain $C$ joining $p$ to $p'$ whose length is at most $\dist[SC] x{x'} +\sympy{F_shorteningChain2Points(delta)}$.
\end{lemm}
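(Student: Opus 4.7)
The plan is to split into two cases according to which of the two terms in the formula for $\dist[SC]x{x'}$ from the Remark above is (approximately) the smallest. If $\dist[SC]x{x'} = \dist[X]x{x'}$, then since $p, p' \in \geo x{x'}$ we have $\dist[X]p{p'} \leq \dist[X]x{x'}$, and the trivial two-point chain $C = (p, p')$ has length $\dist[SC]p{p'} \leq \dist[X]p{p'} \leq \dist[SC]x{x'}$. No error term is needed in this case.

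In the remaining case, for every $\epsilon > 0$ we can pick $i \in I$ with $x, x' \in Y_i$ such that $\mu\!\left(\dist[Y_i]x{x'}\right) \leq \dist[SC]x{x'} + \epsilon$, and we route the chain through $Y_i$. The key geometric observation is that since $p$ and $p'$ lie on $\geo x{x'}$ we have $\gro x{x'}p = \gro x{x'}{p'} = 0$. The set $Y_i$ is strongly quasi-convex, hence $\alpha$-quasi-convex in the sense of Definition~\ref{def quasi-convex} for some $\alpha$ depending only on $\delta$. Definition~\ref{def quasi-convex} then furnishes points $q, q' \in Y_i$ that approximately project $p$ and $p'$ onto $Y_i$, with $\dist[X]p q$ and $\dist[X]{p'}{q'}$ bounded by a constant depending only on $\delta$.

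Consider the four-point chain $C = (p, q, q', p')$. The two outer distances $\dist[SC]p q$ and $\dist[SC]{q'}{p'}$ are each bounded by the corresponding $X$-distances, hence by a constant depending only on $\delta$. For the middle piece, since $q, q' \in Y_i$, the Remark on $\dist[SC]$ gives $\dist[SC]q{q'} \leq \mu\!\left(\dist[Y_i]q{q'}\right)$. The strong-quasi-convexity inequality yields $\dist[Y_i]q{q'} \leq \dist[X]q{q'} + O(\delta)$, and the triangle inequality combined with $\dist[X]p{p'} \leq \dist[X]x{x'} \leq \dist[Y_i]x{x'}$ gives $\dist[Y_i]q{q'} \leq \dist[Y_i]x{x'} + O(\delta)$. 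By subadditivity of $\mu$ and the bound $\mu(t) \leq t$, we then deduce
\[
\mu\!\left(\dist[Y_i]q{q'}\right) \leq \mu\!\left(\dist[Y_i]x{x'}\right) + O(\delta) \leq \dist[SC]x{x'} + \epsilon + O(\delta).
\]
Letting $\epsilon \to 0$ concludes the argument. There is no real obstacle here; the main difficulty is bookkeeping, namely checking that every error term can be traced either to strong quasi-convexity, to the projection estimate on quasi-convex sets, or to the concavity and subadditivity of $\mu$, so that the final additive constant depends only on $\delta$ and not on $r_0$ or the family $Y$.
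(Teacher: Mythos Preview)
Your proof is correct and follows essentially the same approach as the paper: split on whether $\dist[SC]x{x'}=\dist[X]x{x'}$, and in the nontrivial case route the chain $(p,q,q',p')$ through points $q,q'\in Y_i$ close to $p,p'$ obtained from quasi-convexity. The only difference is a minor simplification in the paper: rather than choosing $i$ to $\epsilon$-approximate the infimum and letting $\epsilon\to 0$, the paper picks \emph{any} $i$ with $x,x'\in Y_i$ and uses the universal lower bound $\mu(\dist[X]x{x'})\leq \dist[SC]x{x'}$ from the Remark, which avoids the limit argument entirely.
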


\begin{proof}
	If $\dist[SC] x{x'} = \dist x{x'}$ then the chain $C = (p,p')$ works.
	Thus we can assume that there exists $i \in I$ such that $x, x'\in Y_i$.
	The subset $Y_i$ being $\sympy{F_SQCQc(delta)}$-quasi-convex, there are $q, q' \in Y_i$ such that $\dist pq \leq \sympy{F_SQCQc(delta)}$ and $\dist{p'}{q'} \leq \sympy{F_SQCQc(delta)}$.
	We choose for $C$ the chain $C = (p,q,q',p')$.
	Its length is bounded above by $\mu\left(\dist q{q'}\right) +\sympy{Faux1_shorteningChain2Points(delta)}$.
	However $\dist q{q'} \leq \dist x{x'} +\sympy{Faux2_shorteningChain2Points(delta)}$.
	Consequently $l(C) \leq \mu\left(\dist x{x'}\right) +\sympy{F_shorteningChain2Points(delta)} \leq \dist[SC]x{x'} + \sympy{F_shorteningChain2Points(delta)}$.
\end{proof}

\begin{lemm}
\label{res: shortening chain - 3 points}
	Let $x,y,z \in X$, $p \in \geo xy$ and $q \in \geo yz$.
	We assume that there is $i \in I$ such that $x,y \in Y_i$ but there is no $j \in I$ such that $x,y,z \in Y_j$.
	Then there exists a chain $C$ joining $p$ to $z$ satisfying
	\begin{displaymath}
		l(C) \leq 2\dist pq + \dist[SC] yz - \dist yq + \Delta(Y) +\sympy{F_shorteningChain3Points(delta)}.
	\end{displaymath}
\end{lemm}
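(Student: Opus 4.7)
My strategy is to build $C$ by exploiting the cone $Z_i$ over $Y_i$: since $x,y \in Y_i$ we can slide $p$ into $Y_i$ at bounded cost, use $Z_i$ to shortcut to a point close to $q$, and then revert to the base metric to reach $z$ by concatenating with the tail of a near-optimal chain for $\dist[SC] yz$. The hypothesis that no $Y_j$ contains $x,y,z$ forces $z \notin Y_i$, so the cone $Z_i$ cannot carry us all the way to $z$, and it also bounds any overlap between $Y_i$ and a $Y_j$ appearing further along the chain by $\Delta(Y)$.

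Concretely, strong quasi-convexity of $Y_i$ applied to $p \in \geo xy$ gives $p' \in Y_i$ with $\dist p{p'} = O(\delta)$. Fix a chain $\Gamma = (y = u_0, u_1, \ldots, u_m = z)$ with $l(\Gamma) \leq \dist[SC] yz + \epsilon$ for arbitrarily small $\epsilon > 0$. Then locate an index $k$ so that $u_k$ lies within $\Delta(Y) + O(\delta)$ of the geodesic $\geo yz$ near $q$ (tracking $\Gamma$ against $\geo yz$ using hyperbolicity and the overlap bound), and project $u_k$ onto $Y_i$ to obtain $q' \in Y_i$ with $\dist{u_k}{q'} \leq \Delta(Y) + O(\delta)$. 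The candidate chain is
\begin{displaymath}
C = (p,\, p',\, q',\, u_k,\, u_{k+1},\, \ldots,\, u_m).
\end{displaymath}

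Edge by edge: the short hops satisfy $\dist[SC] p{p'} = O(\delta)$ and $\dist[SC] {q'}{u_k} \leq \Delta(Y) + O(\delta)$; the cone leg through $Z_i$ satisfies $\dist[SC] {p'}{q'} \leq \mu(\dist{p'}{q'}) + O(\delta) \leq \dist{p'}{q'} + O(\delta)$ since $p',q' \in Y_i$ and $\mu(t) \leq t$, and by the triangle inequality $\dist{p'}{q'} \leq \dist pq + \dist q{u_k} + \dist{u_k}{q'} + O(\delta) \leq \dist pq + \Delta(Y) + O(\delta)$; finally the tail along $\Gamma$ satisfies $l(u_k, \ldots, u_m) = l(\Gamma) - l(u_0, \ldots, u_k) \leq \dist[SC] yz - \dist y{u_k} + O(\delta) \leq \dist[SC] yz - \dist yq + \Delta(Y) + O(\delta)$, using that $l(u_0, \ldots, u_k) \geq \dist y{u_k}$ and that $\dist q{u_k} \leq \Delta(Y) + O(\delta)$. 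Summing the four pieces yields $l(C) \leq 2 \dist pq + \dist[SC] yz - \dist yq + \Delta(Y) + O(\delta)$.

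The main obstacle is the tracking step: producing an index $k$ with both $u_k$ close to $q$ in $X$ and the corresponding initial $\Gamma$-length at least $\dist y{u_k}$ (up to $O(\delta)$). This requires comparing the piecewise chain $\Gamma$ to the geodesic $\geo yz$ using hyperbolicity, and invoking the hypothesis that no $Y_j$ contains $x, y, z$ to ensure that every cone edge of $\Gamma$ relevant to the overlap lies in some $Y_j$ with $j \ne i$, so that the overlap is bounded by $\Delta(Y)$ rather than by an uncontrolled quantity. The remaining edge-by-edge estimates are then routine applications of subadditivity of $\mu$ and the triangle inequality, which give the final constants.
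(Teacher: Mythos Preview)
Your approach has a genuine gap at the tracking step, and the paper's argument is radically simpler.

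The key problem is that a near-optimal chain $\Gamma$ for $\dist[SC] yz$ need not pass anywhere near an arbitrary point $q \in \geo yz$. Recall that $\distV[SC]$ is a \emph{single-step} quantity: either $\dist[SC] yz = \dist yz$, or $\dist[SC] yz = \mu(\dist[Y_j] yz)$ for some $j$ with $y,z \in Y_j$. In the latter case the optimal ``chain'' is simply $\Gamma = (y,z)$, and if $q$ lies in the middle of $\geo yz$ then neither $u_0 = y$ nor $u_1 = z$ is close to $q$; your tracking step fails outright. Even if one could locate such a $u_k$, the claim $d(u_k, Y_i) \leq \Delta(Y) + O(\delta)$ is unjustified: the quantity $\diaminter{Y_i}{Y_j}$ bounds the overlap of two quasi-convex sets, not the distance from an arbitrary point of $Y_j$ (or of $\geo yz$) to $Y_i$. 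Since $z \notin Y_i$, a point $q$ far along $\geo yz$ can be far from $Y_i$.

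The paper instead splits on exactly the dichotomy above. If some $Y_j$ contains $y,z$ then necessarily $j \neq i$, so $\gro xzy \leq \diaminter{Y_i}{Y_j} \leq \Delta(Y)$; the triangle inequality then gives $\dist yq \leq \dist pq + \Delta(Y)$, and one takes $C = (p) \cup C_0$ where $C_0$ is the chain from $q$ to $z$ provided by Lemma~\ref{res: shortening chain - 2 points}. If no such $Y_j$ exists then $\dist[SC] yz = \dist yz$, so $\dist[SC] qz \leq \dist[SC] yz - \dist yq$ and the chain $C = (p,q,z)$ already works. No tracking, no projection onto $Y_i$, no use of the cone $Z_i$ at all.
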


\begin{proof}
	We distinguish two cases.
	Assume first that there exists $j \in I$ such that $y,z \in Y_j$.
	According to our hypothesis we necessary have $i \neq j$.
	Therefore $\diaminter{\geo xy}{\geo yz}\leq \diaminter{Y_i}{Y_j} \leq \Delta(Y)$ i.e., $\gro xzy \leq \Delta(Y)$.
	It follows from the triangle inequality that 
	\begin{equation}
	\label{eqn: shortening chain - 3 points}
		\dist yq \leq \gro xzy + \dist pq \leq \dist pq +\Delta(Y).
	\end{equation}
	By Lemma~\ref{res: shortening chain - 2 points}, there exists a chain $C_0$ joining $q$ to $z$ whose length is at most $\dist[SC] yz + \sympy{Faux1_shorteningChain3Points(delta)}$.
	We obtain $C$ by adding $p$ at the beginning of $C_0$.
	It satisfies $l(C) \leq \dist pq + \dist[SC] yz +\sympy{Faux1_shorteningChain3Points(delta)}$.
	Combined with (\ref{eqn: shortening chain - 3 points}) we get the required inequality.
	
	\paragraph{}Assume now that $\dist[SC] yz = \dist yz$.
	Then $\dist[SC] qz \leq \dist[SC] yz - \dist yq$.
	We choose for $C$ the chain $C = (p,q,z)$ which satisfies $l(C) \leq \dist pq +\dist[SC] yz - \dist yq$.
\end{proof}

\begin{lemm}
\label{res: shorteniing chain - pre 4 points}
	Let $x,y,z,t \in X$. 
	If there exists $i \in I$ such that $x,t \in Y_i$ then 
	\begin{displaymath}
		\dist[SC] xt \leq \dist[SC] xy + \dist[SC] yz + \dist[SC] zt - \mu\left(\max \left\{\gro xty, \gro xtz \right\}\right)  + \sympy{F_shorteningChainPre4Points(delta)}
	\end{displaymath}
\end{lemm}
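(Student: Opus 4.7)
The idea is to exploit the cone over $Y_i$ to travel directly from $x$ to $t$ in the SC-metric, and to use the concavity of $\mu$ to extract the savings $\mu(\max\{\gro xty,\gro xtz\})$ from the triangle inequality. First, since $x,t\in Y_i$, the remark following the definition of $\dist[SC]$ (together with the strong quasi-convexity of $Y_i$, which gives $\dist[Y_i]xt\leq\dist[X]xt+\sympy{F_SQCLength(delta)}$) yields an estimate of the form
\begin{displaymath}
	\dist[SC]xt \leq \mu\left(\dist[X]xt\right) + C,
\end{displaymath}
where $C$ depends only on $\delta$, using subadditivity of $\mu$ and the inequality $\mu(c)\leq c$.

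By symmetry I may assume $\gro xty\geq\gro xtz$, and I set $g=\gro xty$. Expanding the Gromov product and applying the triangle inequality $\dist yt\leq\dist yz+\dist zt$ yields
\begin{displaymath}
	\dist[X]xt = \dist xy+\dist yt-2g \leq \dist xy+\dist yz+\dist zt-2g.
\end{displaymath}
Monotonicity of $\mu$ thus reduces the problem to controlling $\mu\left(\dist xy+\dist yz+\dist zt-2g\right)$ from above.

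The heart of the argument is an application of the concavity inequality~(\ref{eqn: prop mu for shorthening}) to the three nonnegative parameters $r=\dist xy-g$, $s=\dist yz+\dist zt-g$ and the intermediate value $g$. Nonnegativity of $r$ and $s$ follows from the standard bounds $g\leq\dist xy$ and $g\leq\dist yt\leq\dist yz+\dist zt$ on the Gromov product. This yields
\begin{displaymath}
	\mu(\dist[X]xt) \leq \mu(\dist xy) + \mu(\dist yz+\dist zt) - \mu(g).
\end{displaymath}
A further use of subadditivity splits $\mu(\dist yz+\dist zt)\leq\mu(\dist yz)+\mu(\dist zt)$, and the inequality $\mu(\dist[X]a{a'})\leq\dist[SC]a{a'}$ from the remark upgrades each of the three $\mu(\dist[X]\cdot\cdot)$ terms to its $\dist[SC]$ counterpart. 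Combining with the estimate of the first paragraph produces the claimed inequality.

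There is no serious obstacle here; the proof is essentially a linear walk through the two structural properties of $\mu$ (subadditivity and the concavity inequality~(\ref{eqn: prop mu for shorthening})) together with the strong quasi-convexity of $Y_i$. The only point requiring a moment of care is verifying that the three parameters fed into~(\ref{eqn: prop mu for shorthening}) are nonnegative, which is immediate from the usual upper bounds on the Gromov product.
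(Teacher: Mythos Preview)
Your proof is correct and follows essentially the same approach as the paper. The only cosmetic difference is the order in which you apply the tools: the paper applies the concavity inequality~(\ref{eqn: prop mu for shorthening}) directly to $\dist xt=\dist xy+\dist yt-2\gro xty$ to obtain $\mu(\dist xt)\leq\mu(\dist xy)+\mu(\dist yt)-\mu(\gro xty)$, and only then uses monotonicity and subadditivity on $\mu(\dist yt)$; you first pass to $\dist xy+\dist yz+\dist zt-2g$ via monotonicity and apply~(\ref{eqn: prop mu for shorthening}) afterwards, but the ingredients and the resulting estimate are identical.
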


\begin{proof}
	Since $x$ and $t$ are in $Y_i$, $\dist[SC] xt \leq \mu\left(\dist xt\right) + \sympy{F_shorteningChainPre4Points(delta)}$.
	Applying~(\ref{eqn: prop mu for shorthening}) we get 
	\begin{displaymath}
		\mu(\dist xt) \leq \mu(\dist xy) +\mu(\dist yt) -\mu(\gro xty).
	\end{displaymath}
	However by triangle inequality $\mu(\dist yt) \leq \mu(\dist yz) +\mu(\dist zt)$.
	Consequently $\dist[SC] xt \leq \dist[SC] xy + \dist[SC] yz + \dist[SC] zt - \mu(\gro xty ) +\sympy{F_shorteningChainPre4Points(delta)}$.
	By symmetry we have the same inequality with $\gro xtz$ instead of $\gro xty$.
\end{proof}
\begin{prop}
\label{res: shortening chain - 4 points}
	Let $x,y,z,t \in X$.
	There exists a chain $C$ joining $x$ to $t$  such that 
	\begin{displaymath}
		l(C) \leq \dist[SC] xy + \dist[SC] yz + \dist[SC] zt - \mu\left(\max \left\{\gro xty, \gro xtz \right\}\right) + 2 \Delta(Y) + \sympy{F_shorteningChain4Points(delta)}
	\end{displaymath}
\end{prop}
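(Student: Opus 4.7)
The proof is a case analysis that combines the three preceding lemmas. We first observe that reversing the chain $(x,y,z,t)\mapsto(t,z,y,x)$ swaps the roles of $\gro xty$ and $\gro xtz$ without affecting the desired bound, so we may assume $k:=\gro xty\geq \gro xtz$.

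If some $Y_i$ contains both $x$ and $t$, Lemma~\ref{res: shorteniing chain - pre 4 points} gives the bound with $C=(x,t)$ and in fact leaves $2\Delta(Y)$ to spare. Assume henceforth that no such $Y_i$ exists. By $\delta$--thinness of the triangle $\{x,y,t\}$, there exist $p\in\geo xy$ and $p'\in\geo yt$ with $\dist yp=\dist y{p'}=k$ and $\dist p{p'}$ bounded by a constant times $\delta$. The strategy is to ``cut the corner at $y$'' by routing through the short edge $pp'$ while shortening each adjacent edge via a cone whenever one is available. A second corner at $z$ may likewise need to be cut; this is where the coefficient $2$ of $\Delta(Y)$ arises.

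The construction splits according to which pairs among $\{x,y\}$, $\{y,z\}$, $\{z,t\}$ share a common $Y_i$. For the sub-chain $x\to p$: when $x,y\in Y_i$, the argument of Lemma~\ref{res: shortening chain - 2 points} (applied with $x$ and $p$ on $\geo xy$) yields a chain of length at most $\mu(\dist xp)$ plus a constant times $\delta$, routed through $Z_i$; otherwise we use the single hop of length $\dist xp=\dist xy-k$. A symmetric analysis handles $p'\to t$, after bounding $\dist[SC]yt$ by $\dist[SC]yz+\dist[SC]zt$ via the triangle inequality for chains. Whenever a cone meeting $y$ fails to contain the opposite vertex of the triangle on which it is used, Lemma~\ref{res: shortening chain - 3 points} is invoked at a cost of at most $\Delta(Y)$ plus a constant times $\delta$; independently, the same can happen at $z$, accounting for the two permissible penalties.

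The decisive algebraic input is inequality~(\ref{eqn: prop mu for shorthening}) together with $\mu(t)\leq t$. It produces the telescoping estimate
\[
\mu(\dist xp)+\mu(\dist{p'}t)\;\leq\;\mu(\dist xy)+\mu(\dist yt)-\mu(k)+c\delta
\]
for an absolute constant $c$, so that the savings on the two edges adjacent to $y$ combine into a single $\mu(k)$ reduction. Summing over the cases above yields the bound of the proposition. The main obstacle is the case bookkeeping: one must match each $\mu$--reduction with the correct edge, verify that the loss $k-\mu(k)\geq 0$ in the no-cone sub-cases is absorbed into the slack already present, and make sure the two potential applications of Lemma~\ref{res: shortening chain - 3 points} are accounted for without overcounting.
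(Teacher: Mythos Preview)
Your telescoping inequality
\[
\mu(\dist xp)+\mu(\dist{p'}t)\;\leq\;\mu(\dist xy)+\mu(\dist yt)-\mu(k)+c\delta
\]
is false. Since $\mu$ is bounded (it equals $2r_0$ once its argument exceeds $\pi\sinh r_0$), take $\dist xy$ and $\dist yt$ so large that $\mu$ is constant on $[\dist xy-k,\dist xy]$ and on $[\dist yt-k,\dist yt]$; then the left side equals $\mu(\dist xy)+\mu(\dist yt)$ while $\mu(k)>0$. Inequality~(\ref{eqn: prop mu for shorthening}) bounds a \emph{single} value $\mu(r+s)$ from above, not a sum of two $\mu$'s; you have applied it with the wrong shape. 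Concretely, concavity only gives $\mu(a)-\mu(a-k)\leq \mu(k)$, which is the opposite of what you need.

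This breaks precisely the sub-case your sketch glosses over: $x,y\in Y_i$ and $z,t\in Y_j$, so both outer edges route through cones. The paper does \emph{not} use concavity to extract $\mu(k)$ here. It first splits via the four-point inequality~(\ref{eqn: hyperbolicity 2}) into two geometric configurations. In the one matching your picture it observes that $i\neq j$ (no $Y_\ell$ contains both $x$ and $t$), hence $\gro xty \leq \dist yz + \diaminter{Y_i}{Y_j} \leq \dist yz + \Delta(Y)$, so that by subadditivity $\mu(k)\leq \mu(\dist yz)+\Delta(Y)\leq \dist[SC]yz+\Delta(Y)$; the short chain $x\to p\to r\to t$ already has length at most $\dist[SC]xy+\dist[SC]zt+c\delta$, and the bound follows. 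In the other configuration ($\dist xz+\dist yt\leq \dist xt+\dist yz+2\delta$) both corners $y$ and $z$ are cut, and the case split is on which of $\{x,y,z\}$, $\{y,z,t\}$, $\{y,z\}$ share a $Y_i$; the only place~(\ref{eqn: prop mu for shorthening}) is actually invoked is when $x,y,z$ lie in a common cone, where it bounds a single $\mu(\dist xs)$. Your reduction to three points via $\dist[SC]yt\leq\dist[SC]yz+\dist[SC]zt$ together with one corner cut does not reach these cases.
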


\begin{proof}
	If there is $i \in I$ such that $x,t \in Y_i$, Lemma~\ref{res: shorteniing chain - pre 4 points} says that the chain $C = (x,t)$ works.
	Therefore, for now on we assume that there is no such $i \in I$.
	By hyperbolicity 
	\begin{equation}
	\label{eqn: shortening chain - 4 points}
		\dist xz + \dist yt \leq \max  \left\{ \fantomB\dist xy + \dist zt, \dist xt + \dist yz\right\} +\sympy{F_hyp2(delta)}
	\end{equation}

	\paragraph{Part 1:}Assume first that the maximum is achieved by $\dist xt + \dist yz$.
	See Figure~\ref{fig: shortening chain 1}.
	In particular it follows that $\gro xtz \leq \gro ytz +\sympy{F10aux1_shorteningChain4Points(delta)}$ and $\gro xty \leq \gro xzy +\sympy{F10aux1_shorteningChain4Points(delta)}$.
	Moreover $\dist yz \geq \gro xty + \gro xtz - \sympy{F10aux1_shorteningChain4Points(delta)}$.
	We denote by $p$ and $q$ (\resp $r$ and $s$) points of $\geo xy$ and $\geo yz$ (\resp $\geo tz$ and $\geo zy$) such that 
	\begin{displaymath}
		\dist yp = \dist yq = \max\{0,\gro xty - \sympy{F10aux1_shorteningChain4Points(delta)}\} 
		\text{ and } 
		\dist zr = \dist zs = \max\{0,\gro xtz - \sympy{F10aux1_shorteningChain4Points(delta)}\}.
	\end{displaymath}
	By hyperbolicity $\dist pq \leq \sympy{F10aux2_shorteningChain4Points(delta)}$ and $\dist rs \leq \sympy{F10aux2_shorteningChain4Points(delta)}$. 
	Furthermore $\dist yz \geq \dist yq + \dist sz$.
	We need to distinguish several cases depending on whether or not the points $x$, $y$, $z$ and $t$ lie in a quasi-convex $Y_i$.
	In each case we implicitly exclude the previous ones.

	\begin{figure}[ht]
	\centering
		\includegraphics[width=0.48\linewidth]{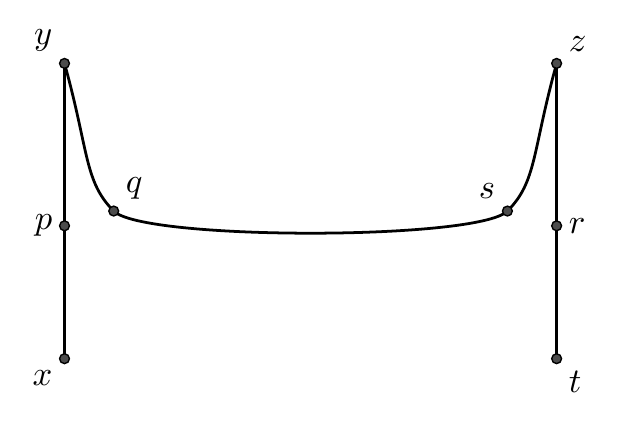}
	\caption{Shortening a four points chain - Part 1}
	\label{fig: shortening chain 1}
	\end{figure}
		
	\paragraph{Case 1.1:} \emph{There exist $i,j \in I$ such that $x,y,z \in Y_i$ and $y,z,t \in Y_j$}.
	According to our assumption at the beginning of the proof $i \neq j$.
	Since $y$ and $z$ belong to $Y_i$ and $Y_j$ they satisfy $\dist yz \leq \diaminter{Y_i}{Y_j} \leq \Delta(Y)$.
	Consequently $\gro xty + \gro xtz \leq \Delta(Y) +\sympy{F11aux1_shorteningChain4Points(delta)}$.
	We choose the chain $C = (x,y,z,t)$. 
	Thus 
	\begin{displaymath}
		l(C) \leq \dist[SC] xy + \dist[SC] yz + \dist[SC] zt - \gro xty - \gro xtz  +\Delta(Y) +\sympy{F11aux1_shorteningChain4Points(delta)}.
	\end{displaymath}

	\paragraph{Case 1.2:} \emph{There exists $i \in I$ such that $x,y,z \in Y_i$.} 
	The subset $Y_i$ being $\sympy{F_SQCQc(delta)}$--quasi-convex, there exists a point $s' \in Y_i$ such that $\dist s{s'} \leq \sympy{F12aux1_shorteningChain4Points(delta)}$.
	Hence $\dist [SC] x{s'} \leq \mu(\dist xs) +\sympy{F12aux2_shorteningChain4Points(delta)}$.
	Recall that $q$ lies on $\geo yz$ between $y$ and $s$.
	By (\ref{eqn: prop mu for shorthening}) we get 
	\begin{displaymath}
		\mu\left(\dist xs\right)
		\leq \mu\left(\dist xp + \dist qs\right) +\sympy{F10aux2_shorteningChain4Points(delta)}
		\leq \mu\left(\dist xy\right) + \mu\left(\dist ys\right) - \mu(\gro xty) + \sympy{F10aux1_shorteningChain4Points(delta) + F10aux2_shorteningChain4Points(delta)}.
	\end{displaymath}
	It follows that $\dist[SC] x{s'} \leq \dist [SC] xy +\dist[SC]yz - \mu(\gro xty) +\sympy{F12aux3_shorteningChain4Points(delta)}$.
	On the other hand, by Lemma~\ref{res: shortening chain - 3 points}, there exists a chain $C_0$ joining $s$ to $t$ such that 
	\begin{displaymath}
		l(C_0) \leq \dist[SC] zt - \dist zr +\Delta(Y) +\sympy{F_shorteningChain3Points(delta) + 2*F10aux2_shorteningChain4Points(delta)} \leq \dist[SC] zt - \gro xtz +\Delta(Y) +\sympy{F12aux4_shorteningChain4Points(delta)}
	\end{displaymath}
	We obtain $C$ by adding $x$ and $s'$ at the beginning of $C_0$.
	Its length satisfies
	\begin{displaymath}
		l(C) \leq \dist[SC] xy + \dist[SC] yz + \dist[SC] zt -\mu(\gro xty) - \gro xtz +\Delta(Y) +\sympy{F12aux5_shorteningChain4Points(delta)}.
	\end{displaymath}
	
	\paragraph{Case 1.3:} \emph{There exists $i \in I$ such that $y,z,t \in Y_i$.} This case is just the symmetric of the previous one.
	
	\paragraph{Case 1.4:} \emph{There exists $i \in I$ such that $y,z \in Y_i$.}
	By Lemma~\ref{res: shortening chain - 2 points} there exists a chain $C_0$ joining $q$ to $s$ whose length is at most $\dist[SC] yz +\sympy{F14aux1_shorteningChain4Points(delta)}$.
	Applying Lemma~\ref{res: shortening chain - 3 points}, there is a chain $C_-$ (\resp $C_+$) joining $x$ to $q$ (\resp $s$ to $t$) such that
	\begin{align*}
		l(C_-) & \leq & \dist[SC] xy - \dist yp +\Delta(Y) +\sympy{F14aux2_shorteningChain4Points(delta)} & \leq & \dist[SC] xy - \gro xty +\Delta(Y) +\sympy{F14aux3_shorteningChain4Points(delta)} \\
		l(C_+) & \leq & \dist[SC] zt - \dist zr +\Delta(Y) +\sympy{F14aux2_shorteningChain4Points(delta)} & \leq & \dist[SC] zt - \gro xtz +\Delta(Y) +\sympy{F14aux3_shorteningChain4Points(delta)}
	\end{align*}
	Concatenating $C_-$, $C_0$ and $C_+$ we obtain a chain $C$ such that 
	\begin{displaymath}
		l(C) \leq \dist[SC] xy + \dist[SC] yz + \dist[SC] zt -\gro xty - \gro xtz +2\Delta(Y) +\sympy{F14aux4_shorteningChain4Points(delta)}.
	\end{displaymath}
	
	\paragraph{Case 1.5:} \emph{This is the last case of Part 1.}
	Negating the previous one there is no $i \in I$ such that $y,z \in Y_i$.
	In particular $\dist[SC] yz = \dist yz$.
	Hence 
	\begin{displaymath}
		\dist[SC] qs \leq \dist[SC] yz - \dist yq - \dist zs \leq \dist[SC]yz -\gro xty - \gro xtz+\sympy{F15aux1_shorteningChain4Points(delta)}.
	\end{displaymath} 
	We put $C_0 = (q,s)$.
	According to Lemma~\ref{res: shortening chain - 2 points} there is a chain $C_-$ (\resp $C_+$) joining $x$ to $p$ (\resp $r$ to $t$) whose length is at most $\dist[SC] xy +\sympy{F15aux2_shorteningChain4Points(delta)}$ (\resp $\dist[SC] tz +\sympy{F15aux2_shorteningChain4Points(delta)}$).
	Concatenating $C_-$, $C_0$ and $C_+$ we obtain a chain $C$ such that 
	\begin{displaymath}
		l(C) \leq \dist[SC] xy + \dist[SC] yz + \dist[SC] zt -\gro xty - \gro xtz +\sympy{F15aux3_shorteningChain4Points(delta)}.
	\end{displaymath}
	
	\paragraph{Part 2:} Assume now that the maximum in (\ref{eqn: shortening chain - 4 points}) is achieved by $\dist xy + \dist zt$.
	See Figure~\ref{fig: shortening chain 2}.
	It follows that $\gro xyt \leq \gro yzt$.
	We assume that $\gro xty \geq \gro xtz$ (the other case is symmetric).
	We denote by $p$ and $q$ the respective points of $\geo xy$ and $\geo ty$ such that $\dist yp = \dist yq = \gro xty$.
	By hyperbolicity, $\dist pq \leq \sympy{F20aux1_shorteningChain4Points(delta)}$.
	On the other hand $\dist tq = \gro xyt \leq \gro yzt$.
	Consequently, if $r$ is the point of $\geo zt$ such that $\dist tr = \gro xyt$ then $\dist qr \leq \sympy{F20aux2_shorteningChain4Points(delta)}$.
	Thus $\dist pr \leq \sympy{F20aux3_shorteningChain4Points(delta)}$.
	Moreover the triangle inequality leads to $\gro xty \leq \dist zy + \dist zt - \gro xyt$ i.e., $\gro xty \leq \dist yz + \dist zr$.
	According to Lemma~\ref{res: shortening chain - 2 points} there exists a chain $C_-$ (\resp $C_+$) joining $x$ to $p$ (\resp $r$ to $t$) such that $l(C_-) \leq \dist[SC] xy +\sympy{F20aux4_shorteningChain4Points(delta)}$ (\resp $l(C_+) \leq \dist[SC] zt +\sympy{F20aux4_shorteningChain4Points(delta)}$).
	As previously we need to distinguish several cases.
	
	\begin{figure}[ht]
		\subfigure[][First configuration]{
			\includegraphics[width=0.48\linewidth]{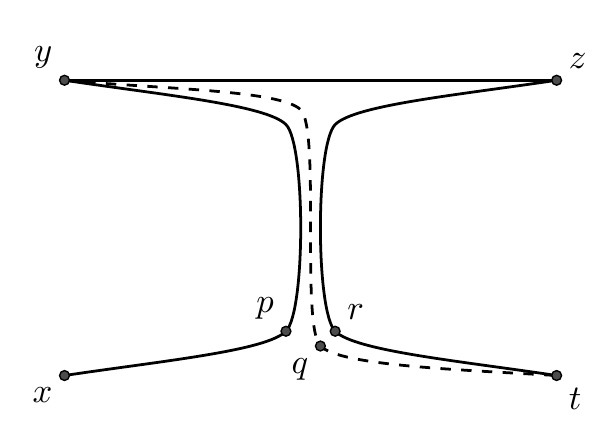}
			\label{fig: shortening chain 2a}
		}
		\subfigure[][Second configuration]{
			\includegraphics[width=0.48\linewidth]{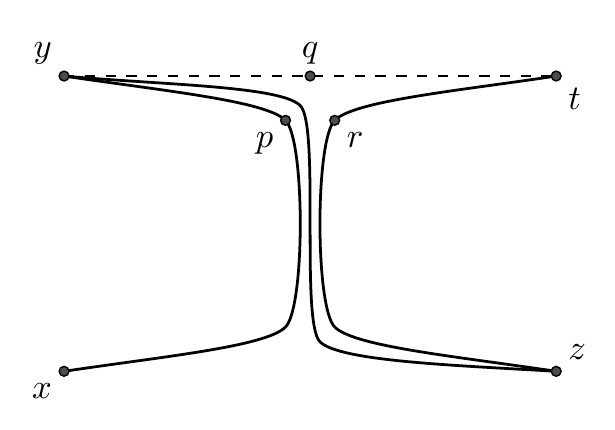}
			\label{fig: shortening chain 2b}
		}
	\caption{Shortening a four points chain - Part 2}
	\label{fig: shortening chain 2}
	\end{figure}

	\paragraph{Case 2.1:} \emph{There exist $i, j \in I$ such that $x,y \in Y_i$ and $z,t \in Y_j$.}
	According to our assumption at the beginning of the proof $i \neq j$.
	In particular $\diaminter{\geo xy}{\geo zt} \leq \diaminter{Y_i}{Y_j} \leq \Delta(Y)$, thus $\gro xty \leq \dist yz + \Delta(Y)$.
	It follows that $\mu(\gro xty) \leq \dist[SC] yz +\Delta(Y)$.
	By contatenating $C_-$ and $C_+$ we obtain a chain whose length satisfies
	\begin{displaymath}
		l(C) \leq \dist[SC]xy + \dist[SC] yz + \dist[SC]zt - \mu(\gro xty) + \Delta(Y) + \sympy{F21aux1_shorteningChain4Points(delta)}
	\end{displaymath}

	\paragraph{Case 2.2:} \emph{There exists $i \in I$ such that $x,y \in Y_i$.}
	In this case $\dist[SC] zt = \dist zt$, thus $\dist[SC] rt \leq \dist[SC] zt - \dist zr$.
	We obtain $C$ by adding $r$ and $t$ at the end of $C_-$.
	This new chain satisfies.
	\begin{displaymath}
		l(C) \leq \dist[SC]xy +  \dist[SC]zt - \dist zr +  \sympy{F22aux1_shorteningChain4Points(delta)}
	\end{displaymath}
	However we proved that $\gro xyt \leq \dist yz + \dist zr$.
	In particular $\mu(\gro xyt)  \leq \dist[SC] yz + \dist zr$.
	Consequently
	\begin{displaymath}
		l(C) \leq \dist[SC]xy + \dist[SC] yz + \dist[SC]zt - \mu(\gro xty) + \sympy{F22aux1_shorteningChain4Points(delta)}
	\end{displaymath}
	
	\paragraph{Case 2.3:} \emph{This is the last case of Part 2.}
	In particular $\dist[SC]xy = \dist xy$.
	It follows that $\dist[SC]xp \leq \dist[SC]xy - \dist yp$ i.e., $\dist[SC]xp \leq \dist[SC]xy -\gro xty$.
	We obtain $C$ by adding $x$ and $p$ at the beginning of $C_+$.
	It satisfies 
	\begin{displaymath}
		l(C) \leq \dist[SC]xy +\dist[SC]zt  - \gro xyt + \sympy{F23aux1_shorteningChain4Points(delta)}
	\end{displaymath}
\end{proof}

\begin{prop}
\label{res: chain in neighbourhood of geodesic}
	Let $\epsilon >0$.
	There exist positive numbers $\delta_0$, $\Delta_0$, $r_1$ and $\eta$ which only depend on $\epsilon$ with the following property.
	Assume that $r_0 \geq r_1$, $\delta \leq \delta_0$ and $\Delta(Y) \leq \Delta_0$.
	Let $x,x' \in X$.
	Let $C$ be a chain of points of $X$ joining $x$ to $x'$.
	If $l(C) \leq \dist[\dot X] x{x'} + \eta$, then every point of $C$ is contained in the $\epsilon$-neighbourhood of $\geo x{x'}$.
\end{prop}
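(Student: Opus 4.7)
The plan is to combine the 4-point shortening (Proposition~\ref{res: shortening chain - 4 points}) with the stability of discrete quasi-geodesics (Corollary~\ref{stability discrete quasi-geodesics}), following the strategy sketched in the paragraph preceding the statement.

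Write $C=(z_1,\dots,z_m)$ with $z_1=x$ and $z_m=x'$. For each $j$ with $2\leq j\leq m-2$, apply Proposition~\ref{res: shortening chain - 4 points} to the four consecutive points $z_{j-1},z_j,z_{j+1},z_{j+2}$ and splice the resulting chain $C_j$ back into $C$ in place of this sub-chain. The new chain still joins $x$ to $x'$ and has length at most
\begin{displaymath}
	l(C) - \mu\!\left(\max\left\{\gro{z_{j-1}}{z_{j+2}}{z_j},\,\gro{z_{j-1}}{z_{j+2}}{z_{j+1}}\right\}\right) + 2\Delta(Y) + c_\delta,
\end{displaymath}
where $c_\delta$ depends only on $\delta$. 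Since this length is at least $\dist[\dot X] x{x'}\geq l(C)-\eta$, the maximum $G_j$ of the two Gromov products satisfies $\mu(G_j)\leq \eta + 2\Delta_0 + c_\delta$ uniformly in $j$. Choosing $r_0\geq r_1$ large enough that $\mu$ remains invertible on this range, and $\eta,\Delta_0,\delta_0$ small enough, one obtains a uniform bound $G_j\leq K(\epsilon)$ that can be taken as small as desired.

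The next task is to turn these 4-point bounds into the 3-point bounds $\gro{z_{j-1}}{z_{j+1}}{z_j}\leq l\delta$ required by Corollary~\ref{stability discrete quasi-geodesics}. When both $\dist[SC]$-edges incident to $z_j$ coincide with the corresponding $X$-distances (no cone shortcut is used at $z_j$), the 2-point replacement $(z_{j-1},z_{j+1})$ of $(z_{j-1},z_j,z_{j+1})$ directly forces $\gro{z_{j-1}}{z_{j+1}}{z_j}\leq \eta/2$ by near-minimality. When cone shortcuts are present, the 4-point bound above, combined with the strong quasi-convexity of the cones and the smallness of $\Delta(Y)$, places $z_{j-1},z_j,z_{j+1}$ near the common geodesic $\geo{z_{j-1}}{z_{j+2}}$ with a controlled ordering along it, which yields the 3-point bound. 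After coalescing consecutive chain points closer than $L\delta$ (these are harmless, being trivially close to each other), Corollary~\ref{stability discrete quasi-geodesics} places the concatenation $\geo{z_1}{z_2}\cup\dots\cup\geo{z_{m-1}}{z_m}$ within $d\delta$ of $\geo x{x'}$, and every $z_j$ lies within $\epsilon$ of $\geo x{x'}$ once the constants are tuned to $\epsilon$.

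The main obstacle I anticipate is this second step: extracting clean 3-point Gromov product bounds in the presence of cone shortcuts, since the direct triangle-inequality argument breaks down because $\dist[SC]$ can be strictly smaller than $\dist[X]$ on such edges. One must appeal to the 4-point estimate together with the geometry of the strongly quasi-convex cones to rule out pathological configurations in which $z_j$ is far from $\geo{z_{j-1}}{z_{j+1}}$ despite the 4-point products being small. Careful tracking of how each of $\delta_0$, $\Delta_0$, $r_1$, $\eta$ must scale with $\epsilon$ throughout is the principal bookkeeping difficulty.
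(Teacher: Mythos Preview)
Your overall strategy is right, but the second step---converting the 4-point Gromov product bound into the 3-point bound needed for Corollary~\ref{stability discrete quasi-geodesics}---is where your argument is incomplete, and not merely for bookkeeping reasons. From $\gro{z_{j-1}}{z_{j+2}}{z_j}$ and $\gro{z_{j-1}}{z_{j+2}}{z_{j+1}}$ both small you only learn that $z_j$ and $z_{j+1}$ are close to $\geo{z_{j-1}}{z_{j+2}}$; you do \emph{not} get any ordering along that geodesic. If the chain backtracks in $X$ (which $\dot X$-near-minimality alone does not rule out when both edges at $z_j$ use cone shortcuts, since $\mu$ compresses $X$-distances), then $\gro{z_{j-1}}{z_{j+1}}{z_j}$ can be large. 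Your appeal to ``controlled ordering'' is asserted, not proved.

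The paper resolves this in two moves you are missing. First, Proposition~\ref{res: shortening chain - 4 points} is applied in the \emph{degenerate} form $y=z$ (to a triple, not a quadruple), which gives the 3-point bound $\mu(\gro{x_j}{x_{j+1}}{y_j})$ small directly---this is exactly Claim~2. You never exploit this; it bypasses your whole case split on cone shortcuts. Second, before invoking any Gromov product bounds, the paper performs a careful \emph{two-stage} subchain extraction producing alternating points $x_0,y_0,x_1,y_1,\dots$ with guaranteed large gaps $\dist{x_{j+1}}{y_j}>(L+l)\delta_0$. This is not the throwaway ``coalescing'' you describe: the extracted subchain is only $\eta'$-near-minimal for an $\eta'$ that grows linearly in the number of extracted points (Claim~1, referenced to \cite{Coulon:il}), so one must bound the local error by controlling how many original points fall between consecutive extracted ones. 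The large gap is then what makes the hyperbolicity step in Claim~3 work: from the 4-point bound on $\gro{y_j}{x_{j+2}}{x_{j+1}}$, hyperbolicity gives $\min\{\gro{x_j}{y_j}{x_{j+1}},\gro{x_j}{x_{j+2}}{x_{j+1}}\}$ small, and the large gap together with Claim~2 forces the first term to be large, selecting the branch you want. Without the extraction you have no such gap, and the conversion fails.
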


\begin{proof}
	We start by defining the constants $\delta_0$, $\Delta_0$, $r_1$ and $\eta$.
 	Given $r_0$ the function $\mu$ defined in Section~\ref{sec:cone-off over a metric space} satisfies
	\begin{equation*}
		\forall t \in \R_+, \quad \mu(t) \geq t - \frac 1{24}\left(1 + \frac 1 {\sinh^2r_0} \right)t^3 
	\end{equation*}
	Thus there exist $r_1\geq 0$ and  $t_0>0$ with the following property.
	If $r_0 \geq r_1$ then for every $t \in \intval 0{t_0}$, $\mu(t) \geq t/2$.
	We now fix $r_0 \geq r_1$.
	Since $\mu$ is increasing, for every $t \in \R_+$ if $\mu(t) <\mu(t_0)$ then $t \leq 2\mu(t)$.
	Let us put $l=\sympy{Fauxl_chainCloseToGeodesic()}$.
	The numbers $L$ and $d$ are given by the stability of discrete quasi-geodesics (Corollary~\ref{stability discrete quasi-geodesics}).
	Without loss of generality, we can assume that $L > l$.
	We choose $\delta_0>0$, $\Delta_0>0$ and $\eta>0$ such that
	\begin{enumerate}
		\item $\sympy{FauxCondition1_chainCloseToGeodesic(Delta0,L,l,eta,delta0)} < \mu(t_0)$,
		\item $\sympy{FauxCondition2_chainCloseToGeodesic(Delta0,L,l,eta,delta0)} \leq l\delta_0$ 
		\item $\sympy{FauxCondition3_chainCloseToGeodesic(d,L,l,delta0)} \leq \epsilon$ 
	\end{enumerate}
	From now on we assume that $\delta \leq \delta_0$ and $\Delta(Y) \leq \Delta_0$.
	In particular $X$ is $\delta_0$-hyperbolic.
	Let $x, x' \in X$ and $C=(z_0, \dots, z_n)$ be a chain of points of $X$ joining $x$ to $x'$ such that $l(C) \leq \dist[\dot X] x{x'} + \eta$.
	Note that for every $i \leq j$, the length of the subchain $(z_i, z_{i+1}, \dots ,z_{j-1},z_j)$ is at most $\dist[\dot X]{z_j}{z_i} + \eta$.
	
	\paragraph{}We now extracts a subchain of $C$. 
	To that end we proceed in two steps. 
	First we define a subchain $C_1= (z_{i_0},\dots z_{i_m})$ of $C$ as explained in \cite[Section 3.2]{Coulon:il}. 
	\begin{itemize}
		\item Put $i_0 = 0$.
		\item Assume that $i_k$ is defined.
		If $\dist{z_{i_k+1}}{z_{i_k}} > \sympy{FauxExtract1_chainCloseToGeodesic(L,l,delta0)}$ then $i_{k+1} = i_k +1$, otherwise $i_{k+1}$ is the largest integer $i \in \intvald {i_k +1}n$ such that $\dist{z_i}{z_{i_k}} \leq \sympy{FauxExtract1_chainCloseToGeodesic(L,l,delta0)}$.
	\end{itemize}
	By construction, for all $k \in \intvald 0{m-2}$ either $\dist{z_{i_{k+2}}}{z_{i_{k+1}}}> \sympy{FauxExtract2_chainCloseToGeodesic(L,l,delta0)}$ or $\dist{z_{i_{k+1}}}{z_{i_k}}> \sympy{FauxExtract2_chainCloseToGeodesic(L,l,delta0)}$.
	Moreover every point of $C$ is $\sympy{FauxExtract1_chainCloseToGeodesic(L,l,delta0)}$-close to a point of $C_1$.
	
	\paragraph{Claim 1.} For every $k,k' \in \intvald 0m$ the length of the subchain $(z_{i_k},\dots, z_{i_{k'}})$ of $C_1$ is bounded above by $\dist[\dot X]{z_{i_k}}{z_{i_{k'}}} + \sympy{FauxErrorChain_chainCloseToGeodesic(kBis,k,L,l,eta,delta0)}$. (See \cite[Lemma 3.2.3]{Coulon:il}).
	
	\paragraph{}We now build the chain $C_2 = (x_0, y_0, x_1, y_1,\dots y_{p-1},x_p)$ as follows.
	\begin{itemize}
		\item Put $x_0 = z_{i_0}$.
		\item Assume that $x_j = z_{i_k}$ is already defined.
		If $\dist{z_{i_{k+1}}}{z_{i_k}}> \sympy{FauxExtract2_chainCloseToGeodesic(L,l,delta0)}$ we put $y_j = x_j$ and $x_{j+1} = z_{i_{k+1}}$, otherwise we chose $y_j = z_{i_{k+1}}$ and $x_{j+1} = z_{i_{k+2}}$.
		(If $z_{i_{k+1}}$ is already the last point of $C_1$ i.e., if $k+1 = m$ we chose $x_{j+1} = z_{i_{k+1}}$.)
	\end{itemize}
	In this way for all $j \in \intvald 0{p-2}$, $\dist{x_{j+1}}{y_j} > \sympy{FauxExtract2_chainCloseToGeodesic(L,l,delta0)}$.
	Moreover, every point of $C$ is $\sympy{FauxXClose_chainCloseToGeodesic(L,l,delta0)}$-close to a point of $\{x_0, x_1,\dots, x_p\}$.
	
	\paragraph{Claim 2.} For all $j \in \intvald 0{p-1}$, we have $\gro {x_j}{x_{j+1}}{y_j} \leq l\delta_0$.
	Let $j \in \intvald 0{p-1}$.
	According to Claim 1, we have
	\begin{displaymath}
		\dist[SC] {x_j}{y_j} + \dist[SC]{y_j}{x_{j+1}} \leq \dist[\dot X]{x_{j+1}}{x_j} +  \sympy{FClaim2aux1_chainCloseToGeodesic(L,l,eta,delta0)}.
	\end{displaymath}
	On the other hand applying Proposition~\ref{res: shortening chain - 4 points} with the points $x_j$, $y_j$ ,$y_j$ and $x_{j+1}$ we obtain a chain joining $x_j$ to $x_{j+1}$ whose length is at most $\dist[SC] {x_j}{y_j} + \dist[SC]{y_j}{x_{j+1}} - \mu(\gro{x_j}{x_{j+1}}{y_j}) + 2\Delta(Y) + \sympy{F_shorteningChain4Points(delta)}$.
	Hence
	\begin{displaymath}
		\mu(\gro{x_j}{x_{j+1}}{y_j}) \leq \sympy{FClaim2aux2_chainCloseToGeodesic(Delta0,L,l,eta,delta0)} < \mu(t_0)
	\end{displaymath}
	It follows from the definitions of $t_0$, $\delta_0$, $\Delta_0$ and $\eta$ that
	\begin{displaymath}
		\gro{x_j}{x_{j+1}}{y_j} \leq \sympy{FClaim2aux3_chainCloseToGeodesic(Delta0,L,l,eta,delta0)}  \leq l \delta_0.
	\end{displaymath}
	
	\paragraph{Claim 3.} For all $j \in \intvald 0{p-2}$, we have $\gro {x_j}{x_{j+2}}{x_{j+1}} \leq l\delta_0$.
	Let $j \in \intvald 0{p-2}$.
	Applying to Claim 1, we have
	\begin{displaymath}
		\dist[SC]{y_j}{x_{j+1}} +\dist[SC]{x_{j+1}}{y_{j+1}} +\dist[SC]{y_{j+1}}{x_{j+2}} \leq \dist[\dot X]{x_{j+2}}{y_j} +  \sympy{FClaim3aux1_chainCloseToGeodesic(L,l,eta,delta0)}.
	\end{displaymath}
	On the other hand according to Proposition~\ref{res: shortening chain - 4 points} applied to the points $y_j$, $x_{j+1}$, $y_{j+1}$ and $x_{j+2}$ there exists a chain joining $y_j$ to $x_{j+2}$ whose length is at most $\dist[SC]{y_j}{x_{j+1}} +\dist[SC]{x_{j+1}}{y_{j+1}} +\dist[SC]{y_{j+1}}{x_{j+2}} - \mu(\gro{y_j}{x_{j+2}}{x_{j+1}}) + 2\Delta(Y) +\sympy{F_shorteningChain4Points(delta)}$.
	Using the same argument as in Claim 2, we obtain that 
	\begin{displaymath}
		\gro{y_j}{x_{j+2}}{x_{j+1}} \leq \sympy{FClaim3aux3_chainCloseToGeodesic(Delta0,L,l,eta,delta0)} \leq (l-1) \delta_0.
	\end{displaymath}
	By hyperbolicity we get
	\begin{displaymath}
		\min\left\{ \gro{y_j}{x_j}{x_{j+1}}, \gro{x_j}{x_{j+2}}{x_{j+1}}\right\} \leq \gro{y_j}{x_{j+2}}{x_{j+1}} + \sympy{FClaim3aux4_chainCloseToGeodesic(delta0)} \leq l\delta_0	
	\end{displaymath}
	However using Claim 2,
	\begin{displaymath}
		\gro{y_j}{x_j}{x_{j+1}} = \dist{x_{j+1}}{y_j} - \gro {x_j}{x_{j+1}}{y_j}  > \sympy{FauxExtract2_chainCloseToGeodesic(L,l,delta0)} -l\delta_0>  l\delta_0.
	\end{displaymath}
	Consequently $\gro {x_j}{x_{j+2}}{x_{j+1}} \leq l\delta_0$.

	\paragraph{Claim 4.} For all $j \in \intvald 0{p-2}$ we have $\dist{x_{j+1}}{x_j} > L\delta_0$.
	The triangle inequality combined with Claim 2 gives 
	\begin{displaymath}
		\dist{x_{j+1}}{x_j} \geq \dist{x_{j+1}}{y_j} - \gro {x_j}{x_{j+1}}{y_j}  > \sympy{FauxExtract2_chainCloseToGeodesic(L,l,delta0)} -l\delta_0.
	\end{displaymath}
	
	Claims 3 and 4 exactly say that $x_0, x_1,\dots, x_p$ satisfies the assumptions of the stability of discrete quasi-geodesics (Proposition~\ref{stability discrete quasi-geodesics}).
	Therefore for every $j \in \intvald 0p$, $x_j$ lies in the $d\delta_0$-neigbourhood of $\geo {x_0}{x_p}$ i.e., $\geo x{x'}$.
	Nevertheless we noticed that every point of $C$ is $\sympy{FauxXClose_chainCloseToGeodesic(L,l,delta0)}$-close to some $x_j$.
	Thus the distance between any point of $C$ and $\geo x{x'}$ is a most $\sympy{FDistaux_chainCloseToGeodesic(d,L,l,delta0)} \leq \epsilon$. 
\end{proof}

\subsection{Paths in a cone-off}
\label{sec:path in cone-off}

In this section, $X$ is still a geodesic, $\delta$-hyperbolic space and $Y=\left(Y_i\right)_{i \in I}$ a family of strongly quasi-convex subsets of $X$. 
We denote by $\dot X$ the cone-off $\dot X(Y,r_0)$.

\begin{lemm}
\label{path closed to the norm}
	Let $x$ and $x'$ be two points of $X$.
	For all $\eta >0$, there exists a path $\sigma : J \rightarrow \dot X$ between them whose length $L(\sigma)$ is smaller than $\dist[SC] x{x'} + \eta$ and for all $t \in J$, if $ \sigma(t)$ is not the apex of a cone $Z_i$ then $p\circ \sigma(t)$ belongs to the $\sympy{F_pathCloseToNorm(delta)}$-neighbourhood of $\geo x{x'}$.
\end{lemm}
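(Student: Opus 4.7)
The plan is to construct $\sigma$ by splitting into cases according to how $\dist[SC] x{x'}$ is realized. Recall from the remark in Section~\ref{sec:cone-off} that
\begin{displaymath}
	\dist[SC] x{x'} = \min \left[ \fantomB \dist[X] x{x'},\ \inf \left\{\mu\left( \dist[Y_i] x{x'}\right) : i \in I,\ x, x' \in Y_i\right\} \right].
\end{displaymath}
Consequently, for any $\eta > 0$, this minimum is achieved up to $\eta/2$ either by $\dist[X] x{x'}$ or by some $\mu\left(\dist[Y_i] x{x'}\right)$ with $x, x' \in Y_i$; I would treat these two situations separately.

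In the first case, I would take for $\sigma$ a geodesic $\geo x{x'}$ of $X$, viewed as a path in $\dot X$ via the natural $1$-Lipschitz map $X \to \dot X$. Its length in $\dot X$ is then bounded by $\dist[X] x{x'} \leq \dist[SC] x{x'} + \eta/2$, no apex appears along $\sigma$, and $p \circ \sigma = \sigma$ is contained in $\geo x{x'}$. In the second case, if additionally $\dist[Y_i] x{x'} \geq \pi \sinh r_0$, then $\mu\left(\dist[Y_i] x{x'}\right) = 2r_0$ and I would take $\sigma$ to be the concatenation of the two radial segments $[x, v_i]$ and $[v_i, x']$ of the cone $Z_i$; its length is exactly $2r_0$ and the projections of its non-apex points lie in $\{x, x'\} \subset \geo x{x'}$. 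The main case is therefore $\dist[Y_i] x{x'} < \pi \sinh r_0$. Since $\left(Y_i, \distV[Y_i]\right)$ is a length space, for any $\eta' > 0$ there exists a rectifiable path $\beta : J \to Y_i$ from $x$ to $x'$ with length $L_{Y_i}(\beta) \leq \dist[Y_i] x{x'} + \eta'$. Choosing $\eta'$ small enough, continuity of $\mu$ together with the strict inequality ensures both $L_{Y_i}(\beta) < \pi \sinh r_0$ and $\mu\left(L_{Y_i}(\beta)\right) \leq \mu\left(\dist[Y_i] x{x'}\right) + \eta/2$. The cone construction of Section~\ref{sec:cone} then lifts $\beta$ to a path $\sigma = \tilde \beta$ in $Z_i \setminus \{v_i\}$ with $L(\tilde \beta) \leq \mu\left(L_{Y_i}(\beta)\right) \leq \dist[SC] x{x'} + \eta$ and $p \circ \tilde \beta = \beta$.

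What remains is to verify that $\beta$ stays uniformly close to $\geo x{x'}$ in $X$. Since the length metric on $Y_i$ satisfies $\distV[Y_i] \leq \distV[X] + \sympy{F_SQCLength(delta)}$, we have $L_X(\beta) \leq L_{Y_i}(\beta) \leq \dist[X] x{x'} + \sympy{F_SQCLength(delta)} + \eta'$. For any point $y$ in the image of $\beta$, splitting the path at $y$ yields $\dist xy + \dist y{x'} \leq L_X(\beta)$, and hence $\gro x{x'}y \leq \left(\sympy{F_SQCLength(delta)} + \eta'\right)/2$. The Gromov product estimate $d\left(y, \geo x{x'}\right) \leq \gro x{x'}y + \sympy{F_groVsDist(delta)}$ then gives the desired neighborhood bound, which becomes at most $\sympy{F_pathCloseToNorm(delta)}$ for $\eta'$ sufficiently small. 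The only delicate point will be to pick a single $\eta'$ satisfying all three requirements simultaneously (length bound for $\tilde\beta$, the cone condition $L_{Y_i}(\beta) < \pi \sinh r_0$, and the neighborhood bound), which follows immediately from the continuity of $\mu$.
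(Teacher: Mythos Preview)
Your proof is correct and follows essentially the same approach as the paper: the same case split on how $\dist[SC] x{x'}$ is realized, and the same constructions (geodesic in $X$, radial path through the apex, or a lift $\tilde\beta$ of a short path $\beta$ in $Y_i$). The only minor difference is in the final neighbourhood estimate: the paper phrases it as $\beta$ being a $(1,\ell)$-quasi-geodesic in $X$ and then invoking stability, whereas your direct Gromov-product computation $\gro x{x'}y \leq \tfrac12(L_X(\beta) - \dist x{x'})$ is a cleaner route to the same bound.
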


\begin{proof}
	If $\dist[SC]x{x'} = \dist[X]x{x'}$ the geodesic of $X$ joining $x$ to $x'$ works.
	Therefore we can assume that $\dist[SC]x{x'} \neq \dist[X]x{x'}$.
	Let $\epsilon >0$.
	By definition of $\distV[SC]$, there exists $i \in I$ such that $x, x' \in Y_i$ and $\dist[Z_i]x{x'} < \dist[SC]x{x'} + \epsilon$.
	We distinguish two cases.
	\paragraph{Case 1:}
	If $\dist[Y_i]x{x'} \geq \pi \sinh r_0$, then $\dist[Z_i]x{x'}=2r_0$.
	We chose for $\sigma : J \rightarrow Z_i$ the geodesic of $Z_i$ $\geo x{v_i}\cup \geo{v_i}{x'}$.
	(Recall that $v_i$ is the apex of the cone $Z_i$.)
	Its length (as a path of $Z_i$) is $2r_0$.
	Moreover for all $t \in J$, if $\sigma(t) \neq v_i$, then $p \circ \sigma(t) \in \left\{ x, x'\right\}$.
	\paragraph{Case 2:}
	If $\dist[Y_i]x{x'} < \pi \sinh r_0$.
	The space $\left(Y_i, \distV[Y_i]\right)$ is a length space.
	Thus there exists a path $\sigma_Y : J \rightarrow Y_i$ parametrized by arc length between $x$ and $x'$ whose length is less than $\min\{ \dist[Y_i]x{x'} + \epsilon,\pi \sinh r_0\}$.
	Hence there exists a path $\sigma : J \rightarrow Z_i \setminus\{ v_i\}$ between $x$ and $x'$ such that $p_i \circ \sigma = \sigma_Y$ and its length $L(\sigma)$ (as a path of $Z_i$) satisfies
	\begin{displaymath}
		L(\sigma) 
		\leq \mu \left( L\left(\sigma_Y\right)\right) 
		\leq \mu\left(\dist[Y_i]x{x'} + \epsilon\right)
		\leq \dist[SC]x{x'} + 2\epsilon
	\end{displaymath}
	However $Y_i$ is strongly quasi-convex.
	It follows that for all $y,y' \in Y_i$, $\dist[X]y{y'} \leq \dist[Y_i]y{y'} \leq \dist[X] y{y'} + \sympy{Faux1_pathCloseToNorm(delta)}$.
	Consequently, as a path of $X$, $\sigma_Y$ is a $(1,\sympy{Faux2_pathCloseToNorm(epsilon,delta)})$-quasi-geodesic.
	In particular $\sigma_Y(J)$ lies in the $\left(\sympy{Faux3_pathCloseToNorm(epsilon,delta)}\right)$-neighbourhood of $\geo x{x'}$.
	
	\paragraph{}
	Hence we have build a path $\sigma : J \rightarrow Z_i$, whose length (as a path of $Z_i$) is smaller than $\dist[SC]x{x'} + 2\epsilon$ and such that for all $t \in J$, if $\sigma(t) \neq v_i$, $p\circ \sigma(t)$ belongs to the $\left(\sympy{Faux3_pathCloseToNorm(epsilon,delta)}\right)$-neighbourhood of $\geo x{x'}$.
	However the map $Z_i \rightarrow \dot X$ is 1-lipschitz.
	It follows that the length of $\sigma$ as a path of $\dot X$ is also smaller than $\dist[SC]x{x'} + 2\epsilon$.
	By choosing $\epsilon$ small enough we obtain the announced result.
\end{proof}

\begin{prop}
\label{projection of quasi-geodesic}
	Let $\epsilon >0$.
	There exist positive constants $\delta_0$, $\Delta_0$ and $r_1$ which only depend on $\epsilon$ having the following property.
	Assume that $r_0 \geq r_1$, $\delta \leq \delta_0$ and $\Delta(Y) \leq \Delta_0$.
	Let $x$ and $x'$ be two points of $X \subset \dot X(Y,r_0)$.
	For all $\eta >0$, there exists a $(1,\eta)$-quasi-geodesic $\sigma : J \rightarrow \dot X$ joining  $x$ and $x'$ such that for all $t \in J$, if $\sigma(t)$ is not an apex of $\dot X$, $p\circ\sigma(t)$ belongs to the $\epsilon$-neighbourhood of $\geo x{x'}$.
\end{prop}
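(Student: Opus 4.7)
The plan is to combine Lemma~\ref{path closed to the norm}, which produces between any two points of $X$ a near-length-minimizing path of $\dot X$ whose non-apex part projects near the geodesic in $X$, with Proposition~\ref{res: chain in neighbourhood of geodesic}, which guarantees that the vertices of any near-optimal chain between $x$ and $x'$ lie close to $\geo x{x'}$. First I would pick an auxiliary constant $\epsilon' < \epsilon$, feed it into Proposition~\ref{res: chain in neighbourhood of geodesic} to obtain candidate constants $\delta_0$, $\Delta_0$, $r_1$ and a tolerance $\eta_0 > 0$, and then shrink $\delta_0$ and $\Delta_0$ further so that both the $\delta$-dependent neighbourhood constant from Lemma~\ref{path closed to the norm} and the hyperbolic drift that arises when comparing $\geo{z_i}{z_{i+1}}$ to $\geo x{x'}$ fit inside the remaining budget $\epsilon - \epsilon'$. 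All these choices depend only on $\epsilon$.

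Given $\eta > 0$ with $\eta \leq \eta_0$, I would use the definition of $\distV[\dot X]$ to pick a chain $C=(z_0,\dots,z_n)$ in $X$ from $x$ to $x'$ with $l(C) \leq \dist[\dot X] x{x'} + \eta/2$. Proposition~\ref{res: chain in neighbourhood of geodesic} then places each $z_i$ in the $\epsilon'$-neighbourhood of $\geo x{x'}$. For each $i \in \intvald 0{n-1}$, Lemma~\ref{path closed to the norm} yields a path $\sigma_i$ in $\dot X$ from $z_i$ to $z_{i+1}$ of length at most $\dist[SC]{z_i}{z_{i+1}} + \eta/(2n)$ whose non-apex points project into a small neighbourhood of $\geo{z_i}{z_{i+1}}$. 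Concatenating the $\sigma_i$ and reparametrizing by arc length produces the candidate $\sigma : J \to \dot X$ of total length at most $l(C) + \eta/2 \leq \dist[\dot X] x{x'} + \eta$.

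Arc-length parametrization gives $\dist[\dot X]{\sigma(s)}{\sigma(t)} \leq \dist s t$, and the total length bound together with the triangle inequality at the endpoints of $\sigma$ forces $\dist s t \leq \dist[\dot X]{\sigma(s)}{\sigma(t)} + \eta$, so $\sigma$ is a $(1,\eta)$-quasi-geodesic. For the projection statement, any non-apex $\sigma(t)$ lies on some $\sigma_i$, hence $p\circ\sigma(t)$ is close to $\geo{z_i}{z_{i+1}}$; since $z_i$ and $z_{i+1}$ are $\epsilon'$-close to $\geo x{x'}$, the hyperbolicity of $X$ bounds the Hausdorff distance between $\geo{z_i}{z_{i+1}}$ and the corresponding subarc of $\geo x{x'}$, and the constants are tuned so that the total stays below $\epsilon$. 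The main obstacle is exactly this tiered bookkeeping: the $\epsilon'$ from Proposition~\ref{res: chain in neighbourhood of geodesic}, the $\delta$-dependent constant from Lemma~\ref{path closed to the norm}, and the hyperbolic drift between the two geodesics must jointly be dominated by $\epsilon$, while $\delta_0$, $\Delta_0$ and $r_1$ must remain functions of $\epsilon$ alone as the statement demands.
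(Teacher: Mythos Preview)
Your proposal is correct and follows essentially the same route as the paper: invoke Proposition~\ref{res: chain in neighbourhood of geodesic} to place the vertices of a near-optimal chain in a small neighbourhood of $\geo x{x'}$, then apply Lemma~\ref{path closed to the norm} on each link, concatenate, and reparametrize by arc length. The only cosmetic difference is that the paper splits the $\epsilon$-budget as $\epsilon/2 + \epsilon/2$ rather than $\epsilon' + (\epsilon-\epsilon')$, and absorbs what you call the ``hyperbolic drift'' directly into the $\delta$-dependent constant coming from Lemma~\ref{path closed to the norm}.
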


\begin{proof}
	By Proposition~\ref{res: chain in neighbourhood of geodesic}, there exist positive constants $\delta_0$, $\Delta_0$, $r_1$ and $\eta_0$ which only depend on $\epsilon$ satisfying the following property.
	Assume that $r_0 \geq r_1$, $\delta \leq \delta_0$ and $\Delta(Y) \leq \Delta_0$.
	Let $x$ and $x'$ be two points of $X$ and $C$ a chain of $X$ between them.
	If $l(C) \leq \dist[\dot X]x{x'} + \eta_0$, then every point of $C$ belongs to the $\epsilon/2$-neigbourhood of $\geo[X]x{x'}$.
	By replacing $\delta_0$ by a smaller constant if necessary, we may also assume that $\sympy{Faux2_projQuasiGeodesic(delta0)} \leq \epsilon/2$.
	
	\paragraph{} Consider now $\eta \in (0, \eta_0)$ and $x$ and $x'$ two points of $X$.
	By definition of $\dist[\dot X]x{x'}$, there exists a chain $C= \left(z_0, \dots,z_m\right)$ of $X$ between $x$ and $x'$ such that $l(C) \leq \dist[\dot X]x{x'} +  \eta/2$.
	By Proposition~\ref{res: chain in neighbourhood of geodesic}, every $z_j$ belongs to the $\epsilon/2$-neighbourhood of $\geo x{x'}$.
	Let $k \in \intvald 0{m-1}$.
	Applying Lemma~\ref{path closed to the norm}, there exists a rectifiable path $\sigma_k : J_k \rightarrow \dot X$ joining $z_k$ and $z_{k+1}$ whose length is smaller than $\dist[SC]{z_k}{z_{k+1}} +  \eta/2m$ and such that for all $t \in J_k$, if $\sigma_k(t)$ is not an apex of $\dot X$, $p \circ \sigma_k(t)$ belongs to the $\sympy{Faux1_projQuasiGeodesic(delta)}$-neighbourhood of $\geo{z_k}{z_{k+1}}$.
	In particular the distance of  $p \circ \sigma_k(t)$ to $\geo x{x'}$ is less than $ \epsilon/2 + \sympy{Faux2_projQuasiGeodesic(delta)} \leq \epsilon$.
	We now choose for $\sigma$ the concatenation of the $\sigma_k$'s.
	Its length is smaller than $l(C) +  \eta/2 \leq \dist[\dot X]x{x'} + \eta$.
	We reparametrize $\sigma$ by arc length, hence $\sigma$ is a $(1, \eta)$-quasi-geodesic.
	Moreover it satisfies the announced property.
\end{proof}

\begin{prop}
\label{res: comparing Gromov product base vs cone-off}
	There exist positive constants $\delta_0$, $\delta_1$, $\Delta_0$ and $r_1$ which do not depend on $X$ or $Y$ having the following property.
	Assume that $r_0 \geq r_1$, $\delta \leq \delta_0$ and $\Delta(Y) \leq \Delta_0$.
	For every $x,y,z \in X$ we have
	\begin{displaymath}
		\mu\left(\gro yzx\right) 
		\leq \frac 12\left(\fantomA \dist[\dot X] yx + \dist[\dot X]zx - \dist[\dot X]yz \right) +\sympy{F_compareGormovProduct(r0,delta1)}.
	\end{displaymath}	
\end{prop}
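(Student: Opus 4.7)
The right-hand side of the displayed inequality is exactly $\gro[\dot X]{y}{z}{x}$ plus the constant term, so the claim reduces to showing
\begin{displaymath}
\mu\bigl(\gro{y}{z}{x}\bigr) \leq \gro[\dot X]{y}{z}{x} + K
\end{displaymath}
for some $K$ depending only on $r_0$ and $\delta_1$. The starting point is a preliminary estimate: for every $a,b \in X$,
\begin{displaymath}
\mu\bigl(\dist[X]{a}{b}\bigr) \leq \dist[\dot X]{a}{b}.
\end{displaymath}
To see this, I take a chain $C = (z_0,\dots,z_n)$ in $X$ between $a$ and $b$, apply the termwise bound $\mu(\dist[X]{z_i}{z_{i+1}}) \leq \dist[SC]{z_i}{z_{i+1}}$ from the remark following the definition of $\dist[SC]$, sum, and use the subadditivity of $\mu$ together with the triangle inequality $\dist[X]{a}{b} \leq \sum_i \dist[X]{z_i}{z_{i+1}}$ to conclude that $\mu(\dist[X]{a}{b}) \leq l(C)$; taking the infimum over chains gives the claim.

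Next, I choose $\delta_0, \Delta_0, r_1$ small enough that both Theorem~\ref{cone-off globally hyperbolic} (so that $\dot X$ is $\delta_1$-hyperbolic) and Proposition~\ref{projection of quasi-geodesic} apply. Fix $\eta > 0$ and apply the latter proposition to produce a $(1,\eta)$-quasi-geodesic $\sigma : J \to \dot X$ joining $y$ and $z$ such that $p\circ \sigma(t)$ lies in the $\epsilon$-neighbourhood of $\geo[X]{y}{z}$ whenever $\sigma(t)$ is not an apex, where $\epsilon$ is small compared with $\delta_1$. By the stability of quasi-geodesics in $\dot X$ (Proposition~\ref{stability quasi-geodesics}), $\sigma$ stays within Hausdorff distance $O(\delta_1)$ of any $\dot X$-geodesic from $y$ to $z$; combined with the standard hyperbolic estimate $\gro[\dot X]{y}{z}{x} \geq d_{\dot X}(x,\sigma) - O(\delta_1)$, we obtain
\begin{displaymath}
\gro[\dot X]{y}{z}{x} \geq \inf_{t \in J}\dist[\dot X]{x}{\sigma(t)} - O(\delta_1).
\end{displaymath}

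It remains to lower bound $\dist[\dot X]{x}{\sigma(t)}$. Pick a non-apex $m = \sigma(t)$ and set $w = p(m) \in X$; either $m = w$, or $m$ lies in the interior of a cone $Z_i$ and $\dist[\dot X]{m}{w} \leq r_0$. Since $w$ is $\epsilon$-close to $\geo[X]{y}{z}$ and $d_X(x,\geo[X]{y}{z}) \geq \gro{y}{z}{x}$, we have $\dist[X]{x}{w} \geq \gro{y}{z}{x} - \epsilon$. Combining the preliminary estimate, the $1$-Lipschitz property of $\mu$, and the triangle inequality in $\dot X$,
\begin{displaymath}
\dist[\dot X]{x}{\sigma(t)} \geq \dist[\dot X]{x}{w} - r_0 \geq \mu\bigl(\dist[X]{x}{w}\bigr) - r_0 \geq \mu\bigl(\gro{y}{z}{x}\bigr) - \epsilon - r_0.
\end{displaymath}
Apices are isolated on $\sigma$, so the same bound extends to apex values of $t$ by continuity of $t\mapsto \dist[\dot X]{x}{\sigma(t)}$. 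Taking the infimum over $t$ and letting $\eta \to 0$ gives $\mu(\gro{y}{z}{x}) \leq \gro[\dot X]{y}{z}{x} + r_0 + O(\delta_1)$, which is the proposition. The main obstacle is purely one of bookkeeping: the constants $\delta_0, \Delta_0, r_1$ must be shrunk uniformly so that Theorem~\ref{cone-off globally hyperbolic}, Proposition~\ref{projection of quasi-geodesic} and Proposition~\ref{stability quasi-geodesics} apply simultaneously while the final additive constant takes the announced form $\sympy{F_compareGormovProduct(r0,delta1)}$.
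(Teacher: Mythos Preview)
Your argument is correct and takes a genuinely different, more direct route than the paper's proof. The paper works with \emph{two} $(1,\eta)$-quasi-geodesics in $\dot X$ --- one from $y$ to $z$ and one from $y$ to $x$ --- picks points on each at parameter roughly $\gro[\dot X]{x}{z}{y}$, uses thinness of the tripod in $\dot X$ to show these points are close in $\dot X$, and then pushes this closeness back into $X$ via the projection property to control $\dist[X]{p\circ\sigma(r)}{y}$ in terms of $\gro{x}{z}{y}$. Your approach replaces all of this by a single quasi-geodesic from $y$ to $z$ together with the global inequality $\mu(\dist[X]{a}{b}) \leq \dist[\dot X]{a}{b}$, which the paper does not isolate. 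This is cleaner and yields a slightly smaller additive constant (roughly $r_0 + O(\delta_1)$ versus $2r_0 + O(\delta_1)$); the paper's detour through the second quasi-geodesic buys nothing extra here.

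One point of phrasing to tighten: you justify $\gro[\dot X]{y}{z}{x} \geq d_{\dot X}(x,\sigma) - O(\delta_1)$ by invoking stability of quasi-geodesics against a $\dot X$-geodesic, but $\dot X$ is only asserted to be a $\delta_1$-hyperbolic \emph{length} space, not a geodesic one. This is harmless because the estimate follows directly from the four-point inequality: choosing $t_0$ with $\dist[\dot X]{y}{\sigma(t_0)} \approx \gro[\dot X]{x}{z}{y}$, the inequality $\dist[\dot X]{x}{\sigma(t_0)} + \dist[\dot X]{y}{z} \leq \max\{\dist[\dot X]{x}{y}+\dist[\dot X]{\sigma(t_0)}{z},\,\dist[\dot X]{x}{z}+\dist[\dot X]{\sigma(t_0)}{y}\} + 2\delta_1$ gives $\dist[\dot X]{x}{\sigma(t_0)} \leq \gro[\dot X]{y}{z}{x} + O(\delta_1+\eta)$ without any appeal to geodesics.
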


\begin{proof}
	The constant $\delta_1$, $\delta_0$, $\Delta_0$ and $r_1$ are given by Proposition~\ref{cone-off globally hyperbolic}.
	We fix $\epsilon_1$ such that $\mu(\epsilon_1) = \delta_1$.
	According to Proposition~\ref{projection of quasi-geodesic}, by decreasing (\resp increasing) if necessary $\delta_0$, $\Delta_0$ (\resp $r_1$) the following hold.
	Assume that $r_0 \geq r_1$, $\delta \leq \delta_0$ and $\Delta(Y) \leq \Delta_0$ then
	\begin{enumerate}
		\item \label{enu: comparing Gromov product base vs cone-off - prem 1}
		$\dot X$ is $\delta_1$-hyperbolic,
		\item \label{enu: comparing Gromov product base vs cone-off - prem 2}
		for every $x,x' \in X$, for every $\eta>0$ there is a $(1,\eta)$-quasi-geodesic $\sigma : J \rightarrow \dot X$ joining  $x$ and $x'$ such that for all $t \in J$, if $\sigma(t)$ is not an apex of $\dot X$, $p\circ\sigma(t)$ belongs to the $\epsilon_1$-neighbourhood of $\geo x{x'}$.
	\end{enumerate}
	Let $x$, $y$ and $z$ be three points of $X \subset \dot X$. 
	In all this section we kept the notation $\gro xyz$ for the Gromov product computed with the distance of $X$.
	Exceptionally we will denote the Gromov product of these three points computed in $\dot X$ by 
	\begin{displaymath}
		\gro[\dot X] xyz = \frac 12\left(\fantomA \dist[\dot X] zx + \dist[\dot X]zy - \dist[\dot X]xz \right)
	\end{displaymath}
	Let $\eta>0$.
	There exists a $(1, \eta)$-quasi-geodesic $\gamma: \intval 0a \rightarrow \dot X$ joining $y$ to $z$ and satisfying \ref{enu: comparing Gromov product base vs cone-off - prem 2}.
	Let us put
	\begin{displaymath}
		t = \min\left\{ \gro[\dot X]xzy, a -  \gro[\dot X]xyz\right\}
	\end{displaymath}
	Note that the definition of $\gamma(t)$ is symmetric in $y$ and $z$: using the reverse parametrization for the quasi-geodesic $\gamma$ would lead to the same point.
	The point $\gamma(t)$ is not necessary in $X$. 
	However the diameter of the cones that were attached to buid $\dot X$ is at most $2r_0$.
	The path $\gamma$ being a continuous $(1,\eta)$-quasi-geodesic, there exists $s \in \intval 0a$ such that $\dist st \leq \sympy{Faux1_compareGormovProduct(r0,eta)}$ and $\gamma(s) \in X$.
	The points $y$ and $z$ playing a symmetric role, we can assume without loss of generality that $s \leq t$.
	
	\paragraph{}We consider now a $(1, \eta)$-quasi-geodesic $\sigma: \intval 0b \rightarrow \dot X$  joining $y$ to $x$, satisfying \ref{enu: comparing Gromov product base vs cone-off - prem 2} and put $r = \min\{s,b\}$.
	Since $\sigma$ is $(1,\eta)$-quasi-geodesic we have $\dist[\dot X] x{\sigma(r)}  \leq \dist[\dot X]xy - r +\sympy{Faux2_compareGormovProduct(eta)}$ which leads to 
	\begin{equation}
	\label{eqn: comparing Gromov product base vs cone-off - eqn 1}
		\dist[\dot X] x{\sigma(r)} 
		\leq\gro[\dot X]yzx + \sympy{Faux3_compareGormovProduct(r0,eta)}
	\end{equation}
	Moreover by hyperbolicity of $\dot X$, $\dist[\dot X]{\sigma(r)}{\gamma(r)} \leq \sympy{Faux4_compareGormovProduct(delta1,eta)}$.
	In particular $\sigma(r)$ belongs to the $(\sympy{Faux4_compareGormovProduct(delta1,eta)})$-neighbourhood of $X$ in $\dot X$.
	Hence $\dist[\dot X]{p\circ\sigma(r)}{\gamma(r)}\leq \sympy{Faux5_compareGormovProduct(delta1,eta)}$.
	It follows that $\dist {p\circ\sigma(r)}{\gamma(r)}\leq \epsilon$, where $\mu(\epsilon) = \sympy{Faux5_compareGormovProduct(delta1,eta)}$.
	Nevertheless $p\circ\sigma(r)$ and $\gamma(r)$ respectively lie in the $\epsilon_1$-neighbourhood of $\geo yx$ and $\geo yz$.
	By triangle inequality
	\begin{displaymath}
		\dist{p\circ\sigma(r)}y \leq \gro xzy +\gro xy{p\circ\sigma(r)} + \gro yz{\gamma(r)} +\dist {p\circ\sigma(r)}{\gamma(r)}.
	\end{displaymath}
	Consequently $\dist{p\circ\sigma(r)}y \leq \gro xzy + \sympy{Faux6_compareGormovProduct(epsilon1,epsilon)}$, and
	\begin{displaymath}
	 	\gro yzx
		\leq \dist xy - \dist y{p\circ\sigma(r)} +  \sympy{Faux6_compareGormovProduct(epsilon1,epsilon)}
		\leq \dist x{p\circ\sigma(r)} +  \sympy{Faux6_compareGormovProduct(epsilon1,epsilon)}
	\end{displaymath}
	Applying $\mu$ to this inequality we get $\mu(\gro yzx) \leq \dist[\dot X] x{p\circ\sigma(r)} + \sympy{Faux7_compareGormovProduct(delta1,eta)}$ which combined with (\ref{eqn: comparing Gromov product base vs cone-off - eqn 1}) gives 
	\begin{displaymath}
		\mu\left(\gro yzx\right) 
		\leq \gro[\dot X]yzx +\sympy{Faux8_compareGormovProduct(r0,delta1,eta)}.
	\end{displaymath}
	This inequality holds for every $\eta>0$ which completes the proof.
\end{proof}

\section{Small cancellation theory}
\label{sec:small cancellation}

\paragraph{} In this section we will be concerned with the small cancellation theory. We expose the geometrical point of view developed by T.~Delzant  and M.~Gromov in \cite{DelGro08} and used in Section~\ref{sec:Burnside groups} to prove the main theorem.

\subsection{General framework}
\label{sec:small cancellation constants}
\paragraph{} We require $X$ to be a proper, geodesic, $\delta$-hyperbolic space and $G$ a group acting properly, co-compactly, by isometries on $X$.
We assume that $G$ satisfies the small centralizers hypothesis (see Section~\ref{sec: hyperbolic groups}).

\paragraph{} 
Let $P$ be a set of hyperbolic elements of $G$.
We assume that $P$ is the union of a finite number of conjugacy classes.
We denote by $K$ the (normal) subgroup of $G$ generated by $P$. 
Our goal is to study the quotient $\bar G = G/K$.
The small cancellation parameters $\Delta(P,X)$ and $\rinj P X$ (see Definition~\ref{defi:overlap and injectivity radius}), respectively play the role of the length of the largest piece and the length of the smallest relation in the usual small cancellation theory.
We are interested in situations where the ratios $\delta/\rinj P X$ and $\Delta(P,X)/\rinj P X$ are very small.
To that end, we build a space $\bar X$ with an action of $\bar G$.
We only recall the main steps of this construction. 
This approach has been studied in  \cite{DelGro08}, \cite{Dahmani:2011vu} and \cite{Cou10a}.
We follow here \cite{Coulon:2013tx}.

\paragraph{} Fix $r_0>0$. 
Its value will be made precise in Theorem \ref{theo:small cancellation theorem}.
We consider the family of strongly quasi-convex subsets $Y = \left(Y_\rho\right)_{\rho \in P}$.
The cone-off of radius $r_0$ over $X$ relatively to $Y$ is denoted by $\dot X$.
We extend by homogeneity the action of $G$ on $X$ in an action of $G$ on $\dot X$.
Given a point $x=(y,r)$ of $C_\rho$ and $g$ an element of $G$, $gx$ is the point of $C_{g\rho g^{-1}}=gC_\rho$ defined by $gx=(gy,r)$.
The group $G$ acts by isometries on $\dot X$ (see \cite[Lemma 4.3.1]{Coulon:il}).
The space $\bar X$ is the quotient of $\dot X$ by $K$.

\begin{theo}[Small cancellation theorem, see {\cite[Th.~5.5.2]{DelGro08}} or {\cite[Prop. 6.7]{Coulon:2013tx}}]
\label{theo:small cancellation theorem}
	There exist positive numbers $\delta_0$, $\delta_1$, $\Delta_0$ and $r_1$ which do not depend on $X$ or $P$ with the following property.
	If $r_0 \geq r_1$, $\delta\leq \delta_0$, $\Delta(P,X) \leq \Delta_0$ and $\rinj P X\geq \pi \sinh r_0$, then $\bar X$ is proper, geodesic and $\bar \delta$-hyperbolic, with $\bar \delta \leq \delta_1$.
	Moreover $\bar G$ acts properly, co-compactly, by isometries on it.
\end{theo}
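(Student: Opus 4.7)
The plan is to build on Theorem~\ref{cone-off globally hyperbolic}: for $r_0 \geq r_1$, $\delta \leq \delta_0$, and $\Delta(Y) \leq \Delta_0$ with $Y=(Y_\rho)_{\rho \in P}$, the cone-off $\dot X$ is already $\delta_1$-hyperbolic for some universal $\delta_1$. Hence only three things remain: pushing hyperbolicity down to the quotient $\bar X = \dot X / K$, checking that $\bar X$ is proper and geodesic, and verifying that $\bar G = G/K$ acts properly and cocompactly on $\bar X$. The first is by far the hardest.

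The first step is to understand the action of each $\rho \in P$ near its apex $v_\rho$. Since $\rho$ stabilizes $Y_\rho$ setwise and acts on it by translation of length $\len\rho$, the homogeneous extension of this action to the cone $Z_\rho$ of radius $r_0$ fixes $v_\rho$ and, close to $v_\rho$, behaves like a rotation of angle $\len\rho / \sinh r_0$. Under the hypothesis $\rinj PX \geq \pi \sinh r_0$ this angle is at least $\pi$ for every $\rho \in P^*$; passing to a sufficiently high power if needed, one gets arbitrarily large rotation angles. Combined with the bound on $\Delta(P,X)$, which controls how close two distinct apices can come and how much the cylinders $Y_\rho, Y_{\rho'}$ overlap, this is precisely the setup of a \emph{very rotating family} of subgroups on $\dot X$ in the sense of Dahmani--Guirardel--Osin.

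The second and most delicate step is the quotient step. The idea is to prove a Greendlinger-type statement for $\bar X$: any short loop in $\bar X$ lifts to a path of $\dot X$ that enters deeply into some apex neighborhood $B(v_\rho, r_0)$, and there the rotation $\rho$ can be used to shorten the lift. The quantitative version of this, together with the rotating family picture, feeds into a Cartan--Hadamard type local-to-global criterion (see \cite[Coro.~5.27]{Dahmani:2011vu} or \cite[Prop.~6.7]{Coulon:2013tx}) which concludes that $\bar X$ is $\bar\delta$-hyperbolic, with $\bar\delta$ depending only on $\delta_1$; after possibly enlarging $\delta_1$ once and for all, we get $\bar\delta \leq \delta_1$. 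The main obstacle lies here: one must balance $r_0$, $\pi\sinh r_0$, the small cancellation parameters, and the hyperbolicity constants simultaneously so that the rotation angles stay large at every scale on which the local-to-global argument operates.

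The remaining verifications are more routine. Geodesicity of $\bar X$ is inherited from $\dot X$, which is a length space. Properness of $\bar X$ follows from properness of $\dot X$ together with the rotating family structure: the $K$-stabilizer of a point of $\bar X$ is either trivial or conjugate into some $\langle\rho\rangle$, which is discrete, so $K$ acts properly discontinuously on $\dot X$. The action of $\bar G$ on $\bar X$ is isometric by construction, and it is cocompact because the action of $G$ on $X$ is, and $P$ is a union of finitely many $G$-conjugacy classes, so the glued cones form finitely many $G$-orbits of sets of diameter at most $2r_0$ in $\dot X$.
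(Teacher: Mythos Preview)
The paper does not prove this theorem at all: it is stated with the citation ``see \cite[Th.~5.5.2]{DelGro08} or \cite[Prop.~6.7]{Coulon:2013tx}'' and then used as a black box. So there is no proof in the paper to compare your sketch against; your outline is in the spirit of the cited references (cone-off plus a Cartan--Hadamard/local-to-global step), and that is the right architecture.

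That said, your ``routine verifications'' paragraph contains two genuine errors. First, $\dot X$ is \emph{not} proper: each cone $Z_\rho$ is built over the cylinder $Y_\rho$, which is an unbounded quasi-line, so $Z_\rho$ has diameter $2r_0$ but is not compact, and neither is any ball in $\dot X$ that contains an apex. Second, $K$ does \emph{not} act properly discontinuously on $\dot X$: every apex $v_\rho$ is fixed by the infinite cyclic group $\langle \rho\rangle \subset K$. So you cannot deduce properness of $\bar X$ from properness of $\dot X$ plus a proper $K$-action --- neither ingredient holds.

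The correct route to properness of $\bar X$ goes the other way: one observes that $Z_\rho/\langle\rho\rangle$ is isometric to the cone over $Y_\rho/\langle\rho\rangle$, which \emph{is} compact (a circle of length roughly $\len\rho$), and then uses that $G$ acts cocompactly on $X$ and that $P$ consists of finitely many conjugacy classes to assemble a compact fundamental domain for $\bar G$ on $\bar X$. Properness of $\bar X$ then follows from the fact that $\bar G$ acts properly (which itself requires the small-cancellation analysis: one must know that point stabilisers in $K$ are trivial away from the apices and cyclic at the apices). In short, properness and cocompactness of the $\bar G$-action are intertwined with the small-cancellation part and are not as routine as you suggest.
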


\paragraph{} 
Note that the constants $\delta_0$, $\delta_1$, $\Delta_0$ and $r_1$ in Theorem~\ref{theo:small cancellation theorem} are a priori different from the ones of Theorem~\ref{cone-off globally hyperbolic} or Propositions~\ref{projection of quasi-geodesic} and \ref{res: comparing Gromov product base vs cone-off}.
However by decreasing (\resp increasing) if necessary $\delta_0$, $\Delta_0$ (\resp $\delta_1$, $r_1$) we can always assume that they work for the three results.
Similarly we can require that $r_1 \geq 10^{100}\delta_1$ and $\delta_0, \Delta_0<10^{-5}\delta_1$.
We now fix them once for all.
By Proposition~\ref{stability quasi-geodesics}, we can find constants $r_0 \geq r_1$ and $k_S \geq 1$ having the following property.
Let $\eta \in (0, \delta_1)$.
If $\sigma$ is a $\sympy{F_SCQGLdot()}$-local $(1,\eta)$-quasi-geodesic in a $\delta_1$-hyperbolic space then it is a $(k_S,\eta)$-quasi-geodesic and lies in the $\sympy{F_SCQGDdot()}$-neighbourhood of every geodesic joining its endpoints.
Using Theorems~\ref{cone-off globally hyperbolic} and \ref{theo:small cancellation theorem}, Propositions~\ref{projection of quasi-geodesic} and \ref{res: comparing Gromov product base vs cone-off} we obtain that if $\delta \leq \delta_0$, $\Delta(P,X) \leq \Delta_0$ and $\rinj PX \geq \sympy{F_SCrinj()}$, then the followings hold.
\begin{enumerate}
	\item (Theorem~\ref{cone-off globally hyperbolic}) The cone-off $\dot X$ is $\delta_1$-hyperbolic.
	\item (Theorem~\ref{theo:small cancellation theorem})
	The space $\bar X$ is proper, geodesic and $\bar \delta$-hyperbolic, with $\bar \delta \leq \delta_1$.
	Moreover  $\bar G$ acts properly, co-compactly, by isometries on it.
	\item (Proposition~\ref{res: comparing Gromov product base vs cone-off})
	For all $x,y,z \in X$, 
	\begin{displaymath}
		\mu\left(\gro yzx\right) \leq \frac 12\left(\fantomB\dist[\dot X]yx + \dist[\dot X]zx - \dist[\dot X]yz\right) + \sympy{F_SCgromovBaseVsDot()}.
	\end{displaymath}
	\item (Proposition~\ref{projection of quasi-geodesic}) For all $x, x' \in X$, for all $\eta >0$, there exists a $(1,\eta)$-quasi-geodesic $\sigma : J \rightarrow \dot X$ between $x$ and $x'$ such that for all $t \in J$, if $\sigma(t)$ is not an apex of $\dot X$, then $p\circ\sigma(t)$ lies in the $\sympy{F_SCprojGeoBase()}$-neighbourhood of  $\geo x{x'}$.
\end{enumerate}

\rem The parameters $\delta_0$, $\Delta_0$, $\delta_1$ and $r_0$ are certainly not chosen in an optimal way.
What only matters is their orders of magnitude recalled below.
\begin{displaymath}
	\max\left\{\delta_0, \Delta_0\right\} \ll \delta _1 \ll r_0 \ll \pi \sinh r_0.
\end{displaymath}
An other important point to remember is the following.
The constants $\delta_0$, $\Delta_0$ and $\pi \sinh r_0$ are used to describe the geometry of $X$ whereas $\delta_1$ and $r_0$ refers to the one of $\dot X$ or $\bar X$.

\notas 
\begin{itemize}
	\item Given $g$ is an element of $G$ we write $\bar g$ for the image of $g$ by the canonical projection $\pi : G \twoheadrightarrow \bar G$.
	\item We will denote by $\bar x$ the image of a point $x$ of $X$ by the natural map $\nu : X \rightarrow \dot X \rightarrow \bar X$.
	\item Unless otherwise stated all distances, diameters, Gromov's products, etc will be compute with the distance of $X$ or $\bar X$ (but not of $\dot X$).
\end{itemize}

\subsection{A Greendlinger Lemma}

\begin{lemm}[see {\cite[Prop. 5.6.1]{DelGro08} or {\cite[Prop. 3.15]{Coulon:2013tx}}}]
\label{developing map}
	Let $x$ be a point of $\dot X$ such that $d(x,X) \leq \frac {r_0}2$.
	The map $\dot X \rightarrow \bar X$ induces an isometry from $B\left(x,\sympy{F_SCdeveloppingMap()}\right)$ onto its image.
\end{lemm}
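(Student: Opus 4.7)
The plan is to show that the canonical quotient map $\nu : \dot X \to \bar X$ restricts to an isometry on a ball $B(x, R)$ of explicit radius $R$ (of the order of a small fraction of $r_0$). Since $\nu$ is $1$-Lipschitz---being induced by a quotient by the isometric action of $K$---the non-trivial direction is the lower bound on distances, and this reduces to establishing the displacement estimate $d_{\dot X}(x, gx) \geq 4R$ for every non-trivial $g \in K$. Indeed, granted such a bound, for any $y, y' \in B(x, R)$ and any $g \in K \setminus \{1\}$, the triangle inequality yields $d_{\dot X}(y, gy') \geq d_{\dot X}(x, gx) - d_{\dot X}(x, y) - d_{\dot X}(x, y') \geq 2R \geq d_{\dot X}(y, y')$, so the infimum defining the distance of $\bar y$ and $\bar y'$ in $\bar X$ is realised at $g = 1$ and the restriction of $\nu$ to $B(x, R)$ is an isometric embedding.

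The hypothesis $d(x, X) \leq r_0/2$ is crucial: it locates $x$ in the \emph{thick} part of $\dot X$, at distance at least $r_0/2$ from every apex, since each apex $v_\rho$ satisfies $d_{\dot X}(v_\rho, X) = r_0$. Near such an apex the stabiliser of $v_\rho$ in $K$ is the infinite cyclic group $\langle \rho \rangle$, and the hyperbolic cone metric makes a rotation of angle $\theta$ about $v_\rho$ displace a point at distance $s$ from $v_\rho$ by approximately $\sinh(s)\,\theta$; for $s \geq r_0/2$ this is already a large quantity, even for the smallest non-trivial rotation, whose angle is $\len \rho / \sinh r_0 \geq \pi$ in view of the hypothesis $\rinj{P}{X} \geq \pi \sinh r_0$. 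Hence apex-rotations alone cannot produce short identifications in $B(x,R)$.

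The core step is a Greendlinger-type argument in $\dot X$. Assume, for contradiction, that $d_{\dot X}(x, gx) < 4R$ for some non-trivial $g \in K$. Write $g$ as a product of conjugates of elements of $P$ and build an equivariant van Kampen-style disk diagram filling the short loop from $x$ to $gx$ in $\dot X$, whose two-cells correspond to the defining relators, i.e.\ loops encircling the various apices. The small cancellation hypothesis $\Delta(P, X) \leq \Delta_0$ restricts the overlap between adjacent faces, while $\rinj{P}{X} \geq \pi \sinh r_0$ ensures each face has large perimeter. A Greendlinger-style accounting on a diagram of boundary length strictly less than $4R$ then produces a face whose interaction with the short outer boundary exceeds what the small cancellation parameters allow, giving the required contradiction, provided $R$ is chosen small compared to $r_0$ and $\pi \sinh r_0$ but large compared to $\delta_1$.

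The main obstacle is organising this diagrammatic argument cleanly in the cone-off framework rather than in the classical planar one, and in particular tracking how distances in $\dot X$ are traded against angles at the apices. In practice, the heavy lifting is done once and for all in the course of proving Theorem~\ref{theo:small cancellation theorem}: once one has constructed $\bar X$ as a hyperbolic space via a careful analysis of diagrams over $\dot X$, the developing map lemma drops out as a byproduct of that analysis, which is exactly the route followed in \cite[Prop.~5.6.1]{DelGro08} and \cite[Prop.~3.15]{Coulon:2013tx}. Accordingly, I would simply invoke that machinery, extracting the explicit constant $R$ from the small cancellation parameters $\delta_0$, $\Delta_0$ and $r_0$ fixed in Section~\ref{sec:small cancellation constants}.
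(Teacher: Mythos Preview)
The paper gives no proof of this lemma at all: it is stated with a citation to \cite[Prop.~5.6.1]{DelGro08} and \cite[Prop.~3.15]{Coulon:2013tx} and is used as a black box. Your proposal ultimately does the same thing, invoking exactly those references after a heuristic discussion; the sketch you give (reduce to a displacement bound for non-trivial $g\in K$, use that $x$ lies in the thick part away from apices, and appeal to the small cancellation/Greendlinger analysis underlying Theorem~\ref{theo:small cancellation theorem}) is a correct outline of why the result holds, so your approach is consistent with---indeed more informative than---what the paper does.
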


\begin{prop}
\label{existence good quasi-geodesic in cone-off and quotient}
	Let $x$ and $x'$ be two points of $X$.
	We assume that for all $\rho \in P$, $\diaminter{\geo x{x'}}{Y_\rho} \leq \len \rho  \sympy{-1*F_SCpreGreendlingerOverlap()}$.
	Then for all $\eta >0 $ there exists a $(1, \eta)$-quasi-geodesic $\sigma : J \rightarrow \dot X$ between $x$ and $x'$, such that the path $\bar \sigma : J \rightarrow \dot X \rightarrow \bar X$ is a $\sympy{F_SCpreGreendlingerL()}$-local $(1, \eta)$-quasi-geodesic of $\bar X$.
\end{prop}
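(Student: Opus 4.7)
The plan is to build $\sigma$ directly from Proposition~\ref{projection of quasi-geodesic} and then transfer the quasi-geodesic property from $\dot X$ down to $\bar X$ via the developing map of Lemma~\ref{developing map}. The overlap hypothesis is used exactly to prevent $\sigma$ from lingering deep inside the cones attached to $X$.

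First I would apply Proposition~\ref{projection of quasi-geodesic} with a universal constant $\epsilon$ at most $\sympy{F_SCprojGeoBase()}$ to obtain, for the given $\eta>0$, a $(1,\eta)$-quasi-geodesic $\sigma:J\to\dot X$ from $x$ to $x'$ such that, at every non-apex parameter $t$, the projection $p\circ\sigma(t)$ lies within distance $\epsilon$ of $\geo{x}{x'}$. This already gives the first part of the conclusion, and since the quotient map $\dot X\to\bar X$ is $1$-Lipschitz, we automatically obtain the upper bound $\dist[\bar X]{\bar\sigma(s)}{\bar\sigma(t)}\leq |t-s|+\eta$ for all $s,t\in J$.

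To establish the matching lower bound on every subinterval $I\subset J$ of length at most $L=\sympy{F_SCpreGreendlingerL()}$, let $R=\sympy{F_SCdeveloppingMap()}$ denote the radius provided by Lemma~\ref{developing map}. The key claim is that every such $I$ contains a parameter $r$ with $d(\sigma(r),X)\leq r_0/2$. Given this, for every $t\in I$ one has $\dist[\dot X]{\sigma(t)}{\sigma(r)}\leq L+\eta$, which, by the choice of $L$ in the statement, is at most $R$. Hence $\sigma(I)$ is contained in the ball $B_{\dot X}(\sigma(r),R)$; by Lemma~\ref{developing map} this ball maps isometrically to $\bar X$, so the paths $\sigma|_I$ and $\bar\sigma|_I$ are metrically identical, and $\bar\sigma|_I$ inherits the $(1,\eta)$-quasi-geodesic property from $\sigma$.

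The main obstacle, and the only place where the overlap hypothesis really enters, is the existence of such an $r\in I$. Suppose for contradiction that $d(\sigma(t),X)>r_0/2$ for every $t\in I$. Since the attached cones are pairwise disjoint away from their bases, all of $\sigma(I)$ must then lie in the upper half of a single cone $Z_\rho$. A straightforward comparison between the cone metric and the base metric shows that a $(1,\eta)$-quasi-geodesic of length at least $L-\eta$ remaining above radius $r_0/2$ in $Z_\rho$ must have its endpoints project under $p$ to two points of $Y_\rho$ whose distance in $Y_\rho$, and hence in $X$ by strong quasi-convexity, is of the order of $\pi\sinh r_0$ minus a controlled error. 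Combining this with the $\epsilon$-closeness of $p\circ\sigma$ to $\geo{x}{x'}$ and applying Proposition~\ref{res:diam quasi-convex and projections} to $Y_\rho$ forces $\diaminter{\geo{x}{x'}}{Y_\rho}$ to exceed $\len\rho\sympy{-1*F_SCpreGreendlingerOverlap()}$, contradicting the hypothesis. The constant $\sympy{F_SCpreGreendlingerL()}$ is precisely the value that makes this contradiction work while keeping $L+\eta<R$; the quantitative heart of the argument lies in calibrating this constant consistently with the cone-metric comparison and the constants fixed earlier in the paper.
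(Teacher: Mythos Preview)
Your construction of $\sigma$ via Proposition~\ref{projection of quasi-geodesic} and the use of Lemma~\ref{developing map} for the shallow case are exactly what the paper does. The divergence is in the deep-cone case, and there your argument has a genuine gap.

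You try to rule out the possibility that an $L$-subinterval of $\sigma$ stays at distance $>r_0/2$ from $X$, by asserting that such a segment must project under $p$ to two points of $Y_\rho$ at distance ``of the order of $\pi\sinh r_0$'', hence forcing a large overlap of $\geo{x}{x'}$ with $Y_\rho$. Neither step is sound. First, near the apex the radial projection $p$ is arbitrarily distorting (and undefined at the apex), so a cone-distance of order $L$ between two points of $\{r<r_0/2\}$ gives no usable lower bound on the base-distance of their projections; the ``straightforward comparison'' you invoke does not exist in this direction. Second, even if you could extract an overlap of size $\pi\sinh r_0$, this would not contradict the hypothesis: the assumption only bounds $\diaminter{\geo{x}{x'}}{Y_\rho}$ by $\len\rho$ minus a constant, and $\len\rho$ can be arbitrarily large compared to $\pi\sinh r_0$.

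The paper does not attempt to exclude the deep-cone case at all. Instead, when $d(\sigma(s),X)>r_0/2$, it observes that both $\sigma(s)$ and $\sigma(t)$ lie in the same cone $Z_\rho$, and that their base-projections $p\circ\sigma(s)$, $p\circ\sigma(t)$ belong simultaneously to $Y_\rho$ and to the $\epsilon$-neighbourhood of $\geo{x}{x'}$. The overlap hypothesis then bounds $\dist[Y_\rho]{p\circ\sigma(s)}{p\circ\sigma(t)}$ by $\len[Y_\rho]\rho-\pi\sinh r_0$ (up to lower-order terms), which is precisely the input needed for Lemma~\ref{metric quotient cone} to conclude $\dist[\bar X]{\bar\sigma(s)}{\bar\sigma(t)}=\dist[\dot X]{\sigma(s)}{\sigma(t)}$ directly. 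This is where the overlap hypothesis is actually used, and Lemma~\ref{metric quotient cone}---not the developing map---is the right tool for this case.
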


\begin{proof}
	Let $\eta \in \left( 0 , \sympy{Faux1_SCpreGreendlinger()}\right)$.
	Applying Proposition~\ref{projection of quasi-geodesic} there exists a $(1,\eta)$-quasi-geodesic $\sigma : J \rightarrow \dot X$ between $x$ and $x'$ such that for all $t \in J$, if $\sigma(t)$ is not an apex of $\dot X$, then $p\circ\sigma(t)$ lies in the $\sympy{F_SCprojGeoBase()}$-neighbourhood of $\geo x{x'}$.
	Let $s, t \in J$ such that $\dist st \leq \sympy{Faux2_SCpreGreendlinger()}$.
	Since $\sigma$ is a $(1,\eta)$-quasi-geodesic, $\dist[\dot X]{\sigma(s)}{\sigma(t)} \leq \dist st + \eta < \sympy{Faux3_SCpreGreendlinger()}$.
	We now distinguish two cases.
	\begin{itemize}
		\item Assume that $d\left(\sigma(s), X\right) \leq \sympy{Faux4_SCpreGreendlinger()}$.
		By Lemma~\ref{developing map}, the map $\dot X \rightarrow \bar X$ restricted to the ball of center $\sigma(s)$ and radius $\sympy{Faux5_SCpreGreendlinger()}$ preserves the distances.
		Hence $\dist[\bar X]{\bar \sigma(s)}{\bar \sigma(t)} = \dist[\dot X]{\sigma (s)}{\sigma (t)}$. 
	
		\item Assume that $d\left(\sigma(s), X\right) > \sympy{Faux4_SCpreGreendlinger()}$.
		There exists $\rho \in P$ such that $\sigma(s)$ and $\sigma(t)$ are two points of the same cone $C_\rho$.
		If $\sigma(s)$ or $\sigma(t)$ is the apex of the cone then $\dist[\bar X]{\bar \sigma(s)}{\bar \sigma(t)} = \dist[\dot X]{\sigma (s)}{\sigma (t)}$, otherwise $p\circ \sigma(s)$ and $p\circ \sigma(t)$ belong to $Y_\rho$ and the $\sympy{F_SCprojGeoBase()}$-neghbourhood of $\geo x{x'}$.
		Thus
		\begin{displaymath}
			\dist[Y_\rho]{p\circ \sigma(s)}{p\circ \sigma(t)}
			\leq \diaminter{\geo x{x'}}{Y_\rho} + \sympy{Faux6_SCpreGreendlinger()}
			\leq \len[espace=Y_\rho] \rho \sympy{-1*Faux7_SCpreGreendlinger()}
		\end{displaymath}
		It follows from Lemma~\ref{metric quotient cone}, that $\dist[\bar X]{\bar \sigma(s)}{\bar \sigma(t)} = \dist[\dot X]{\sigma (s)}{\sigma (t)}$.
	\end{itemize}
	Thus for all $s,t \in J$, if $\dist st \leq \sympy{Faux2_SCpreGreendlinger()}$, $\dist[\bar X]{\bar \sigma(s)}{\bar \sigma(t)} = \dist[\dot X]{\sigma (s)}{\sigma (t)}$.
	Since $\sigma$ is a $(1,\eta)$-quasi-geodesic, $\bar \sigma$ is a $\sympy{F_SCpreGreendlingerL()}$-local $(1,\eta)$-quasi-geodesic.
\end{proof}

\begin{theo}[Greendlinger's Lemma]
\label{Greendlinger Lemma}
	Let $x$ be a point of $X$.
	Let $g$ be an element of $G\setminus\{1\}$.
	If $g$ belongs to $K$, then there exists $\rho \in P$ such that $\diaminter{\geo x{gx}}{Y_\rho} > \len \rho  \sympy{-1*F_SCGrenndlinger()}$.
\end{theo}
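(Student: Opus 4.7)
The plan is to argue by contradiction, using Proposition~\ref{existence good quasi-geodesic in cone-off and quotient} together with the stability of local quasi-geodesics in the hyperbolic space $\bar X$. I would assume, for the Greendlinger constant to be specified, that $\diaminter{\geo{x}{gx}}{Y_\rho} \leq \len{\rho} - C$ for every $\rho \in P$ (where $C$ stands for this constant), and derive that $gx = x$, which will then contradict $g \neq 1$ under the torsion-freeness in force in our applications.

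First I would choose $C$ to be at least the overlap constant appearing in the hypothesis of Proposition~\ref{existence good quasi-geodesic in cone-off and quotient}. Then for every $\eta \in (0,\delta_1)$ that proposition furnishes a $(1,\eta)$-quasi-geodesic $\sigma_\eta : [0,a_\eta] \to \dot X$ from $x$ to $gx$, parametrized by arclength, whose projection $\bar\sigma_\eta$ in $\bar X$ is an $L$-local $(1,\eta)$-quasi-geodesic for the locality constant $L$ of Proposition~\ref{existence good quasi-geodesic in cone-off and quotient}. The whole calibration of constants in Section~\ref{sec:small cancellation constants} is arranged precisely so that this $L$ exceeds the threshold guaranteeing, in the $\delta_1$-hyperbolic space $\bar X$, that such a local quasi-geodesic is automatically a global $(k_S,\eta)$-quasi-geodesic.

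Now I invoke $g \in K$: this forces $\bar g = 1$, so $\overline{gx} = \bar x$ and $\bar\sigma_\eta$ is a loop at $\bar x$. The global quasi-geodesic inequality applied between the two endpoints gives $k_S^{-1}a_\eta - \eta \leq 0$, hence $a_\eta \leq k_S\eta$. Since $\sigma_\eta$ itself is a $(1,\eta)$-quasi-geodesic of $\dot X$, we deduce $\dist[\dot X]{x}{gx} \leq a_\eta + \eta \leq (k_S+1)\eta$. Letting $\eta \to 0$ yields $\dist[\dot X]{x}{gx} = 0$; the injectivity of the natural inclusion of $X$ into $\dot X$ (the cone-off identifies no points of $X$ with each other) then forces $gx = x$ in $X$.

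The remaining step, passing from $gx = x$ to $g = 1$, is where one needs the action to be essentially free. This holds in our applications by torsion-freeness, since a finite stabilizer in a torsion-free group is trivial. The main technical obstacle I expect is not in this contradiction argument itself but in the upstream bookkeeping of constants: one must verify that the overlap threshold and the locality parameter supplied by Proposition~\ref{existence good quasi-geodesic in cone-off and quotient}, the stability threshold $L_0$ fixed in Section~\ref{sec:small cancellation constants}, the global quasi-isometry constant $k_S$, and the hyperbolicity constant $\delta_1$ of $\bar X$ are set up coherently, so that the local-to-global stability really does apply to the quasi-geodesic $\bar\sigma_\eta$ we constructed.
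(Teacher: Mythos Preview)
Your argument is essentially identical to the paper's: both proceed by contradiction, apply Proposition~\ref{existence good quasi-geodesic in cone-off and quotient} to obtain a $(1,\eta)$-quasi-geodesic in $\dot X$ whose image in $\bar X$ is a local quasi-geodesic, promote it to a global $(k_S,\eta)$-quasi-geodesic via the stability constants fixed in Section~\ref{sec:small cancellation constants}, and use $\bar g = 1$ to conclude $\dist[\dot X]{x}{gx} \to 0$ as $\eta \to 0$, hence $gx = x$. The only difference is in the final step: you appeal to torsion-freeness of $G$ (valid in the eventual applications), whereas the paper invokes the more general fact that $K$ itself acts freely on $X$ in the small cancellation setup (\cite[Prop.~5.6.2]{DelGro08}), which does not require $G$ to be torsion-free.
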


\begin{proof}
	We prove the theorem by contradiction.
	Assume that for all $\rho \in P$, $\diaminter{\geo x{gx}}{Y_\rho} \leq \len \rho  \sympy{-1*F_SCGrenndlinger()}$.
	Let $\eta \in (0, \delta_1)$.
	Applying Proposition~\ref{existence good quasi-geodesic in cone-off and quotient}, there exists a $(1, \eta)$-quasi-geodesic $\sigma : \intval ab \rightarrow \dot X$ between $x$ and $gx$, such that the path $\bar \sigma :  \intval ab \rightarrow \dot X \rightarrow \bar X$ is a $\sympy{Faux1_SCGreendlinger()}$-local $(1, \eta)$-quasi-geodesic of $\bar X$.
	In particular $\bar \sigma$ is a $(k_S, \eta)$-quasi-geodesic (see Proposition~\ref{stability quasi-geodesics}).
	Hence, $\dist[\dot X] {gx}x \leq k_S\dist {\bar g \bar x}{\bar x} + 3\eta = 3\eta$.
	This inequality holds for all $\eta >0$.
	It implies $gx=x$. 
	However $K$ acts freely on $X$ (see \cite[Prop. 5.6.2]{DelGro08}), thus $g=1$.
	Contradiction.
\end{proof}

\begin{prop}[Preserving shape Lemma]
\label{Preserving shape Lemma}
	Let $x$, $y$ and $z$ be three points of $X$ such that for all $\rho \in P$, 
	\begin{displaymath}
		\max\left\{\fantomB \diaminter{\geo xy}{Y_\rho}, \diaminter{\geo x{z}}{Y_\rho}\right\} \leq \len \rho \sympy{-1*F_SCpreservingShapeAxe()}.
	\end{displaymath}
	If $\gro{\bar y}{\bar z}{\bar x} \leq \sympy{F_SCpreservingShapeGro()}$, then $\gro yzx \leq  \sympy{F_SCpreservingShape()}$
\end{prop}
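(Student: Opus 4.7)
The strategy is to transfer the hypothesis on the Gromov product from $\bar X$ down through the intermediate space $\dot X$ to $X$, using Proposition~\ref{res: comparing Gromov product base vs cone-off} as the final bridge. First, fix a small $\eta > 0$ and apply Proposition~\ref{existence good quasi-geodesic in cone-off and quotient} to each pair $(x,y)$ and $(x,z)$; the overlap hypotheses in the statement are exactly what is required. This produces $(1,\eta)$-quasi-geodesics $\sigma_1 : [0,a] \to \dot X$ and $\sigma_2 : [0,b] \to \dot X$ from $x$ to $y$ and from $x$ to $z$, whose projections $\bar\sigma_1, \bar\sigma_2$ are $L$-local $(1,\eta)$-quasi-geodesics in $\bar X$, hence $(k_S,\eta)$-quasi-geodesics globally by Proposition~\ref{stability quasi-geodesics}. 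A key byproduct, established in the proof of that proposition, is that along each $\sigma_i$, whenever $|s-s'| \leq L$, one has $\dist[\bar X]{\bar\sigma_i(s)}{\bar\sigma_i(s')} = \dist[\dot X]{\sigma_i(s)}{\sigma_i(s')}$.

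Next, I would use the $\bar\delta$-hyperbolicity of $\bar X$. Since $\gro[\bar X]{\bar y}{\bar z}{\bar x}$ is bounded by a small constant, a standard fellow-traveling argument for two quasi-geodesics issuing from the same basepoint produces a parameter $s_0$, bounded in terms of $\gro[\bar X]{\bar y}{\bar z}{\bar x}$, $k_S$, $\bar\delta$ and $\eta$, such that the points $\bar u_1 := \bar\sigma_1(s_0)$ and $\bar u_2 := \bar\sigma_2(s_0)$ satisfy $\dist[\bar X]{\bar u_1}{\bar u_2} \leq C$ for an explicit constant $C$.

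I then lift this closeness to $\dot X$ via Lemma~\ref{developing map}. Since $\sigma_i$ is a $(1,\eta)$-quasi-geodesic starting from $x \in X$, the overlap hypothesis prevents it from plunging far into any single cone early on, so after possibly modifying $s_0$ by a bounded amount one may arrange that $u_1 = \sigma_1(s_0)$ and $u_2 = \sigma_2(s_0)$ lie close to the base $X$ in $\dot X$. Choosing $C$ smaller than the local isometry radius of $\nu$ provided by Lemma~\ref{developing map}, one concludes $\dist[\dot X]{u_1}{u_2} = \dist[\bar X]{\bar u_1}{\bar u_2} \leq C$. Together with $\dist[\dot X]{x}{u_i} \leq s_0 + \eta$, the $\delta_1$-hyperbolicity of $\dot X$ yields a bound $\gro[\dot X]yzx \leq s_0 + C'$ for some constant $C'$ depending on $\delta_1$, $C$ and $\eta$. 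Finally, Proposition~\ref{res: comparing Gromov product base vs cone-off} gives $\mu(\gro yzx) \leq \gro[\dot X]yzx$ plus a constant, and since $\mu$ is increasing with $\mu(t) \geq t/2$ on a bounded range, this delivers the desired uniform upper bound on $\gro yzx$.

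The main obstacle is this lifting step. The constants must be balanced so that the $\bar X$-distance $C$ to be lifted falls within the local isometry radius of $\nu$, and the intermediate points $u_i$ must be arranged to lie close to $X$ rather than deep inside some cone. The overlap hypothesis on $\diaminter{\geo xy}{Y_\rho}$ and $\diaminter{\geo xz}{Y_\rho}$ is essential here: it prevents $\sigma_i$ from travelling through a full relation early on, thereby bounding the depth and length of its cone excursions and allowing the adjustment of $s_0$ to remain bounded. Everything else in the argument is standard manipulation of quasi-geodesics in hyperbolic spaces.
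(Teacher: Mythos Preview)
Your approach is essentially the same as the paper's: build the two good quasi-geodesics via Proposition~\ref{existence good quasi-geodesic in cone-off and quotient}, use the small Gromov product in $\bar X$ to find points $u_1,u_2$ at a common small parameter that are close in $\bar X$, lift this closeness to $\dot X$ via Lemma~\ref{developing map}, deduce a bound on the $\dot X$ Gromov product, and finish with Proposition~\ref{res: comparing Gromov product base vs cone-off} and the estimate on $\mu$.

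The one place where you make your life harder than necessary is the lifting step. You worry about arranging $u_1,u_2$ to lie close to the base $X$, and invoke the overlap hypothesis a second time to control ``cone excursions''. This is not needed. Lemma~\ref{developing map} asks only that the \emph{center} of the ball be close to $X$, not every point of the ball. Since $x\in X$ and the hypothesis bounds $\gro{\bar y}{\bar z}{\bar x}$ by an explicit constant well below the local-isometry radius, you can simply take $s_0$ just above this Gromov product; then $u_1,u_2$ lie in the small $\dot X$-ball about $x$ on which $\nu$ is an isometry, and $\dist[\dot X]{u_1}{u_2}=\dist{\bar u_1}{\bar u_2}$ follows directly. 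The overlap hypothesis is used exactly once, to feed Proposition~\ref{existence good quasi-geodesic in cone-off and quotient}; after that it plays no further role. The paper organises the same computation as a comparison of an upper bound (from $\delta_1$-hyperbolicity of $\dot X$) and a lower bound (from the $\bar X$ picture) for $\dist[\dot X]{u_1}{u_2}$, but the content is identical to yours.
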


\begin{proof}
	As we wrote before, we keep the notation $\gro yzx$ for the Gromov product computed with the distance of $X$.
	Therefore we denote by $t$ the same product computed with the distance of $\dot X$.
	\begin{displaymath}
		t =  \frac 12\left(\fantomB\dist[\dot X]yx + \dist[\dot X]zx - \dist[\dot X]yz\right) 
	\end{displaymath}
	By Proposition~\ref{res: comparing Gromov product base vs cone-off}, we have $\mu(\gro yzx) \leq t +  \sympy{Faux1_SCpreservingShape()}$.
	The goal is now to compare $t$  and $\gro{\bar y}{\bar z}{\bar x}$.
	We can assume that $\min\{\dist[\dot X] xy,\dist[\dot X]xz\} >  \gro{\bar y}{\bar z}{\bar x} + \sympy{Faux17_SCpreservingShape()}$.
	Otherwise we would have $t \leq  \gro{\bar y}{\bar z}{\bar x} +\sympy{Faux17_SCpreservingShape()}$.

	\paragraph{}Let $\eta \in (0, \delta_1)$ such that $\min\{\dist[\dot X] xy,\dist[\dot X]xz\} > \gro{\bar y}{\bar z}{\bar x} + \sympy{Faux14_SCpreservingShape(eta)}$.
	According to Proposition~\ref{existence good quasi-geodesic in cone-off and quotient}, there exists a $(1,\eta)$-quasi-geodesic $\sigma : \intval 0a \rightarrow \dot X$ between $x$ and $y$ whose image $\bar \sigma : J \rightarrow \dot X \rightarrow \bar X$ in $\bar X$ is a $\sympy{Faux2_SCpreservingShape()}$-local $(1, \eta)$-quasi-geodesic. 
	In particular $\bar \sigma$ lies in the $\sympy{Faux3_SCpreservingShape()}$-neighbourhood of $\geo{\bar x}{\bar y}$.
	We also construct a path $\gamma : \intval 0b \rightarrow \dot X$ between $x$ and $z$ having the same properties.
	Let $ s \in [0, \min\{a,b\}]$ such that $s> \gro{\bar y}{\bar z}{\bar x} + \sympy{Faux13_SCpreservingShape(eta)}$.
	Without loss of generality we can also require that $s \leq \sympy{Faux15_SCpreservingShape()}$.
	Let us denote by $p$ and $q$ the points $\sigma(s)$ and $\gamma(s)$.
	By hyperbolicity of $\dot X$ we have 
	\begin{displaymath}
		\dist[\dot X] pq  \leq \max\left\{\dist{\fantomB\dist[\dot X]xp}{\dist[\dot X]xq} +\sympy{Faux4_SCpreservingShape(eta)}, \dist[\dot X] xp + \dist [\dot X]xq - 2t \right\} +\sympy{Faux5_SCpreservingShape()}
	\end{displaymath}
	which leads to 
	\begin{equation}
	\label{eqn: Preserving shape Lemma - hyp dot X}
		\dist[\dot X] pq  \leq \max\left\{\sympy{Faux6_SCpreservingShape(eta)}, 2s - 2t \right\} +\sympy{Faux7_SCpreservingShape(eta)}
	\end{equation}
	Our next step is to give a lower bound for $\dist[\dot X]pq$.
	Recall that $s \leq \sympy{Faux15_SCpreservingShape()}$.
	Thus $p$ and $q$ are contained in the ball of $\dot X$ of center $x$ and radius $\sympy{Faux8_SCpreservingShape()}$.
	However the map $\dot X \rightarrow \bar X$ restricted to this ball is an isometry, hence $\dist[\dot X] pq = \dist{\bar p}{\bar q}$ and $\dist{\bar x}{\bar p}+ \dist{\bar x}{\bar q} \geq 2s \sympy{-1*Faux9_SCpreservingShape(eta)}$ .
	By triangle inequality that
	\begin{displaymath}
		\dist{\bar p}{\bar q} \geq \dist{\bar x}{\bar p} + \dist{\bar x}{\bar q} - 2 \gro{\bar y}{\bar z}{\bar x} - 2\gro{\bar x}{\bar y}{\bar p} - 2 \gro{\bar x}{\bar z}{\bar q}
	\end{displaymath}
	Since $\bar p$ (\resp $\bar q$) lies in the $\sympy{Faux3_SCpreservingShape()}$-neighbourhood of $\geo {\bar x}{\bar y}$ (\resp $\geo{\bar x}{\bar z}$) we get 
	\begin{displaymath}
		\dist[\dot X]pq = \dist{\bar p}{\bar q} \geq 2s - 2 \gro{\bar y}{\bar z}{\bar x} \sympy{-1*Faux10_SCpreservingShape(eta)} > \sympy{Faux11_SCpreservingShape(eta)}
	\end{displaymath}
	It follows then from (\ref{eqn: Preserving shape Lemma - hyp dot X}) that $t \leq s + \sympy{Faux12_SCpreservingShape(eta)}$.
	This inequality holds for every sufficiently small $\eta$ and for every $s> \gro{\bar y}{\bar z}{\bar x} + \sympy{Faux13_SCpreservingShape(eta)}$ thus $t \leq \gro{\bar y}{\bar z}{\bar x} + \sympy{Faux16_SCpreservingShape()}$.
	
	\paragraph{}
	We proved that $\mu(\gro yzx) \leq \gro{\bar y}{\bar z}{\bar x} + \sympy{Faux18_SCpreservingShape()} < \sympy{Faux19_SCpreservingShape()}$.
	The conclusion follows from the estimate of the function $\mu$ (see Section~\ref{sec:cone}).
\end{proof}

\subsection{$P$-close points}
\label{sec: close points}

\begin{defi}
	Two points $x$ and $x'$ of $X$ are \emph{$P$-close} if for all $\rho \in P$, $\diaminter{\geo x{x'}}{Y_\rho} \leq \len \rho/2 + \sympy{V_Pclose}$.
\end{defi}

\rem There is a very simple way to get $P$-close points.
Let $x$ and $x'$ be two points of $X$. 
Let $u \in K$. 
If $\dist x{ux'} \leq \inf_{v \in K} \dist x{vx'} + \delta$, then $x$ and $ux'$ are $P$-close.
Indeed, if it was not the case, according to Lemma~\ref{shortening geodesic} one could reduce the distance between $x$ and $ux'$.

\begin{prop}
\label{satisfaire Greendlinger geodesique}
	Let $\alpha \geq 0$
	Let $x$ and $x'$ be two $P$-close points of $X$.
	Let $y \in X$ such that for all $u \in K$, $\gro x{x'}y  < \gro x{x'}{uy} + 2\alpha$.
	Then for all $\rho \in P$, $\diaminter{\geo xy}{Y_\rho} \leq \len \rho - c$ where $c =  \sympy{F_SCsatisfaireGreendlinger(Qalpha)}$.
\end{prop}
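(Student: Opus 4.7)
We argue by contradiction: suppose that some $\rho \in P$ satisfies $\diaminter{\geo xy}{Y_\rho} > \len \rho - c$. The plan is to exhibit $u = \rho^k \in K$ such that $\gro x{x'}{uy} \leq \gro x{x'}y - 2\alpha$, which contradicts the hypothesis $\gro x{x'}y < \gro x{x'}{uy} + 2\alpha$. Fix a nerve $N$ of $\rho$ parametrized by arclength, and let $p$, $p'$, $q$ be respective projections of $x$, $x'$, $y$ onto $N$. Combining Proposition~\ref{res:diam quasi-convex and projections} with the fact that $x$ and $x'$ are $P$-close (and that $Y_\rho$ is within $O(\delta)$ of $N$), we obtain
\begin{displaymath}
  \dist pq \geq \len \rho - c - O(\delta),
  \qquad
  \dist p{p'} \leq \frac{\len \rho}{2} + \xi + O(\delta).
\end{displaymath}

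Parametrize $N$ so that $p = N(0)$ and $q = N(L)$ with $L = \dist pq$, and denote by $s_{p'}$ the position of $p'$. Because $\rho$ acts on $N$ by translation of length $\len\rho$, the positions of the orbit $\{\rho^k q : k \in \Z\}$ form a coset of $\len\rho \Z$ in $\R$. The function $f(u) = |u| + |u - s_{p'}|$ attains its minimum $|s_{p'}|$ on the interval $I$ between $0$ and $s_{p'}$ and grows linearly with slope $2$ outside it; since $|s_{p'}| < \len\rho$, the orbit lies at distance at most $(\len\rho - |s_{p'}|)/2$ from $I$, so we may pick $k$ such that the position $s_u$ of $\rho^k q$ satisfies $f(s_u) \leq \len\rho$. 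Using the standard projection formula
\begin{displaymath}
  \dist xy = \dist xp + L + \dist qy + O(\delta)
\end{displaymath}
(and the analogous formula for $\dist{x'}y$, with $p'$ in place of $p$ and $|L - s_{p'}|$ in place of $L$), together with the corresponding estimates for $\dist x{\rho^k y}$ and $\dist{x'}{\rho^k y}$ (obtained by replacing $L$ and $|L - s_{p'}|$ with $|s_u|$ and $|s_u - s_{p'}|$ respectively, since $\rho^k q$ projects $\rho^k y$ onto $N$), we compute
\begin{displaymath}
  \bigl(\dist xy + \dist{x'}y\bigr) - \bigl(\dist x{\rho^k y} + \dist{x'}{\rho^k y}\bigr)
  \geq L + |L - s_{p'}| - f(s_u) - O(\delta)
  \geq \frac{\len \rho}{2} - 2c - \xi - O(\delta),
\end{displaymath}
where the last inequality uses $L + |L - s_{p'}| \geq 2L - |s_{p'}| \geq \tfrac{3}{2}\len\rho - 2c - \xi - O(\delta)$ and $f(s_u) \leq \len\rho$.

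Dividing by $2$ gives $\gro x{x'}y - \gro x{x'}{\rho^k y} \geq \len\rho/4 - c - \xi/2 - O(\delta)$. Since $\len\rho \geq \rinj PX$ is arranged to be very large by the small cancellation hypotheses and $c$ is chosen in the statement as an explicit function of $\alpha$ (together with the fixed constants $\xi$ and $\delta$), this lower bound exceeds $2\alpha$, delivering the desired contradiction.

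The main technical obstacle lies in turning the projection approximations $\dist ab \approx \dist{a}{\pi_N a} + \dist{\pi_N a}{\pi_N b} + \dist{\pi_N b}b$ into rigorous inequalities: one must check that each of the nerve-distances $L$, $|L - s_{p'}|$, $|s_u|$, $|s_u - s_{p'}|$ is large enough relative to $\delta$ for the nerve picture to apply, and edge cases where one of these quantities becomes small must either be treated separately or absorbed into the $O(\delta)$ error. The careful bookkeeping of these additive constants is precisely what the sympy machinery in the paper automates.
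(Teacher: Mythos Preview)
Your contradiction strategy matches the paper's, but the execution differs and yields only half the gain. The paper also projects $x$ and $y$ onto a nerve $N$ of $\rho$ to obtain $p$ and $q$; however, instead of projecting $x'$ onto all of $N$, it projects $x'$ onto the \emph{segment} $\nerf pqN$, calling the result $r$. This is precisely the configuration of Proposition~\ref{projection conditionnelle sur un axe version simple} (applied with the auxiliary point $s = x'$, using that $x$ and $x'$ are $P$-close), which delivers both $\dist qr \geq \len\rho/2 - O(c)$ and the sharp estimate $\gro x{x'}y \geq \gro x{x'}r + \dist yq + \dist qr - O(\delta)$. The paper then translates by $\rho^\epsilon$ with $\epsilon \in \{\pm 1\}$ chosen so that $\rho^\epsilon q$ moves toward $p$, and a two-case analysis (according to whether $\rho^\epsilon q$ lands in $\nerf qrN$ or beyond $r$) gives $\gro x{x'}y - \gro x{x'}{\rho^\epsilon y} \geq \len\rho/2 - 2c - O(\delta)$.

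Your bound via sums of distances and an arbitrary power $\rho^k$ gives a Gromov-product gain of only $\len\rho/4 - c - O(\delta)$. The factor-of-two loss comes from not exploiting point~\ref{enum axe simple produit gromov} of Proposition~\ref{projection conditionnelle sur un axe version simple}: since $r$ lies between $p$ and $q$, it sits close to $\geo xy$ right at the branch point of $x'$, so one has $\gro x{x'}y \approx \gro x{x'}r + \dist yr$ directly, whereas your sum-of-distances computation effectively estimates this quantity twice over. The consequence is that the paper's argument works for $c$ up to roughly $\pi\sinh r_0/4 - \alpha$, while yours only for $c$ up to roughly $\pi\sinh r_0/4 - 2\alpha$. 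Since the constant $c$ in the statement is fixed by the automated bookkeeping to match the paper's argument, your concluding sentence ``this lower bound exceeds $2\alpha$'' is not justified for that specific $c$; you establish the proposition with a slightly smaller constant. This weaker version would still suffice for every downstream use in the paper (where $\alpha$ is always of order $\delta$), but it does not literally prove the statement as written.

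A minor notational point: the quantity you call $\xi$ is the $P$-closeness threshold from Section~\ref{sec: close points}; the symbol $\xi$ in this paper is reserved for an unrelated integer in Section~\ref{sec:Burnside groups}.
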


\begin{proof}
	We prove this result by contradiction.
	Assume that there exists $\rho \in P$ such that $\diaminter{\geo xy}{Y_\rho} > \len \rho - \sympy{Faux0_SCsatisfaireGreendlinger(c)}$. 
	Let $N$ be a nerve of $\rho$.
	We denote by $p$ and $q$ respective projections of $x$ and $y$ on $N$.
	Let $r$ be a projection of $x'$ on $\nerf pqN$.
	Recall that $\len \rho \geq \sympy{F_SCrinj()}$.
	It follows from Proposition~\ref{projection conditionnelle sur un axe version simple}, that
	\begin{enumerate}
		\item \label{enu: greendlinger geodesic - distances}
		\begin{list}{\labelitemi}{\leftmargin=0em}
		\renewcommand{\labelitemi}{}
			\item $\dist pq \geq \len \rho \sympy{-1*Faux1_SCsatisfaireGreendlinger(c)}$,
			\item $\dist qr \geq \len \rho/2 \sympy{-1*Faux2_SCsatisfaireGreendlinger(c)}$,
		\end{list}
		\item \label{enu: greendlinger geodesic - gromov product}
		$\gro x{x'}y \geq \gro x{x'}r +   \dist yq  + \dist qr \sympy{-1*Faux3_SCsatisfaireGreendlinger()}$.
	\end{enumerate}
	The isometry $\rho$ acts on $N$ by translation of length $\len \rho$. 
	Therefore there exists $\epsilon \in \{\pm1\}$, such that $p$ and $\rho^\epsilon q$ belong to the same component of $N\setminus\{q\}$.
	We want to compare $\gro x{x'}y$ and $\gro x{x'}{\rho^\epsilon y}$.
	To that end, we distinguish two cases depending on the relative positions of $p$, $r$, and $\rho^\epsilon q$ on $N$.

	\begin{figure}[ht]
	\centering
		\includegraphics{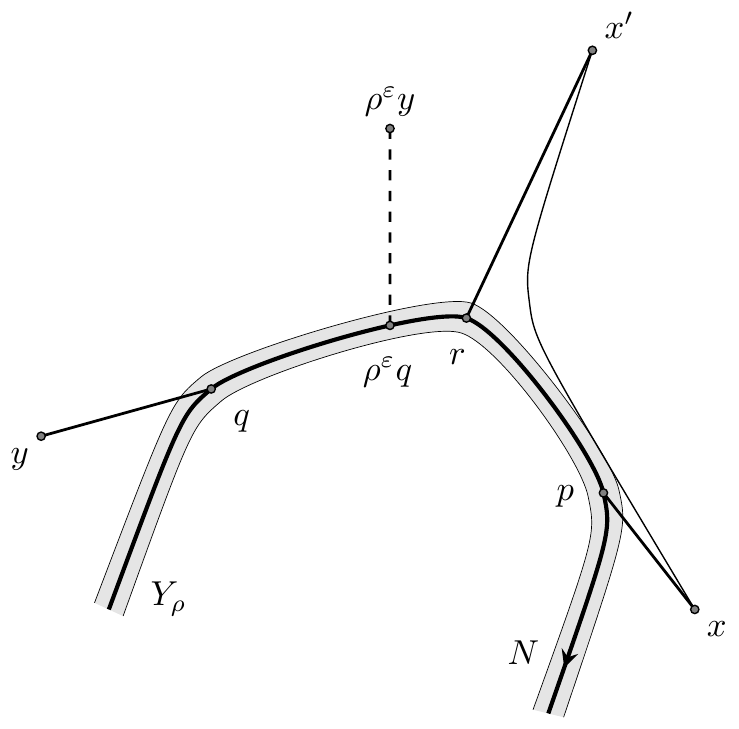}
	\caption{Case 1}
	\label{fig:satisfaire Greendlinger geodesique Case1}
	\end{figure}
	
	\paragraph{Case 1.}
	Assume that $\rho^\epsilon q$ belongs to $\nerf qrN$ (see Fig.~\ref{fig:satisfaire Greendlinger geodesique Case1}).
	Since $q$ is a projection of $y$ on $N$ we have $\dist yr \geq  \dist{\rho^\epsilon y}r + \len \rho  \sympy{-1*Faux4_SCsatisfaireGreendlinger()}$.
	Combined with the lower bound of $\gro x{x'}y$ given by~\ref{enu: greendlinger geodesic - gromov product},  we get
	\begin{displaymath}
		\gro x{x'}y \geq \gro x{x'}r + \dist {\rho^ \epsilon y}r + \len \rho  \sympy{-1*Faux5_SCsatisfaireGreendlinger()} \geq \gro x{x'}{\rho^\epsilon y} +\len \rho  \sympy{-1*Faux5_SCsatisfaireGreendlinger()}.
	\end{displaymath}
	
	\begin{figure}[ht]
		\subfigure[][{$\rho^\epsilon q$} between {$r$} and {$p$}]{
			\includegraphics[width=0.46\linewidth]{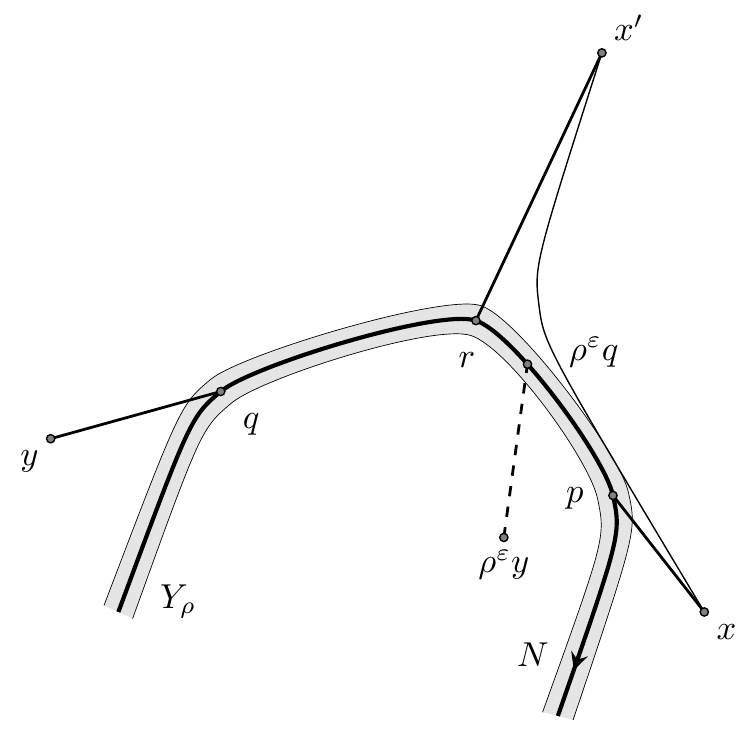}
			\label{fig:satisfaire Greendlinger geodesique Case2a}
		}
		\subfigure[][{$p$} between {$r$} and {$\rho^\epsilon q$}]{
			\includegraphics[width=0.50\linewidth]{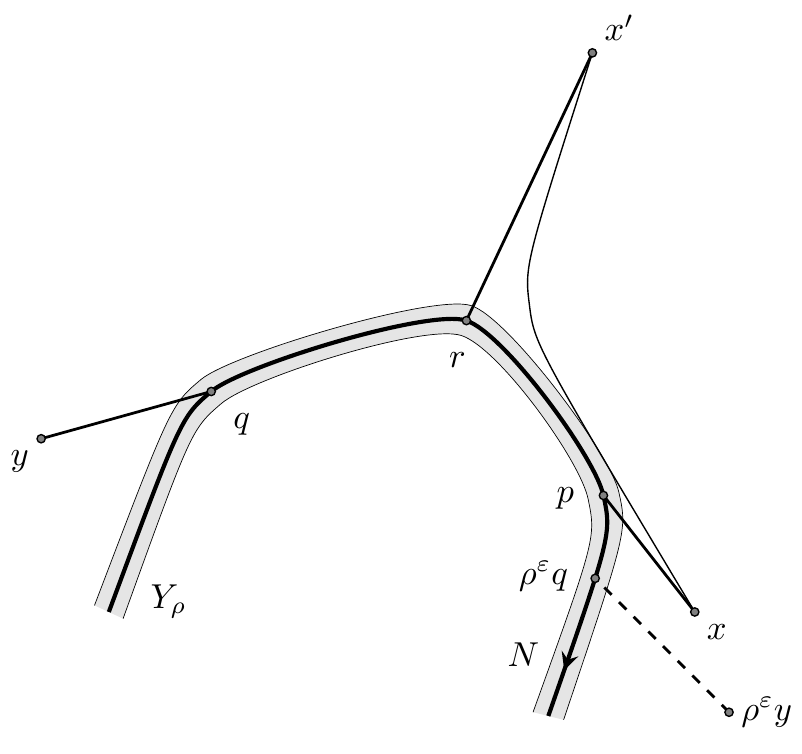}
			\label{fig:satisfaire Greendlinger geodesique Case2b}
		}
	\caption{Case 2}
	\label{fig:satisfaire Greendlinger geodesique Case2}
	\end{figure}
	
	\paragraph{Case 2.}
	Assume now that $\rho^\epsilon q$ does not belong to $\nerf qrN$.
	We claim that $\gro xr{\rho^\epsilon q} \leq  \sympy{Faux8_SCsatisfaireGreendlinger(c)}$.
	If $\rho^\epsilon q$ lies on $N$ between $r$ and $p$ (see Fig.~\ref{fig:satisfaire Greendlinger geodesique Case2a}) it follows from the definition of $N$.
	If not (see Fig.~\ref{fig:satisfaire Greendlinger geodesique Case2b}) $N$ being a $\len \rho$-local geodesic we have 
	\begin{displaymath}
		\gro xr{\rho^\epsilon q} \leq \dist p{\rho^\epsilon q} + \gro xrp = \len \rho - \dist pq +\gro xrp.
	\end{displaymath}
	The point $p$ is a projection of $x$ on $N$, thus $\gro xrp \leq \sympy{Faux6_SCsatisfaireGreendlinger()}$.
	Moreover by~\ref{enu: greendlinger geodesic - distances} $\len \rho - \dist pq \leq \sympy{Faux7_SCsatisfaireGreendlinger(c)}$, which completes the proof of our claim. 
	Applying the triangle inequality we get $\gro x{x'}{\rho^\epsilon q} \leq \gro x{x'}r + \gro xr{\rho^\epsilon q} \leq   \gro x{x'}r  + \sympy{Faux8_SCsatisfaireGreendlinger(c)}$.
	Combined with \ref{enu: greendlinger geodesic - distances} and \ref{enu: greendlinger geodesic - gromov product} it gives
	\begin{displaymath}
		\gro x{x'}y \geq \gro x{x'}{\rho^\epsilon q} + \dist {\rho^\epsilon q}{\rho^\epsilon y} +\frac 12\len \rho \sympy{-1*Faux9_SCsatisfaireGreendlinger(c)}.
	\end{displaymath}
	In both cases $\gro x{x'}{\rho^\epsilon y} \leq\gro x{x'}y  + \sympy{-1*Faux10_SCsatisfaireGreendlinger(c)} \leq \gro x{x'}y +2\alpha$, which contradicts our assumption on $y$.
\end{proof}

\subsection{$P$-reduced isometries}
\label{sec: reduced isometries}

\begin{defi}
	Let $g$ be an element of $G$. 
	The isometry $g$ is $P$-reduced if its image $\bar g $ in $\bar G$ is hyperbolic and for all $\rho \in P$, $\diaminter{Y_g}{Y_\rho} \leq \len \rho/2 + \sympy{V_Preduced}$.
\end{defi}

\rem Since $P$ is invariant under conjugation, all conjugates of a $P$-reduced isometry are also $P$-reduced.

\paragraph{} The next proposition explains how to construct $P$-reduced elements of $G$.
To that end we need to assume that the elements of $P$ are proper powers of small isometries.

\begin{prop}
\label{obtaining reduced isometry}
	Let $n \in \N^*$. 
	We assume that 
	\begin{enumerate}
		\item for all $\rho \in P$, there exists $r \in G$ such that $\len r \leq \sympy{F_SCobtainingRedIsomHyp1()}$ and $\rho = r^n$,
		\item $A(G,X) \leq \sympy{F_SCobtainingRedIsomHyp2()}$
	\end{enumerate}
	Let $g \in G$, such that its image $\bar g$ in $\bar G$ is hyperbolic.
	Then, there exists $u \in K$ such that $ug$ is $P$-reduced.
\end{prop}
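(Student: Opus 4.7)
The plan is to pick $u_0 \in K$ that (nearly) minimizes $\len{u_0 g}$ and to show that such a choice is automatically $P$-reduced. Fix a small $\epsilon > 0$ to be specified and choose $u_0 \in K$ with $\len{u_0 g} \leq \inf_{u \in K} \len{ug} + \epsilon$. Since $\bar g$ is hyperbolic in $\bar G$, $u_0 g$ has infinite order, hence is hyperbolic on $X$ by the dichotomy recalled in Section~\ref{sec: hyperbolic groups}, so $Y_{u_0 g}$ is well defined.

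Suppose for contradiction that $u_0 g$ is not $P$-reduced: there exists $\rho \in P$ with $\diaminter{Y_{u_0 g}}{Y_\rho} > \len\rho/2 + \sympy{V_Preduced}$. Write $\rho = r^n$ as in the first hypothesis. I first claim that $\langle r, u_0 g\rangle$ is non-elementary. If it were, the small centralizers hypothesis would make it cyclic, generated by some $s$; writing $r = s^l$ and $u_0 g = s^m$, the relation $\bar{r}^{\,n} = \bar\rho = 1$ in $\bar G$ would force $\bar s$ to have finite order, hence $\bar g = \overline{u_0 g}$ would too, contradicting the hyperbolicity of $\bar g$.

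The next step is to bound the overlap via the $A$-invariant. Proposition~\ref{recouvrement axes} applied with $r$ (whose translation length is small by hypothesis) and $u_0 g$ yields $\diaminter{A_r}{A_{u_0 g}} \leq \len{u_0 g} + A(G,X) + \sympy{F_invAProp(delta)}$. The cylinders $Y_\rho = Y_r$ (they depend only on the common pair of fixed points at infinity of $r$ and $r^n$) and $Y_{u_0 g}$ lie in bounded neighbourhoods of $A_r$ and $A_{u_0 g}$ respectively (Proposition~\ref{embedded quasi-convex in an axis}), so item~\ref{enu: diaminter - neighbourhood} in Section~\ref{sec: hyp - quasi-convex} transfers the estimate to $\diaminter{Y_\rho}{Y_{u_0 g}} \leq \len{u_0 g} + A(G,X) + C$ for a universal constant $C$. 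Combining this with the contradiction hypothesis and the assumption $A(G,X) \leq \sympy{F_SCobtainingRedIsomHyp2()}$ forces $\len{u_0 g} > \len\rho/2 + a$ for some positive $a$, provided $\sympy{V_Preduced}$ has been chosen large enough to dominate $A(G,X) + C$.

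The conclusion follows from Lemma~\ref{shortening class length} applied with the isometries $\rho$ and $u_0 g$: the lower bound on $\len\rho$ is satisfied because $\len\rho \geq \rinj PX$ is enormous in the small cancellation regime, and $\min\{\len{u_0 g}, \diaminter{Y_{u_0 g}}{Y_\rho}\} > \len\rho/2 + a$ by the previous step together with the contradiction hypothesis. The lemma produces $k \in \Z$ with $\len{\rho^k u_0 g} < \len{u_0 g} - a + \sympy{F_shorteningIsom(delta)}$; since $\rho^k u_0 \in K$, this contradicts the $\epsilon$-minimality of $\len{u_0 g}$ as soon as $a > \sympy{F_shorteningIsom(delta)} + \epsilon$. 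The main obstacle is the simultaneous calibration of $\sympy{V_Preduced}$, $\sympy{F_SCobtainingRedIsomHyp1()}$, $\sympy{F_SCobtainingRedIsomHyp2()}$ and $\epsilon$ so that both $a > 0$ and $a > \sympy{F_shorteningIsom(delta)} + \epsilon$, while preserving the translation-length thresholds required by Propositions~\ref{recouvrement axes} and~\ref{embedded quasi-convex in an axis} and by Lemma~\ref{shortening class length}.
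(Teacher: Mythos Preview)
Your proposal is correct and follows essentially the same approach as the paper: almost-minimize $\len{ug}$ over $u\in K$, assume the result is not $P$-reduced, use the small centralizers hypothesis to see that $r$ and $ug$ generate a non-elementary subgroup, bound the axis overlap via Proposition~\ref{recouvrement axes} and Proposition~\ref{embedded quasi-convex in an axis} to force $\len{ug}$ to exceed $\len\rho/2$ by a definite margin, and then invoke Lemma~\ref{shortening class length} to contradict minimality. The only cosmetic difference is the order in which you pass between cylinders and axes, and your explicit discussion of the constant calibration, which the paper absorbs into its \texttt{sympy} macros.
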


\begin{proof}
	We choose $u \in K$ such that for all $v \in K$, $\len {ug} \leq \len {vg} + \sympy{Faux1_SCobtainingRedIsom()}$.
	Since $\bar g = \bar u \bar g$ is a hyperbolic element of $\bar G$, so is $ug$ in $G$.
	We suppose now that the isometry $ug$ is not $P$-reduced.
	There is $\rho \in P$, such that $\diaminter{Y_{ug}}{Y_\rho} > \len\rho/2 + \sympy{Faux2_SCobtainingRedIsom()}$.
	By assumption, there exists $r \in G$ such that $\len r \leq  \sympy{Faux3_SCobtainingRedIsom()}$ and $\rho = r^n$.
	From Proposition~\ref{embedded quasi-convex in an axis}, $Y_\rho = Y_r$ (\resp  $Y_{ug}$) lies in the $\sympy{Faux4_SCobtainingRedIsom()}$-neighbourhood of  $A_r$ and  (\resp $A_{ug}$).
	Hence $\diaminter{A_{ug}}{A_r} > \len \rho/2 + \sympy{Faux5_SCobtainingRedIsom()}$.
	Note that $ug$ and $r$ do not generate an elementary subgroup.
	The group $G$ satisfies indeed the small centralizers hypothesis.
	If it was the case, $\bar g$ would have finite order as $\bar r$, which contradicts the fact that $\bar g$ is hyperbolic.
	Thus Proposition~\ref{recouvrement axes} leads to $\len {ug} > \len \rho/2 -A(G,X) + \sympy{Faux6_SCobtainingRedIsom()}$.
	It follows from our assumptions and Lemma~\ref{shortening class length} that there exists $m \in \Z$ such that $\len{\rho^mug} < \len {ug} +  A(G,X)  \sympy{-1*Faux7_SCobtainingRedIsom()}$.
	However $\rho^mu$ belongs to $K$.
	This last inequality contradicts the definition of $u$.
	Consequently $ug$ is $P$-reduced.
\end{proof}

\begin{lemm}
\label{element reduit donne points proches}
	Let $g$ be a $P$-reduced element of $G$.
	Let $x$ and $x'$ be two points of $X$.
	For all $\rho \in P$ we have
	\begin{displaymath}
		\diaminter{\geo x{x'}}{Y_\rho} \leq \frac 12 \len \rho  + d\left(x, Y_g\right) + d\left(x',Y_g\right) + \sympy{F_SCreducedVsClose()}.
	\end{displaymath}
	In particular, if $d\left(x, Y_g\right) + d\left(x',Y_g\right) \leq \sympy{F_SCreducedVsCloseCoro()}$, then $x$ and $x$' are $P$-closed.
\end{lemm}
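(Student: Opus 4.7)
The plan is to bound $\diaminter{\geo{x}{x'}}{Y_\rho}$ by projecting $x$ and $x'$ onto $Y_g$ and then invoking the $P$-reduced hypothesis on $Y_g$. Fix $\eta>0$ and let $y$, $y'$ be $\eta$-projections of $x$, $x'$ on the (strongly quasi-convex) subset $Y_g$, so that $d(x,y)\leq d(x,Y_g)+\eta$ and $d(x',y')\leq d(x',Y_g)+\eta$. By the remark following Definition~\ref{def:intersection quasi-convex}, $\diaminter{\geo{x}{x'}}{Y_\rho}=\diaminter{\{x,x'\}}{Y_\rho}$, so it suffices to estimate
\begin{displaymath}
d(x,x')+d(z,z')-d(x,z)-d(x',z')
\end{displaymath}
uniformly in $z,z'\in Y_\rho$.

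A triple use of the triangle inequality ($d(x,x')\leq d(x,y)+d(y,y')+d(y',x')$, $-d(x,z)\leq -d(y,z)+d(x,y)$, $-d(x',z')\leq -d(y',z')+d(x',y')$) gives
\begin{displaymath}
d(x,x')+d(z,z')-d(x,z)-d(x',z') \leq d(y,y')+d(z,z')-d(y,z)-d(y',z')+2d(x,y)+2d(x',y').
\end{displaymath}
Taking the supremum over $z,z'\in Y_\rho$ and dividing by $2$ yields
\begin{displaymath}
\diaminter{\geo{x}{x'}}{Y_\rho}\leq \diaminter{\{y,y'\}}{Y_\rho}+d(x,y)+d(x',y').
\end{displaymath}
Since $y,y'\in Y_g$, the first term on the right is at most $\diaminter{Y_g}{Y_\rho}$, which by the $P$-reduced hypothesis is bounded by $\len\rho/2$ plus the constant appearing in the definition of $P$-reduced. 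Letting $\eta\to 0$ replaces $d(x,y)$ and $d(x',y')$ by $d(x,Y_g)$ and $d(x',Y_g)$, giving the first inequality of the lemma (the additive constant in the statement being chosen at least equal to the one in the $P$-reduced definition).

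The second assertion is then immediate: plugging $d(x,Y_g)+d(x',Y_g)\leq\sympy{F_SCreducedVsCloseCoro()}$ into the inequality just proved bounds $\diaminter{\geo{x}{x'}}{Y_\rho}$ by $\len\rho/2$ plus a constant, and the numerical choice of $\sympy{F_SCreducedVsCloseCoro()}$ is made precisely so that this constant does not exceed the one tolerated in the definition of $P$-close. There is no real technical obstacle here; the entire argument is an unfolding of the definitions of $\diaminter{\cdot}{\cdot}$, $P$-reduced and $P$-close together with the triangle inequality, and the only bookkeeping concerns the explicit constants.
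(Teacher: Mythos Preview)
Your proof is correct and follows the same strategy as the paper: project $x,x'$ to points $y,y'\in Y_g$, bound $\diaminter{\geo{x}{x'}}{Y_\rho}$ by $\diaminter{\{y,y'\}}{Y_\rho}$ plus the projection distances, and then invoke the $P$-reduced hypothesis. The only cosmetic difference is that the paper appeals to the pre-packaged inequality~(\ref{eqn: diameter with gromov products}) (which carries an extra additive $\delta$ coming from hyperbolicity) and then bounds $\gro{y}{y'}{x}\leq d(x,Y_g)$, whereas you unfold the definition of $\diaminter{\cdot}{\cdot}$ and use the triangle inequality directly; your route is marginally cleaner since it does not need hyperbolicity at this step, but the two arguments are otherwise identical.
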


\begin{proof}
	Let $\rho$ be an element of $P$.
	Let $y$ and $y'$ be respective projections of $x$ and $x'$ on $Y_g$.
	One knows by (\ref{eqn: diameter with gromov products}) that 
	\begin{displaymath}
		\diaminter{\geo x{x'}}{Y_\rho} 
		\leq \diaminter{\geo y{y'}}{Y_\rho} + \gro y{y'}x + \gro y{y'}{x'}+\sympy{Faux1_SCreducedVsClose()}.
	\end{displaymath}
	However $g$ is $P$-reduced, therefore $\diaminter{\geo y{y'}}{Y_\rho} \leq \diaminter{Y_g}{Y_\rho} \leq \len g/2 +\sympy{Faux2_SCreducedVsClose()}$.
	On the other hand, $\gro y{y'}x \leq \dist xy = d\left(x, Y_g\right)$. 
	Similarly $\gro y{y'}{x'} \leq d\left(x', Y_g\right)$. 
\end{proof}

\begin{prop}
\label{satisfaire Greendlinger axe}
	Let $\alpha \geq 0$.
	Let $g$ be a $P$-reduced element of $G$.
	Let $x$ be a point of $X$ such that for all $u \in K$, $d\left(x,Y_g \right) \leq d\left( ux, Y_g\right) + 2\alpha$.
	Then, there exists $k_0$ such that for all $k \geq k_0$, for all $\rho \in P$, $\diaminter{\geo x{g^kx}}{Y_\rho} \leq \len \rho - c$ where $c =  \sympy{F_SCsatisfaireGreendlingerAxe(Qalpha)}$.
\end{prop}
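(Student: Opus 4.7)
I would mimic the structure of the proof of Proposition~\ref{satisfaire Greendlinger geodesique}, but with the Gromov-product minimization replaced by the distance-to-$Y_g$ minimization: argue by contradiction, assume $\diaminter{\geo x{g^kx}}{Y_\rho} > \len\rho - c$ for some $\rho \in P$, and exhibit $u \in K$ with $d(ux, Y_g) < d(x, Y_g) - 2\alpha$, which contradicts the minimality hypothesis on $x$. The value $c = \sympy{F_SCsatisfaireGreendlingerAxe(Qalpha)}$ will turn out to be of the form $2\alpha + O(\delta)$, and $k_0$ will be chosen large enough (depending on the uniform upper bound $\max_{\rho\in P}\len\rho$, which exists because $P$ is a finite union of conjugacy classes) that for $k \geq k_0$ the portion of $\geo x{g^kx}$ fellow-travelling $Y_g$, of length about $k\len g$, dwarfs $\max_{\rho\in P}\len\rho$.

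The first step is to locate the long overlap along the geodesic. Because $g$ is $P$-reduced, $\diaminter{Y_g}{Y_\rho} \leq \len\rho/2 + \sympy{V_Preduced}$, so the middle portion of $\geo x{g^kx}$ that fellow-travels $Y_g$ contributes at most $\len\rho/2 + \sympy{V_Preduced} + O(\delta)$ to the overlap with $Y_\rho$. The remaining part of the overlap, of length at least $\len\rho/2 - c - \sympy{V_Preduced} - O(\delta)$, must lie on either the $x$-approach or the $g^kx$-approach of the geodesic. Since $g^{-k}\rho g^k$ is again in $P$, the $g^kx$-approach case reduces to the $x$-approach case after replacing $\rho$ by its $g^k$-conjugate and $u$ by $g^{-k}ug^k$; so we may assume the overlap lies on the $x$-approach. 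Simultaneously, applying Lemma~\ref{element reduit donne points proches} to $\geo x{g^kx}$ gives the lower bound $d(x, Y_g) \geq \len\rho/4 - c/2 - O(\delta)$.

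Next I perform the shift by $\rho$. Let $N$ be a nerve of $\rho$ and let $p, q$ be projections of $x, g^kx$ on $N$; Proposition~\ref{res:diam quasi-convex and projections} yields $\dist pq \geq \len\rho - c - O(\delta)$. Choose $\epsilon \in \{\pm 1\}$ so that $\rho^\epsilon p$ lies on $N$ in the direction of $q$ from $p$; then $\dist{\rho^\epsilon p}{q} \leq c + O(\delta)$ and $u := \rho^\epsilon$ sits in $K$. Using Propositions~\ref{projection conditionnelle sur un axe version simple} and \ref{projection conditionnelle sur une axe version double}, the overlap-on-$x$-side configuration gives the estimates $\dist xp \leq a + O(\delta)$ and $d(q, Y_g) \leq \max\{0, d(x, Y_g) - b\} + O(\delta)$, where $[a,b]$ is the arc-interval on $\geo x{g^kx}$ of the overlap, with $b - a \geq \len\rho - c - O(\delta)$. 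A triangle inequality then gives
\begin{displaymath}
d(ux, Y_g) \leq \dist xp + \dist{\rho^\epsilon p}{q} + d(q, Y_g) \leq d(x, Y_g) - \frac{\len\rho}{2} + 2c + \sympy{V_Preduced} + O(\delta),
\end{displaymath}
and since $\len\rho \geq \rinj PX \geq \sympy{F_SCrinj()}$ is very large compared to $\alpha$, the right-hand side is strictly smaller than $d(x, Y_g) - 2\alpha$, producing the required contradiction.

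The main obstacle is the technical bookkeeping in the third paragraph: justifying that both $\dist xp$ and $d(q, Y_g)$ are controlled in terms of the arc-parameters of the overlap (rather than by potentially unbounded absolute quantities like $d(x, N)$ or $d(q, Y_g)$ taken naively). This is exactly what requires treating the geodesic $\geo x{g^kx}$, the nerve $N_g$ of $g$, and the nerve $N$ of $\rho$ simultaneously as three quasi-convex subsets of the $\delta$-hyperbolic space $X$, and tracking their mutual projections via the machinery of Section~\ref{sec:hyperbolic spaces}, exactly as in the two-case analysis that closes the proof of Proposition~\ref{satisfaire Greendlinger geodesique}.
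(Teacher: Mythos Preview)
Your strategy is viable but takes a genuinely different route from the paper, and one intermediate claim is wrong as stated.

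The paper does \emph{not} rerun the argument of Proposition~\ref{satisfaire Greendlinger geodesique} in the axis setting; it \emph{reduces} to it and then contradicts the choice of $k_0$, not the minimality of $x$. Concretely: let $y$ be a projection of $x$ on $Y_g$ and choose $k_0$ so that $\dist y{g^ky}$ exceeds $\len\rho/2$ by a fixed margin for every $\rho\in P$. Quasi-convexity of $Y_g$ turns the hypothesis $d(x,Y_g)\leq d(ux,Y_g)+2\alpha$ into $\gro y{g^ky}x \leq \gro y{g^ky}{ux}+2\alpha'$; since $y,g^ky\in Y_g$ are $P$-close (Lemma~\ref{element reduit donne points proches}), Proposition~\ref{satisfaire Greendlinger geodesique} bounds the overlap of the \emph{cross} geodesics $\geo x{g^ky}$ and $\geo y{g^kx}$ with every $Y_\rho$. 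A short projection argument then forces $y$ and $g^ky$ to be near-projections of $p,q$ on $Y_g$, whence $\dist y{g^ky}\leq\diaminter{Y_\rho}{Y_g}+O(\delta)\leq\len\rho/2+O(\delta)$, contradicting $k_0$. No element $u\in K$ is ever produced.

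In your argument, the claim $\dist{\rho^\epsilon p}{q} \leq c + O(\delta)$ is false in general: you only have the lower bound $\dist pq \geq \len\rho - c - O(\delta)$, and $\dist pq$ may exceed $\len\rho$ by an arbitrary amount (this happens whenever $d(x,Y_g)$ is large and $Y_\rho$ fellow-travels a long portion of the $x$-approach). What is true is $\dist{\rho^\epsilon p}{q}=\lvert\dist pq-\len\rho\rvert+O(\delta)$. Feeding this into your triangle inequality and splitting into the cases $\dist pq\lessgtr\len\rho$ and $b\lessgtr d(x,Y_g)$ (with $b$ the far endpoint of the overlap) does recover the displayed bound $d(\rho^\epsilon x,Y_g)\leq d(x,Y_g)-\len\rho/2+O(c,\delta)$ in every case, so your strategy can be completed; but that step needs the case split, not the uniform bound you wrote. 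The paper's route buys you the bookkeeping of Proposition~\ref{satisfaire Greendlinger geodesique} for free; yours is more self-contained but forces you to redo that case analysis by hand.
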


\begin{proof}
	Let $y$ be a projection of $x$ on $Y_g$.
	The family $P$ only contains a finite number of conjugacy classes.
	Since $g$ is hyperbolic, there exists $k_0$ such that for all $k \geq k_0$, for all $\rho \in P$, $\dist y{g^ky} > \len \rho/2 +\sympy{Faux14_SCsatisfaireGreendlingerAxe()}$.
	Assume now that our proposition is false i.e., there exists $k \geq k_0$ and $\rho \in P$ such that $\diaminter{\geo x{g^kx}}{Y_\rho} > \len \rho - \sympy{Faux0_SCsatisfaireGreendlingerAxe(c)}$.
	The point $y$ is a projection of $x$ on $Y_g$, thus $\gro y{g^ky}x \leq d(x,Y_g)$.
	Moreover $Y_g$ is $\sympy{Faux1_SCsatisfaireGreendlingerAxe()}$-quasi-convex.
	It follows from our assumption on $x$ that that for all $u \in K$, $\gro y{g^ky}x  \leq \gro y{g^ky}{ux} + \sympy{Faux2_SCsatisfaireGreendlingerAxe(Qalpha)}$.
	On the other hand, $g$ is $P$-reduced.
	By Lemma~\ref{element reduit donne points proches}, $y$ and $g^ky$ are $P$-close.
	According to Proposition~\ref{satisfaire Greendlinger geodesique} $\diaminter{\geo x{g^ky}}{Y_\rho} \leq \len \rho - c'$, where $c' = \sympy{Faux3_SCsatisfaireGreendlingerAxe(Qalpha)}$.
	The same inequality holds if one replaces $\geo x{g^ky}$ by $\geo y{g^kx}$.
	We now denote by $p$ and $q$ respective projections of $x$ and $g^kx$ on $Y_\rho$.
	According to Lemma~\ref{res:diam quasi-convex and projections}
	\begin{equation}
	\label{eqn: satisfaire Greendlinger axe}
		\dist pq \geq \diaminter {\geo x{g^kx}}{Y_\rho} - \sympy{Faux4_SCsatisfaireGreendlingerAxe()} > \len \rho  \sympy{-1*Faux5_SCsatisfaireGreendlingerAxe(c)}.
	\end{equation}
	
	\paragraph{Claim.} $y$ is a $\sympy{Faux9_SCsatisfaireGreendlingerAxe()}$-projection of $p$ on $Y_g$.
	Thanks to Lemma~\ref{res: exending projections on quasi-convex} it is sufficient to show that $\gro xyp \leq \sympy{Faux8_SCsatisfaireGreendlingerAxe()}$.
	Assume that this statement is false.
	Let $z \in Y_\rho$. 
	By hyperbolicity we have
	\begin{displaymath}
		\min \left\{\gro xyp , \gro yzp \right\} \leq \gro xzp + \sympy{Faux6_SCsatisfaireGreendlingerAxe()} \leq \sympy{Faux7_SCsatisfaireGreendlingerAxe()}.
	\end{displaymath}
	Thus for every $z \in Y_\rho$, $\gro yzp \leq \sympy{Faux7_SCsatisfaireGreendlingerAxe()}$.
	In particular $p$ is a $\sympy{Faux10_SCsatisfaireGreendlingerAxe()}$-projection of $y$ on $Y_\rho$.
	Using Lemma~\ref{res:diam quasi-convex and projections} we obtain that 
	\begin{math}
		\dist pq
		\leq \diaminter{\geo y{g^kx}}{Y_\rho} + \sympy{Faux11_SCsatisfaireGreendlingerAxe()}.
		\leq \len \rho \sympy{-1*Faux12_SCsatisfaireGreendlingerAxe(cBis)},
	\end{math}
	which contradicts (\ref{eqn: satisfaire Greendlinger axe}).
	
	\paragraph{} In the the same way, we prove that $g^ky$ is a $\sympy{Faux9_SCsatisfaireGreendlingerAxe()}$-projection of $q$ on $Y_g$.
	It follows then from Lemma~\ref{res:diam quasi-convex and projections} that 
	\begin{displaymath}
		\dist y{g^ky}\leq \diaminter{\geo pq}{Y_g}+\sympy{Faux13_SCsatisfaireGreendlingerAxe()} \leq \diaminter {Y_\rho}{Y_g}+\sympy{Faux13_SCsatisfaireGreendlingerAxe()}. 
	\end{displaymath}
	By assumption $g$ is $P$-reduced.
	Consequently, $\dist y{g^ky} \leq \len \rho /2 + \sympy{Faux14_SCsatisfaireGreendlingerAxe()}$, which contradicts our assumption on $k$.
	Thus the proposition is true.
\end{proof}

\subsection{Foldable configurations}
\label{sec:foldable configurations}

\paragraph{} 
In this section, we are interested in the following situation.
Let $x$, $p$ and $q$ be three points of $X$ such that $x$ and $p$ (\resp $x$ and $q$) are $P$-close.
We assume that $p$ and $q$ have the same image $\bar p = \bar q$ in $\bar X$, but are distinct as points of $X$.
We would like to understand the reason why $p \neq q$ in $X$ and which transformation could move $p$ closer to $q$.

\paragraph{}
The idea is roughly the following.
Since $\bar p = \bar q$, there exists $g \in K\setminus\{1\}$ such that $q = gp$.
By the Greendlinger Lemma (Proposition~\ref{Greendlinger Lemma}), there exists $\rho \in P$, such that
\begin{displaymath}
	\diaminter{\geo pq}{Y_\rho} > \len \rho \sympy{-1*F_SCfoldableBla1()}.
\end{displaymath}
However $x$ and $p$ (\resp $x$ and $q$) are $P$-closed. 
Hence, half of the overlap between $Y_\rho$ and $\geo pq$ is covered by $\geo xp$ and the other half by $\geo xq$ (see Fig.~\ref{fig:folding roughly speaking}).
Using $\rho$ we translate the point $p$.
In particular there exists $\epsilon \in \{\pm 1\}$ such that
\begin{displaymath}
	\gro  {\rho^\epsilon p}q x \geq \gro pqx + \frac 12 \len \rho \sympy{-1*F_SCfoldableBla2()}.
\end{displaymath}
\begin{figure}[ht]
	\centering
	\includegraphics{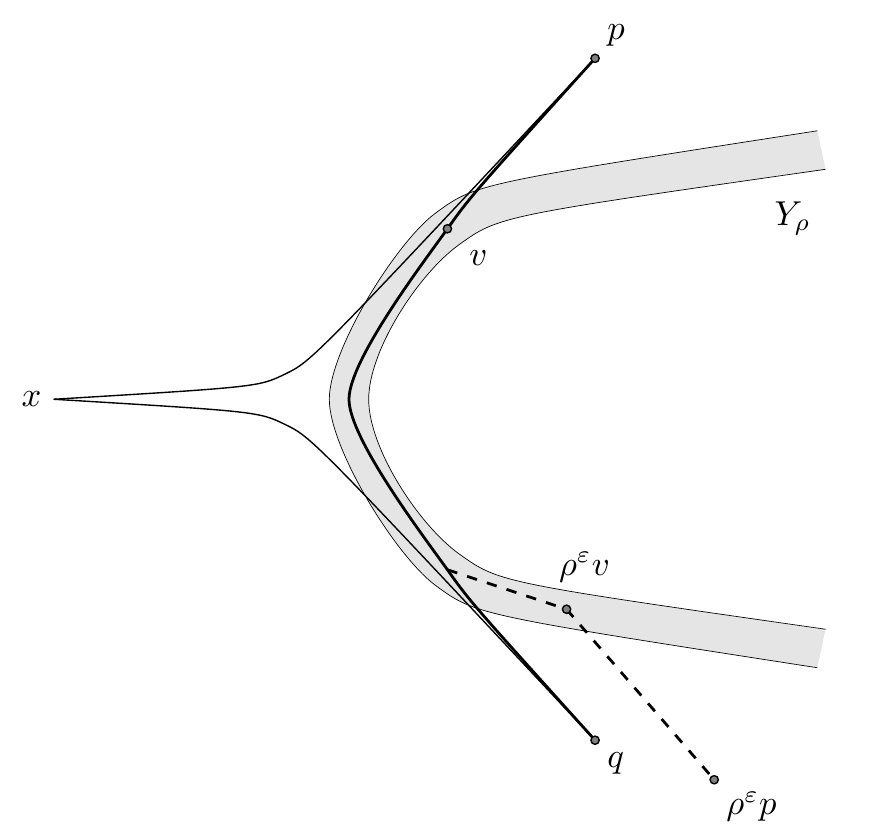}
	\caption{Folding a geodesic.}
	\label{fig:folding roughly speaking}
\end{figure}
By iterating the process, we increase at each step $\gro pqx$ (which is bounded above by $\dist xq$) until $p=q$.
To that end we need the points $x$ and $\rho^\epsilon p$ to be $P$-close, which is unfortunately not exactly the case: 
we might approximatively have 
\begin{displaymath}
	\diaminter{\geo x{\rho^\epsilon p}}{Y_\rho} \simeq \frac 12 \len \rho + \sympy{F_SCfoldableBla3()}
\end{displaymath}

The definition of \emph{foldable configuration} gives a set of conditions on $x$, $p$ and $q$ which are sufficient to detail the previous discussion and which will be still satisfied by $x$, $\rho^\epsilon p$ and $q$.

\begin{defi}[Foldable configuration]
\label{defi:foldable configuration}
	Let $x$, $p$, $q$ and $y$ be four points of $X$.
	We say that the configuration $(x,p,q,y)$ is \emph{foldable} if there exist $s, t \in X$ satisfying the following conditions (see Fig.~\ref{fig:definition foldable configuration}).
	\begin{foldable}
		\item \label{enu: def foldable - point s}
		$s$ and $p$ are $P$-close and $\dist xs \leq \gro pqx + \sympy{F_SCdefFoldable()}$,
		\item \label{enu: def foldable - point t}
		$t$ and $q$ are $P$-close and $\dist xt \leq \gro pqx + \sympy{F_SCdefFoldable()}$. 
		\item \label{enu: def foldable - point y}
		$s$ and $y$ are $P$-close and $\gro syp =0$.
	\end{foldable}
\end{defi}

\begin{figure}[ht]
	\centering
	\includegraphics{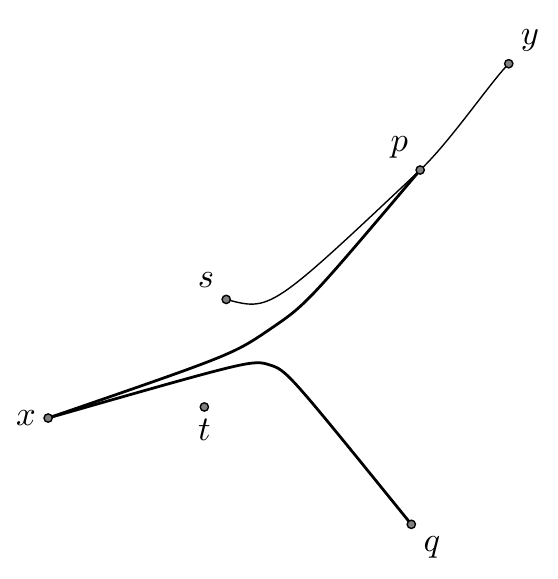}
	\caption{Definition of a foldable configuration.}
	\label{fig:definition foldable configuration}
\end{figure}

\rem 
This framework is a little more general than the one presented above.
It naturally arises when folding a geodesic $\geo xy$ on the axe of a relation (see Proposition~\ref{lift overlap})
The reason to keep track of the point $y$ will appear in Proposition~\ref{BI: part C}.

\begin{prop}
\label{folding proposition}
	Let $(x,p,q,y)$ be a foldable configuration such that $\bar p = \bar q$ but $p \neq q$.
	There exist $\rho \in P$ and $\epsilon \in \{ \pm 1 \}$ satisfying the followings.
	\begin{enumerate}
		\item \label{enu: folding proposition - diaminter xp Y}
		$\diaminter{\geo xy}{Y_\rho} \geq \len \rho/2 \sympy{-1*F_foldingPropOverlap()}$,
		\item \label{enu: folding proposition - gro x rho p q}
		$\gro {\rho^\epsilon p}q x \geq \gro pqx +\len \rho/2 \sympy{-1*F_foldingPropGromov()}$,
		\item \label{enu: folding proposition - foldable configuration}
		the configuration $(x,\rho^\epsilon p,q,\rho^\epsilon y)$ is foldable.
		\item \label{enu: folding proposition - gro with y}
		$\gro xyp \leq \sympy{F_foldingPropSmallPG1()}$ and $\gro x{\rho^\epsilon y}{\rho^\epsilon p} \leq \sympy{F_foldingPropSmallPG2()}$
	\end{enumerate}
\end{prop}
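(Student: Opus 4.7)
The strategy follows the discussion preceding the proposition: Greendlinger's Lemma locates a relation $\rho \in P$ witnessing the identification $\bar p = \bar q$, and then the double-projection proposition gives a precise picture of $x, p, q$ along a nerve $N$ of $\rho$, from which the choice of $\epsilon$ and the four conclusions can be extracted.

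Write $q = gp$ for some $g \in K \setminus \{1\}$; by Theorem~\ref{Greendlinger Lemma} applied at $p$ there is $\rho \in P$ with $\diaminter{\geo pq}{Y_\rho} \geq \len\rho - c_0$. I then apply Proposition~\ref{projection conditionnelle sur une axe version double} to $(x; p, q)$ with $(s_1, s_2) = (s, t)$: the overlap hypotheses $\diaminter{\geo{s_i}{y_i}}{Y_\rho} \leq \len\rho/2 + \text{const}$ are supplied by $P$-closeness in (C1) and (C2), the distance hypotheses are exactly in (C1) and (C2), and the overlap bound on $\geo pq$ comes from Greendlinger. Writing $r, q_1, q_2$ for the projections on a nerve $N$ of $\rho$ of $x, p, q$ respectively, we obtain that $r \in \nerf{q_1}{q_2}N$, that $\dist{q_1}{q_2} \geq \len\rho - c_1$ and $\dist r{q_i} \in [\len\rho/2 - c_1, \len\rho/2 + c_1]$, that the four Gromov products $\gro xp{q_1}, \gro xq{q_2}, \gro sp{q_1}, \gro tq{q_2}$ are each at most $c_1$, and that $|\gro pqx - \dist xr| \leq c_1$. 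Since $\rho$ translates $N$ by $\len\rho$, I choose $\epsilon \in \{\pm 1\}$ such that $\rho^\epsilon q_1$ lies on the $q_2$-side of $q_1$, which places $\rho^\epsilon q_1$ within $c_1$ of $q_2$.

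Conclusion (ii) is a direct computation through the projections on $N$: the projection of $\rho^\epsilon p$ is $\rho^\epsilon q_1$, within $c_1$ of $q_2$, so both $\rho^\epsilon p$ and $q$ project near $q_2$; the projection estimates give $\gro{\rho^\epsilon p}qx \approx \dist x{q_2} = \dist xr + \dist r{q_2} \geq \gro pqx + \len\rho/2 - c_2$. For (iv), I analyse the hyperbolic four-point condition on $\{x, y, s, p\}$, using (C3) ($\gro syp = 0$) and (C1) ($\dist xs \leq \gro pqx + c_0 \leq \dist xp + c_0$): among the sums $\dist xy + \dist sp$, $\dist xs + \dist yp$, $\dist xp + \dist sy$, the third equals $\dist xp + \dist sp + \dist py$, and either it pairs with the first as the two largest (giving $\gro xyp = O(\delta)$ directly) or with the second, which forces $\dist sp = O(1)$ via (C1) and then an approximate collinearity argument for $x, p, s, y$ yields $\gro xyp = O(1)$. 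Point (i) then follows by transferring the overlap: from the double-projection, $\diaminter{\geo xp}{Y_\rho} \geq \dist r{q_1} - c \geq \len\rho/2 - c_3$, and applying inequality~\eqref{eqn: diameter with gromov products} together with the bound from (iv) gives $\diaminter{\geo xy}{Y_\rho} \geq \len\rho/2 - c_4$.

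The main obstacle is (iii), which requires constructing witnesses $s', t'$ certifying that $(x, \rho^\epsilon p, q, \rho^\epsilon y)$ is foldable. The natural ansatz is $s' = \rho^\epsilon s$ and $t' = t$: conjugation invariance of $P$ supplies the $P$-closeness required in (C1$'$) and (C3$'$), and $\gro{\rho^\epsilon s}{\rho^\epsilon y}{\rho^\epsilon p} = \gro syp = 0$ by $\rho^\epsilon$-invariance. The bound $\dist xt \leq \gro{\rho^\epsilon p}qx + c$ follows from (C2) and the improvement (ii). The subtler bound $\dist x{\rho^\epsilon s} \leq \gro{\rho^\epsilon p}qx + c$ uses Point~3 of the double-projection to place the projection of $s$ on $N$ near $q_1$, so that $\rho^\epsilon s$ projects near $q_2$; the projection estimate from Proposition~\ref{res:diam quasi-convex and projections} then controls $\dist x{\rho^\epsilon s}$ in terms of $\dist x{q_2}$, which by (ii) is approximately $\gro{\rho^\epsilon p}qx$. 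Finally, the second inequality of (iv), $\gro x{\rho^\epsilon y}{\rho^\epsilon p} \leq c$, follows by applying the (iv)-analysis verbatim to the new foldable configuration furnished by (iii).
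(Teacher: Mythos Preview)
Your overall architecture matches the paper's: Greendlinger supplies $\rho$, Proposition~\ref{projection conditionnelle sur une axe version double} applied to $(x;p,q)$ with $(s_1,s_2)=(s,t)$ gives the nerve picture, and $\epsilon$ is chosen so that $\rho^\epsilon$ pushes the projection of $p$ toward that of $q$. Points~(i), (ii) and the first half of~(iv) go through essentially as you describe.

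The genuine gap is in~(iii). Your witness $s'=\rho^\epsilon s$ does \emph{not} satisfy the distance bound in~\ref{enu: def foldable - point s}. Your justification, that ``Point~3 of the double projection places the projection of $s$ on $N$ near $q_1$'', misreads that proposition: Point~\ref{enu: axe double petits produits gromov} only gives $\gro sp{q_1}\leq c$, i.e.\ $q_1$ lies close to $\geo sp$; it says nothing about where $s$ itself projects on $N$. In fact the very first application of the proposition in the paper (Lemma~\ref{BI: part B}) has $s=x$, and then the projection of $s$ on $N$ is $r$, at distance roughly $\len\rho/2$ from $q_1$. In that situation $\dist x{\rho^\epsilon s}=\dist x{\rho^\epsilon x}\approx 2\,d(x,N)+\len\rho$, while $\gro{\rho^\epsilon p}qx\approx d(x,N)+\len\rho/2$; the required inequality $\dist x{\rho^\epsilon s}\leq\gro{\rho^\epsilon p}qx+c$ fails by roughly $d(x,N)+\len\rho/2$, which is enormous.

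The paper's remedy is to replace $s$ \emph{before} translating: let $s'$ be a projection of $q_1$ onto $\geo sp$. Then $\dist{s'}{q_1}\leq\gro sp{q_1}+O(\delta)$ is bounded, so $\dist x{\rho^\epsilon s'}\leq\dist x{\rho^\epsilon q_1}+O(1)$, and a separate estimate (the paper's Claim~3) shows $\dist x{\rho^\epsilon q_1}\leq\gro{\rho^\epsilon p}qx+O(1)$. Since $s'$ lies on $\geo sp$ and $\gro syp=0$ forces $\geo sp\subset\geo sy$, both the $P$-closeness of $s'$ with $p$ and with $y$, and the condition $\gro{s'}yp=0$, are inherited; translating by $\rho^\epsilon$ then gives the required witness for $(x,\rho^\epsilon p,q,\rho^\epsilon y)$.

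Finally, your bootstrap for the second half of~(iv) (``apply the (iv)-analysis verbatim to the new foldable configuration'') is not safe either: that analysis needs the large gap $\dist xp-\dist xs\gtrsim\len\rho/2$ coming from the nerve projections, and for the new configuration the analogous gap $\dist x{\rho^\epsilon p}-\dist x{s'_{\mathrm{new}}}$ is only of order $d(p,N)$, which is uncontrolled (and the reapplication is unavailable if $\rho^\epsilon p=q$). The paper instead bounds $\gro x{\rho^\epsilon y}{\rho^\epsilon p}$ directly, by first locating the projection $z$ of $y$ on $N$ (its Claims~1 and~2) and then using that $\rho^\epsilon z$ is far from $r$.
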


\begin{proof}
The points $s$ and $t$ are the one given by the definition of a foldable configuration.
We assumed that $\bar p = \bar q$ but $p \neq q$.
By Greendlinger's Lemma there exists $ \rho \in P$ such that $\diaminter{\geo pq}{Y_\rho} \geq \len \rho \sympy{-1*Faux1_foldingProp()}$.
We denote by $N$ a nerve of $\rho$ and by $u$, $v$, $w$ and $z$ respective projections of $x$, $p$, $q$ and $y$ on $N$.
According to Proposition~\ref{projection conditionnelle sur une axe version double}, $u$ lies on $N$ between $v$ and $w$ (see Fig.~\ref{fig:folding proposition uvwr}). 
Moreover we have
\begin{enumeratealph}
	\item \label{enu: folding proposition - dist vw}
	$\dist vw \geq \len \rho \sympy{-1*Faux2_foldingProp()}$,
	\item \label{enu: folding proposition - dist uv}
	$\len \rho /2 \sympy{-1*Faux3_foldingProp()} \leq \dist uv \leq \len \rho /2 + \sympy{Faux4_foldingProp()}$,
	\item \label{enu: folding proposition - dist uw}
	$\len \rho /2 \sympy{-1*Faux3_foldingProp()} \leq \dist uw \leq \len \rho /2 + \sympy{Faux4_foldingProp()}$,
	\item \label{enu: folding proposition - dist xu}
	$\dist{\dist xu}{\gro pqx} \leq \sympy{Faux5_foldingProp()}$,
	\item \label{enu: folding proposition - gro spv}
	$\gro spv \leq \sympy{Faux6_foldingProp()}$,
\end{enumeratealph}

\begin{figure}[ht]
\centering
	\includegraphics{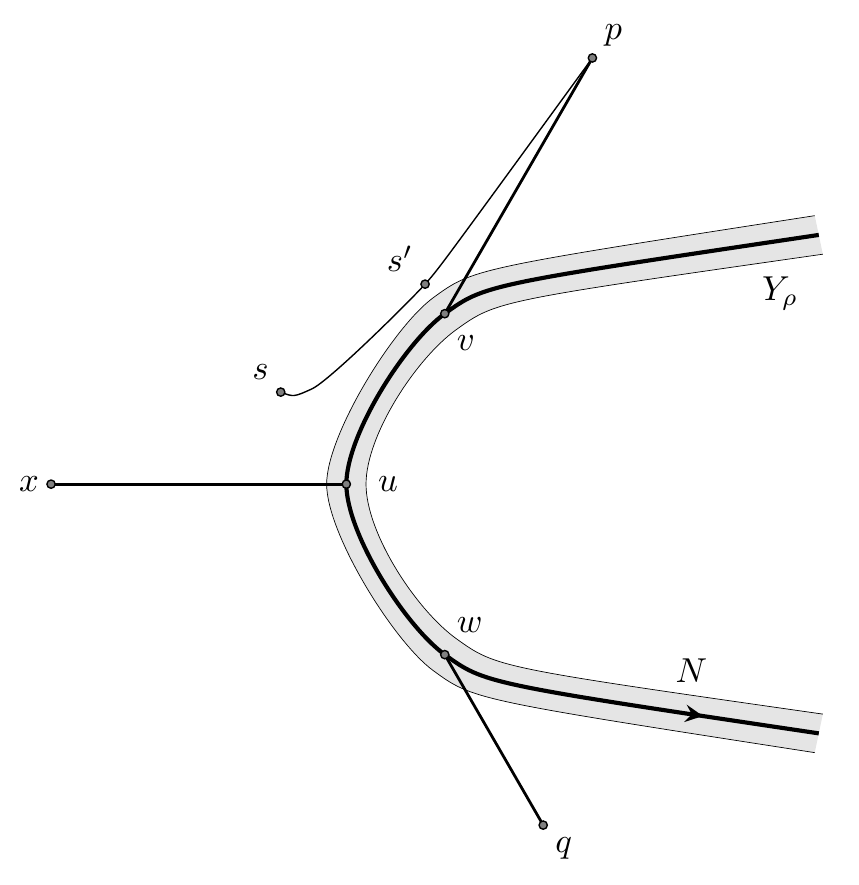}
	\caption{Positions of the points  $u$, $v$, $w$ and $s'$}
	\label{fig:folding proposition uvwr}
\end{figure}

\paragraph{On the configuration $(x,p,q,y)$.}
The points $u$ and $v$ are respective projections of $x$ and $p$ on $N$, thus $\dist xp \geq \dist xu + \dist uv \sympy{-1*Faux7_foldingProp()}$.
Combined with Points~\ref{enu: folding proposition - dist uv} and \ref{enu: folding proposition - dist xu}, we get
\begin{equation}
\label{eqn: satellite point - claim 1}
	\dist xp > \gro pqx + \sympy{Faux10_foldingProp()} \geq \dist xs + \sympy{Faux9_foldingProp()}.
\end{equation}
By hyperbolicity, $\min\left\{\gro xyp, \dist xp - \dist xs  \right\} \leq \gro syp +\sympy{Faux8_foldingProp()} \leq \sympy{Faux8bis_foldingProp()}$.
According to~(\ref{eqn: satellite point - claim 1}) we necessary have $\gro xyp \leq \sympy{Faux11_foldingProp()}$, which proves the first part of Point~\ref{enu: folding proposition - gro with y}.
The nerve $N$ is contained in the $\sympy{Faux12_foldingProp()}$-neighbourhood of $Y_\rho$.
Applying Proposition~\ref{res:diam quasi-convex and projections} with \ref{enu: folding proposition - dist uv} we get 
\begin{displaymath}
	\diaminter{\geo xy}{Y_\rho} \geq \diaminter{\geo xp}{Y_\rho} - \gro xyp \geq \len \rho /2 \sympy{-1*Faux13_foldingProp()},
\end{displaymath}
which corresponds to Point~\ref{enu: folding proposition - gro with y}.

\paragraph{Claim 1.} $\dist uz \leq \len \rho /2 + \sympy{Faux20_foldingProp()}$.
By hyperbolicity, we have 
\begin{displaymath}
	\gro syu \leq \max \left\{ \dist xs - \dist xu + 2 \gro xyu, \gro xyu \right\} + \sympy{Faux14_foldingProp()}.
\end{displaymath}
By \ref{enu: folding proposition - dist xu} we know that $\dist xs \leq \gro pqx + \sympy{Faux15_foldingProp()} \leq \dist xu + \sympy{Faux16_foldingProp()}$.
On the other hand the triangle inequality leads to $\gro xyu \leq \gro xyp + \gro xpu \leq \sympy{Faux17_foldingProp()}$.
It follows that $\gro syu \leq \sympy{Faux18_foldingProp()}$.
However $z$ is a projection of $y$ on $N$.
The points $s$ and $y$ being $P$-close Proposition~\ref{res:diam quasi-convex and projections} yields 
\begin{displaymath}
	\dist uz \leq \diaminter{\geo uy}{Y_\rho} +  \sympy{Faux19_foldingProp()} \leq \diaminter{\geo sy}{Y_\rho} +\gro syu + \sympy{Faux19_foldingProp()} \leq \frac 12 \len \rho + \sympy{Faux20_foldingProp()}.
\end{displaymath}

\paragraph{Claim 2.} $\gro zyp \leq \sympy{Faux24_foldingProp()}$.
By triangle inequality, $\gro zyp \leq \gro xyp + \gro xpv + \dist vz$.
The Gromov products on the left hand side of the inequality are small ($\gro xyp \leq \sympy{Faux11_foldingProp()}$ and $\gro xpv \leq \sympy{Faux20bis_foldingProp()}$) therefore it is sufficient to find an upper bound for $\dist vz$.
In particular we can assume that $\dist vz > \sympy{Faux22_foldingProp()}$.
Note that, since $\gro xyp \leq \sympy{Faux21_foldingProp()}$ the points $z$ and $u$ cannot belong to the same component of $N\setminus\{v\}$.
In other words $v$ lies between $u$ and $z$.
It follows from Claim 1 and Point~\ref{enu: folding proposition - dist uv} that $\dist vz = \dist uz - \dist uv \leq \sympy{Faux23_foldingProp()}$.

\begin{figure}[ht]
\centering
	\includegraphics{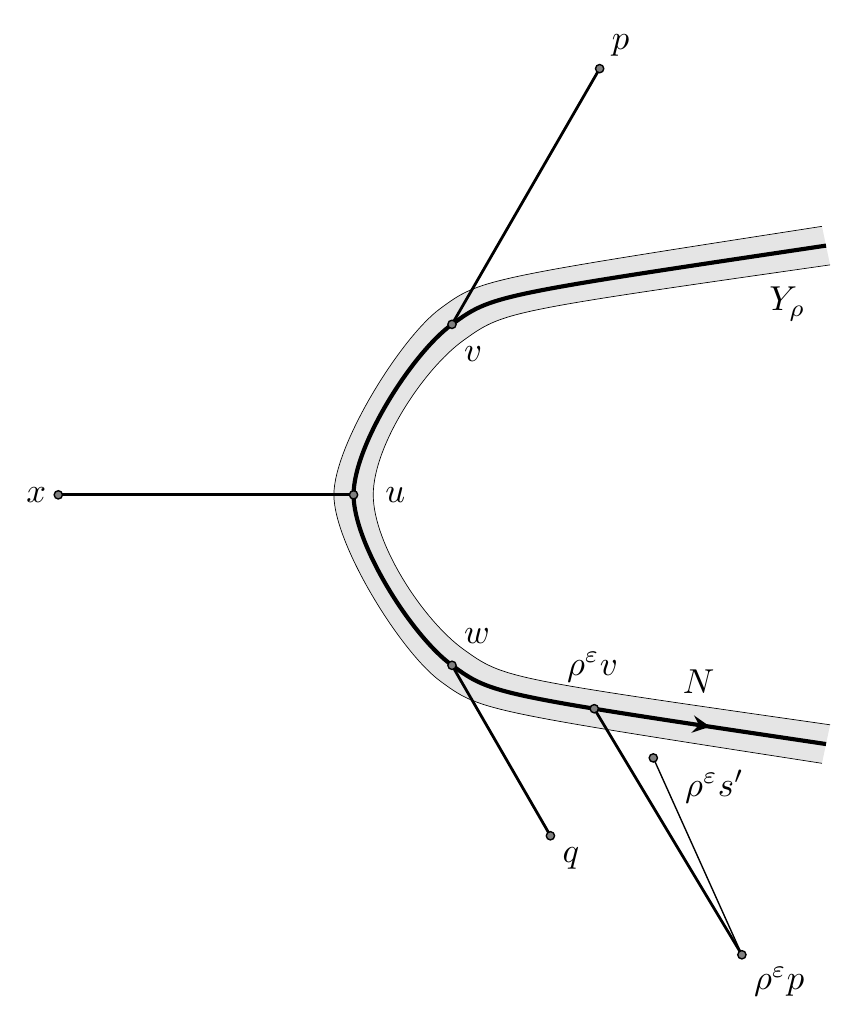}
	\caption{Positions of the point $\rho^\epsilon v$, $\rho^\epsilon s'$ and $\rho^\epsilon p$}
	\label{fig:folding proposition rho vrpy}
\end{figure}

\paragraph{Translation by $\rho$.}
The isometry $\rho$ acts by translation on $N$.
Therefore there exists $\epsilon \in \{\pm 1\}$ such that $\rho^\epsilon v$ and $w$ belong to the same component of $N \setminus\{v\}$ (see Fig.~\ref{fig:folding proposition rho vrpy}).

\paragraph{Claim 3.} $\dist x{\rho^\epsilon v} \leq \gro {\rho^\epsilon p}qx + \sympy{Faux33_foldingProp()}$.
Note that $\dist uv \leq \len \rho \leq \dist {\rho^\epsilon v}v$.
Thus $u$ lies on $N$ between $v$ and $\rho^\epsilon v$.
Since $N$ is a $\len \rho$-local geodesic,  $\dist{\rho^\epsilon v}u = \len \rho - \dist uv \geq \len \rho /2 \sympy{-1*Faux25_foldingProp()}$.
We now distinguish two cases.
If $\rho^\epsilon v$ lies on $N$ between $u$ and $w$.
Then $\gro xq{\rho^\epsilon v} \leq \sympy{Faux26_foldingProp()}$ and $\gro x{\rho^\epsilon p}{\rho^\epsilon v} \leq \sympy{Faux27_foldingProp()}$.
By hyperbolicity we obtain
\begin{displaymath}
	\dist x{\rho^\epsilon v} \leq \gro {\rho^\epsilon p}qx + \max\left\{ \gro x{\rho^\epsilon p}{\rho^\epsilon v}, \gro xq{\rho^\epsilon v}\right\} + \sympy{Faux28_foldingProp()} \leq \gro{\rho^\epsilon p}qx + \sympy{Faux29_foldingProp()}.
\end{displaymath}
Assume now that $w$ lies on $N$ between $u$ and $\rho^\epsilon v$.
As previously we show that $\dist xw \leq \gro {\rho^\epsilon p}qx + \sympy{Faux30_foldingProp()}$.
On the other hand $N$ is a $\len\rho$-local geodesic, thus using Point~\ref{enu: folding proposition - dist vw}, $\dist w{\rho^\epsilon v} = \len\rho - \dist vw \leq \sympy{Faux31_foldingProp()}$.
It follows from the triangle inequality that $\dist x{\rho^\epsilon v} \leq \dist xw + \dist w{\rho^\epsilon v} \leq \gro {\rho^\epsilon p}qx + \sympy{Faux32_foldingProp()}$, which completes the proof of our claim.

\paragraph{}
Combined with Point~\ref{enu: folding proposition - dist xu}, we get in particular
\begin{displaymath}
	\gro{\rho^\epsilon p}qx \geq \dist xu +\dist u{\rho^\epsilon v} \sympy{-1*Faux34_foldingProp()} \geq \gro pqx + \frac 12 \len \rho \sympy{-1*Faux35_foldingProp()}, 
\end{displaymath}
which is exactly Point~\ref{enu: folding proposition - gro x rho p q}.
We now prove that $(x,\rho^\epsilon p,q,\rho^\epsilon y)$ is foldable.
Note that the point $t$ already satisfies the condition~\ref{enu: def foldable - point t}.
Let us denote by $s'$ a projection of $v$ on $\geo sp$.
Since $s$ and $p$ are $P$-close, so are $s'$ and $p$ and thus $\rho^\epsilon s'$ and $\rho^\epsilon p$.
On the other hand, by Point~\ref{enu: folding proposition - gro spv}, $\dist v{s'} \leq \sympy{Faux36_foldingProp()}$.
Using Claim 3 we obtain $\dist x{\rho^\epsilon s'} \leq \dist x{\rho^\epsilon v} + \dist v{s'} \leq \gro {\rho^\epsilon p}qx + \sympy{Faux37_foldingProp()}$.
Consequently $\rho^\epsilon s'$ satisfies the condition \ref{enu: def foldable - point s}.
Since $\gro syp = 0$ there exists a geodesic joining $s$ to $y$ which extends the geodesic between $s$ and $p$ containing $s'$.
In particular $\gro{\rho^\epsilon s'}{\rho^\epsilon y}{\rho^\epsilon p} = \gro {s'}yp=0$.
The points $s$ and $y$ being $P$-close, so are $s'$ and $y$ and thus $\rho^\epsilon s'$ and $\rho^\epsilon y$.
Thus \ref{enu: def foldable - point y} is also fulfilled and $(x,\rho^\epsilon p,q,\rho^\epsilon y)$ is foldable.

\paragraph{}
In only remains to prove that $\gro x{\rho^\epsilon y}{\rho^\epsilon p} \leq \sympy{Faux41_foldingProp()}$.
The isometry $\rho$ acts on $N$ by translation of length $\len \rho$.
Moreover by Claim 1, $\dist uz \leq \len \rho /2 + \sympy{Faux38_foldingProp()}$.
Thus $\dist u{\rho^\epsilon z} \geq \len \rho /2 \sympy{-1*Faux39_foldingProp()}$.
In particular $\gro x{\rho^\epsilon y}{\rho^\epsilon z} \leq \sympy{Faux40_foldingProp()}$.
The triangle inequality and Claim 2 lead to $\gro x {\rho^\epsilon y}{\rho^\epsilon p} \leq \gro x{\rho^\epsilon y}{\rho^\epsilon z} + \gro zyp \leq \sympy{Faux41_foldingProp()}$, which completes the proof of Point~\ref{enu: folding proposition - gro with y} and of the proposition.
\end{proof}

\subsection{Lifting figures of $\bar X$ in $X$}

\paragraph{} In this section we try to find the best way to lift in $X$ a figure of $\bar X$.
Lemma~\ref{releve distance a une geodesique} (\resp Lemma~\ref{releve distance a un axe}) explains how to lift a point of $\bar X$ which is close to a geodesic (\resp the cylinder of an isometry) with a point of $X$ having a similar property.
In Proposition~\ref{lift overlap} we are interested in the following situation.
Let $x$ and $y$ be two $P$-close points of $X$ and $g$ a $P$-reduced isometry of $G$.
We assume that $\geo {\bar x}{\bar y}$ and $Y_{\bar g}$ have a large overlap in $\bar X$ (for instance larger than $\len{\bar g ^k}$ with $k \gg 1$) and would like to ``lift'' this overlap.
By replacing if necessary $g$ by a conjugate of $g$ we may translate $Y_g$ such that $\geo xy$ and $Y_g$ have more or less a non-empty intersection. 
However there is no reason that this overlap should be as large in $X$ as in $\bar X$.
We face the same kind of problem exposed at the beginning of Section~\ref{sec:foldable configurations}.
Nevertheless, lifting the endpoints of $\geo{\bar x}{\bar y}\cap Y_{\bar g}$, one can build a foldable configuration.
In the same way as explained in Section~\ref{sec:foldable configurations}, we will use this configuration in Section~\ref{sec:Burnside groups} in order to translate $y$ by elements of $P$ and fold the geodesic $\geo xy$ onto $Y_g$.

\begin{lemm}
\label{releve distance a une geodesique}
	Let $x$ and $x'$ be two $P$-close points of $X$.
	Let $y \in X$ such that for all $u \in K$, $\gro x{x'}y \leq \gro x{x'}{uy} + 2\delta$. 
	Moreover we assume that $\gro {\bar x}{\bar x'}{\bar y} \leq \sympy{F_SCliftDistGeoAssumption()}$,
	Then $\gro x{x'}y  \leq \sympy{F_SCliftDistGeoConclusion()}$.
\end{lemm}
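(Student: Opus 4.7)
The plan is to combine Proposition~\ref{satisfaire Greendlinger geodesique}, which controls the overlap of a geodesic with each $Y_\rho$ provided the Gromov product is nearly minimal over the $K$-orbit, with the Preserving Shape Lemma (Proposition~\ref{Preserving shape Lemma}), which transfers a small Gromov product from $\bar X$ back to $X$ under a no-large-overlap assumption. The hypotheses of the present lemma are tailor-made so that the first provides the input needed by the second.

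First I would apply Proposition~\ref{satisfaire Greendlinger geodesique} with parameter $\alpha = \delta$: the pair $(x,x')$ consists of $P$-close points by hypothesis, and the near-minimality condition $\gro x{x'}y \leq \gro x{x'}{uy} + 2\delta$ for all $u \in K$ is exactly the requirement on $y$. This yields a constant $c$ (depending only on $\delta$) such that for every $\rho \in P$,
\begin{displaymath}
	\diaminter{\geo xy}{Y_\rho} \leq \len \rho - c.
\end{displaymath}
Since the hypothesis is symmetric in $x$ and $x'$, the same application of Proposition~\ref{satisfaire Greendlinger geodesique} with the roles of $x$ and $x'$ exchanged gives the identical bound for $\diaminter{\geo{x'}y}{Y_\rho}$.

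Next I would invoke the Preserving Shape Lemma with the roles of its three points $x$, $y$, $z$ played respectively by $y$, $x$, $x'$ in our setting. The no-large-overlap assumption of that lemma is precisely
\begin{displaymath}
	\max\left\{\diaminter{\geo yx}{Y_\rho}, \diaminter{\geo y{x'}}{Y_\rho}\right\} \leq \len \rho - \sympy{F_SCpreservingShapeAxe()},
\end{displaymath}
which follows from the bound just established (provided $c \geq \sympy{F_SCpreservingShapeAxe()}$, a purely numerical check on the symbolic constants). The other hypothesis $\gro{\bar x}{\bar x'}{\bar y} \leq \sympy{F_SCpreservingShapeGro()}$ is included among the assumptions of the current lemma. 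The conclusion of the Preserving Shape Lemma is exactly $\gro x{x'}y \leq \sympy{F_SCpreservingShape()}$, which is the desired bound.

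The only real work is bookkeeping: one must verify that the constant $c$ produced by Proposition~\ref{satisfaire Greendlinger geodesique} with $\alpha = \delta$ exceeds the threshold $\sympy{F_SCpreservingShapeAxe()}$ demanded by the Preserving Shape Lemma, and that the assumption $\gro{\bar x}{\bar x'}{\bar y} \leq \sympy{F_SCliftDistGeoAssumption()}$ of the statement is consistent with (at least as strong as) the threshold $\sympy{F_SCpreservingShapeGro()}$. Both are routine comparisons of the symbolic constants already introduced in the paper, and presumably the conclusion constant $\sympy{F_SCliftDistGeoConclusion()}$ is simply $\sympy{F_SCpreservingShape()}$ or a trivially larger value.
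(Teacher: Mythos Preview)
Your proposal is correct and follows essentially the same approach as the paper: apply Proposition~\ref{satisfaire Greendlinger geodesique} with $\alpha=\delta$ to bound $\diaminter{\geo xy}{Y_\rho}$ and $\diaminter{\geo{x'}y}{Y_\rho}$, then invoke the Preserving Shape Lemma (Proposition~\ref{Preserving shape Lemma}) with the point $y$ playing the role of the vertex. Your write-up is in fact more detailed than the paper's two-line proof, spelling out the role permutation and the constant comparisons explicitly.
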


\begin{proof}
	The points $x$ and $x'$ are $P$-close. 
	Hence by Proposition~\ref{satisfaire Greendlinger geodesique}, for all $\rho \in P$, $\diaminter{\geo xy}{Y_\rho}$ and $\diaminter{\geo {x'}y}{Y_\rho}$ are smaller than $\len \rho \sympy{-1*Faux1_SCliftDistGeo()}$.
	The result follows then from Proposition~\ref{Preserving shape Lemma}.
\end{proof}

\begin{lemm}
\label{releve distance a un axe}
	Let $g$ be a $P$-reduced element of $G$.
	Let $x \in X$ such that for all $u \in K$, $d\left(x,Y_g\right) \leq d\left(ux, Y_g\right) + 2\delta$.
	We assume also that $d\left( \bar x, Y_{\bar g} \right) \leq \sympy{F_SCliftDistAxeAssumptionQuo()}$.
	Then $d\left(x, Y_g \right) \leq  \sympy{F_SCliftDistAxeConclusion()}$.
\end{lemm}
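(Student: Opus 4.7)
The plan is to reduce both distances $d(x,Y_g)$ and $d(\bar x, Y_{\bar g})$ to Gromov products via Lemma~\ref{distance axe gromov product}, and then to transfer a bound on the Gromov product from $\bar X$ back to $X$ using the Preserving Shape Lemma~\ref{Preserving shape Lemma}. The minimality assumption on $x$ combined with the fact that $g$ is $P$-reduced will supply, via Proposition~\ref{satisfaire Greendlinger axe}, the overlap hypothesis needed by Proposition~\ref{Preserving shape Lemma}.

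First I would replace $g$ by a sufficiently large power $g^k$. Since $g$ is $P$-reduced, $\bar g$ is hyperbolic in $\bar G$, so $g$ is hyperbolic in $G$; in particular $\len{g^k}$ and $\len{\bar g^k}$ both tend to infinity with $k$. Note that $g^k$ is still $P$-reduced (same cylinder $Y_{g^k}=Y_g$ and same hyperbolic fixed-point pair), that $Y_{g^{-1}}=Y_g$, and that the minimality assumption on $x$ is phrased entirely in terms of $Y_g$, hence passes unchanged to $g^k$ and to $g^{-1}$.

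Next I would apply Proposition~\ref{satisfaire Greendlinger axe} to both $g$ and $g^{-1}$ (with $\alpha=\delta$). This produces an integer $k_0$ and a constant $c>0$ such that for every $k\geq k_0$ and every $\rho\in P$,
\[
\diaminter{\geo x{g^k x}}{Y_\rho}\leq \len\rho -c,\qquad \diaminter{\geo x{g^{-k}x}}{Y_\rho}\leq \len\rho -c.
\]
I would then enlarge $k$ so that, in addition, $\len{g^k}$ exceeds the threshold needed to apply Lemma~\ref{distance axe gromov product} in $X$ and $\len{\bar g^k}$ exceeds the corresponding threshold in $\bar X$.

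In $\bar X$, Lemma~\ref{distance axe gromov product} applied to $\bar g^k$ yields $\gro{\bar g^k\bar x}{\bar g^{-k}\bar x}{\bar x}\leq d(\bar x, Y_{\bar g})+\mathrm{const}$, which by hypothesis is bounded by a fixed constant. Now I would apply the Preserving Shape Lemma~\ref{Preserving shape Lemma} with $y=g^{-k}x$ and $z=g^k x$: the overlap assumption is exactly what the previous step guarantees, while the bound on $\gro{\bar y}{\bar z}{\bar x}$ is what we have just established (choosing $k$ large enough initially so that this bound is below the threshold $\sympy{F_SCpreservingShapeGro()}$ from that lemma). The conclusion is that $\gro{g^kx}{g^{-k}x}{x}$ is bounded by a constant. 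Finally Lemma~\ref{distance axe gromov product} applied to $g^k$ in $X$ converts this into $d(x,Y_g)=d(x,Y_{g^k})\leq \gro{g^kx}{g^{-k}x}{x}+\mathrm{const}$, which gives the desired bound.

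The main obstacle is not conceptual but rather the bookkeeping: one must check that a single $k$ can be chosen to meet, simultaneously, the threshold for Proposition~\ref{satisfaire Greendlinger axe} applied to $g$ and to $g^{-1}$, the translation-length threshold of Lemma~\ref{distance axe gromov product} in $X$ and in $\bar X$, and the smallness threshold $\sympy{F_SCpreservingShapeGro()}$ on the lifted Gromov product. Since the Gromov product $\gro{\bar g^k\bar x}{\bar g^{-k}\bar x}{\bar x}$ is controlled by the hypothesis $d(\bar x, Y_{\bar g})\leq \sympy{F_SCliftDistAxeAssumptionQuo()}$ up to a $k$-independent additive constant, this smallness condition is what fixes the admissible value of $\sympy{F_SCliftDistAxeAssumptionQuo()}$ and, through the three nested applications of the above lemmas, determines the explicit constant $\sympy{F_SCliftDistAxeConclusion()}$ appearing in the conclusion.
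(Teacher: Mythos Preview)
Your proposal is correct and follows essentially the same route as the paper: apply Proposition~\ref{satisfaire Greendlinger axe} to control the overlaps $\diaminter{\geo x{g^{\pm k}x}}{Y_\rho}$, use Lemma~\ref{distance axe gromov product} in $\bar X$ to bound $\gro{\bar g^{-k}\bar x}{\bar g^k\bar x}{\bar x}$, invoke the Preserving Shape Lemma~\ref{Preserving shape Lemma} to pull this back to $X$, and finish with Lemma~\ref{distance axe gromov product} in $X$. You are in fact slightly more explicit than the paper in applying Proposition~\ref{satisfaire Greendlinger axe} to both $g$ and $g^{-1}$ so as to cover the geodesic $\geo x{g^{-k}x}$ as well.
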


\begin{proof}
	By Proposition~\ref{satisfaire Greendlinger axe}, there exists $k_0 \in \N$ such that for all $k \geq k_0$, for all $\rho \in P$, $\diaminter{\geo x{g^kx}}{Y_\rho} \leq \len \rho \sympy{-1*Faux1_SCliftDistAxe()}$.
	However $\bar g$ is a hyperbolic isometry.
	Therefore, there exists $k \geq k_0$ such that $\len {g^k} > \sympy{Faux2_SCliftDistAxe()}$ and $\len {\bar g^k} > \sympy{Faux3_SCliftDistAxe()}$.
	It follows from Lemma~\ref{distance axe gromov product} that the distance from $x$ to $Y_g$ is approximatively given by $\gro{g^{-k}x}{g^kx}x$.
	The same works for $\bar x$ and $Y_{\bar g}$.
	More precisely,
	\begin{displaymath}
		\gro {\bar g^{-k} \bar x}{\bar g^k\bar x}{\bar x} 
		\leq d\left(\bar x, Y_{\bar g}\right) + \sympy{Faux4_SCliftDistAxe()}
		\leq \sympy{Faux5_SCliftDistAxe()}.
	\end{displaymath}
	Applying Proposition~\ref{Preserving shape Lemma} we get 
	\begin{displaymath}
		d\left(x,Y_g\right)
		\leq \gro{g^{-k}x}{g^kx}x + \sympy{Faux6_SCliftDistAxe()}
		\leq  \sympy{Faux7_SCliftDistAxe()},
	\end{displaymath}
	which completes the proof of the lemma.
\end{proof}

\begin{prop}
\label{lifting distances}
	Let $k \in \N$.
	Let $L \geq \sympy{F_SCliftingDistHypL()}$.
	Let $g$ be a $P$-reduced element of $G$ such that $\len {\bar g^k} > \sympy{F_SCliftingDistHypTL()}$.
	Let $p$ and $q$ be two points of $X$ satisfying the followings
	\begin{enumerate}
		\item \label{enu: lifting distances - hyp bqr X}
		$d\left(\bar p, Y_{\bar g} \right), d\left(\bar q, Y_{\bar g} \right) \leq\sympy{F_SCliftingDistHypAxeQuo()}$,
		\item \label{enu: lifting distances - hyp X}
		for all $u \in K$, $d\left(p,Y_g \right) \leq d\left(up, Y_g\right) + \sympy{F_SCliftingDistHypAxeBase()}$ and $d\left(q,Y_g \right) \leq d\left(uq, Y_g\right) + \sympy{F_SCliftingDistHypAxeBase()}$,
		\item \label{enu: lifting distances - hyp dist pq}
		$\dist{\bar p}{\bar q} \geq \len{\bar g^k} + L$.
	\end{enumerate}
	Then $\dist pq \geq \len{g^k} + L \sympy{-1*F_SCliftingDistCl()}$.
\end{prop}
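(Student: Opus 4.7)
The proof passes through the cylinder $Y_g$ in $X$, transferring the $\bar X$-distance hypothesis via the $1$-Lipschitz projection $\pi : X \to \bar X$, and then converting it to an $X$-distance estimate using the nerve parametrization of $Y_g$.

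First, hypotheses \ref{enu: lifting distances - hyp bqr X} and \ref{enu: lifting distances - hyp X}, combined with Lemma~\ref{releve distance a un axe} applied to $p$ and $q$, bound $d(p, Y_g)$ and $d(q, Y_g)$ by a constant $C_1$. Let $p_0, q_0$ denote respective projections of $p$ and $q$ onto $Y_g$ in $X$. Standard projection estimates on the quasi-convex $Y_g$ (Lemma~\ref{projection quasi-convex}) give $\dist p q \geq \dist{p_0}{q_0} - C_2$.

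Next, fix a nerve $N$ of $g$ in $X$ and a nerve $\bar N$ of $\bar g$ in $\bar X$. After replacing $p_0, q_0$ by their projections on $N$ (at the cost of a bounded error), write $p_0 = N(s)$ and $q_0 = N(t)$ with $s < t$. The projection $\pi$ intertwines the action of $g$ on $N$ with that of $\bar g$ on $\pi(N)$, which lies in a bounded neighbourhood of $\bar N$; hence in $\bar N$-parameters, $\pi \circ N$ corresponds to an affine reparametrization of slope $\len{\bar g}/\len g$, which is close to $1$ because $g$ is $P$-reduced. On the $\bar X$-side, combining the analogous projection estimate in $\bar X$ with hypothesis~\ref{enu: lifting distances - hyp dist pq} yields $\dist{\pi p_0}{\pi q_0} \geq \len{\bar g^k} + L - C_3$. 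Unwinding through the nerve parametrizations gives $t - s \geq k\len g + L - C_4$, and since $N$ is a $\len g$-local geodesic on which $g$ acts with total displacement $\len{g^k}$ over $k$ periods, we obtain $\dist{p_0}{q_0} = \dist{N(s)}{N(t)} \geq \len{g^k} + L - C_5$. Combining with the first step, $\dist p q \geq \len{g^k} + L - C$, as required.

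The chief technical difficulty is the translation between $N$- and $\bar N$-arc-length parameters: for a $P$-reduced $g$, one must verify that $\len g$ and $\len{\bar g}$ agree up to a constant depending only on the small cancellation data. This rests on Lemma~\ref{shortening class length} applied to $g^k$ (which inherits $P$-reducedness from $g$ since $Y_{g^k} = Y_g$), together with the observation that $N$ is close enough to a global quasi-geodesic that arc-length differences translate faithfully into $X$-distances with only bounded additive error per period. The hypothesis $\len{\bar g^k} > \sympy{F_SCliftingDistHypTL()}$ and $L \geq \sympy{F_SCliftingDistHypL()}$ are used precisely to absorb these bounded errors.
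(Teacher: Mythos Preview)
Your argument has a genuine gap at the nerve-parameter comparison, and the lemma you cite does not fill it.

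The step ``$\pi\circ N$ corresponds to an affine reparametrization of slope $\len{\bar g}/\len g$, which is close to $1$ because $g$ is $P$-reduced'' is the crux, and it is not justified. The map $X\to\bar X$ is $1$-Lipschitz, so $\len{\bar g}\leq\len g$, but nothing in the $P$-reduced condition bounds the ratio from below: $P$-reducedness controls the overlap of $Y_g$ with each individual $Y_\rho$, not the cumulative shortening along $Y_g$ produced by the cone-off and the quotient. Lemma~\ref{shortening class length} is about shortening translation length within a coset of $K$ in $G$ by multiplying by powers of a relator; it says nothing about comparing $\len g$ in $X$ with $\len{\bar g}$ in $\bar X$. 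Even if one had an additive bound $|\len g-\len{\bar g}|\leq C$, your argument would need $|\len{g^k}-\len{\bar g^k}|$ bounded independently of $k$, which does not follow. Your final step, converting the arc-length bound $t-s\geq k\len g+L-C_4$ into a distance bound $\dist{N(s)}{N(t)}\geq\len{g^k}+L-C_5$, also silently assumes $\len g$ is large enough for $N$ to be globally close to geodesic, which is not given.

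The paper avoids this comparison entirely. Working in $\bar X$ it shows the Gromov product $\gro{\bar p}{\bar q}{\bar g^k\bar p}$ is small, then uses the Preserving Shape Lemma (Proposition~\ref{Preserving shape Lemma}) together with Lemma~\ref{element reduit donne points proches} to lift this to a small $\gro pq{g^kp}$ in $X$. That alignment yields $\dist pq\geq\dist p{g^kp}+\dist{g^kp}q-C$; the first term is $\geq\len{g^k}$ directly, and the second is bounded below by $\dist{\bar g^k\bar p}{\bar q}\geq L-C'$ via the $1$-Lipschitz projection. The $\len{g^k}$ and $L$ contributions are thus obtained by completely different mechanisms, and no comparison of $\len g$ with $\len{\bar g}$ is needed.
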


\begin{proof}
	Let $\bar N$ be a nerve of $\bar g^k$ (in $\bar X$).
	We denote by $\bar r$ and $\bar s$ respective projections of $\bar p$ and $\bar q$ on $\bar N$.
	The isometry $\bar g^k$ acts on $\bar N$ by translation of length $\len{\bar g^k}$.
	By replacing if necessary $g$ by $g^{-1}$, we can assume that $\bar s$ and $\bar g^k \bar r$ belong to the same component of $\bar N\setminus\{ \bar r\}$.
	Since $\len{\bar g^k} > \sympy{Faux1_SCliftingDist()}$, $Y_{\bar g}$ is contained in the $\sympy{Faux2_SCliftingDist()}$-neighbourhood of $\bar N$.
	In particular $\dist {\bar p}{\bar r} \leq \sympy{Faux3_SCliftingDist()}$ and $\dist {\bar q}{\bar s} \leq \sympy{Faux3_SCliftingDist()}$.
	It follows from the triangle inequality that $\dist{\bar r}{\bar s} \geq \dist{\bar p}{\bar q} \sympy{-1*Faux4_SCliftingDist()} \geq \len{\bar g^k}$. 
	However $\bar N$ is a $\len{\bar g^k}$-local geodesic, thus $\bar g^k \bar r$ necessary belongs to $\nerf{\bar r}{\bar s}{\bar N}$ and $\gro{\bar p}{\bar q}{\bar g^k\bar r} \leq \sympy{Faux5_SCliftingDist()}$.
	Hence $\gro{\bar p}{\bar q}{\bar g^k\bar p}  \leq \gro{\bar p}{\bar q}{\bar g^k\bar r} + \dist{\bar r}{\bar p} \leq \sympy{Faux6_SCliftingDist()}$.
	According to Point~\ref{enu: lifting distances - hyp X} $p$ and $q$  are the respective lifts of $\bar p$ and $\bar q$ which are the ``closest'' to $Y_g$.
	Hence by Lemma~\ref{releve distance a un axe}, $p$ and $q$ belongs to the $(\sympy{Faux7_SCliftingDist()})$-neighbourhood of $Y_g$.
	It follows from Lemma~\ref{element reduit donne points proches} that for all $\rho \in P$, $\diaminter{\geo p{g^kp}}{Y_\rho}$ and $\diaminter{\geo q{g^kp}}{Y_\rho}$ are bounded above by $\len \rho \sympy{-1*Faux8_SCliftingDist()}$.
	Consequently by Proposition~\ref{Preserving shape Lemma} $\gro pq{g^kp} \leq \sympy{Faux9_SCliftingDist()}$.
	In particular
	\begin{equation}
	\label{eqn: lifting distances}
		\dist pq \geq \dist p{g^kp} + \dist {g^kp}q \sympy{-1*Faux10_SCliftingDist()}  \geq \len{g^k} + \dist {g^kp}q \sympy{-1*Faux10_SCliftingDist()}.
	\end{equation}
	However the map $X \rightarrow \bar X$ shorten the distances, thus
	\begin{displaymath}
		\dist{g^kp}q \geq \dist {\bar g^k \bar p}{\bar q} 
		\geq \dist{\bar p}{\bar q} - \dist {\bar g^k \bar p}{\bar p}
		\geq \dist{\bar p}{\bar q} -  \len{\bar g^k} - 2\dist{\bar p}{\bar r}
	\end{displaymath}
	Using Point~\ref{enu: lifting distances - hyp dist pq} we deduced that $\dist{g^kp}q \geq L \sympy{-1*Faux11_SCliftingDist()}$, which together with (\ref{eqn: lifting distances}) leads to the result.
\end{proof}

\begin{prop}
\label{lift overlap}
	Let $x$ and $y$ be two $P$-close points of $X$.
	Let $g$ be a $P$-reduced element of $G$.
	Let $k \in \N$ such that $\len{\bar g^k} >  \sympy{F_SCliftOverlapHypTL()}$.
	Let $L \geq \sympy{F_SCliftOverlapHypL()}$ such that $\diaminter{\geo {\bar x}{\bar y}}{Y_{\bar g}} \geq \len {\bar g^k} +  L$.
	There exists three points $r,p,q \in X$ and $v \in K$ satisfying the following properties
	\begin{enumerate}
		\item \label{enu: lift overlap - p=q in bar X}
		$\bar p = \bar q$.
		\item \label{enu: lift overlap - estimation distances aux objets}
		$d\left(r, vY_g\right) \leq \sympy{F_SCliftOverlapClDistAxeR()}$, $d\left(q,vY_g\right) \leq \sympy{F_SCliftOverlapClDistAxeQ()}$, $\gro xqr \leq \sympy{F_SCliftOverlapClGroR()}$ and $\gro xyp \leq \sympy{F_SCliftOverlapClGroP()}$, ,
		\item \label{enu: lift overlap - estimate distance releves}
		$\dist rq \geq \len {g^k} + L \sympy{-1*F_SCliftOverlapClOverlap()}$.
		\item \label{enu: lift overlap - configuration pliable} 
		 The configuration $(x,p,q,y)$ is foldable.
	\end{enumerate}
\end{prop}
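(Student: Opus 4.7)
The plan is to transport the overlap data from $\bar X$ back to $X$ by lifting a suitable point $\bar p \in \geo{\bar x}{\bar y}$ that is close to $Y_{\bar g}$ in two different ways: once to $p \in \geo xy$, and once to $q$ close to a $K$-translate of $Y_g$; the third point $r$ will be a lift of a second point $\bar r$ on the overlap, chosen so that $\dist{\bar r}{\bar p} \geq \len{\bar g^k} + L$. To locate $\bar p$ and $\bar r$, introduce a nerve $\bar N$ of $\bar g^k$ in $\bar X$; since $\len{\bar g^k}$ is large, $Y_{\bar g}$ lies in a small neighbourhood of $\bar N$, so $\diaminter{\geo{\bar x}{\bar y}}{\bar N} \geq \len{\bar g^k} + L - O(\delta_1)$. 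Letting $\bar\pi, \bar\tau$ be respective projections of $\bar x, \bar y$ on $\bar N$, Proposition~\ref{res:diam quasi-convex and projections} gives $\dist{\bar\pi}{\bar\tau} \geq \len{\bar g^k} + L - O(\delta_1)$. Assuming $\bar g^k$ translates $\bar N$ from $\bar\pi$ towards $\bar\tau$, the point $\bar g^{-k}\bar\tau$ lies on $\nerf{\bar\pi}{\bar\tau}{\bar N}$ at distance $\len{\bar g^k}$ from $\bar\tau$ and $\geq L - O(\delta_1)$ from $\bar\pi$. Choose $\bar p$ (respectively $\bar r$) on $\geo{\bar x}{\bar y}$ whose projection to $\bar N$ is close to $\bar\tau$ (respectively $\bar g^{-k}\bar\tau$): both lie near $Y_{\bar g}$ and $\dist{\bar r}{\bar p} \geq \len{\bar g^k} + L - O(\delta_1)$.

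Next, lift to $X$. Since $x$ and $y$ are $P$-close, Proposition~\ref{existence good quasi-geodesic in cone-off and quotient} shows that the image of $\geo xy$ in $\bar X$ is a local quasi-geodesic closely following $\geo{\bar x}{\bar y}$, so we can pick $p \in \geo xy$ whose image is close to $\bar p$, yielding $\gro xyp$ automatically small. Among the $K$-orbit of $p$, take $q := k_0 p$ where $k_0 \in K$ nearly minimizes $d(k p, Y_g)$; Lemma~\ref{releve distance a un axe} then yields $d(q, Y_g) \leq O$. Similarly choose $r$ as a lift of $\bar r$ nearly minimizing $d(r, Y_g)$. Set $v := 1 \in K$: item (i) and the bounds on $d(q, vY_g)$, $d(r, vY_g)$, $\gro xyp$ in (ii) are then immediate.

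To establish the remaining items, apply Proposition~\ref{lifting distances} with the isometry $g$ and the pair $(r, q)$: the hypotheses on distances to $Y_{\bar g}$, the minimization conditions, and $\dist{\bar r}{\bar q} = \dist{\bar r}{\bar p} \geq \len{\bar g^k} + L$ are all met, and the conclusion gives $\dist rq \geq \len{g^k} + L - O$, which is item (iii). For the remaining Gromov-product bound $\gro xqr \leq O$: in $\bar X$, $\bar r$ lies on $\geo{\bar x}{\bar y}$ between $\bar x$ and $\bar p = \bar q$, so $\gro{\bar x}{\bar q}{\bar r}$ is $O(\bar\delta)$; transfer this to $X$ by Proposition~\ref{Preserving shape Lemma}, using that the overlaps of $\geo xq$ and $\geo xr$ with each $Y_\rho$ are bounded via the minimization choice of $q, r$ and Lemma~\ref{element reduit donne points proches} (since $q, r$ hug the $P$-reduced axis $Y_g$). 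For foldability of $(x, p, q, y)$ in the sense of Definition~\ref{defi:foldable configuration}, exhibit the required points $s, t$ by setting $s$ on $\geo xp \subset \geo xy$ at distance $\gro pqx + O$ from $x$ and $t$ on $\geo xq$ at a similar distance; condition (C3) $\gro syp = 0$ is immediate since $s, p \in \geo xy$, (C1) holds because $s, p$ inherit $P$-closeness from $x, y$ along the same geodesic, and (C2) follows from the $K$-minimization of $q$ combined with Proposition~\ref{satisfaire Greendlinger geodesique}.

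The main obstacle is simultaneously ensuring $\gro xqr \leq O$ and the foldability of $(x, p, q, y)$, both of which require controlling overlaps of $\geo xq$ with arbitrary relator axes $Y_\rho$ for $\rho \in P$. Since $q$ can lie far from $x$, these overlaps are not automatic: one must combine the minimization choice of $q$ (no shorter representative in $Kq$) with careful applications of Propositions~\ref{satisfaire Greendlinger geodesique} and~\ref{Preserving shape Lemma}, while tracking every numerical tolerance prescribed by the explicit constants in the statement.
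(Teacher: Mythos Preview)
Your overall strategy matches the paper's: lift two points of the overlap in $\bar X$, one as $p$ on $\geo xy$ and one as $q$ close to a translate of $Y_g$, and use Proposition~\ref{lifting distances} for~(iii). However there is a genuine gap in how you lift $r$, and it propagates to both the bound on $\gro xqr$ and to foldability.

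By fixing $v=1$ and choosing $r$ in the $K$-orbit of a preimage of $\bar r$ so as to nearly minimise $d(r,Y_g)$, you guarantee that $r$ is close to $Y_g$ but you lose all control over $\gro xyr$. When you then try to apply the Preserving Shape Lemma (Proposition~\ref{Preserving shape Lemma}) at the point $r$ to transfer $\gro{\bar x}{\bar q}{\bar r}\leq O(\bar\delta)$ to $X$, you need $\diaminter{\geo xr}{Y_\rho}$ to be bounded away from $\len\rho$ for every $\rho\in P$. Lemma~\ref{element reduit donne points proches} handles $\diaminter{\geo rq}{Y_\rho}$ (both endpoints hug $Y_g$), but it says nothing about $\diaminter{\geo xr}{Y_\rho}$, since $x$ need not be near $Y_g$. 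Your appeal to ``the minimisation choice of $r$'' does not help: you minimised $d(\cdot,Y_g)$, not a Gromov product, so neither Proposition~\ref{satisfaire Greendlinger geodesique} nor Proposition~\ref{satisfaire Greendlinger axe} applies to the segment $\geo xr$.

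The paper resolves this by lifting $r$ the other way round: first choose $r$ among the preimages of $\bar r$ to nearly minimise $\gro xyr$ (so Lemma~\ref{releve distance a une geodesique} forces $r$ close to $\geo xy$, and hence $\diaminter{\geo xr}{Y_\rho}\leq \diaminter{\geo xy}{Y_\rho}+\gro xyr$ is controlled because $x,y$ are $P$-close); \emph{then} choose $v\in K$ to nearly minimise $d(r,vY_g)$, so that Lemma~\ref{releve distance a un axe} places $r$ close to $vY_g$ as well. The freedom in $v$ is exactly what lets $r$ satisfy both constraints simultaneously. This same $r$ then serves as the point $t$ in the foldable configuration: $r$ and $q$ are both close to $vY_g$, hence $P$-close by Lemma~\ref{element reduit donne points proches}, and $\dist xr\leq \gro pqx+O$ follows from the smallness of $\gro xpr$ and $\gro xqr$. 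Your proposed $t\in\geo xq$ does not work for~(C2): lying on $\geo xq$ gives no reason for $t$ and $q$ to be $P$-close unless $x$ and $q$ already are, and the $K$-minimisation you performed on $q$ (again, of $d(\cdot,Y_g)$, not of a Gromov product) does not feed into Proposition~\ref{satisfaire Greendlinger geodesique}.
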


\begin{proof}
	Let us denote by $\bar a$ and $\bar b$ respective projections of $\bar x$ and $\bar y$ on $Y_{\bar g}\subset \bar X$.
	By Proposition~\ref{res:diam quasi-convex and projections}, $\dist{\bar a}{\bar b} \geq \len{\bar g^k}  + L \sympy{-1*Faux1_SCliftOverlap()}$.
	Recall that $\bar X$ is obtained by attaching cones on $X / K$.
	Hence $\bar a$ and $\bar b$ may not belong to $\nu \left(X\right)$, the image of $X$ in $\bar X$. 
	However these cones have diameter $\sympy{Faux2_SCliftOverlap()}$.
	Thus there exists two points $\bar r$ and $\bar z$ in $\geo{\bar a}{\bar b}\cap \nu \left(X\right)$, such that $\dist{\bar a}{\bar r}, \dist{\bar b}{\bar z} \leq \sympy{Faux3_SCliftOverlap()}$.
	In particular $\dist{\bar r}{\bar z} \geq \len{\bar g^k} + L \sympy{-1*Faux4_SCliftOverlap()}$.
	Since $Y_{\bar g}$ is $\sympy{Faux5_SCliftOverlap()}$-quasi-convex, $\bar r$ and  $\bar z$ are in the $\sympy{Faux6_SCliftOverlap()}$-neighbourhood of  $Y_{\bar g}$.
	Moreover, $\gro {\bar x}{\bar y}{\bar r}, \gro {\bar x}{\bar y}{\bar z} \leq \sympy{Faux7_SCliftOverlap()}$ and $\gro {\bar x}{\bar z}{\bar r} \leq \sympy{Faux8_SCliftOverlap()}$.
	The next step of the proof consists in lifting this figure in $X$.
	First we define lifts of $\bar r$ and $\bar z$ which are as close as possible from $\geo xy$.
	Let $r, z \in X$ be respective pre-images of $\bar r$ and $\bar z$ such that for all $u \in K$, we have in $X$ $\gro xyr \leq \gro xy{ur} + \sympy{Faux9_SCliftOverlap()}$ and  $\gro xyz \leq \gro xy{uz}+ \sympy{Faux9_SCliftOverlap()}$.
	Since $x$ and $y$ are $P$-close, Lemma~\ref{releve distance a une geodesique} leads to $\gro xyr , \gro xyz \leq \sympy{Faux10_SCliftOverlap()}$.
	In particular there is a point $p$ on $\geo xy$ such that $\dist pz \leq \sympy{Faux11_SCliftOverlap()}$ and $\gro xyp \leq \gro xyz + \dist pz \leq \sympy{Faux12_SCliftOverlap()}$.

	\paragraph{} We now chose a conjugate of $g$ whose axes in $X$  approximatively passes through $r$.
	To that end, we fix $v\in K$ such that for all $u \in K$, we have $d\left(r,vY_g\right) \leq d\left(ur,vY_g\right) + \sympy{Faux13_SCliftOverlap()}$.
	By assumption $g$ is $P$-reduced.
	Hence $vY_g$ is the cylinder of $vgv^{-1}$ which is $P$-reduced as well.
	By Lemma~\ref{releve distance a un axe}, $d\left(r,vY_g\right) \leq \sympy{Faux14_SCliftOverlap()}$.
	We chose for $z$ a lift of $\bar z$ which was close to $\geo xy$.
	Unfortunately $z$ is not necessarily in the neighbourhood of $vY_g$.
	That is why we have to introduce a second pre-image of $\bar z$. 
	Let $w \in K$ such that for all $u \in K$, $d\left(wz,vY_g\right) \leq d\left(uwz,vY_g\right) + \sympy{Faux15_SCliftOverlap()}$.
	By Lemma~\ref{releve distance a un axe}, $d\left(wz,vY_g\right) \leq \sympy{Faux16_SCliftOverlap()}$.
	We finally put $q = wp$.
	In particular $d(q, vY_g)\leq \sympy{Faux17_SCliftOverlap()}$.
	Moreover $\bar p = \bar q$, which proves Point~\ref{enu: lift overlap - p=q in bar X}.
	
	\paragraph{} 
	By construction $\gro xyr \leq \sympy{Faux18_SCliftOverlap()}$.
	However $x$ and $y$ are $P$-close.
	Hence for all $\rho \in P$, $\diaminter{\geo xr}{Y_\rho} \leq \diaminter{\geo xy}{Y_\rho} +\gro xyr \leq\len \rho \sympy{-1*Faux19_SCliftOverlap()}$.
	On the other hand, $d(r,vY_g)$ and $d(wz,vY_g)$ are bounded above by $\sympy{Faux20_SCliftOverlap()}$.
	The isometry $vgv^{-1}$ being $P$-reduced, Lemma~\ref{element reduit donne points proches} implies that for all $\rho \in P$, $\diaminter{\geo r{wz}}{Y_\rho} \leq \len \rho \sympy{-1*Faux21_SCliftOverlap()}$.
	Since $\gro {\bar x}{\bar z}{\bar r} \leq \sympy{Faux8_SCliftOverlap()}$, applying Lemma~\ref{Preserving shape Lemma} we get 
	\begin{math}
		\gro xqr 
		\leq \gro x{wz}r +\dist pz 
		\leq \sympy{Faux23_SCliftOverlap()},
	\end{math}
	which completes the proof of Point~\ref{enu: lift overlap - estimation distances aux objets}.
	(In the same way, we can prove that $\gro xpr \leq \sympy{Faux29_SCliftOverlap()}$.)
	Note that $vgv^{-1}$, $r$ and $wz$ satisfy the assumptions of Proposition~\ref{lifting distances}.
	Therefore $\dist r{wz} \geq \len{g^k} + L \sympy{-1*Faux24_SCliftOverlap()}$.
	Thus $\dist rq \geq \len{g^k} + L \sympy{-1*Faux25_SCliftOverlap()}$, which gives Point~\ref{enu: lift overlap - estimate distance releves}.
	
	\paragraph{}It only remains to prove that $(x,p,q,y)$ is foldable.
	In the definition of foldable configuration we choose $s=x$. 
	Since $x$ and $y$ are $P$-close and $p$ lies on a geodesic between them, Assumption \ref{enu: def foldable - point s} is fulfilled. 
	So is the condition \ref{enu: def foldable - point y}.
	We choose for $t$ the point $r$.
	We proved that $d(r,vY_g) \leq \sympy{Faux26_SCliftOverlap()}$ and $d(q,vY_g) \leq \sympy{Faux27_SCliftOverlap()}$.
	Moreover $vgv^{-1}$ is $P$-reduced.
	By Lemma~\ref{element reduit donne points proches}, $r$ and $q$ are $P$-close.
	On the other hand $\gro xqr \leq \sympy{Faux28_SCliftOverlap()}$ and $\gro xpr \leq \sympy{Faux29_SCliftOverlap()}$.
	Therefore by hyperbolicity $\dist xr \leq \gro pqx +\sympy{Faux30_SCliftOverlap()}$.
	Thus Condition~\ref{enu: def foldable - point t} holds.
\end{proof}

\section{Burnside groups}
\label{sec:Burnside groups}

\subsection{General framework}
\label{framework sequence for Burnside}
\paragraph{}
This section is dedicated to the proof of our main theorem.
Let $(X,x_0)$ be a geodesic, proper, hyperbolic  pointed space.
Let $G$ be a non-elementary, torsion-free group acting freely, properly, co-compactly, by isometries on $X$.

\paragraph{}
In order to study the quotient $G/G^n$, T.~Delzant and M.~Gromov provides in \cite{DelGro08} a sequence of appropriate hyperbolic groups $(G_k)$ whose direct limit is $G/G^n$.
We recall here the main steps of this construction as it is exposed in \cite{Coulon:2013tx}.

\paragraph{}
The constants $\delta_1$, $r_0$, $\delta_0$ and $\Delta_0$ are the one given at the end of Section~\ref{sec:small cancellation constants}.
The rescaling parameter $\lambda_n$ is defined by
\begin{equation*}
	\lambda_n = \frac {\pi \sinh r_0}{5\sqrt{nr_0 \delta_1}}.
\end{equation*}
The integer $n_0$ is chosen large enough in such a way that $\lambda_{n_0}$ satisfy a set of inequalities%
\footnote{
	In this article, the exact statement of the inequalities it should satisfy is not important.
	There are chosen in such a way that one can iterate the small cancellation process explained below.
	The conditions to fulfill coarsely say that $\lambda_n \delta_1\ll \min\left\{\delta_0, \Delta_0\right\}$.
	For more details see \cite{Coulon:2013tx}.
}.
For our purpose, we also require that $\lambda^{-1}_{n_0}\geq 500$.
We build by induction two sequences $(X_k)$ and $(G_k)$ as follows.

\paragraph{Initialization.} 
Among other things, we can assume, by rescaling $X$ if necessary, that $X$ is $\delta$-hyperbolic, with $\delta \leq \delta_0$ and $A(G,X) \leq \Delta_0/2$.
Up to increase $n_0$, we may also require that $\rinj GX \geq 20 \sqrt{r_0\delta_1/n_0}$.
We fix now $\xi$ such that
\begin{equation*}
	40(\xi -1) \sqrt{r_0\delta_1/n_0} \geq \sympy{F_BIdefXi()}
\end{equation*}
and an odd integer $n \geq \max\{100, n_0, 2 \epsilon +1\}$ satisfying
\begin{equation*}
	\frac {500 \pi \sinh r_0}n \leq 20 \sqrt{r_0\delta_1/n_0}
\end{equation*}
We put $X_0 = X$ and $G_0=G$.
For simplicity of notation we write $\lambda$ instead of $\lambda_{n_0}$.
\paragraph{Heredity.}
We assume that $X_k$ and $G_k$ are built and satisfy (among others) the following assumptions.
\begin{enumerate}
	\item The metric space $X_k$ is geodesic, proper and $\delta$-hyper\--bolic, with $\delta \leq \delta_0$.
	\item The group $G_k$ acts properly, co-compactly by isometries on $X_k$ and satisfies the small centralizers hypothesis (i.e. it is non-elementary and all its elementary subgroups are cyclic).
	\item $A\left(G_k, X_k\right) \leq \Delta_0/2$.
	\item $\rinj{G_k}{X_k}  \geq 20 \sqrt{r_0\delta_1/n_0} \geq \sympy{F_BIrinjGk()}$. In particular, the injectivity radius of $G_k$ satisfies $2(\xi-1)\rinj{G_k}{X_k} \geq \sympy{F_BIdefXi()}$.
\end{enumerate}
We denote by $R_k$ the set of elements $g \in G_k$ such that $g$ is hyperbolic, not a proper power and $\len[espace=X_k]g \leq \sympy{F_BIdefIsomRk()}$.
There exists a subset $R_k^0$ of $R_k$ stable under conjugation such that $R_k$ is the disjoint union of $R_k^0$ and the set of all inverses of $R_k^0$.
We define $P_k$ by $P_k = \left\{ g^n, g \in R_k^0\right\}$.
This set satisfies the hypothesis of the small cancellation theorem (Theorem \ref{theo:small cancellation theorem}), i.e. $\Delta\left(P_k, X_k \right) \leq \Delta_0$ and $\rinj {P_k}{X_k} \geq \sympy{F_BIrinjPk()}$.
Let $G_{k+1}$ be the quotient $G_k/ \ll P_k\gg$.
The space $\bar X_k$ is the one constructed from $X_k$ by small cancellation (see Section~\ref{sec:small cancellation}).
It is $\bar \delta$-hyperbolic, with $\bar \delta \leq \delta_1$.
We define $X_{k+1}$ as the rescaled space $\lambda \bar X_k$.
Using the conditions satisfied by $\lambda$, one can prove that $X_{k+1}$ and $G_{k+1}$ satisfy also the assumptions (i)--(iv).
Moreover the canonical map $\nu_k : X_k \rightarrow X_{k+1}$ has the following property: for all $x,x' \in X_k$, $\dist[X_{k+1}] {\nu_k (x)}{\nu_k(x')} \leq \lambda \dist[X_k]x{x'}$.

\paragraph{}The sequence $(G_k)$ constructed in this way approximates the Burnside group $G/G^n$ in the sense that $\dlim G_k = G/G^n$.

\notas
\begin{enumerate}
	\item For all $k \in \N$ the kernel of the projection $G \twoheadrightarrow G_k$ is denoted by $K_k$.
	In particular, for all $k \in \N$, $K_k \lhd K_{k+1}$.
	\item Let $x$ be a point of $X$ (\resp $g$ be an element of $G$).
	For simplicity of notation, we still write $x$ (\resp $g$) for its image by the natural map $X \rightarrow X_k$ (\resp $G \twoheadrightarrow G_k$).
\end{enumerate} 

\subsection{Close points, reduced elements of rank $k$}

\rem From now on, unless otherwise stated, all the metric objects (distances, diameters, Gromov's products) are measured with the distance of $X_k$ (and never with the one of $\bar X_k)$.

\begin{defi}
	Let $k \in \N$.
	Two points $x$ and $x'$ of $X$ are close of rank $k$ if for all $j < k$, for all $\rho \in P_j$, $\diaminter{\geo x{x'}}{Y_\rho}\leq \len \rho/2 + \sympy{F_BIcloseRankK()}$ in the space $X_j$.
\end{defi}

\begin{defi}
	Let $k \in \N$.
	An element $g$ of $G$ is reduced of rank $k$ if $g$ is hyperbolic as element of $G_k$ and for all $j < k$, $\diaminter{Y_g}{Y_\rho}\leq \len \rho/2 + \sympy{F_BIreducedRankK()}$ in the space $X_j$.
\end{defi}

\rem Note that \emph{being close} (\resp \emph{reduced}) \emph{of rank 0} is an empty condition. 
Any two points of $X$ are close of rank 0.
Any hyperbolic element of $G$ is reduced of rank 0. 

\begin{prop}
\label{lifting reduced elements}
	Let $k \in \N$.
	Let $g \in G$.
	 If $g$ is hyperbolic in $G_k$ then there exists $u \in K_k$ such that $ug$ is reduced of rank $k$.
\end{prop}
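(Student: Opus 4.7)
My plan is to argue by induction on $k$. The base case $k=0$ is immediate: being reduced of rank $0$ requires only that $g$ be hyperbolic in $G_0 = G$, which is given, and the overlap condition is indexed over the empty set, so $u = 1$ works.

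For the inductive step I would assume the statement at rank $k$ and deduce it at rank $k+1$. Let $g \in G$ be hyperbolic in $G_{k+1}$. First I would observe that $g$ is automatically hyperbolic in $G_k$: since $G_k$ acts properly and cocompactly on the proper hyperbolic space $X_k$, each non-trivial element is either elliptic or hyperbolic, and an elliptic (hence torsion) element of $G_k$ could not project to the hyperbolic element $g$ of $G_{k+1}$. I would then apply Proposition~\ref{obtaining reduced isometry} to the action of $G_k$ on $X_k$ with the family of relators $P_k$; the standing conditions (i)--(iv) of Section~\ref{framework sequence for Burnside}, in particular the bound $A(G_k, X_k) \leq \Delta_0/2$ and the fact that each $\rho \in P_k$ is the $n$-th power of an isometry of controlled length, are precisely what that proposition requires. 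Because the image of $g$ in $G_k/\langle\langle P_k \rangle\rangle = G_{k+1}$ is hyperbolic by hypothesis, the proposition yields an element $v \in \langle\langle P_k\rangle\rangle \lhd G_k$ such that $vg$ is $P_k$-reduced in $X_k$. I would then choose any lift $\tilde v \in G$ of $v$; since its image in $G_{k+1}$ is trivial, $\tilde v$ lies in $K_{k+1}$. Set $h_0 := \tilde v g$.

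Next, the induction hypothesis at rank $k$ applied to $h_0$ (which projects to the hyperbolic element $vg$ in $G_k$) produces $u_0 \in K_k$ such that $u_0 h_0$ is reduced of rank $k$. I would then take $u := u_0 \tilde v$, which lies in $K_{k+1}$ because $K_k \subset K_{k+1}$ and $\tilde v \in K_{k+1}$, and check that $ug = u_0 h_0$ is reduced of rank $k+1$. Hyperbolicity of $ug$ in $G_{k+1}$ is immediate since $u \in K_{k+1}$ forces $ug$ to project to $g$ in $G_{k+1}$. For each $j < k$ the required overlap bound in $X_j$ is exactly the rank-$k$ reducedness of $u_0 h_0$ supplied by the induction. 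For the new top level $j = k$ I would use that the action of $G$ on $X_k$ factors through $G_k$, where $u_0 \in K_k$ acts trivially; thus $Y_{ug}$ and $Y_{h_0} = Y_{vg}$ coincide as subsets of $X_k$, and the $P_k$-reducedness of $vg$ gives exactly the overlap estimate required at level $k$.

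The main obstacle I anticipate is the potential interference between the two corrections: enforcing $P_k$-reducedness at level $k$ (via $\tilde v$) could a priori spoil the reducedness at lower ranks, while enforcing reducedness at lower ranks (via $u_0$) could undo the top-level correction. The resolution is the order in which they are applied combined with the asymmetry $K_k \subset K_{k+1}$: performing the Proposition~\ref{obtaining reduced isometry} correction first and only then invoking the induction hypothesis is safe, because the subsequent element $u_0$, being trivial in $G_k$, cannot affect the geometry of $ug$ on $X_k$. The only remaining point to keep in mind is that the numerical constants appearing in the definitions of ``$P_k$-reduced'' and of ``reduced of rank $k+1$ at level $k$'' must match, which is precisely what is arranged by the constants fixed at the end of Section~\ref{sec:small cancellation constants} and in Section~\ref{framework sequence for Burnside}.
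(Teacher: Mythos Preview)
Your proposal is correct and follows essentially the same route as the paper: induct on $k$, first use Proposition~\ref{obtaining reduced isometry} at level $k$ to make the element $P_k$-reduced, then invoke the induction hypothesis, and finally observe that the rank-$k$ correction lies in $K_k$ and so cannot disturb the geometry in $X_k$. Your explicit discussion of lifting $v$ from $G_k$ to $G$ and of why $g$ (hence $vg$) must be hyperbolic in $G_k$ are points the paper handles by its standing notational convention of identifying elements of $G$ with their images in $G_k$, but the argument is the same.
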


\begin{proof}
	The proof is by induction on $k$.
	Since every hyperbolic element of $G$ is reduced of rank 0, the proposition is true for $k=0$.
	Assume now that the proposition holds for $k \in \N$.
	Let $g \in G$ such that $g$ is hyperbolic in $G_{k+1}$.
	By Proposition~\ref{obtaining reduced isometry} there exists $u \in K_{k+1}$ such that $ug$ is $P_k$-reduced, i.e. for all $\rho \in P_k$, $\diaminter{Y_{ug}}{Y_\rho}\leq \len \rho/2 + \sympy{Faux1_BIliftingReducedElts()}$ in the space $X_k$.
	Note that $g = ug$ in $G_{k+1}$. 
	Thus $ug$ is hyperbolic in $G_{k+1}$ and therefore in $G_k$.
	We apply the induction hypothesis on $ug$: there exists $v \in K_k$ such that $vug$ is reduced of rank $k$.
	However $vug = ug$ in $G_k$. 
	Hence for all $j \leq k$, for all $\rho \in P_j$, $\diaminter{Y_{vug}}{Y_\rho}\leq \len \rho/2 +\sympy{Faux2_BIliftingReducedElts()}$ in the space $X_j$, which means that $vug$ is reduced of rank $k+1$.
	Moreover, since $K_k \lhd K_{k+1}$, $vu \in K_{k+1}$.
	Consequently the proposition holds for $k+1$.
\end{proof}

\subsection{Elementary moves in $X$}

	Recall that $x_0$ is a base point of $X$.
	
	\begin{defi}
		Let $y$ and $z$ be two points of $X$. 
		\begin{itemize}
			\item We say that $z$ is the image of $y$ by a \emph{$(\xi,n)$-elementary move} (or simply an \emph{elementary move}), if there exist $g \in G$ such that
			\begin{enumerate}
				\item $\diaminter{\geo {x_0}y}{Y_g} \geq \len{g^m}$ in the space $X$, with $m \geq n/2-\xi$.
				\item $z = g^{-n}y$ in $X$.
			\end{enumerate}
			\item We say that $z$ is the image of $y$ by a \emph{sequence of elementary moves}, and we write $y \rightarrow z$, if there exists a finite sequence of points of $X$, $y=y_0, y_1, \dots, y_l=z$ such that for all $j \in \intvald 0{l-1}$, $y_{j+1}$ is the image of $y_j$ by an elementary move.
		\end{itemize}
	\end{defi}
	
	Our theorems are consequences of the following one
	\begin{theo}
	\label{the:identifying two points}
		Let $y$ be a point of $X$.
		An element $g \in G$ belongs to $G^n$ if and only if there exist two sequences of elementary moves which respectively send $y$ and $gy$ to the same point.
	\end{theo}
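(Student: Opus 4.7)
The forward implication is immediate from the free action of $G$ on $X$: each $(\xi,n)$-elementary move replaces a point by its image under an element of the form $v^{-n} \in G^n$, so if two sequences of such moves send $y$ and $gy$ respectively to a common point $z$, then $h_1 y = h_2 gy$ for some $h_1, h_2 \in G^n$, which by freeness of the action forces $g = h_2^{-1} h_1 \in G^n$. For the converse, since $G/G^n = \dlim G_k$ we have $G^n = \bigcup_{k \geq 0} K_k$, so it suffices to prove by induction on $k$ the statement $P(k)$: \emph{for every $y \in X$ and every $g \in K_k$, there exist finite sequences of $(\xi,n)$-elementary moves sending $y$ and $gy$ to the same point of $X$}. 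The base case $P(0)$ is trivial since $K_0 = \{1\}$.

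\paragraph{}For the inductive step, fix $g \in K_{k+1}$; the case $g \in K_k$ reduces directly to $P(k)$, so we may assume that the image of $g$ in $G_k$ is a non-trivial element of the normal closure of $P_k$. The plan is to apply elementary moves to $y$ and to $gy$ until the resulting pair $(y_\star, z_\star)$ is related by an element of $K_k$, and then to conclude via $P(k)$. To locate the first such move, we work in $X_k$: Greendlinger's Lemma (Theorem~\ref{Greendlinger Lemma}) applied to the action of the normal closure of $P_k$ on $X_k$ produces some $\rho = v^n \in P_k$ such that $\geo[X_k]{y}{gy}$ and $Y_\rho$ have overlap in $X_k$ exceeding $\len[espace=X_k]\rho$ by only a small additive constant. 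Multiplying $gy$ by a lift of $v^{-n}$ to $G$ would then cancel this copy of $\rho$ in $G_k$ and bring the relating element of $G$ one step closer to $K_k$.

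\paragraph{}The central obstacle is that this overlap lives in $X_k$, whereas the admissibility of a $(\xi,n)$-elementary move is defined by an overlap condition in the ambient space $X = X_0$. Proposition~\ref{lift overlap}, applied at the $(k+1)$-st small-cancellation step (ambient $X_k$, quotient $X_{k+1}$, relations $P_k$, and $v$ as the $P_k$-reduced isometry), lifts the $X_{k+1}$-overlap to a foldable configuration in $X_k$ organised around $x_0$, $gy$, and $Y_v$. Iterated application of Proposition~\ref{folding proposition} through this configuration yields a sequence of multiplications by elements of $P_k$ that progressively fold $gy$ onto $Y_v$ in $X_k$, eventually producing a pair for which the overlap condition required by $v^{-n}$ is already visible in $X_0$.

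\paragraph{}The execution of each intermediate folding as an actual move in $X_0$ is itself an $n$-th power multiplication by some $\tilde v_i^n \in G$ whose image in $G_j$ lies in $P_j$ for some $j \leq k$, and the overlap condition required to realise it as a $(\xi,n)$-elementary move in $X_0$ may in turn have to be set up by further moves at strictly lower levels; these sub-problems are handled by the inductive hypothesis applied recursively. The proof therefore unfolds as a nested induction, outer on $k$ and inner on the number of foldings. The main difficulty is the quantitative bookkeeping: one must verify that the constants controlling overlaps, $P$-closeness, minimization of distances, and projections appearing in Propositions~\ref{lift overlap} and~\ref{folding proposition} remain within their admissible ranges at every stage of the recursion. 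This is ensured by systematically replacing $y$ and $gy$ by minimizing representatives in their $K_j$-orbits (via Propositions~\ref{satisfaire Greendlinger geodesique} and~\ref{satisfaire Greendlinger axe}) and by the geometric taming provided by the rescaling parameter $\lambda$. A symmetric treatment of $y$ in parallel with $gy$ guarantees that the terminal pair $(y_\star, z_\star)$ differs by an element of $K_k$, at which point $P(k)$ closes the induction and finishes the proof.
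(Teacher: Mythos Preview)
Your overall strategy matches the paper's, and the easy direction is handled correctly. However, there is a genuine gap in the hard direction: the inductive hypothesis $P(k)$ you formulate is too weak to support the recursion you describe. When you write that ``the overlap condition required to realise it as a $(\xi,n)$-elementary move in $X_0$ may in turn have to be set up by further moves at strictly lower levels; these sub-problems are handled by the inductive hypothesis applied recursively,'' you are invoking a statement that is \emph{not} $P(j)$ for any $j$. What you need at that point is: given $y$ close of rank $j$ to $x_0$ and an overlap $\diaminter{\geo{x_0}y}{Y_g} \geq \len{g^m}$ visible in $X_j$ (with $g$ reduced of rank $j$), produce elementary moves taking $y$ to some $uy$ for which the same overlap is visible in $X_0$. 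Your $P(j)$ says nothing of the sort; it only concerns identifying $y$ with $hy$ for $h \in K_j$.

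The paper's solution is to load the induction hypothesis: Proposition~\ref{big induction proposition} is a three-part statement (A), (B), (C) proved simultaneously by induction on $k$. Part (A) produces, via elementary moves, a representative close of rank $k$ to $x_0$; part (B) is essentially your $P(k)$ restricted to points already close of rank $k$; part (C) is precisely the overlap-lifting statement you need but did not isolate. In the inductive step, (C) at level $k$ is what converts an overlap detected in $X_k$ (via Greendlinger or the folding proposition) into an honest elementary move in $X$, after which (A) restores closeness of rank $k$. Your sketch gestures at all three ingredients but never states (C) as part of the induction, so the ``recursive'' appeal is circular as written. The fix is exactly to strengthen the induction to carry (A) and (C) along with (B).
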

	
	\rem Assume that there are two sequences of elementary moves which respectively send $y$ and $gy$ to the same point.
	By definition this common point can be written $uy = vgy$ where $u$ and $v$ belong to $G^n$. 
	Since $G$ acts freely on $X$ it directly follows that $g$ belongs to $G^n$.
	What we need to prove is the other direction.
	To that end we first show the following induction proposition.

	\begin{prop}
	\label{big induction proposition}
		Let $k \in \N$.
		\begin{enumerate}
		\renewcommand{\labelenumi}{(\Alph{enumi})}
			\item 
			Let $y \in X$. 
			There exists $u \in K_k$ such that $x_0$ and $uy$ are close of rank $k$ and $uy$ is the image of $y$ by a sequence of elementary moves.
			\item 
			Let $y,z \in X$ such that $x_0$ and $y$ (\resp $x_0$ and $z$) are close of rank $k$.
			If $y=z$ in $X_k$, then $z$ is the image of $y$ by a sequence of elementary moves.
			\item 
			Let $y \in X$ such that $x_0$ and $y$ are close of rank $k$.
			Let $g$ be an element of $G$ which is reduced of rank $k$.
			We assume that there exists an integer $m \geq n/2-\xi$ such that 
			\begin{displaymath}
				\diaminter{\geo {x_0}y}{Y_g} \geq \len {g^m} + \sympy{F_BIinductionPropOverlap()} \text{ in }X_k.
			\end{displaymath}
			Then there exist $u,v \in K_k$ such that $uy$ is the image of $y$ by a sequence of elementary moves and 
			\begin{displaymath}
				\diaminter{\geo {x_0}{uy}}{vY_g}\geq \len {g^m} + \sympy{F_BIinductionPropOverlap()} \text{ in } X.	
			\end{displaymath}
		\end{enumerate}
	\end{prop}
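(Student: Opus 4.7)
The plan is to establish (A), (B) and (C) simultaneously by induction on $k$, using at each step the small cancellation machinery of Section~\ref{sec:small cancellation} applied to the quotient $G_k \twoheadrightarrow G_{k+1}$ with relators $P_k$. The base case $k=0$ is trivial: any two points are close of rank $0$ and any hyperbolic element of $G$ is reduced of rank $0$, while $K_0 = \{1\}$; so (A) and (B) require no moves, and (C) holds with $u=v=1$ because $X_0 = X$.

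The guiding principle for the inductive step is the following. Whenever one detects, in $X_k$, a large overlap between a relevant geodesic (or cylinder) and some $Y_\rho$ with $\rho = r^n \in P_k$, of size at least $\len{r^m}$ for some $m \geq n/2-\xi$, then $r$ is reduced of rank $k$ by construction of $P_k$, the induction hypothesis (C) at rank $k$ applied to $r$ transfers this overlap from $X_k$ to $X$, and the resulting overlap in $X$ between $\geo{x_0}{uy}$ and a conjugate of $Y_r$ triggers a $(\xi,n)$-elementary move. Each such move decreases a well-chosen quantity (either the distance to $x_0$, as in Lemma~\ref{shortening geodesic}, or a combinatorial complexity of an identifying word), ensuring termination.

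Concretely, part (A) at rank $k+1$ is obtained by first invoking (A) at rank $k$ to reach a point close of rank $k$ to $x_0$, and then iteratively eliminating overlaps with cylinders $Y_\rho$, $\rho \in P_k$, via the mechanism above, with $\dist{x_0}{uy}$ strictly decreasing at each step. For (B) at rank $k+1$: if $y=z$ in $X_{k+1}$ but not in $X_k$, Theorem~\ref{Greendlinger Lemma} applied in $X_k$ yields some $\rho \in P_k$ such that $\geo{y}{z}$ has a large overlap with $Y_\rho$; then one uses $x_0$, which is close of rank $k$ to both $y$ and $z$, as the fourth point of a foldable configuration in the sense of Definition~\ref{defi:foldable configuration}, and invokes Proposition~\ref{folding proposition} to produce a $\rho^{\pm 1}$ whose action folds $y$ toward $z$, the resulting move in $X$ being realized through (C) at rank $k$. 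Iterating reduces to equality in $X_k$, at which point (B) at rank $k$ finishes.

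The substantial step is (C) at rank $k+1$. Given the overlap $\diaminter{\geo{x_0}{y}}{Y_g} \geq \len{g^m}$ in $X_{k+1}$, Proposition~\ref{lift overlap} provides lifts $p,q \in X$, an element $v \in K_k$ and a foldable configuration $(x_0,p,q,y)$ in $X_k$ with $\bar p = \bar q$ in $\bar X_k$ and $\dist{r}{q}$ essentially of the required size in $X$. One then applies Proposition~\ref{folding proposition} iteratively: each fold translates $p$ by some $\rho^{\pm 1} \in P_k^{\pm 1}$, raises $\gro{p}{q}{x_0}$ by about $\len\rho/2$, preserves foldability of $(x_0,\rho^{\pm 1}p,q,\rho^{\pm 1}y)$, and, through (C) at rank $k$ applied to the underlying short element $r$ with $\rho=r^n$, corresponds to a $(\xi,n)$-elementary move acting on $y$ in $X$. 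The process terminates because $\gro{p}{q}{x_0} \leq \dist{x_0}{q}$ remains bounded; when $p=q$, the accumulated overlap between $\geo{x_0}{uy}$ and $vY_g$ in $X$ is the desired one. The main obstacle, and the source of almost all technical difficulty, is the bookkeeping of constants across these three coupled inductions, so that at every fold the hypotheses of ``foldable configuration'', ``close of rank $k$'' and ``reduced of rank $k$'' are simultaneously preserved, and so that the hypothesis $\diaminter{\geo{x_0}{y}}{Y_g} \geq \len{g^m} + \text{const}$ in the statement of (C) has a large enough margin to accommodate the errors introduced by Propositions~\ref{lift overlap} and~\ref{folding proposition}.
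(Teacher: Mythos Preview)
Your outline is essentially the paper's own argument: induction on $k$ with trivial base case, (A) via iterated distance-shortening, (B) and (C) via the foldable-configuration machinery, each fold being realized as an elementary move in $X$ through the inductive hypothesis (C) at rank $k$ applied to the short root $r$ of $\rho = r^n$.

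One point in your sketch of (C) is not quite right and hides a genuine step. When the folding terminates with $up = q$, this equality holds in $X_k$, and what you have obtained is the overlap $\diaminter{\geo{x_0}{uy}}{vY_g} \geq \len{g^m} + \text{const}$ \emph{in $X_k$}, not in $X$ (Proposition~\ref{lift overlap} gives $p,q,r \in X_k$ and $\dist rq$ large in $X_k$, not in $X$, and $v$ lies in $K_{k+1}$, not $K_k$). To descend to $X$ you must invoke (C) at rank $k$ once more, this time with the element $g$ itself --- which is legitimate because $g$, being reduced of rank $k+1$, is a fortiori reduced of rank $k$, and $x_0$, $uy$ are close of rank $k$. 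This final recursive call is what actually produces the $u', v' \in K_k$ giving the overlap in $X$; without it the induction does not close.
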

	
	\begin{proof}
		The rest of this section is devoted to the proof of this proposition.
		The proof is by induction of $k$.
		If $k =0$, all the conclusions are already contained in the hypothesis (take $u=v=1$). 
		Hence the proposition is true for $k=0$.
		Assume now that the proposition holds for $k \in \N$.
		
		\begin{lemm}
		\label{BI: reduire distance}
			Let $y \in X$ such that $x_0$ and $y$ are close of rank $k$ but not close of rank $k+1$.
			There exists $u \in K_{k+1}$ such that
			\begin{enumerate}
				\item $x_0$ and $uy$ are close of rank $k$,
				\item $uy$ is the image of $y$ by a sequence of elementary moves,
				\item $\dist[X_k]{x_0}{uy} < \dist[X_k]{x_0}y  \sympy{-1*F_BIreduireDist()}$.
			\end{enumerate}
		\end{lemm}
		
		\begin{proof}
			By assumption, there exists $r \in R_k^0$ such that
			\begin{displaymath}
				\diaminter{\geo {x_0}y}{Y_r} > \frac 12\len{r^n} + \sympy{Faux1_BIreduireDist()} \text{ in } X_k.
			\end{displaymath}
			Applying Lemma~\ref{shortening geodesic}, there exists $\kappa \in \Z$ such that $\dist[X_k]{x_0}{r^{\kappa n}y} < \dist[X_k]{x_0}y \sympy{-1* Faux2_BIreduireDist()}$.
			However $r$ is hyperbolic in $G_k$.
			By Proposition~\ref{lifting reduced elements}, there exists $s \in G$ which is reduced of rank $k$ such that $s=r$ in $G_k$.
			In particular $s^n$ belongs to $K_{k+1}$ and $\diaminter{\geo {x_0}y}{Y_s} > \len{s^n}/2 + \sympy{Faux3_BIreduireDist()}$ in $X_k$.
			We put $m = \left\lfloor n/2-\xi \right\rfloor +1$.
			Recall that $(\xi -1)\rinj {G_k}{X_k} \geq  \sympy{Faux4_BIreduireDist()}$.
			It follows that 
			\begin{displaymath}
				\len[espace=X_k] {s^n} 
				\geq 2\len[stable, espace={X_k}]{s^m} + 2(\xi-1)\len[stable, espace={X_k}] s 
				\geq 2\len[espace=X_k]{s^m} + \sympy{Faux5_BIreduireDist()}.
			\end{displaymath}
			Consequently we have in $X_k$, $\diaminter{\geo {x_0}y}{Y_s} > \len{s^m} +  \sympy{Faux6_BIreduireDist()}$, with $m \geq n/2-\xi$.
			By construction $x_0$ and $y$ are close of rank $k$ and $s$ is reduced of rank $k$.
			Applying the induction hypothesis (Prop.~\ref{big induction proposition}(C)), there exist $u,v \in K_k$ such that $uy$ is the image of $y$ by a sequence of elementary moves and 
			\begin{displaymath}
				\diaminter{\geo {x_0}{uy}}{vY_s} \geq \len {s^m} +  \sympy{Faux7_BIreduireDist()} \geq \len{vs^mv^{-1}} \text{ in } X.	
			\end{displaymath}
			Therefore $(vs^{\kappa n}v^{-1})uy$ is the image of $uy$ by an elementary move.
			However, by induction hypothesis (Prop.~\ref{big induction proposition}(A)), there exists $w \in K_k$ such that $x_0$ and $w(vs^{\kappa n}v^{-1})uy$  are close of rank $k$ and $w(vs^{\kappa n}v^{-1})uy$ is the image of $(vs^{\kappa n}v^{-1})uy$ by a sequence of elementary moves.
			
			\paragraph{} Let us now summarize. 
			Using a finite number of elementary moves, we have done the following transformations: 
			\begin{displaymath}
				y \rightarrow uy \rightarrow (vs^{\kappa n}v^{-1})uy \rightarrow w(vs^{\kappa n}v^{-1})uy.
			\end{displaymath}
			On the other hand $u,v,w \in K_k$ and $s^n \in K_{k+1}$. 
			Thus $w(vs^{\kappa n}v^{-1})u$ belongs to $K_{k+1}$ and $w(vs^{\kappa n}v^{-1})u = s^{\kappa n}=r^{\kappa n}$ in $G_k$.
			Hence
			\begin{displaymath}
				\dist[X_k]{x_0}{w(vs^{\kappa n}v^{-1})uy} = \dist[X_k]{x_0}{r^{\kappa n}y} < \dist[X_k]{x_0}y  \sympy{-1*Faux8_BIreduireDist()}.
			\end{displaymath}
			which concludes the proof of the lemma.
		\end{proof}
		
		\begin{lemm}
		\label{BI: distance minimale implique proche}
			Let $y \in X$. 
			There exists $u \in K_{k+1}$ such that $x_0$ and $uy$ are close of rank $k+1$ and $uy$ is the image of $y$ by a sequence of elementary moves.
		\end{lemm}
		\rem This lemma proves Prop.~\ref{big induction proposition}(A) for $k+1$.
		
		\begin{proof}
			Let $\mathcal U$ be the set of elements of $u \in K_{k+1}$ such that $x_0$ and $uy$ are close of rank $k$ and $uy$ is the image of $y$ by a sequence of elementary moves.
			According to the induction hypothesis (Prop.~\ref{big induction proposition}(A)), $\mathcal U$ is non-empty (more precisely $\mathcal U\cap K_k \neq \emptyset$).
			Hence we can choose $u \in \mathcal U$ such that for all $u' \in \mathcal U$, $\dist[X_k]{x_0}{uy} \leq \dist[X_k]{x_0}{u'y} + \delta$.
			We claim that $x_0$ and $uy$ are close of rank $k+1$.
			On the contrary, suppose that this assertion is false.
			By construction of $\mathcal U$, $x_0$ and $uy$ are close of rank $k$.
			By Lemma~\ref{BI: reduire distance}, there exists $v$ in $K_{k+1}$ such that $vu$ belongs to $\mathcal U$ and $\dist[X_k]{x_0}{vuy} < \dist[X_k]{x_0}{uy} \sympy{-1*Faux1_BIminDistGivesClose()}$, which contradicts the definition of $u$.
		\end{proof}
		
		\begin{lemm}
		\label{BI: folding geodesic}
			Let $y \in X$ such that $x_0$ and $y$ are close of rank $k$.
			Let $p,q \in X_k$ such that the configuration $\left(x_0,p,q,y\right)$ is foldable in $X_k$.
			We assume that $p$ and $q$ are equal in $X_{k+1}$ but not in $X_k$.
			There exists $u \in K_{k+1}$ such that
			\begin{enumerate}
				\item $x_0$ and $uy$ are close of rank $k$,
				\item $uy$ is the image of $y$ by a sequence of elementary moves,
				\item $\gro {up}q{x_0} \geq \gro pq{x_0} + \sympy{F_BIfoldingGeoIncGro()}$ in $X_k$,
				\item the configuration $(x_0,up,q,uy)$ is foldable and 
					\begin{displaymath}
						\gro {x_0}{uy}{up} \leq \sympy{F_BIfoldingGeoBoundedGro()}.
					\end{displaymath}
			\end{enumerate}
		\end{lemm}
		
		\begin{proof}
			Let us apply Proposition~\ref{folding proposition} in $X_k$ with $(x_0,p,q,y)$.
			There exist $r \in R_k^0$ and $\epsilon \in \{\pm 1\}$ satisfying the followings.
			\begin{itemize}
				\item $\diaminter{\geo{x_0}y}{Y_r} \geq \len{r^n}/2 \sympy{-1*Faux1_BIfoldingGeo()}$.
				\item $\gro{r^{\epsilon n}p}q{x_0} \geq \gro pq{x_0} + \len{r^n}/2 \sympy{-1*Faux2_BIfoldingGeo()}$.
				\item The configuration $\left(x_0, r^{\epsilon n}p,q,r^{\epsilon n}y\right)$ is foldable.
				Furthermore 
				\begin{displaymath}
					\gro{x_0}{r^{\epsilon n}y}{r^{\epsilon n}p} \leq \sympy{Faux3_BIfoldingGeo()}.
				\end{displaymath}
			\end{itemize}
			However $r$ is hyperbolic in $G_k$.
			By Proposition~\ref{lifting reduced elements}, there exists $s \in G$ which is reduced of rank $k$ such that $s=r$ in $G_k$.
			In particular $s^n$ belongs to $K_{k+1}$.
			Moreover, we have $\diaminter{\geo {x_0}y}{Y_s} \geq \len{s^n}/2 \sympy{-1*Faux4_BIfoldingGeo()}$ in $X_k$.
			We put $m = \left\lfloor n/2-\xi \right\rfloor +1$.
			Just as in Lemma~\ref{BI: reduire distance}, we have $\len[espace=X_k] {s^n} \geq 2\len[espace=X_k]{s^m} + \sympy{Faux5_BIfoldingGeo()}$.
			Consequently we get in $X_k$, $\diaminter{\geo {x_0}y}{Y_s} > \len{s^m}  + \sympy{Faux6_BIfoldingGeo()}$, with $m \geq n/2-\xi$.
			By construction $x_0$ and $y$ are close of rank $k$ and $s$ is reduced of rank $k$.
			Applying the induction hypothesis (Prop.~\ref{big induction proposition}(C)), there exist $u,v \in K_k$ such that $uy$ is the image of $y$ by a sequence of elementary moves and 
			\begin{displaymath}
				\diaminter{\geo {x_0}{uy}}{vY_s} \geq \len {s^m} + \sympy{Faux7_BIfoldingGeo()}  \geq  \len{vs^mv^{-1}} \text{ in } X.	
			\end{displaymath}
			Therefore $(vs^{\epsilon n}v^{-1})uy$ is the image of $uy$ by an elementary move.
			By induction hypothesis (Prop.~\ref{big induction proposition}(A)), there exists $w \in K_k$ such that $x_0$ and $w(vs^{\epsilon n}v^{-1})uy$  are close of rank $k$ and $w(vs^{\epsilon n}v^{-1})uy$ is the image of $(vs^{\epsilon n}v^{-1})uy$ by a sequence of elementary moves.
						
			\paragraph{} Let us now summarize. 
			Using a finite number of elementary moves, we have done the following transformations: 
			\begin{displaymath}
				y \rightarrow uy \rightarrow (vs^{\epsilon n}v^{-1})uy \rightarrow w(vs^{\epsilon n}v^{-1})uy.
			\end{displaymath}
			On the other hand $u,v,w \in K_k$ and $s^n \in K_{k+1}$. 
			Thus $w(vs^{\epsilon n}v^{-1})u$ belongs to $K_{k+1}$ and $w(vs^{\epsilon n}v^{-1})u = s^{\epsilon n}=r^{\epsilon n}$ in $G_k$.
			Consequently the configuration $(x_0, w(vs^{\epsilon n}v^{-1})up,q,w(vs^{\epsilon n}v^{-1})uy)$ is foldable (in $X_k$) and 
			\begin{displaymath}
				\gro {x_0}{w(vs^{\epsilon n}v^{-1})uy}{w(vs^{\epsilon n}v^{-1})up}\leq \sympy{Faux8_BIfoldingGeo()}.
			\end{displaymath}
		\end{proof}
		
		\begin{lemm}
		\label{BI: produit Gromov maximal implique egalite des points}
			Let $y \in X$ such that $x_0$ and $y$ are close of rank $k$.
			Let $p,q \in X_k$ such that the configuration $\left(x_0,p,q,y\right)$ is foldable in $X_k$ and $\gro {x_0}yp \leq \sympy{F_BIgroovMaxGivesSamePointHyp()}$.
			We assume that $p$ and $q$ are equal in $X_{k+1}$.
			There exists $u \in K_{k+1}$ such that
			\begin{enumerate}
				\item $x_0$ and $uy$ are close of rank $k$,
				\item $uy$ is the image of $y$ by a sequence of elementary moves,
				\item in $X_k$, $up=q$ and $\gro{x_0}{uy}q \leq \sympy{F_BIgroovMaxGivesSamePointCl()}$.
			\end{enumerate}
		\end{lemm}
		
		\begin{proof}
			Let us denote by $\mathcal U$ the set of elements $u \in K_{k+1}$ such that,
			\begin{itemize}
				\item $x_0$ and $uy$ are close of rank $k$,
				\item $uy$ is the image of $y$ by a sequence of elementary moves,
				\item in $X_k$, the configuration $(x_0,up,q,uy)$ is foldable, furthermore 
				\begin{displaymath}
					\gro{x_0}{uy}{up} \leq \sympy{Faux1_BIgroovMaxGivesSamePoint()}.	
				\end{displaymath}
			\end{itemize}
			The set $\mathcal U$ is non empty ($1 \in \mathcal U$).
			On the other hand, for all $u \in \mathcal U$, $\gro {up}q{x_0}$ is bounded above by $\dist[X_k] q{x_0}$ in $X_k$.
			Hence we can choose $u \in \mathcal U$ such that for all $u' \in \mathcal U$, $\gro {up}q{x_0} \geq \gro{u'p}q{x_0} \sympy{-1*Faux2_BIgroovMaxGivesSamePoint()}$ in $X_k$.
			We claim that $up=q$.
			On the contrary, suppose that this assertion is false.
			By definition of $\mathcal U$, the configuration $\left(x_0,up,q,uy\right)$ is foldable in $X_k$.
			Therefore applying Lemma~\ref{BI: folding geodesic}, there exists $v \in K_{k+1}$ such that $vu$ belongs to $\mathcal U$ and $\gro{vup}q{x_0} \geq \gro{up}q{x_0} + \sympy{Faux3_BIgroovMaxGivesSamePoint()}$ in $X_k$, which contradicts the definition of $u$.
			Consequently $up=q$ in $X_k$.
			It follows from the definition of $\mathcal U$ that $\gro{x_0}{uy}q \leq \sympy{Faux4_BIgroovMaxGivesSamePoint()}$ in $X_k$.
		\end{proof}
		
		\begin{lemm}
		\label{BI: part B}
			Let $y, z \in X$ such that $x_0$ and $y$ (\resp $x_0$ and $z$) are close of rank $k+1$.
			If $y=z$ in $X_{k+1}$ then $z$ is the image of $y$ by a sequence of elementary moves.
		\end{lemm}
		\rem This lemma proves Prop.~\ref{big induction proposition}(B) for $k+1$.
		
		\begin{proof}
			By assumption $x_0$ and $y$ are close of rank $k$. 
			Moreover $x_0$ and $y$ (\resp $x_0$ and $z$) are $P_k$-close in $X_k$.
			Thus the configuration $\left(x_0,y,z,y\right)$ is foldable in $X_k$ (take $s=t=x_0$ in Definition~\ref{defi:foldable configuration}) and $\gro {x_0}yy = 0$.
			Applying Lemma~\ref{BI: produit Gromov maximal implique egalite des points}, there exists $u \in K_{k+1}$ such that $uy$ is the image of $y$ by a sequence of elementary moves, $uy=z$ in $X_k$ and $x_0$ and $uy$ are close of rank $k$.
			By assumption, $x_0$ and $z$ are also close of rank $k$.
			According to the induction hypothesis (Prop.~\ref{big induction proposition}(B)), $z$ is the image of $uy$ by a sequence of elementary moves.
			Hence $z$ is the image of $y$ by a sequence of elementary moves.
		\end{proof}
		
		\begin{lemm}
		\label{BI: part C}
			Let $y \in X$ such that $x_0$ and $y$ are close of rank $k+1$.
			Let $g \in G$ which is reduced of rank $k+1$.
			We assume that there exists an integer $m \geq n/2-\xi$ such that
			\begin{displaymath}
				\diaminter{\geo {x_0}y}{Y_g} \geq \len {g^m} +  \sympy{F_BIpartCHyp()}, \text{ in } X_{k+1}.	
			\end{displaymath}
			Then there exist $u,v \in K_{k+1}$ such that $uy$ is the image of $y$ by a sequence of elementary moves and 
			\begin{displaymath}
				\diaminter{\geo {x_0}{uy}}{vY_g} \geq \len {g^m} + \sympy{F_BIpartCCl()} \text{ in } X.	
			\end{displaymath}
		\end{lemm}
		\rem This lemma proves Prop.~\ref{big induction proposition}(C) for $k+1$.
		
		\begin{proof}
			Exceptionally we begin the proof by working in $\bar X_k = \lambda^{-1} X_{k+1}$ (instead of $X_{k+1}$).
			Written in $\bar X_k$, our assumption says that $\diaminter{\geo {x_0}y}{Y_g} \geq \len {g^m} +  \sympy{Faux1_BIpartC()}$.
			According to Proposition~\ref{lift overlap}, there exist $r,p,q \in X_k$ and $v \in K_{k+1}$ satisfying the following
			\begin{enumerate}
				\item $d\left(r, vY_g\right) \leq \sympy{Faux2_BIpartC()}$, $d\left(q,vY_g\right) \leq \sympy{Faux3_BIpartC()}$, $\gro {x_0}yp \leq \sympy{Faux4_BIpartC()}$ and $\gro {x_0}qr \leq \sympy{Faux5_BIpartC()}$ in $X_k$,
				\item $\dist[X_k] rq \geq \len[espace = X_k] {g^m} + \sympy{Faux6_BIpartC()}$.
				\item $\bar p = \bar q$ in $\bar X_k$ and thus in $X_{k+1}$. Moreover the configuration $(x_0,p,q,y)$ is foldable in $X_k$.
			\end{enumerate}
			Applying Lemma~\ref{BI: produit Gromov maximal implique egalite des points}, there exists $u \in K_{k+1}$ such that 
			\begin{itemize}
				\item $x_0$ and $uy$ are close of rank $k$,
				\item $uy$ is the image of $y$ by a sequence of elementary moves,
				\item in $X_k$, $up=q$ and $\gro{x_0}{uy}q \leq \sympy{Faux7_BIpartC()}$. 
			\end{itemize}
			In $X_k$ we have
			\begin{displaymath}
				\diaminter{\geo {x_0}{uy}}{vY_g}
				\geq \diaminter{\geo rq}{vY_g} - \gro {x_0}{uy}q - \gro {x_0}qr.
			\end{displaymath}
			On the other hand $d\left(r, vY_g\right)\leq \sympy{Faux8_BIpartC()}$ and $d\left(q,vY_g\right) \leq \sympy{Faux9_BIpartC()}$, thus
			\begin{displaymath}
				 \diaminter{\geo rq}{vY_g} 
				 \geq \dist rq \sympy{-1*Faux10_BIpartC()}
				 \geq \len[espace = X_k] {g^m} + \sympy{Faux11_BIpartC()}.
			\end{displaymath}
			It follows that in $X_k$, $\diaminter{\geo {x_0}{uy}}{vY_g} \geq \len{g^m} +\sympy{Faux12_BIpartC()}$.	
			(Recall that $\lambda^{-1} \geq 500$.)	
			According to the induction hypothesis (Prop.~\ref{big induction proposition}(C)) there exist $u',v' \in K_k$ such that $u'uy$ is the image of $uy$ by a sequence of elementary moves and $\diaminter{\geo {x_0}{u'uy}}{v'vY_g} \geq \len {g^m} + \sympy{Faux12_BIpartC()}$ in $X$.
			In particular $u'u, v'v \in K_{k+1}$ and $u'uy$ is the image of $y$ by a sequence of elementary moves, which ends the proof of the lemma.
		\end{proof}

		Lemmas~\ref{BI: distance minimale implique proche}, \ref{BI: part B} and \ref{BI: part C} proves that Proposition~\ref{big induction proposition} holds for $k+1$.
	\end{proof}

\begin{proof}[Proof of Theorem~\ref{the:identifying two points}]
	Let $g \in G$ such that its image in $G/G^n$ is trivial.
	By construction the direct limit of the sequence $(G_k)$ is $G/G^n$.
	There exists $k \in \N$ such that $g$ is trivial in $G_k$.
	In particular, $y = gy$ in $X_k$.
	By Proposition~\ref{big induction proposition}(A), there exist $u,v \in K_k$ such that $x_0$ and $uy$ (\resp $x_0$ and $vgy$) are close of rank $k$.
	Moreover $uy$ (\resp $vgy$) is the image of $y$ (\resp $gy$) be a sequence of elementary moves.
	However $u$ and  $v$ belong to $K_k$, thus $uy = vgy$ in $X_k$.
	Applying Proposition~\ref{big induction proposition}(B), $vgy$ is the image of $uy$ by a sequence of elementary moves.
\end{proof}

\makebiblio

\noindent
R\'emi Coulon \\
remi.coulon@vanderbilt.edu \\
Vanderbilt University \\ 
Department of Mathematics \\
1326 Stevenson Center, Nashville, TN 37240, USA.



\begin{thebibliography}{10}

\bibitem{Adi79}
S.~I. Adian.
\newblock {\em {The Burnside problem and identities in groups}}, volume~95 of
  {\em Ergebnisse der Mathematik und ihrer Grenzgebiete [Results in Mathematics
  and Related Areas]}.
\newblock Springer-Verlag, Berlin, 1979.

\bibitem{AdiLys92}
S.~I. Adian and I.~G. Lysenok.
\newblock {The method of classification of periodic words and the Burnside
  problem}.
\newblock In {\em Proceedings of the International Conference on Algebra, Part
  1 (Novosibirsk, 1989)}, pages 13--28, Providence, RI, 1992. Amer. Math. Soc.

\bibitem{BriHae99}
M.~R. Bridson and A.~Haefliger.
\newblock {\em {Metric spaces of non-positive curvature}}, volume 319 of {\em
  Grundlehren der Mathematischen Wissenschaften [Fundamental Principles of
  Mathematical Sciences]}.
\newblock Springer-Verlag, Berlin, 1999.

\bibitem{Bur02}
W.~Burnside.
\newblock {On an unsettled question in the theory of discontinuous groups}.
\newblock {\em Quart.J.Math.}, 33:230--238, 1902.

\bibitem{Che05}
E.~A. Cherepanov.
\newblock {Free semigroup in the group of automorphisms of the free Burnside
  group}.
\newblock {\em Communications in Algebra}, 33(2):539--547, 2005.

\bibitem{CooDelPap90}
M.~Coornaert, T.~Delzant, and A.~Papadopoulos.
\newblock {\em {G\'eom\'etrie et th\'eorie des groupes}}, volume 1441 of {\em
  Lecture Notes in Mathematics}.
\newblock Springer-Verlag, Berlin, 1990.

\bibitem{Cou10a}
R.~Coulon.
\newblock {Outer automorphisms of free Burnside groups (ArXiv 1008.4495)}.
\newblock {\em Commentarii Mathematici Helvetici, to appear}, 2010.

\bibitem{Coulon:il}
R.~Coulon.
\newblock {Asphericity and small cancellation theory for rotation families of
  groups}.
\newblock {\em Groups, Geometry, and Dynamics}, 5(4):729--765, 2011.

\bibitem{Coulon:2013tx}
R.~Coulon.
\newblock {Small cancellation theory and Burnside problem}.
\newblock {\em arXivorg}, (1302.6933v1), Feb. 2013.

\bibitem{CouHil11}
R.~Coulon and A.~Hilion.
\newblock {Growth and order of automorphisms of free groups and free Burnside
  groups, in preparation}.

\bibitem{Dahmani:2011vu}
F.~Dahmani, V.~Guirardel, and D.~V. Osin.
\newblock {Hyperbolically embedded subgroups and rotating families in groups
  acting on hyperbolic spaces}.
\newblock {\em arXiv.org}, (1111.7048v2), Nov. 2011.

\bibitem{DelGro08}
T.~Delzant and M.~Gromov.
\newblock {Courbure m{\'e}soscopique et th{\'e}orie de la toute petite
  simplification}.
\newblock {\em Journal of Topology}, 1(4):804--836, 2008.

\bibitem{GhyHar90}
{\'E}.~Ghys and P.~de~la Harpe.
\newblock {\em {Sur les groupes hyperboliques d'apr{\`e}s Mikhael Gromov}},
  volume~83 of {\em Progress in Mathematics}.
\newblock Birkh{\"a}user Boston Inc., Boston, MA, 1990.

\bibitem{Hal57}
M.~Hall~Jr.
\newblock {Solution of the Burnside problem of exponent $6$}.
\newblock {\em Proceedings of the National Academy of Sciences of the United
  States of America}, 43:751--753, 1957.

\bibitem{Iva94}
S.~V. Ivanov.
\newblock {The free Burnside groups of sufficiently large exponents}.
\newblock {\em International Journal of Algebra and Computation},
  4(1-2):ii+308, 1994.

\bibitem{LevWae33}
F.~Levi and B.~van~der Waerden.
\newblock {\"Uber eine besondere Klasse von Gruppen}.
\newblock {\em Abhandlungen aus dem Mathematischen Seminar der Universit\"at
  Hamburg}, 9(1):154--158, Dec. 1933.

\bibitem{Lys96}
I.~G. Lysenok.
\newblock {Infinite Burnside groups of even period}.
\newblock {\em Izvestiya Akademii Nauk SSSR. Seriya Matematicheskaya},
  60(3):3--224, 1996.

\bibitem{Mos92}
B.~Moss{\'e}.
\newblock {Puissances de mots et reconnaissabilit{\'e} des points fixes d'une
  substitution}.
\newblock {\em Theoretical Computer Science}, 99(2):327--334, 1992.

\bibitem{NovAdj68c}
P.~S. Novikov and S.~I. Adian.
\newblock {Infinite periodic groups.}
\newblock {\em Izvestiya Akademii Nauk SSSR. Seriya Matematicheskaya}, 32,
  1968.

\bibitem{NovAdj68a}
P.~S. Novikov and S.~I. Adian.
\newblock {Infinite periodic groups. II}.
\newblock {\em Izvestiya Akademii Nauk SSSR. Seriya Matematicheskaya},
  32:251--524, 1968.

\bibitem{NovAdj68b}
P.~S. Novikov and S.~I. Adian.
\newblock {Infinite periodic groups. III}.
\newblock {\em Izvestiya Akademii Nauk SSSR. Seriya Matematicheskaya},
  32:709--731, 1968.

\bibitem{Olc82}
A.~Y. Ol'shanski{\u \i}.
\newblock {The Novikov-Adyan theorem}.
\newblock {\em Matematicheski\u\i\ Sbornik}, 118(160)(2):203--235, 287, 1982.

\bibitem{Olc91}
A.~Y. Ol'shanski{\u \i}.
\newblock {Periodic quotient groups of hyperbolic groups}.
\newblock {\em Matematicheski\u\i\ Sbornik}, 182(4):543--567, 1991.

\bibitem{San40}
I.~N. Sanov.
\newblock {Solution of Burnside's problem for exponent 4}.
\newblock {\em Leningrad State Univ. Annals [Uchenye Zapiski] Math. Ser.},
  10:166--170, 1940.

\end{thebibliography}
\end{document}